 \pgfplotsset{compat=newest}  
\newcommand\blfootnote[1]{%
  \begingroup
  \renewcommand\thefootnote{}\footnote{#1}%
  \addtocounter{footnote}{-1}%
  \endgroup
}
\newtheorem{theo}{Theorem}
\newtheorem*{THM}{Theorem}
\newtheorem{definition}{Definition}
\newtheorem{Prop}{Proposition}[section]
\newtheorem{Cor}{Corollary}[section]
\newtheorem{Lemma}{Lemma}[section]
\newtheorem{Rm}{Remark}
\newtheorem*{Ex}{Example}
\title{ A group from a map and orbit equivalence.}
\author [1] {J\'er\^ome Los}
\affil [1]{\small{Aix-Marseille Universit\'e, CNRS, ECM, I2M, UMR 7373.}}
\author [2] {Natalia A. Viana Bedoya}
\affil [2] {\small{Universidade Federal de S\~ao Carlos, DM-UFSCar.}}
\date{}
\begin{document}
\maketitle
\begin{abstract}
\noindent  \em{In two papers published in 1979, R. Bowen and C. Series defined a dynamical system from a Fuchsian group, acting on the hyperbolic plane $\mathbb{H}^2$. The dynamic is given by a map on $S^1$ which is,
in particular, a discontinuous expanding piecewise homeomorphism of the circle.
In this paper we consider a reverse question: which dynamical conditions for a discontinuous expanding piecewise homeomorphism of $S^1$
are sufficient for the map to be a ``Bowen-Series-type" map (see below) for some group $G$ and which groups can occur? We give a partial answer to these questions.}
 \end{abstract}

 \section{Introduction}
 \blfootnote{{\it AMS Subject Classification (2020):} 20F65, 20F67, 37E10.}
\noindent In this paper we introduce a class of {\em discontinuous expanding piecewise homeomorphisms of the circle}. Such a map 
$ \Phi : S^1 \rightarrow S^1$ is given by a finite partition of the circle so that the restriction of $\Phi$ to each partition interval is an expanding homeomorphism onto its image. The class of maps we consider is motivated by two related questions:\\
- Can we construct a group $G_{\Phi}$ from such a map $\Phi$?\\
- Which groups can be obtained?

The groups that can possibly be constructed are naturally subgroups of $\textrm{Homeo}(S^1)$ which is well known to have many different classes of subgroups.
Since the possible groups $G_{\Phi}$ and the map $\Phi$ act on the same space, $S^1$, it is natural to compare the two actions and the best possible situation is when the two actions are ``orbit equivalent". This means that the orbits of $\Phi$ and of $G_{\Phi}$ are the 
same, modulo possibly finitely many exceptions. In such cases we say that the map $\Phi$ is a {\em Bowen-Series-type} map for the group $G_{\Phi}$.

This program is a reverse problem of a beautiful construction initiated by R. Bowen and C. Series in the late 70's in \cite{B} and \cite{BS}, where they discovered a striking relationship between some groups and some dynamics.
The Bowen-Series construction starts with a Fuchsian group $G$ given by an action on $\mathbb{H}^2$ with specific properties and they obtained a particular map $ \Phi_{BS} : S^1 \rightarrow S^1$, where $S^1$ is the boundary $\partial \mathbb{H}^2$.
Some variations of this construction have been studied by Adler and Flato in \cite{AF} and more recently in \cite{AKU}.
The maps $ \Phi_{BS}$ satisfy very strong properties:\\
- The maps are piecewise M\"obius, in particular piecewise analytic,\\
- each $ \Phi_{BS}$ is orbit equivalent to the $G$-action on $S^1$,\\
- each $ \Phi_{BS}$ is an expanding Markov map.

The idea of the Bowen-Series construction has been revisited in \cite{Los} for hyperbolic surface groups, in the geometric group theory context. The group is given abstractly by a presentation $P = \langle X; R\rangle$ i.e., a set of generators and relations. The presentations belong to a particular class called ``geometric", meaning that the associated Cayley 2-complex is planar. The classical presentations of surface groups are geometric in this sense as well as the presentations considered in \cite{AF}.
This construction starts with a geometric presentation $P$ and defines a map $ \Phi_{P} : S^1 \rightarrow S^1$ that is an expanding  piecewise homeomorphism and the circle is the Gromov boundary of the group $G$ (see \cite{Gr}).

\vspace*{2pt}
\noindent The maps $ \Phi_{P}$ and  $\Phi_{BS}$ are different i.e. non-conjugate, even in the cases they can be compared, for the classical presentations. But they satisfy the same two main features:

{\it The orbit equivalence and the Markov properties.}

\vspace*{2pt}
\noindent The map $ \Phi_{P}$ satisfies an additional property relating the group and the dynamics:

{\it The volume entropy of  $P$ (see \cite{GH}) equals the topological entropy of $ \Phi_{P}$ (see \cite{AKM}).}

\vspace*{3pt}
\noindent The construction of more maps from any geometric presentation $P$ of a surface group has been given in \cite{AJLM2} by defining a multiparameter family of maps and an entropy stability result has been obtained for all maps in this family.

\vspace*{3pt}
The problem we consider in this paper is a converse question:\\
{\em How particular are the maps obtained from a surface group presentation among discontinous piecewise homeomorphisms of the circle?}\\
We obtain a partial answer to this general question. Here a map $\Phi$ is given, as a discontinuous piecewise homeomorphism of the circle, one goal is to find dynamical conditions on $\Phi$ that allow us first to construct a group from the map and then to analyse which groups could be obtained. Each map $\Phi$ is given by a finite partition of $S^1$, it will soon become clear that the number of partition intervals has to be even. A point at the boundary of two partition intervals is called a {\em cutting point}, at such points the map is not continuous. The map is expanding means that each partition interval is mapped onto an interval that contains it compactly thus the map is surjective and not globally injective.

 
  The conditions we found on the map $\Phi$ are explained in \S 2, they can be expressed roughly as:

 
\noindent $\bullet$  An Eventual Coincidence condition (EC): the left and right orbits of each cutting point coincide after some well defined iterate.
 
\noindent $\bullet$  The conditions (E+) and (E-) that control the left and right orbits of the cutting points before the coincidence. These two conditions imply a Strong Expansivity condition (SE): each partition interval is mapped to an interval that contains it and intersects all but one partition interval.

Finally we do not restrict to maps $\Phi$ satisfying a Markov property, as in \cite{BS}, \cite{AF}, \cite{Los}, \cite{AJLM}, which would be too restrictive. We replace it by a weaker condition which quantifies the expansivity property:

\noindent $\bullet$ The Constant Slope condition (CS-$\lambda$): the map is conjugate to a piecewise affine map with constant slope $\lambda > 1$.\\
 Under this set of conditions our main result is:
  \textcolor{black}{
  \begin{THM}\label{first}
  Let $\Phi: S^1 \rightarrow S^1$ be an orientation preserving discontinuous piecewise homeomorphism satisfying the conditions: $\rm  (EC), (E+), (E-), (CS{\textrm{-}}\lambda)$, for some $\lambda>1$.
 Then there exists a discrete subgroup $G_{\Phi}$ of ${\rm Homeo}^+ (S^1)$ such that: 
 \vspace*{-3pt}
   \begin{enumerate} [noitemsep, leftmargin=15pt]
 \item $G_{\Phi}$ and $\Phi$ are orbit equivalent.
 \item $G_{\Phi}$ is conjugate in ${\rm Homeo}({S}^1)$ to the restriction of a torsion-free Fuchsian group \textcolor{black}{action on $S^1$}.
 \item Each $g \in G_{\Phi}$ is a piecewise affine homeomorphism with slopes in $\{\lambda^k, k \in \mathbb{Z}\}$ and $\lambda$ is an algebraic integer.
 \end{enumerate}
  \end{THM}
  }

 The set of maps satisfying the above conditions is not empty. Indeed if a surface group, for an orientable surface of genus larger than 2, has a geometric presentation $P$, where all the relations have even length (for instance the classical presentations) then the map $\Phi_{P}$ of \cite{Los} satisfies the conditions of the Theorem. For the same set of presentations, the multiparameter family 
 $\Phi_{P, \Theta}$ defined in [AJLM2], satisfies the conditions of the Theorem for an open set of parameters (see Lemma \ref{condSatisfied}) and the numbers $\lambda$ are very specific.
 On the other hand the set of conditions of the Theorem is not optimal (see Remark \ref{nonempty}) in order to obtain a classification of the Bowen-Series-type maps.
 
  The strategy of proof has several steps. The first one is to analyse the dynamical properties of the map 
 $\Phi$ (see \S 2 and \S 3). Then we construct a group $G_{X_{\Phi}}$, as a subgroup of ${\rm Homeo}^+ (S^1)$, by producing a generating set 
 $X_{\Phi}$ from the map $\Phi$ (see \S 3). 
 This step exhibits a surprising generating set $X_{\Phi}$.
 
  The next step is to prove that the group  $G_{X_{\Phi}}$, as an abstract group, is hyperbolic in the sense of M. Gromov (see \cite{Gr} or \cite{GdlH}). 
This property is obtained by showing that $G_{X_{\Phi}}$ acts geometrically on a hyperbolic metric space.
 This is a technical part (see \S 4 and \S 5). It requires the construction of a hyperbolic space and a geometric action on it, from the only data we have: the map.
 
 The hyperbolic space is obtained by a general dynamical construction inspired by one due to P. Haissinsky and K. Pilgrim \cite{HP} (see \S 4). The hyperbolicity is a consequence of the expansivity, as in \cite{HP}, and the Gromov boundary of the space is $S^1$.
  We adapt the construction and define a new space, suited to the maps $\Phi$, specially to the conditions (EC) and 
  ($\textrm{E}\pm$), in order to define a group action on the space.
  
  This step is new, it defines a class of ``dynamical spaces" in the context of groups.
    The construction of an action of the group on this metric space is also new. In both cases, the space and the action are defined only from the dynamic of the map (see \S 5).
    
  At this point the group $G_{\Phi}$ is hyperbolic with boundary $S^1$.
   A result of E. Freden  \cite{F} implies that the group is a discrete convergence group, as defined by F. Gehring and G. Martin \cite{GM} and thus it satisfies the conditions of the geometrisation theorem of P. Tukia \cite{Tukia}, D. Gabai  \cite{G} and A. Casson-D. Jungreis \cite{CJ}. \textcolor{black}{ The conclusion is that  
  $G_{\Phi}$ is conjugate to the restriction of a Fuchsian group action on $S^1$.}
 One more step shows that, with our assumptions, $G_{\Phi}$ is torsion-free and, by  H. Zieschang \cite{Zi}, the Fuchsian group is a surface group.
 
 Proving that the group  $G_{\Phi}$ and the map $\Phi$ are orbit equivalent follows a strategy similar to \cite{BS}
 (see \S 6) and is, in fact, simpler thanks to some particular dynamical properties of the map.
 
 In the appendix (see \S 7), we give a direct proof that $G_{\Phi}$ is abstractly a surface group, without using the geometrisation theorems of Tukia, Gabai and Casson-Jungreis. 
 All the hard work has been done before: the geometric action constructed in \S 5 is extended to a free, co-compact action on $\mathbb{R}^2$. This also gives some interesting consequences, in relation with \cite{AJLM2}.

\textcolor{black}{
We obtain a partial answer to our general question, the main result is a sort of reciprocal to the Bowen-Series-like construction. The answer is only partial, as explained in Remark \ref{nonempty}, and finding better conditions is a challenge for future works.
}
 The construction of the group from the map leads to some surprises. For instance, as a by-product of our construction, we obtain:
 \textcolor{black}{
 \begin{THM}\label{surprise}
 Let $S$ be a closed, compact, orientable surface of genus larger than 2. There exists a discrete faithful representation $\rho: \pi_1(S)\rightarrow {\rm Homeo}^+ (S^1)$ and a metric $\mu$ on $S^1$ such that $G:=\rho(\pi_1(S))$ satisfies:
 \begin{enumerate} [noitemsep, leftmargin=15pt]
 \item   $G$ admits a presentation where the generators are piecewise affine homeomorphisms, for the metric $\mu$ of $S^1$, with slopes in 
 $\{ \lambda, \lambda^{-1} \}$ for an algebraic integer $\lambda >1$.
 \item  $log (\lambda)$ is the volume entropy of the presentation. 
 \item Each element $g\in G$ is piecewise affine with slopes in $\{\lambda^k : k \in \mathbb{Z}\}$, if $g$ has length $n$, with respect to the generating set, it admits an interval $I_g \subset S^1$ so that $g_{| I_g}$ is affine of slope $\lambda^n$.
   \end{enumerate}
  \end{THM}
 }
\noindent
\textcolor{black}{
The group $G:=\rho(\pi_1(S))$ of the above theorem is the group $G_{\Phi}$ obtained in the proof of the main theorem for some map $\Phi$. By the main theorem, $G_{\Phi}$
is topologically conjugate to $G_F$, the restriction of a Fuchsian group on $S^1$.
There is thus a topological conjugacy between $G$ in ${\rm Homeo}^+ (S^1)$ and $G_F$ in ${\rm Diff}^{ w }(S^1)$.
The existence and differentiability of a conjugacy for two representations of surface groups in some 
${\rm Diff}^{ k}(S^1)$ is a question that has been considered in many works, for instance by Matsumoto \cite{Ma} in class $C^0$ and by Ghys \cite{Gh} in class $C^k$, for $k\geq 3$. Here the conjugacy cannot be better than $C^0$ since $G$ is in ${\rm Homeo}^+ (S^1)$ and the elements are not $C^1$.
The group $G_{\Phi}$ we obtain from the map $\Phi$ is rigid, uniqueness comes from a limit argument. We could expect that 
 some variations of our construction, for instance without a limit, is more flexible and leads to different classes of groups in 
 ${\rm Diff} (S^1)$ from a given map. Which groups could be obtained is an interesting question.
  } 
The condition (EC) is central in our approach, it seems to be a new dynamical condition and is interesting in its own right. The class of discontinuous maps satisfying a condition (EC) is much larger than the one studied here.\\


{\bf Acknowledgements}: {\small This work was partially supported by FAPESP (2016/24707-4). We thanks the hospitality of our research institutes: I2M in Marseille and DM-UFSCar in S\~ao Carlos, Brazil. We thanks the French-Uruguayan laboratory (IFUMI)
in Montevideo where this work has been completed. 
 The authors would like to thank P. Haissinsky and M. Boileau for their interest in this work and some comments on 
 an earlier versions of the paper.
 We would like to thank the anonymous referee of a previous version who pointed out a gap in one argument. Closing this gap has proved to be very interesting. We would like to thank the reviewer of this version for her/his careful reading and observations, which have helped us to greatly improve this work.}

 \section{A class of piecewise homeomorphisms on $S^1$}\label{sec 2}
 
We define in this section the class of maps that will be considered throughout the paper.\\ 
     A map  $\Phi: S^1 \rightarrow S^1$ is a {\it piecewise orientation preserving homeomorphism of the circle} if  
 there is a finite partition of $S^1$:
 \begin{equation}\label{theMap}
  S^1 = \bigcup_{j=1}^{M}  I_j \textrm{, where each } I_j \textrm{ is half open,} 
 \end{equation} 
 \textcolor{black}{so that }  $\Phi_j :=   \Phi_{|_{I_{j}}}$ is an orientation preserving homeomorphism 
 onto its image and each $I_j$ is maximal. We require further that the number of partition intervals is even: $M = 2N$.

 \subsection{ The class of maps}\label{TheClassPhi}
 To state the next properties of the maps in our class, we introduce some notations.\\
 Let $\zeta,\iota, \delta, \gamma$ be permutations of $\{1,\dots,2N\}$, such that:
 \begin{itemize} [noitemsep, leftmargin=10pt]
\item $\zeta$ is a cyclic permutation of order $2N$,
\item $\iota$  is a  fixed point free involution i.e., for all $j \in \{1,\dots,2N\}$, $\iota(j)\neq j$ and 
$\iota^2=\textrm{id}$,\\ such that:
   $\iota(j) \neq \zeta^{ \pm 1}(j), \forall j \in \{1,\dots,2N\}$.\\
      This implies that $N>1$ and, to avoid special cases, we assume for the rest of the paper, that $N \geq 4$.
   \item From the permutations $\zeta$ and $\iota$ we define: $\gamma:=\zeta^{-1} \iota$  and  $\delta:=\zeta\;  \iota $.
\end{itemize}
\noindent Geometrically $\zeta$ is the permutation that realizes the adjacency permutation of the intervals 
$\{ I_{1}, \dots, I_{{2N}} \}$ along a given  positive (clockwise) orientation  of $S^1$. By convention $I_{{\zeta(j)}}$ is the interval that is adjacent to $I_{j}$ in the positive direction.\\
The interval $I_{{\iota(j)}}$ is an interval that is not $I_{j}$ and is not adjacent to $I_{j}$. The two intervals 
$I_{{\gamma (j)}}$ and $I_{{\delta (j)}}$ are the intervals adjacent to $I{_{\iota(j)}}$ (see Figure \ref{SE}).

\noindent From now on we assume that all the cycles of $\gamma$ (and $\delta$ see Lemma \ref{conj} \textcolor{black}{below)}, in its cycle decomposition, have   even length  and greater or equal to $4$, i.e., \textcolor{black}{if $\ell[j]$ denotes the length of the $\delta$ cycle containing $j$ then:}
\begin{eqnarray}\label{even-cycles}
\ell[j] \textrm{ is even and }  k(j)=\ell[j]/2  \geq 2 \textrm{, for all } j \in \{1, \dots, 2N\}.
\end{eqnarray}

 Using the permutations above, the map $\Phi : S^1 \rightarrow S^1$ satisfies the following set of conditions:
 the {\it Strong Expansivity} condition if:\\
\noindent {\bf(SE)}\centerline{$ \forall j \in \{1, \dots, 2N\}$, $ \Phi(I_{j}) \bigcap  I_{k} = \emptyset \Leftrightarrow  k = \iota (j)$, (see Figure \ref{SE}).}

\vspace{5pt}
\noindent This condition has some immediate consequences:\\
(I) $ \Phi(I_j) \bigcap  I_k = I_k $, $\forall k \neq \iota (j), \gamma(j), \delta (j)$,\\
(II) The map $\Phi$ has a fixed point in the interior of each $I_j$.\\
This is immediate from the definition of $\iota$ and (I), since 
$I_j \subset \Phi(I_j)$.\\
(III) The map is surjective, non-injective and each point $z \in S^1$ has $2N-1$ or $2N-2$ pre-images.

\vspace{2pt}
  To fix the notations we write each interval  $I_j := [z_j, z_{\zeta(j)} )$, the points 
$z_j \in S^1$ are called the {\it cutting points} of $\Phi$. The map $\Phi$ is not continuous at each $z_j$.
\begin{figure}[h]\label{SE}
\begin{center}
\resizebox{0.4\textwidth}{!}{
\begin{tikzpicture}[scale=0.57]
\draw [ black] circle [radius=2.8];
\draw [ black] circle [radius=4.8];
 \draw[line width=1.2mm, gray]
  (-.9,2.6)
  arc[start angle=110,end angle=66,radius=2.8]
  node[midway]{{$ $}};
\draw[line width=1.2mm, gray]
  (-2.5,-4.2)
  arc[start angle=240,end angle=-80,radius=4.8]
  node[midway]{{$$}};

 \draw[line width=1.5mm, white]
  (-2.5,-4.2)
  arc[start angle=240,end angle=280,radius=4.8]
  node[midway]{{$$}};

 \draw[dashed,gray,very thin,thick,-] (-2.45,-4) -- (-1.55,-2.3);
 
 \draw [decorate,color=black] (-1.55,-4.8)
   node[left] {\begin{footnotesize}$\Phi_j({z_{_j}})$\end{footnotesize}};

\draw[dashed,gray,very thin,thick,-] (.75,-4.75) -- (0.37,-2.7);

\draw [decorate,color=black] (2.5,-5.35)
   node[left] {\begin{footnotesize}$\Phi_j({z_{_{\zeta(j)}}})$\end{footnotesize}};
   
\draw [decorate,color=black] (-.5,2.65)
   node[left] {$\boldsymbol{\cdot}$};
   
   \draw[dashed,gray,very thin,thick,-] (-.9,2.65) -- (-1.6,4.5);
   
   \draw [decorate,color=black] (-.3,2.2)
   node[left] {\begin{footnotesize}${z_{_j}}$\end{footnotesize}};
   
    \draw [decorate,color=black] (.6,3.3)
   node[left] {\begin{footnotesize}$I_{_j}$\end{footnotesize}};
   
   \draw [decorate,color=black] (1.5,2.55)
   node[left] {$\boldsymbol{\cdot}$};
   
    \draw[dashed,gray,very thin,thick,-] (1.17,2.55) -- (2,4.2);

    \draw [decorate,color=black] (1.9,2.2)
   node[left] {\begin{footnotesize}$z_{_{\zeta(j)}}$\end{footnotesize}};

    \draw [decorate,color=black] (-2,-1.5)
   node[left] {$\boldsymbol{\cdot}$};
   
    \draw[dashed,gray,very thin,thick,-] (-2.5,-1.6) -- (-4.1,-2.5);

    \draw [decorate,color=black] (-1.5,-2.2)
   node[left] {\begin{footnotesize}$I_{_{\delta(j)}}$\end{footnotesize}};

    \draw [decorate,color=black] (-1,-2.45)
   node[left] {$\boldsymbol{\cdot}$};
   
   \draw [decorate,color=black] (0.2,-3.2)
   node[left] {\begin{footnotesize}$I_{_{\iota(j)}}$\end{footnotesize}};
   
     \draw [decorate,color=black] (0.5,-2.8)
   node[left] {$\boldsymbol{\cdot}$};
   
     \draw [decorate,color=black] (2.2,-3.1)
   node[left] {\begin{footnotesize}$I_{_{\gamma(j)}}$\end{footnotesize}};

     \draw [decorate,color=black] (2,-2.3)
   node[left] {$\boldsymbol{\cdot}$};
   
    \draw[dashed,gray,very thin,thick,-] (1.75,-2.4) -- (3,-3.65);
    
     \draw [decorate,color=black] (7,0)
   node[left] {\begin{footnotesize}$\Phi_j(I_{_{j}})$\end{footnotesize}};
   \end{tikzpicture}
   }
\caption{Condition (SE)}\label{SE}
\end{center}
\end{figure}
The next condition makes the map $\Phi$ really particular, it is called the {\it Eventual Coincidence} condition:\\
\noindent {\bf(EC)}\hspace{.5cm}{  $\forall j \in \{1, \dots, 2N\}$ and $\forall n \geq k(j) - 1$, where  $k(j) \geq 2$ is given by (\ref{even-cycles}):\\
 \centerline{ $ \Phi^n ( \Phi _{\zeta^{-1}(j) }(z_j) ) = \Phi^n (\Phi_j (z_j)).$ \textcolor{black}{ Let  $\mathscr{z}_j^{k(j)}:=\widetilde{\Phi}^{k(j)-1}(\widetilde{\Phi}_{\zeta^{-1}(j)}(\widetilde{z}_j)) =
 \widetilde{\Phi}^{k(j)-1}(\widetilde{\Phi}_{j}(\widetilde{z}_j)). $}}

\vspace{5pt}
\noindent In other words, each cutting point has a priori two different orbits, one from the positive side and one from the negative side of the point. The condition (EC) says that after $k(j)$ iterates these two orbits coincide.
By (\ref{theMap}) each $I_j := [z_j, z_{\zeta(j)} )$ is half open, the notation 
$\Phi _{\zeta^{-1}(j) }(z_j)$ is well defined by continuity at the left of $z_j$ of 
$\Phi _{\zeta^{-1}(j) }$.

The next set of conditions on the map gives some control on the first $k(j)-1$ iterates of the cutting points $z_j$, namely: 
 For all $ j \in \{1,\dots, 2N \}$ and all $ 0\leq m \leq  k(j) - 2$:\\
{\bf(E+)}  \centerline{$ \Phi^m ( \Phi_j (z_j))  \in I_{_{\delta^{m+1} (j)}} $, }\\
{\bf(E-)}  \centerline{$ \Phi^m ( \Phi_{\zeta^{-1}(j) }(z_j))  \in I_{_{\gamma^{m+1} (\zeta^{-1}(j)) }}$. }

\vspace{5pt}
\noindent   These two conditions are interpreted as follows:\\
   Consider (E+), for $m=0$  this is condition (SE) since 
  $\Phi_j (z_j)  \in I_{\delta (j)} $, see Figure \ref{SE}. Then $\Phi_j (z_j)$ is near the cutting point $z_{\delta (j)} $ in $I_{\delta (j)} $, since  $ \Phi ( \Phi_j (z_j))  \in I_{\delta^{2} (j)}$ (by $m=1$) and 
  $I_{\delta^{2} (j)}$ is the interval containing $\Phi_{\delta(j )}(z_{\delta (j)})$ (by $m=0$ for $z_{\delta (j)}$ ) and so on up to $m = k(j) - 2$.  As observed above the conditions (E+) and (E-) imply condition (SE).

 The last condition quantifies the expansivity property of the map, it is called the {\it Constant Slope} condition:
 
\noindent {\bf(CS-$\lambda$)} $\Phi$ is topologically conjugate, \textcolor{black}{by $g \in \textrm{Homeo}^+(S^1)$},  to a piecewise affine map $\widetilde{\Phi}$ with constant slope $\lambda > 1$. \textcolor{black}{ Most of the time the constant $\lambda $ will be implicit and removed from the notations.}\\
(II') In complement to (II): $\Phi$ has a unique expanding fixed point on each $I_j$.\\
The following result is a combination of several statements in \cite{AJLM2} (see Theorem A and Lemma 5.1). It implies that the set of piecewise homeomorphisms of the circle satisfying the conditions (SE), (EC), (E-), (E+), (CS-$\lambda$) is non-empty. 

\begin{Lemma}\label{condSatisfied}
Let $S$ be a closed compact orientable surface of negative Euler characteristic, and let $P = \langle X; R \rangle$ be a geometric presentation
of the fundamental group $G = \pi_1 (S)$ so that all the relations in $R$ have even length, for instance the classical presentation. Then, in the Bowen-Series-Like family of maps $\Phi_{P,\Theta}$ defined in \cite{AJLM2}, there is an open set of parameters $\Theta$  so that the corresponding maps satisfy the conditions $\rm (SE), (EC), (E\textrm{-}), (E+),(CS\textrm{-}\lambda)$. The parameters $\Theta$ belong to a product of $2N$ intervals, where 2N is the number of generators
\textcolor{black}{ and  
$\lambda$ is an algebraic integer.}
\end{Lemma}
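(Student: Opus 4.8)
The plan is to extract from the papers cited as [Los] and [AJLM2] the concrete description of the maps $\Phi_P$ and $\Phi_{P,\Theta}$ and then verify, one by one, that the five conditions hold, at least for an open set of parameters $\Theta$. First I would recall the setup from [Los]: a geometric presentation $P=\langle X,R\rangle$ of a surface group has a planar Cayley $2$-complex, and the generators $X$ together with the relators $R$ determine a partition of the Gromov boundary $S^1$ into intervals indexed by the generators (there are $2N$ of them, counting each generator and its inverse), with an adjacency cyclic permutation $\zeta$ coming from the circular order of the generators around a vertex, and an involution $\iota$ recording which generator is the inverse of which. The map $\Phi_P$ sends the interval $I_j$ associated to a generator $x_j$ to the complement of the interval associated to $x_j^{-1}=x_{\iota(j)}$, which is precisely the content of (SE). The permutations $\gamma=\zeta^{-1}\iota$ and $\delta=\zeta\iota$ and the half-lengths $k(j)=\ell[j]/2$ then get their combinatorial meaning from the cyclic words of the relators in $R$: the hypothesis that all relators have even length is exactly what forces the cycles of $\gamma$ and $\delta$ to have even length, giving (\ref{even-cycles}).

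Next I would address (EC), (E+), (E-). The key input is the combinatorial structure of the Bowen–Series-type map in [AJLM2]: the forward orbits of the two one-sided limits of a cutting point $z_j$ are governed by reading the relator through $z_j$ letter by letter, and the two sides correspond to the two relators (or the two ways of traversing one relator) meeting at that point. Since a relator $r$ of even length $\ell[j]$ can be split as $r = u v$ with $u$ and $v$ of equal length $k(j)$ and $v = u^{-1}$ read backwards (this is where the even-length hypothesis is used a second time, in a sharper form), the left orbit after following $u$ and the right orbit after following $v^{-1}$ land at the same point — this is (EC), with coincidence exactly at iterate $k(j)-1$. The intermediate positions of these orbits are tracked by the powers $\delta^{m+1}(j)$ and $\gamma^{m+1}(\zeta^{-1}(j))$ of the permutations, which is the assertion of (E+) and (E-); I would cite the relevant lemma of [AJLM2] (Lemma 5.1 as indicated) for the statement that, for the unperturbed map $\Phi_P$, the image $\Phi^m(\Phi_j(z_j))$ lies in the interval $I_{\delta^{m+1}(j)}$, and similarly on the negative side.

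For (CS), I would invoke the fact — established in [Los] and recalled in [AJLM2] — that $\Phi_P$ (and each $\Phi_{P,\Theta}$) is a Markov expanding map, hence topologically conjugate to a piecewise-linear model with constant slope $\lambda>1$ equal to the exponential of its topological entropy; this is a standard consequence of the Markov property together with transitivity (Parry's theorem on constant-slope models). Finally, for the open-set-of-parameters claim, I would note that the conditions (SE), (EC), (E+), (E-) are stated in terms of which partition interval contains a point, i.e. they are open conditions on the finitely many relevant orbit points, and (CS) is stable as long as the Markov property persists; since $\Phi_P$ corresponds to a specific parameter value $\Theta_0$ at which all conditions hold with the relevant points in the interiors of their intervals, they continue to hold on a neighbourhood of $\Theta_0$ in the parameter space (a product of $2N$ intervals).

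The main obstacle I anticipate is not any single deep step but rather the bookkeeping required to match the combinatorial definitions of $\gamma$, $\delta$, $k(j)$ and the conditions (E$\pm$) precisely against the constructions in [Los] and [AJLM2]: one must be careful that the indexing conventions for the partition intervals, the orientation of $S^1$, the choice of one-sided limits, and the way relators are read all line up, and in particular that "all relators have even length" is correctly translated into the statement that every cycle of $\gamma$ (and of $\delta$) has even length at least $4$ with $k(j)\ge 2$. Since the lemma is explicitly described as "a combination of several statements in [AJLM2]," the proof is essentially a citation-and-translation argument, and I would keep it short, pointing to Theorem A and Lemma 5.1 of [AJLM2] for the substantive content and only spelling out the correspondence of notation and the openness of the parameter conditions.
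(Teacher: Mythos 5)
Your proposal matches the paper's own treatment: the paper gives no independent proof of Lemma \ref{condSatisfied}, presenting it as a combination of Theorem A and Lemma 5.1 of \cite{AJLM2}, and Remark \ref{nonempty} records exactly the points you sketch (for $\Phi_P$ the conditions are a direct check, with (CS) obtained from the Markov property via a Perron--Frobenius argument, and for the family $\Phi_{P,\Theta}$ the constant-slope condition is a statement of the main theorem of \cite{AJLM2}), so yours is the same citation-and-translation argument. One caution to fold in: for general parameters $\Theta$ the maps $\Phi_{P,\Theta}$ are not Markov, so (CS) on the open set of parameters should be attributed to Theorem A of \cite{AJLM2} rather than to persistence of the Markov property (and the relator splitting $r=uv$ gives $v=u^{-1}$ only as group elements, not letter-by-letter), but neither point changes the structure or validity of your approach.
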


\begin{Rm}\label{nonempty}
By the previous result, the set of piecewise orientation preserving homeomorphisms satisfying the conditions $\rm (SE), (EC), (E\textrm{-}), (E+),(CS\textrm{-}\lambda)$ is non-empty and there is a family of such maps for each orientable surface and each geometric presentation with even length relations. For the maps $\Phi_{P}$ constructed in \cite{Los}, the proof of these properties is a direct check. In particular the constant slope condition 
$\rm (CS\textrm{-}\lambda)$ is obtained using the Markov property satisfied by $\Phi_{P}$ via a standard Perron-Frobenius argument. 
In the more general cases of the family 
$\Phi_{P,\Theta}$ defined in \cite{AJLM2}, the constant slope condition is one statement of the main theorem of that paper. \textcolor{black}{ In these cases the number $\lambda$ is an algebraic integer and $\log (\lambda)$ is the volume entropy of the presentation.
}

 If a presentation $P$ of a surface group $G$ is geometric and has some relations with odd length then the constructions in \cite{Los} and  \cite{AJLM2} apply but not those in \cite{B} and \cite{BS}. For these presentations, some conditions similar but different  from $\rm(E+)$ and $\rm(E-)$ are satisfied. When the presentation $P$ has some relations of length 3, a condition weaker than $\rm(SE)$ is satisfied (see Lemma 5.2 in \cite{Los}). In all these cases a condition $\rm (CS\textrm{-}\lambda)$ is satisfied, for particular $\lambda$, and a condition similar to $\rm (EC)$ is satisfied for some integers $k$. The condition $\rm (EC)$ is crucial in this paper and is not satisfied by all possible maps constructed via the general 
 Bowen-Series-Like strategy as in \cite{AJLM2}. In particular it is not satisfied by the original map in \cite{BS}.
 The set of conditions considered in this paper is thus non-optimal to obtain a complete answer to our general question. 
\end{Rm}

 \subsection{ Elementary properties of the permutations $\delta$ and $\gamma$}\label{combinatorics}
 
\noindent The combinatorics of our class of maps is mainly encoded via the permutations $\delta$ and $\gamma$. For the rest of the work we need to understand, in particular, the cycle structure of these permutations. These cycles will appear everywhere. 
In this paragraph we point out some elementary  properties of these  permutations.
\begin{Lemma}\label{conj}
 The permutations $\gamma$ and $\delta$ are conjugate, more precisely 
$\gamma=\iota^{-1} \delta^{-1} \iota.$
\end{Lemma}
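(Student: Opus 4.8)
The plan is to prove the identity by a one-line computation directly from the definitions, using only that $\iota$ is an involution; no property of $\zeta$ beyond being a bijection is needed. Recall that $\gamma := \zeta^{-1}\iota$, that $\delta := \zeta\,\iota$, and that $\iota^2=\mathrm{id}$, so in particular $\iota^{-1}=\iota$.

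First I would invert $\delta$: since $\delta=\zeta\,\iota$ we get $\delta^{-1}=\iota^{-1}\zeta^{-1}=\iota\,\zeta^{-1}$. Conjugating by $\iota$ then gives
\[
\iota^{-1}\,\delta^{-1}\,\iota \;=\; \iota\,(\iota\,\zeta^{-1})\,\iota \;=\; (\iota^{2})\,\zeta^{-1}\,\iota \;=\; \zeta^{-1}\,\iota \;=\; \gamma,
\]
which is exactly the asserted formula. The only point requiring care is the composition convention (reading $\zeta^{-1}\iota$ as ``apply $\iota$, then $\zeta^{-1}$''), together with the identity $\iota^{-1}=\iota$; once these are fixed the manipulation above is forced.

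Finally I would record the consequence that matters for the rest of the paper: the formula exhibits $\gamma$ as the conjugate of $\delta^{-1}$ by the permutation $\iota$, so $\gamma$ and $\delta^{-1}$ have the same cycle type, and since any permutation shares its cycle type with its inverse, $\gamma$ and $\delta$ have cycle decompositions with the same multiset of cycle lengths. In particular the hypothesis \eqref{even-cycles} imposed on the cycles of $\gamma$ holds automatically for $\delta$, which justifies the parenthetical remark made there. I do not expect any genuine obstacle here; the statement is essentially a bookkeeping lemma that isolates a symmetry between $\gamma$ and $\delta$ to be used repeatedly later.
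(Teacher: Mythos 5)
Your computation is correct and is essentially the same as the paper's: both invert $\delta=\zeta\,\iota$ to get $\delta^{-1}=\iota\,\zeta^{-1}$, conjugate by $\iota$ using $\iota^{2}=\mathrm{id}$, and read off $\zeta^{-1}\iota=\gamma$, concluding that $\gamma$ and $\delta$ are conjugate since $\delta$ and $\delta^{-1}$ are. The added remark on cycle types matches the paper's subsequent use of the lemma.
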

\begin{proof}
Since $\delta$ and $\delta^{-1}$ are conjugate and 
$ \iota^{-1} \delta^{-1} \iota=\iota(\iota  \zeta^{-1})\iota= \zeta^{-1} \iota=\gamma,$
 then $\delta$ and $\gamma$ are conjugate.
\end{proof}

 To simplify the notations we will sometimes use:
 $\overline{j}:=\iota(j)$.

\begin{Rm}\label{lj}  The two permutations $\gamma$ and $\delta$ have the same cycle structure. We obtain 
  $\gamma$ from $\delta^{-1}$ by changing $j$ to $\overline{j}$ on its cycles. 
  The cycle of $\gamma$ that contains $\overline{j}$ and the cycle of $\delta$ that contains $j$ have the same length. We denote this number by $  \ell[j]$.
\end{Rm}
 \begin{Lemma}\label{same-cycle}
The integers $\zeta^{-1}(j)$,  $\overline{j}$ and $\overline{\delta^{m-1}(j)}$ belong to the same cycle of $\gamma$ of length 
$\ell[j]$, for all $j \in \{ 1,\dots, 2N \}$ and $0<m \leq \ell[j]$. 
\end{Lemma}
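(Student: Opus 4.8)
The plan is to unwind the definitions of $\gamma$ and $\delta$ in terms of $\zeta$ and $\iota$, and then simply track where the relevant indices go under iteration of $\gamma$. Recall $\gamma = \zeta^{-1}\iota$, so $\gamma(\overline{j}) = \zeta^{-1}\iota(\overline{j}) = \zeta^{-1}\iota^2(j) = \zeta^{-1}(j)$, using that $\iota$ is an involution. This already shows that $\overline{j}$ and $\zeta^{-1}(j)$ are consecutive elements in a cycle of $\gamma$, hence lie in the same cycle; and by Lemma \ref{conj} (or directly from the cycle structure of $\delta$, whose cycle through $j$ has length $\ell[j]$), the cycle of $\gamma$ through $\overline{j}$ has length $\ell[j]$. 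So the only remaining point is to locate $\overline{\delta^{m-1}(j)}$ in that same cycle.

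For that I would prove by induction on $m$ the identity $\gamma^{m}(\overline{j}) = \overline{\delta^{m-1}(j)}$ for $0 < m \le \ell[j]$, or equivalently $\iota\gamma^{m}\iota(j) = \delta^{m-1}(j)$. The base case $m=1$ is exactly $\gamma(\overline j)=\zeta^{-1}(j)=\zeta^{-1}\iota\iota(j)=\delta^{-1}\iota(j)$... let me instead use Lemma \ref{conj} cleanly: since $\gamma = \iota^{-1}\delta^{-1}\iota = \iota\delta^{-1}\iota$, we get $\gamma^{m} = \iota\delta^{-m}\iota$ for every $m$. Hence $\gamma^{m}(\overline j) = \iota\delta^{-m}\iota(\overline j) = \iota\delta^{-m}(j)$. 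This is a clean closed form: the $\gamma$-orbit of $\overline j$ is exactly $\{\iota\delta^{-m}(j) : m \ge 0\} = \{\overline{\delta^{-m}(j)} : m\ge 0\}$. Since $\delta^{-m}(j)$ ranges over the whole $\delta$-cycle of $j$ as $m$ varies over $\{0,1,\dots,\ell[j]-1\}$, and in particular $\delta^{-m}(j) = \delta^{\ell[j]-m}(j)$, every element of the form $\overline{\delta^{m-1}(j)}$ with $0 < m \le \ell[j]$ equals $\gamma^{\ell[j]-m+1}(\overline j)$ and so lies in the cycle of $\gamma$ through $\overline j$. The same computation shows this cycle has length exactly $\ell[j]$, matching the $\delta$-cycle of $j$.

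Concretely the write-up would be: (1) establish $\gamma^{m} = \iota\delta^{-m}\iota$ for all integers $m$, immediately from Lemma \ref{conj} since conjugation commutes with taking powers and $\iota^{-1}=\iota$; (2) deduce $\gamma^{m}(\overline j) = \overline{\delta^{-m}(j)}$; (3) observe $\gamma(\overline j) = \zeta^{-1}(j)$ by the direct computation above, placing $\zeta^{-1}(j)$ in the orbit; (4) note that as $m$ runs over a full period $\ell[j]$ of the $\delta$-cycle of $j$, the indices $\delta^{-m}(j)$ coincide with the indices $\delta^{m-1}(j)$ up to reindexing, so all the $\overline{\delta^{m-1}(j)}$, $0<m\le\ell[j]$, appear; (5) conclude all the listed integers lie in one common $\gamma$-cycle, of length $\ell[j]$.

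I do not expect a genuine obstacle here — this is a bookkeeping lemma. The only mild subtlety is matching the exponent conventions: the statement uses $\delta^{m-1}(j)$ with $0 < m \le \ell[j]$ (i.e. exponents $0,1,\dots,\ell[j]-1$), whereas the natural computation produces $\delta^{-m}(j)$ (exponents $0,-1,\dots$); one must invoke periodicity $\delta^{\ell[j]} = \mathrm{id}$ on that cycle to identify the two index sets, and be slightly careful that $\delta$ restricted to the cycle containing $j$ indeed has order exactly $\ell[j]$ (true by definition of cycle length). Everything else is a one-line substitution.
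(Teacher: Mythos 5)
Your proposal is correct and follows essentially the same route as the paper: both compute $\gamma(\overline{j})=\zeta^{-1}\iota(\iota(j))=\zeta^{-1}(j)$ directly and then use the conjugacy of Lemma \ref{conj} (in the form $\gamma^{\pm m}=\iota\,\delta^{\mp m}\,\iota$) to express $\overline{\delta^{m-1}(j)}$ as a power of $\gamma$ applied to $\overline{j}$ (the paper writes it as $\gamma^{-m}(\zeta^{-1}(j))$, you convert to a positive exponent via periodicity, which is an immaterial bookkeeping difference). No gap.
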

\begin{proof} From the definitions of $\iota,\;\gamma,\; \delta$ and Lemma \ref{conj},
we have:
$ \gamma(\overline{j})=\zeta^{-1}\iota (\iota (j))=\zeta^{-1}(j)$ and
$ \overline{\delta^{m-1}(j)}=\iota (\delta^{m}\delta^{-1}(j))=(\iota \delta^{m}\iota^{-1})\zeta^{-1}(j)=\gamma^{-m}(\zeta^{-1}(j)).$
\end{proof}

\begin{Lemma} \label{adj}
 If $1\leq m \leq \ell[\overline{j}],$ then
$\zeta (\gamma^m(j))=\overline{\gamma^{m-1}(j)}$. In particular 
 if $\ell[\overline{j}]$ is even and  $k(\overline{j})= \ell[\overline{j}]/2$ then
$\zeta (\overline{\delta^{k(\overline{j})-1}\zeta(j)})=\overline{\gamma^{k(\overline{j})-1}(j)}.$
\end{Lemma}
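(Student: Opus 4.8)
The plan is to reduce the whole statement to the single algebraic identity $\zeta\gamma=\iota$, which is immediate from the definition $\gamma=\zeta^{-1}\iota$ given in \S\ref{TheClassPhi}.

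First I would dispose of the general statement. Applying $\zeta\gamma=\iota$ to the element $\gamma^{m-1}(j)$ gives
\[
\zeta\bigl(\gamma^{m}(j)\bigr)=\zeta\Bigl(\gamma\bigl(\gamma^{m-1}(j)\bigr)\Bigr)=\iota\bigl(\gamma^{m-1}(j)\bigr)=\overline{\gamma^{m-1}(j)},
\]
which is exactly the asserted identity; in fact it holds for every integer $m$, the bound $1\le m\le\ell[\overline{j}]$ being relevant only for the way the formula is invoked elsewhere. Next, for the ``in particular'' part, I would rewrite $\overline{\delta^{k(\overline{j})-1}\zeta(j)}$ as a power of $\gamma$ applied to $j$, reusing the computation already carried out in the proof of Lemma~\ref{same-cycle}, where it is shown (via $\iota\delta\iota=\gamma^{-1}$, i.e.\ Lemma~\ref{conj}) that $\overline{\delta^{m-1}(i)}=\gamma^{-m}\bigl(\zeta^{-1}(i)\bigr)$ for all $i$ and $m$. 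Taking $i=\zeta(j)$ and $m=k(\overline{j})$ yields
\[
\overline{\delta^{k(\overline{j})-1}\zeta(j)}=\gamma^{-k(\overline{j})}\bigl(\zeta^{-1}\zeta(j)\bigr)=\gamma^{-k(\overline{j})}(j).
\]

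The last ingredient is the cycle bookkeeping: by the convention fixing $\ell[\cdot]$ (the paragraph preceding Lemma~\ref{same-cycle}), $\ell[\overline{j}]$ is the length of the $\gamma$-cycle containing $\overline{\overline{j}}=j$, and by hypothesis~(\ref{even-cycles}) this length is even and equals $2k(\overline{j})$. Hence $\gamma^{2k(\overline{j})}(j)=j$, so $\gamma^{-k(\overline{j})}(j)=\gamma^{k(\overline{j})}(j)$. Combining this with the previous display and then applying the general statement with $m=k(\overline{j})$ (legitimate since $1\le k(\overline{j})\le 2k(\overline{j})=\ell[\overline{j}]$) gives
\[
\zeta\bigl(\overline{\delta^{k(\overline{j})-1}\zeta(j)}\bigr)=\zeta\bigl(\gamma^{k(\overline{j})}(j)\bigr)=\overline{\gamma^{k(\overline{j})-1}(j)},
\]
as desired. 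The only point requiring genuine care is this cycle bookkeeping — knowing which element lies in which $\gamma$-cycle and using the identification $\ell[\overline{j}]=2k(\overline{j})$ to turn a negative power of $\gamma$ into a positive one — together with getting the direction of the conjugation $\iota\delta\iota=\gamma^{-1}$ right; everything else is a one-line manipulation of the permutation relations.
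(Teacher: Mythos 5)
Your proposal is correct and follows essentially the same route as the paper: the general identity is the one-line consequence of $\zeta\gamma=\iota$, and the ``in particular'' part is reduced to showing $\overline{\delta^{k(\overline{j})-1}\zeta(j)}=\gamma^{k(\overline{j})}(j)$ via Lemma \ref{conj} together with the cycle-length relation $\ell[\overline{j}]=2k(\overline{j})$ used to turn a negative power into a positive one. The only cosmetic difference is that you reuse the identity $\overline{\delta^{m-1}(i)}=\gamma^{-m}(\zeta^{-1}(i))$ from Lemma \ref{same-cycle} and flip $\gamma^{-k(\overline{j})}$ at $j$, whereas the paper performs the equivalent flip on $\delta^{\pm k(\overline{j})}$ at $\overline{j}$ (leaving that step implicit), so your write-up is, if anything, slightly more explicit about where the evenness hypothesis enters.
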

\begin{proof}
 Notice that
$\zeta (\gamma^m(j))=\zeta \gamma (\gamma^{m-1}(j))=\zeta (\zeta^{-1}\iota)(\gamma^{m-1}(j))=\overline{\gamma^{m-1}(j)}$,
and suppose that  $\ell[\overline{j}]$ is even and let $k(\overline{j})= \ell[\overline{j}]/2$. 
From the first part of this Lemma, to obtain   $\zeta (\overline{\delta^{k(\overline{j})-1}\zeta(j)})=\overline{\gamma^{k(\overline{j})-1}(j)}$,  it is enough  to show that 
$\overline{\delta^{k(\overline{j})-1}\zeta(j)}=\gamma^{k(\overline{j})}(j)$. In fact, by Lemma \ref{conj} and the definition of $\delta$ we have:
$\gamma^{k(\overline{j})}(j)=\iota^{-1} \delta^{-k(\overline{j})}\iota(j)=\overline{\delta^{k(\overline{j})}\iota(j)}=\overline{\delta^{k(\overline{j})-1}\delta \iota (j)}=\overline{\delta^{k(\overline{j})-1}\zeta(j)}$.
\end{proof}
\begin{Lemma}\label{gamma-delta}
$\gamma(\overline{\delta^m(j)})=\overline{\delta^{m-1}(j)}$ and  $\delta(\overline{\gamma^m(\zeta^{-1}(j))})=\overline{\gamma^{m-1}(\zeta^{-1}(j))}$,  for $m=1,\dots,\ell[\overline{j}]$.
\end{Lemma}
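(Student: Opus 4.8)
The plan is to reduce both identities to the conjugacy relation of Lemma~\ref{conj}. Recall that $\iota^{-1}=\iota$ since $\iota^2=\mathrm{id}$, so Lemma~\ref{conj} reads $\gamma=\iota\,\delta^{-1}\,\iota$; inverting this relation also gives $\delta=\iota\,\gamma^{-1}\,\iota$. With these two expressions the proof is a one-line telescoping computation in each case.

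For the first identity I would write $\overline{\delta^m(j)}=\iota\bigl(\delta^m(j)\bigr)$ and compute
\[
\gamma\bigl(\overline{\delta^m(j)}\bigr)=\iota\,\delta^{-1}\,\iota\,\iota\bigl(\delta^m(j)\bigr)=\iota\,\delta^{-1}\delta^m(j)=\iota\bigl(\delta^{m-1}(j)\bigr)=\overline{\delta^{m-1}(j)},
\]
where the middle cancellation uses $\iota\iota=\mathrm{id}$. The second identity is the mirror computation with the roles of $\gamma$ and $\delta$ exchanged: starting from $\delta=\iota\,\gamma^{-1}\,\iota$ one gets
\[
\delta\bigl(\overline{\gamma^m(\zeta^{-1}(j))}\bigr)=\iota\,\gamma^{-1}\,\iota\,\iota\bigl(\gamma^m(\zeta^{-1}(j))\bigr)=\iota\bigl(\gamma^{m-1}(\zeta^{-1}(j))\bigr)=\overline{\gamma^{m-1}(\zeta^{-1}(j))}.
\]

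There is essentially no obstacle here: the only points requiring care are the identification $\iota=\iota^{-1}$ and the precise direction of the conjugation in Lemma~\ref{conj}. I would also note that the stated range $1\le m\le \ell[\overline{j}]$ is not used in the algebraic identity, which holds for every integer $m$; it is recorded only because this is the range in which the lemma is invoked later, keeping the relevant indices inside the $\gamma$- and $\delta$-cycles (cf.\ Lemma~\ref{same-cycle}).
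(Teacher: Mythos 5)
Your proof is correct, and it is essentially the same argument as the paper's: a one-line computation in the symmetric group using $\iota^2=\mathrm{id}$, the only cosmetic difference being that you route through the conjugacy $\gamma=\iota\,\delta^{-1}\,\iota$ of Lemma~\ref{conj} (and its inverse form $\delta=\iota\,\gamma^{-1}\,\iota$), whereas the paper unwinds the definitions $\gamma=\zeta^{-1}\iota$, $\delta=\zeta\iota$ directly. Your remark that the identity holds for all $m$ and that the stated range merely reflects later use within the cycles is also accurate.
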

\begin{proof}
In fact, by Lemma \ref{adj} and  $\iota,\gamma, \delta$, we have:
$\gamma(\overline{\delta^m(j)})=\zeta^{-1}\iota(\iota \delta^m(j))=\zeta^{-1}\delta(\delta^{m-1}(j))
=\zeta^{-1}\zeta(\overline{\delta^{m-1}(j)})=\overline{\delta^{m-1}(j)},$
$\textrm{and }\;\delta(\overline{\gamma^m(\zeta^{-1}(j))})=\zeta \iota (\iota \gamma^{m}(\zeta^{-1}(j)))=
\zeta \gamma (\gamma^{m-1}(\zeta^{-1}(j)))=\overline{\gamma^{m-1}(\zeta^{-1}(j)))}.$
\end{proof}

\begin{Lemma}\label{alpha-beta}  If $\ell[{j}]$ is even and  $k({j})= \ell[{j}]/2$ then the relations $\alpha(j):= \gamma^{k(j)-1}(\zeta^{-1}(j))$ and  $\beta(j):= \delta^{k(\zeta(j))-1}(\zeta(j))$ are permutations on $\{1,\dots,2N\}$, satisfying \textcolor{black}{$\beta=\iota^{-1}\alpha \iota$ and $\beta=\alpha^{-1}$. }
\end{Lemma}
  \begin{proof}
  It is enough to show that both are injective. In fact,\\ 
$
\small
{ \alpha(i)=\alpha(j) \iff \gamma^{k(i)-1}(\zeta^{-1}(i))=\gamma^{k(j)-1}(\zeta^{-1}(j)) \iff \zeta^{-1}(i)=\gamma^{k(j)-k(i)}(\zeta^{-1}(j))},
$\\
 \normalsize
 then both, $\zeta^{-1}(i)$ and $\zeta^{-1}(j)$, are in the same cycle of $\gamma$. Equivalently, by Lemma \ref{conj},   $\delta^{-1}(i)$ and $\delta^{-1}(j)$ are in the same cycle of $\delta$. But this equivalence means that $i$ and $j$ are in the same cycle of $\delta$ and therefore $k(i)=k(j)$. Using this in the last equality above, we obtain $\zeta^{-1}(i)=\zeta^{-1}(j)$ and $i=j$.
 Analogously we show that $\beta$ is injective.\\
\noindent \textcolor{black}{Notice that: \\
$\iota^{-1}\alpha \iota(j)=\iota^{-1}\gamma^{k(\overline{j})-1}(\zeta^{-1}(\overline{j}))=\iota^{-1}\gamma^{k(\overline{j})}({j})=\iota^{-1}(\iota \delta^{k(\overline{j})} \iota)(j))=\delta^{k(\overline{j})-1}\zeta(j)=\beta(j),$ where the last three equalities come from Lemma \ref{conj},  $\iota=\iota^{-1}$ and $k(\overline{j})=k(\zeta(j))$.\\
Finally, to show that $\beta=\alpha^{-1}$, since  $\beta=\iota^{-1}\alpha \iota$, it is equivalent to show that   $(\alpha \iota)^2(j)=j$, for $j\in \{1, \dots, 2N\}$. In fact,  notice that $\alpha \iota (j)=\alpha(\overline{j})=\gamma^{k(\overline{j})-1}(\zeta^{-1}\iota(j))=\gamma^{k(\overline{j})}(j)$, Then $(\alpha \iota)^2(j)=\gamma^{2k(\overline{j})}(j)=j$, where the last equality comes from Remark \ref{lj}.}
  \end{proof}
\section{Construction of a group from the map $\Phi$}\label{g-map}
In this section we construct a group from any map 
$\Phi $ in the class defined in \S \ref{TheClassPhi}, as announced in the title. The group is a subgroup of 
${\rm Homeo}^+ (S^1)$ obtained in several steps. The first goal is to construct a finite set of homeomorphisms, as potential generators of the group. The final step is to verify that this collection satisfies some relations in 
${\rm Homeo}^+ (S^1)$. It is notoriously difficult to check a relation, i.e., an equality in a group. Here the guidelines are the specific properties of the map $\Phi$, in particular the conditions (EC), (E+), (E-). 

The construction of the potential generating homeomorphisms has several steps. It starts by a ``toy model" construction and then a family of diffeomorphisms with integer parameters. The toy model construction from $\Phi$ is a simple connect-the-dots operation. 
The family of diffeomorphisms is an improvement of this construction, it has the property that some particular compositions of the diffeomorphisms, given by the map, are equal ``locally" i.e., on some intervals. These local equalities are obtained from the dynamical conditions (EC), (E+), (E-).
The final step is a limit process: when the parameters grow, the local equalities become global i.e., the intervals on which an equality occurs grow to cover all $S^1$ at the limit.

\subsection{A toy model construction of diffeomorphisms from $ \Phi$}
By condition (CS-$\lambda$) we replace our initial piecewise homeomorphism $\Phi$ by 
the piecewise affine map $\widetilde{\Phi}$ with constant slope $\lambda > 1$, where 
$ \widetilde{\Phi} = g^{-1} \circ \Phi \circ g$, for $g \in \textrm{Homeo}^+ (S^1) $.
The piecewise affine map $ \widetilde{\Phi}$ is defined by a partition: 
$S^1 = \bigcup_{j = 1}^{2N} \widetilde{I}_j$,
where: \\
 \centerline{$ \widetilde{I}_j =[\tilde{z}_j, \tilde{z}_{\zeta(j)}):= g^{-1} ( I_j) \textrm{ and } \widetilde{\Phi}_{j}:=\widetilde{\Phi}_{|_{\widetilde{I}_j}}, \textrm{ for  } j \in \{ 1, \dots , 2N\}.$}

\begin{Lemma}\label{prop-gen}
Assume  $\Phi : S^1 \rightarrow S^1$ is a piecewise homeomorphism of $S^1$ satisfying the conditions $\rm (SE)$ and {\rm (CS-$\lambda$)} with slope $\lambda > 1$. For each $j \in \{1, \dots, 2N\}$, using the notations above, there is a class of diffeomorphisms 
$ [ f_{j}] \subset {\rm Diff}^+ (S^1) $ such that:
 \vspace*{-3pt}
\begin{itemize}[noitemsep, leftmargin=17pt]
\item[$(1)$] For each \textcolor{black}{ $f \in [ f_{j}]$,  $f_{|\widetilde{I_j}} =  \widetilde{\Phi}_{j}$ and 
$f_{| \widetilde{\Phi}_{{\iota(j)}} (\widetilde{I}_{\iota(j)})} =
( \widetilde{\Phi}_{\iota(j)})^{- 1}_{| \widetilde{\Phi}_{{\iota(j)}} (\widetilde{I}_{\iota(j)})} $,}
 \item[$(2)$] $f$ is a hyperbolic M\"obius like diffeomorphism i.e., with one attractive and one repelling fixed point and one pair of neutral points i.e., with derivative one.
\item[$(3)$]  \textcolor{black}{ If $f \in [ f_{j}]$ then $f^{-1} \in [ f_{\iota(j)}] $.}
\end{itemize}
\end{Lemma}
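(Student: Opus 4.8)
The statement is essentially a "connect-the-dots" construction, so I would build each $f_j$ by prescribing it on two arcs where it is forced, and then interpolating freely on the complement. Recall that condition (SE) says $\widetilde\Phi(\widetilde I_j)$ misses exactly $\widetilde I_{\iota(j)}$; equivalently $\widetilde\Phi_{\iota(j)}(\widetilde I_{\iota(j)})$ is the unique partition image that does not cover $\widetilde I_j$, and in fact $\widetilde\Phi_{\iota(j)}(\widetilde I_{\iota(j)})$ and $\widetilde I_j$ together cover $S^1$ with overlaps only near the endpoints (this is the geometric content of consequence (I) together with the fact that $\iota(j)$ is "opposite" to $j$). So on $\widetilde I_j$ I set $f_j = \widetilde\Phi_j$, an affine expansion of slope $\lambda$; on $\widetilde\Phi_{\iota(j)}(\widetilde I_{\iota(j)})$ I set $f_j = (\widetilde\Phi_{\iota(j)})^{-1}$, an affine contraction of slope $\lambda^{-1}$. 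The first step is to check these two prescriptions are compatible: the two arcs $\widetilde I_j$ and $\widetilde\Phi_{\iota(j)}(\widetilde I_{\iota(j)})$ may overlap, and I must verify that on the overlap the two affine formulas agree — this is exactly the assertion that $\widetilde\Phi_{\iota(j)}$ is the inverse branch compatible with $\widetilde\Phi_j$, i.e. that $\widetilde\Phi_j$ and $\widetilde\Phi_{\iota(j)}^{-1}$ are restrictions of a single Möbius-like map. Here I would actually \emph{choose} the conjugating homeomorphism $g$, or rather re-coordinatize, so that the affine pieces $\widetilde\Phi_j$ on $\widetilde I_j$ and on the arc $\widetilde\Phi_{\iota(j)}(\widetilde I_{\iota(j)})$ literally glue; the constant-slope hypothesis makes the two slopes $\lambda$ and $\lambda^{-1}$, so a genuine hyperbolic Möbius map with multiplier $\lambda$ has exactly this behavior on a neighborhood of its repelling/attracting fixed points, and one picks $f_j$ on these two arcs to be (a $C^1$ reparametrization of) such a Möbius map.

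\textbf{Building the class $[f_j]$.} Once the map is pinned down on $\widetilde I_j \cup \widetilde\Phi_{\iota(j)}(\widetilde I_{\iota(j)})$, the complement $S^1 \setminus (\widetilde I_j \cup \widetilde\Phi_{\iota(j)}(\widetilde I_{\iota(j)}))$ is a union of at most two open arcs, and on each of them I must interpolate by an orientation-preserving $C^1$ diffeomorphism matching the prescribed values and the prescribed one-sided derivatives at the four endpoints. This is a standard $C^1$-interpolation lemma on an interval: given endpoint values and positive endpoint derivatives, there is a non-empty convex-like family of strictly increasing $C^1$ maps realizing them (e.g. integrate a positive $C^0$ density with prescribed boundary values and prescribed total integral). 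The set of all such interpolations, over the (at most two) gap arcs, is the class $[f_j]$; it is manifestly non-empty, which gives (1). For (2): the two "forced" arcs are neighborhoods of the repelling fixed point (inside $\widetilde I_j$, where the slope is $\lambda>1$) and of the attracting fixed point (inside $\widetilde\Phi_{\iota(j)}(\widetilde I_{\iota(j)})$, where the slope is $\lambda^{-1}<1$) — these fixed points exist by consequence (II) applied to $\Phi$ and to the inverse branch. By shrinking/adjusting I arrange that on each gap arc the derivative of $f_j$ crosses the value $1$ exactly once, producing exactly one neutral point per gap, hence one pair of neutral points in total; this is an easy extra constraint to impose on the interpolation, so (2) holds for a sub-class, which I simply take to be $[f_j]$.

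\textbf{The involution relation (3).} For (3) I would first check it on the two forced arcs: by construction $(f_j)_{|\widetilde I_j} = \widetilde\Phi_j$ has image $\widetilde\Phi_j(\widetilde I_j)$, on which $f_{\iota(j)}$ equals $(\widetilde\Phi_{\iota(\iota(j))})^{-1} = (\widetilde\Phi_j)^{-1}$ — \emph{provided} $\widetilde\Phi_j(\widetilde I_j) = \widetilde\Phi_{\iota(j)}(\widetilde I_{\iota(j)})$, which is the key symmetric statement that $j$ and $\iota(j)$ have the "same" branch image; I would extract this from (SE) plus the definition of $\iota$ (both arcs are the complement of the "opposite" small arc). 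Similarly $(f_{\iota(j)})_{|\widetilde I_{\iota(j)}} = \widetilde\Phi_{\iota(j)}$ forces $f_j^{-1}$ to agree with it there. So $f_j^{-1}$ and $f_{\iota(j)}$ agree on $\widetilde I_{\iota(j)} \cup \widetilde\Phi_j(\widetilde I_j)$, i.e. on the two arcs where $f_{\iota(j)}$ is itself forced. On the remaining gaps there is freedom, so the honest way to get (3) is to \emph{define} $f_{\iota(j)}$ (for one representative of each $\iota$-pair $\{j,\iota(j)\}$) by the equation $f_{\iota(j)} := f_j^{-1}$; one then only has to check that $f_j^{-1}$, so defined, is itself a legitimate member of the class described in (1)–(2) for the index $\iota(j)$ — which is immediate since inverting swaps the expanding and contracting forced arcs in exactly the way the definition for index $\iota(j)$ demands, and inverts the Möbius-like/neutral-point structure symmetrically.

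\textbf{Main obstacle.} The only genuinely delicate point is the compatibility/overlap analysis on the forced arcs: verifying that $\widetilde\Phi_j$ on $\widetilde I_j$ and $(\widetilde\Phi_{\iota(j)})^{-1}$ on $\widetilde\Phi_{\iota(j)}(\widetilde I_{\iota(j)})$ are two restrictions of one globally coherent hyperbolic-Möbius-like $C^1$ diffeomorphism — in particular that the arcs $\widetilde I_j$ and $\widetilde\Phi_{\iota(j)}(\widetilde I_{\iota(j)})$ overlap in the "right" way (near a common endpoint, not in a messy interleaved fashion) so that one consistent slope-$\lambda$-to-slope-$\lambda^{-1}$ profile exists. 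This is where the precise combinatorial meaning of $\iota$, $\gamma=\zeta^{-1}\iota$, $\delta=\zeta\iota$ (the intervals adjacent to $I_{\iota(j)}$ are $I_{\gamma(j)}$, $I_{\delta(j)}$) and consequence (I) must be used carefully; everything after that is routine $C^1$ interpolation.
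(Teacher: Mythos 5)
Your overall architecture (prescribe $f_j$ on two arcs, interpolate $C^1$ on the complement, get one neutral point per gap, and obtain (3) by defining $f_{\iota(j)}:=f_j^{-1}$) is the same as the paper's, but the geometric picture on which you hang the ``delicate'' part is wrong, and this is a genuine error rather than a presentational one. Applying (SE) to the index $\iota(j)$ gives $\Phi(I_{\iota(j)})\cap I_k=\emptyset$ exactly for $k=\iota(\iota(j))=j$, i.e.\ $\widetilde{\Phi}_{\iota(j)}(\widetilde I_{\iota(j)})$ is \emph{disjoint} from $\widetilde I_j$. So the two forced arcs do not overlap, do not cover $S^1$, and there is no compatibility condition whatsoever to check: condition (1) is constraint-free, and the complement of the two forced arcs consists of exactly two nonempty arcs $L_j$ and $R_j$ on which the interpolation (with endpoint derivatives $\lambda$ and $\lambda^{-1}$, taken with monotone derivative to get exactly one neutral point in each) takes place. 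Your ``main obstacle'' --- verifying that $\widetilde\Phi_j$ and $(\widetilde\Phi_{\iota(j)})^{-1}$ glue on an overlap, possibly after re-choosing the conjugacy $g$ of (CS) --- is a phantom problem created by this misreading; note also that your own description is internally inconsistent (if the two arcs covered $S^1$ up to endpoint overlaps, there would be no gap arcs left to interpolate on and no neutral points).

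The same confusion contaminates your verification of (3): the identity $\widetilde\Phi_j(\widetilde I_j)=\widetilde\Phi_{\iota(j)}(\widetilde I_{\iota(j)})$ that you invoke is false (the first image misses exactly $\widetilde I_{\iota(j)}$, the second misses exactly $\widetilde I_j$, and these are distinct arcs since $\iota$ is fixed-point free). Fortunately it is also unnecessary: from (1) for the index $j$ one reads off directly that $f_j^{-1}$ restricted to $\widetilde I_{\iota(j)}$ equals $\widetilde\Phi_{\iota(j)}$ and restricted to $\widetilde\Phi_j(\widetilde I_j)$ equals $(\widetilde\Phi_j)^{-1}=(\widetilde\Phi_{\iota(\iota(j))})^{-1}$, which are precisely the forced conditions defining the class for the index $\iota(j)$; one then declares $f_{\iota(j)}:=f_j^{-1}$, as you do at the end and as the paper does. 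So the conclusion of your argument for (3) is salvageable, but the intermediate claim must be deleted, and the construction of $f_j$ itself must be redone on the correct picture: disjoint forced arcs, two genuine gap intervals $L_j,R_j$, and free monotone-derivative interpolation there.
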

\begin{proof} Since the intervals $I_j$ and $\Phi ( I_{\iota(j)} )$ are disjoint by condition (SE), then    $ \widetilde{I}_j$ and 
 $\widetilde{\Phi}_{{\iota(j)}} (\widetilde{I}_{\iota(j)})$ are disjoint and the condition $(1)$ has no constraints.\\
By condition \textcolor{black}{(CS-$\lambda$)} the slope of $\widetilde{\Phi}$  in  $ \widetilde{I}_j$ and 
$ \widetilde{I}_{\iota(j)}$ is $\lambda$, then  \textcolor{black}{by condition (1), $f_{|\widetilde{I}_{j}}$ is affine of slope 
$\lambda$ and 
$f_{|\widetilde{\Phi}({\widetilde{I}_{\iota(j)}})}$} is affine of slope 
$\lambda^{-1}$. The map \textcolor{black}{$f$} is defined on
$\widetilde{I}_j \bigcup \widetilde{\Phi}_{\iota(j)}( \widetilde{I}_{\iota(j)})$, it remains to define it on the complementary intervals:
 \vspace{-5pt}
\begin{equation}\label{complement}
 S^1 -  ( \widetilde{I}_j \bigcup \widetilde{\Phi}_{\iota(j)}( \widetilde{I}_{\iota(j)})) = 
 L_j \bigcup R_j,
\end{equation}

 \vspace{-5pt}
\noindent where $L_j:=[\widetilde{\Phi}_{\iota (j) }( \tilde{z}_{_{\delta(j)}}) ,\tilde{z}_{_j}]$ and $R_j:=[\tilde{z}_{_{\zeta(j)}}, \widetilde{\Phi}_{\iota (j) }( \tilde{z}_{_{\iota (j) }} ) ]$
(see Figure \ref{connect-dots}).\\
The existence of the diffeomorphism \textcolor{black}{$f$} is a ``differentiable connect-the-dots" construction. The constraints are the images of the extreme points:\\
\textcolor{black}{ \centerline{ $f ( \partial \widetilde{I}_j ) = \partial \widetilde{\Phi}_j  ( \widetilde{I}_j ) \textrm{ and } 
f ( \partial \widetilde{\Phi}_{\iota(j)} (  \widetilde{I}_{\iota(j )} ) ) =  \partial \widetilde{I}_{\iota(j)},$}}
 together with the derivatives at these points which are, respectively 
 $ \lambda$ and $ \lambda^{-1}$.
 \begin{figure}[h]
\begin{center}
 \resizebox{0.8\textwidth}{!}{%
\begin{tikzpicture}[scale=0.50]

\draw [black,line width=0.2mm] circle [radius=3.2];
  \draw [black,line width=0.2mm](11,0) circle [radius=3.2];

   \draw[line width=1.2mm, gray]
  (-1,3)
  arc[start angle=114,end angle=70,radius=2.8]
  node[midway]{{$$}};
   
    \draw[line width=1.2mm, gray](1.8,2.6)
  arc[start angle=55,end angle=-235,radius=3.15]
  node[midway]{{$$}};
  
   \draw[line width=1.2mm, gray](11.25,-3.15)
  arc[start angle=-87.655,end angle=-118,radius=3.14]
  node[midway]{{$$}};

\draw[line width=1.2mm, gray]
  (9,-2.37)
  arc[start angle=230,end angle=-70,radius=3.15]
  node[midway]{{$$}};
   
   
   \draw[black,very thin,thick,-] (1.15,3.25) -- (1,2.8);
    
    \draw [decorate,color=black] (-1.1,-2.85)
   node[left] {$\boldsymbol{\cdot}$};
 \draw [decorate,color=black] (-0.8,-3.2)
   node[left] {\begin{tiny}${\tilde{z}_{_{\delta(j)}}}$\end{tiny}}; 
   
    \draw [decorate,color=black] (0.55,-3.2)
   node[left] {$\boldsymbol{\cdot}$};
    \draw [decorate,color=black] (1.1,-3.6)
   node[left] {\begin{tiny}${\tilde{z}_{_{\iota(j)}}}$\end{tiny}}; 
   
    \draw[black,very thin,thick,-] (-1.1,3.2) -- (-.95,2.8);
\draw [decorate,color=black] (-.7,3.5)
   node[left] {\begin{tiny}${\tilde{z}_{_j}}$\end{tiny}};
   
   \draw [decorate,color=black] (-.6,2.4)
   node[left] {\begin{footnotesize}${L_j}$\end{footnotesize}};
   
   \draw [decorate,color=black] (1.8,2.4)
   node[left] {\begin{footnotesize}${R_j}$\end{footnotesize}};
   
    \draw [decorate,color=black] (1.9,3.6)
   node[left] {\begin{tiny}$\tilde{z}_{_{\zeta(j)}}$\end{tiny}};
   
   \draw[black,very thin,thick,-] (-1.95,2.8) -- (-1.7,2.4);
   \draw [decorate,color=black] (-1.66,2.9)
   node[left] {\begin{tiny}${\widetilde{\Phi}_{\iota(j)}(\tilde{z}_{_{\delta(j)}}})$\end{tiny}};
   
    \draw[black,very thin,thick,-] (1.9,2.8) -- (1.7,2.4);
\draw [decorate,color=black] (4.6,2.9)
   node[left] {\begin{tiny}${\widetilde{\Phi}_{\iota(j)}(\tilde{z}_{_{\iota(j)}}})$\end{tiny}};


  \draw [decorate,color=black] (10,2.9)

   node[left] {$\boldsymbol{\cdot}$};
   \draw [decorate,color=black] (10,3.4)
   node[left] {\begin{tiny}${\tilde{z}_{_j}}$\end{tiny}};
   \draw [decorate,color=black] (12.3,3.05)
   node[left] {$\boldsymbol{\cdot}$};
    \draw [decorate,color=black] (12.7,3.6)
   node[left] {\begin{tiny}$\tilde{z}_{_{\zeta(j)}}$\end{tiny}};

    \draw[black,very thin,thick,-] (8.8,-2.65) -- (9.1,-2.2);

 \draw [decorate,color=black] (9,-2.5)
   node[left] {\begin{tiny}${\widetilde{\Phi}_{j}(\tilde{z}_{_j}})$\end{tiny}}; 
   
     \draw[black,very thin,thick,-] (12,-2.7) -- (12.15,-3.2);
      \draw [decorate,color=black] (14.5,-3.5)
   node[left] {\begin{tiny}${\widetilde{\Phi}_{j}(\tilde{z}_{_{\zeta(j)}}})$\end{tiny}};

  \draw[black,very thin,thick,-] (9.5,-3.1) -- (9.75,-2.7);
 \draw [decorate,color=black] (10.55,-3.4)
   node[left] {\begin{tiny}${\tilde{z}_{_{\delta(j)}}}$\end{tiny}}; 
   
    \draw [decorate,color=black] (10.6,-2.2)
   node[left] {\begin{footnotesize}${R_{\iota(j)}}$\end{footnotesize}};
   
     \draw[black,very thin,thick,-] (11.2,-2.95) -- (11.25,-3.4);
      \draw [decorate,color=black] (12.1,-3.6)
   node[left] {\begin{tiny}${\tilde{z}_{_{\iota(j)}}}$\end{tiny}}; 
   
   \draw [decorate,color=black] (12.2,-2.6)
   node[left] {\begin{footnotesize}${L_{\iota(j)}}$\end{footnotesize}};
   
    \draw [decorate,color=black] (6,1.5)
   node[left] {\begin{footnotesize}${\widetilde{\Phi}_j}$\end{footnotesize}};
    \draw[black,line width=0.2mm,->] (4.6,1) -- (6.6,1);
    
       \draw [decorate,color=darkgray] (6.6,-1.5)
   node[left] {\begin{footnotesize}${\widetilde{\Phi}_{\iota(j)}}$\end{footnotesize}};
      \draw[black,line width=0.2mm,->] (6.6,-1)--(4.6,-1);
\end{tikzpicture}
}%
\caption{The connect-the-dots construction of $f_{j} \in {\rm Diff}^+ (S^1)$}\label{connect-dots}
\end{center}
\end{figure}
The connect-the-dots  construction is simple enough and we could stop here. We give more precision that will be needed later.
Let $X$ and $Y$ be two disjoint intervals of $S^1$, we denote $\partial^{\pm} X$ the two boundary points of $X$, where the indices $\pm $ refer to the orientation of the interval. Let
$\textrm{Diff}^+ (X,Y)$ be the space of orientation preserving diffeomorphisms from $X$ to $Y$. Let 
$a,b \in \mathbb{R}^+$ and $\dd g$ be the derivative of $g$, we define:
\begin{equation}\label{Diffmon}
\begin{array}{l}
\textrm{Diff}_{a,b}^+ (X,Y) = \{ g \in \textrm{Diff}^+ (X,Y): 
\dd g(\partial^{-} X) = a > 0, \dd g(\partial^{+} X) = b > 0 \},\\
\textrm{ if } a \neq b : \textrm{Diff}_{a,b}^{(mon)} (X, Y) :=  \{g \in \textrm{Diff}_{a,b}^{+} (X, Y) :
 \dd g \textrm{ is monotone} \}.
\end{array}
 \end{equation}
We define \textcolor{black}{$f$} on the two intervals $L_j$ and $R_j$. The image of these intervals are, by condition $(1)$, respectively:
  $R_{\iota(j)}$ and $L_{\iota(j)}$.\\
Since 
\textcolor{black}{$f$} is required to be a diffeomorphism, the derivative \textcolor{black}{ $\dd f$} varies continuously from $\lambda > 1$ to $\lambda^{-1} < 1$ along $R_j$ and from $\lambda^{-1} < 1$ to $\lambda > 1$ on $L_j$. 
In other words:\\
 \textcolor{black}{ \centerline{ $ f_{| R_j} \in \textrm{Diff}_{\lambda, \lambda^{-1}}^+ (R_j,L_{\iota(j)})$ and 
$f_{| L_j} \in \textrm{Diff}_{\lambda^{-1} , \lambda, }^+ (L_j;R_{\iota(j)})$.}}
 Thus \textcolor{black}{ $f$} is highly non-unique. By the intermediate value theorem and $\lambda > 1$, there is at least one point with derivative one i.e., a neutral point, in each interval $L_j$ and $R_j$.

Condition $(2)$ requires the existence of exactly one neutral point $N_j^+$ in $R_j$ and one neutral point 
$N_j^-$ in $L_j$. This is the simplest situation, it is realized if the derivative varies monotonically in $R_j$ and $L_j$, in other words:\\
 \textcolor{black}{ \centerline{ $f_{| R_j} \in \textrm{Diff}_{\lambda, \lambda^{-1}}^{( mon )} (R_j,L_{\iota(j)})$ and 
$ f_{| L_j} \in \textrm{Diff}_{\lambda^{-1} , \lambda, }^{( mon )} (L_j;R_{\iota(j)})$.}}\\
By condition (SE) and (CS-\textcolor{black}{$\lambda$}), see the property (II'), the map \textcolor{black}{ $f$} has exactly two fixed points, one expanding in $\widetilde{I}_j$ and one contracting in $\widetilde{I}_{\iota(j)}$. Therefore, with the above choices, condition $(2)$ of the Lemma is satisfied for 
 \textcolor{black}{ $f $}.\\
\noindent Let us denote by $\{ f_{j} \}$ the subset of ${\rm Diff}^+(S^1)$ satisfying conditions $(1)$ and $(2)$. 
Fixing \textcolor{black}{ $f \in \{ f_{j} \}$, by construction we have
 $f^{-1} \in \{ f_{\iota(j)} \}$.} Therefore the pair \textcolor{black}{ $f$, $f^{-1}$} satisfies the condition $(3)$ of Lemma \ref{prop-gen}. 
Let us denote by $ [ f_{j} ]$ the subset of ${\rm Diff}^+ (S^1)$ satisfying $(1), (2), (3).$
\end{proof}

  \subsection{Dynamical properties of  $\Phi$}
From now on the map $\Phi$ satisfies all the ruling conditions of \S \ref{TheClassPhi} i.e., the conditions (SE), (EC), (E$\pm$),  (CS-$\lambda$), they are crucial for the next result.

 \begin{Lemma}\label{affin-diffeo}
 Let  $\Phi: S^1 \rightarrow S^1$ be a piecewise homeomorphism satisfying conditions
$\rm(EC), (E+), (E\textrm{-})\, \textrm{and} \, (CS\textrm{-}\lambda)$ for some number $\lambda > 1$.
Then there exists a maximal neighborhood $V_j$ of the cutting point $z_j$, for all $j \in \{1,\dots,2N\}$, such that 
$\Phi^{k(j)}|_{V_j}$ is continuous and conjugated to an affine diffeomorphism 
$\widetilde{\Phi}^{k(j)}|_{\widetilde{V}_j}$ with slope 
$\lambda^{k(j)}$. The integer $k(j)$ is given by condition $\rm(EC)$ for the cutting point $z_j$. The neighborhood 
$\widetilde{V}_j$ of $\widetilde{z}_j$ is the image of $V_j$ under $g^{-1} \in {\rm Homeo}^+ (S^1)$  that conjugates 
$\Phi$ to  $\widetilde{\Phi}$.
 \end{Lemma}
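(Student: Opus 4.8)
The plan is to use condition (CS) to replace $\Phi$ by its piecewise affine model $\widetilde{\Phi}$, every branch of which has slope $\lambda$, to describe the two one-sided germs of $\widetilde{\Phi}^{k(j)}$ at the cutting point $\widetilde{z}_j$ separately with the help of (SE), (E+), (E-), and then to glue these germs using (EC).

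Fix $j$ and pass, through the conjugacy $g$, to $\widetilde{\Phi}=g^{-1}\circ\Phi\circ g$ and $\widetilde{z}_j=g^{-1}(z_j)$; since $g\in{\rm Homeo}^+(S^1)$ it is enough to produce the maximal interval $\widetilde{V}_j\ni\widetilde{z}_j$ on which $\widetilde{\Phi}^{k(j)}$ restricts to a continuous injection and to check that there it is affine of slope $\lambda^{k(j)}$; then $V_j:=g(\widetilde{V}_j)$ is the required neighborhood. For the germ on the right of $\widetilde{z}_j$, the relevant orbit is $\widetilde{z}_j,\ \widetilde{\Phi}_j(\widetilde{z}_j),\ \widetilde{\Phi}(\widetilde{\Phi}_j(\widetilde{z}_j)),\dots,\widetilde{\Phi}^{\,k(j)-2}(\widetilde{\Phi}_j(\widetilde{z}_j))$, which by (SE) (the case $m=0$ of (E+)) and by (E+) for $0\le m\le k(j)-2$ passes through the interiors of $\widetilde{I}_j,\widetilde{I}_{\delta(j)},\dots,\widetilde{I}_{\delta^{k(j)-1}(j)}$, in this order. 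Hence a short enough right neighborhood $[\widetilde{z}_j,\widetilde{z}_j+\varepsilon)$ has the same itinerary during its first $k(j)-1$ iterates, so on it $\widetilde{\Phi}^{k(j)}$ coincides with the composition $\widetilde{\Phi}_{\delta^{k(j)-1}(j)}\circ\cdots\circ\widetilde{\Phi}_{\delta(j)}\circ\widetilde{\Phi}_j$ of $k(j)$ affine maps of slope $\lambda$, thus is affine of slope $\lambda^{k(j)}$. The symmetric argument, replacing the right branch at $\widetilde{z}_j$ by the left branch $\widetilde{\Phi}_{\zeta^{-1}(j)}$ and using (E-) together with the cycle of $\gamma$ through $\zeta^{-1}(j)$, shows that the left germ of $\widetilde{\Phi}^{k(j)}$ at $\widetilde{z}_j$ is affine of slope $\lambda^{k(j)}$ as well.

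Next I would glue the two germs. Taking $n=k(j)-1$ in (EC) gives $\widetilde{\Phi}^{\,k(j)-1}\big(\widetilde{\Phi}_{\zeta^{-1}(j)}(\widetilde{z}_j)\big)=\widetilde{\Phi}^{\,k(j)-1}\big(\widetilde{\Phi}_j(\widetilde{z}_j)\big)$, i.e. the left and right germs of $\widetilde{\Phi}^{k(j)}$ take the same value at $\widetilde{z}_j$; as they also have the same slope $\lambda^{k(j)}$, they are restrictions of one and the same affine map. In particular $\widetilde{\Phi}^{k(j)}$ is continuous at $\widetilde{z}_j$; its set of discontinuities being contained in the finite set $\bigcup_{m=0}^{k(j)-1}\widetilde{\Phi}^{-m}(\{\widetilde{z}_1,\dots,\widetilde{z}_{2N}\})$ and, by the preceding, not containing $\widetilde{z}_j$, the map $\widetilde{\Phi}^{k(j)}$ is continuous on a genuine two-sided neighborhood of $\widetilde{z}_j$, and on any interval of continuity it is a composition of branches of $\widetilde{\Phi}$, hence affine of slope $\lambda^{k(j)}$. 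One then lets $\widetilde{V}_j$ be the union of all open intervals about $\widetilde{z}_j$ on which $\widetilde{\Phi}^{k(j)}$ is a continuous injection: by locality of continuity and since on it $\widetilde{\Phi}^{k(j)}$ is the restriction of a single affine map of slope $\lambda^{k(j)}$ (injective as long as its image is a proper sub-arc of $S^1$), $\widetilde{V}_j$ has the same property and is maximal; it is nondegenerate by the previous sentence.

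The point that needs care is the passage from ``the orbit of the cutting point $z_j$ has the prescribed itinerary'' to ``a whole one-sided neighborhood of $z_j$ has the same itinerary for $k(j)-1$ steps'': this is where (E+) and (E-) are used, to place the iterates $\widetilde{\Phi}^m(\widetilde{\Phi}_j(\widetilde{z}_j))$ and $\widetilde{\Phi}^m(\widetilde{\Phi}_{\zeta^{-1}(j)}(\widetilde{z}_j))$ in the interiors of partition intervals so that no discontinuity of $\widetilde{\Phi}$ is crossed before the $k(j)$-th step. The other essential ingredient is (EC), which forces the two one-sided affine germs of $\widetilde{\Phi}^{k(j)}$ to coincide, not merely to have equal slope; the remainder is bookkeeping with the permutations $\delta$, $\gamma$ and the constant slope $\lambda$.
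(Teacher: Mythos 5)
Your proof is correct and follows essentially the same route as the paper: pass to the affine model $\widetilde{\Phi}$ via (CS), use (E+)/(E-) to keep the one-sided itineraries of $\widetilde{z}_j$ inside the prescribed partition intervals for the first $k(j)-1$ steps so that each one-sided germ of $\widetilde{\Phi}^{k(j)}$ is a composition of $k(j)$ slope-$\lambda$ branches, and glue the two germs with (EC) at $n=k(j)-1$. The only real difference is that the paper exhibits the maximal neighborhood explicitly as $\widetilde{V}_j=V_j^{c_j}\cup V_j^{d_j}$, the $\widetilde{\Phi}^{k(j)-1}$-preimages of the extreme subintervals $J_{c_j}$ and $J_{d_j}$ (a description it reuses later), whereas you obtain maximality abstractly as a union of intervals of continuity, which suffices for the statement as given.
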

 \begin{proof}
 As in the previous proof, we replace the piecewise homeomorphism $\Phi$ by 
the piecewise affine map $\widetilde{\Phi}$ with constant slope $\lambda > 1$, using condition (CS-$\lambda$) and the conjugacy given by 
 $g \in \textrm{Homeo}^+ (S^1) $.\\
 By condition (EC) for the cutting point $\widetilde{z}_j$ we have:\\
 \centerline{ $\mathscr{z}_j^{k(j)}=\widetilde{\Phi}^{k(j)-1}(\widetilde{\Phi}_{\zeta^{-1}(j)}(\widetilde{z}_j)) =
 \widetilde{\Phi}^{k(j)-1}(\widetilde{\Phi}_{j}(\widetilde{z}_j)). $}
 Suppose that $\mathscr{z}_j^{k(j)}\in \widetilde{I}_{a_j}$, for some
 $a_j \in \{1,\dots, 2N \}$.\\
 Consider the pre-images of the point $\mathscr{z}_j^{k(j)} $ from the left and the right, along the orbits of the cutting point $\widetilde{z}_j$. Namely we consider the points:
 
\centerline{ $ \mathscr{z}^{}_{\tiny{\delta^{k(j) -1 } (j)} }= \widetilde{\Phi}^{k(j) -2}  ( \widetilde{\Phi}_{j} (\widetilde{z}_j) ) \in
 \widetilde{I}_{ \delta^{k(j) -1 } (j)} \textrm{ by (E+) }$,} 

\centerline{ $\mathscr{z}^{}_{ \tiny{\gamma^{k(j) -1}(\zeta^{-1} ( j ) ) }}  =  \widetilde{\Phi}^{k(j) -2}  ( \widetilde{\Phi}_{\zeta^{-1} (j)} 
 (\widetilde{z}_j) )  
 \in \widetilde{I}_{ \gamma^{k(j) -1}(\zeta^{-1} ( j ) )} \textrm{ by (E-) }.$}
 
  \vspace*{3pt}
\noindent In order to simplify the notations let us define (see Figure \ref{Vj}):
    \begin{equation}\label{cj-dj}
  \begin{array}{c}
   c_j:=\gamma^{k(j)-1}(\zeta^{-1}(j)),\;
   d_j:=\delta^{k(j)-1}(j),\\
   J_{c_j}:=[ \tilde{z}_{c_j} ,  \mathscr{z}_{c_j } ] \subset I_{c_j},\;
   J_{d_j}:=[ \mathscr{z}_{d_j} ,  \tilde{z}_{\zeta (d_j)} ] \subset I_{d_j},\;
   \end{array}
   \end{equation}
   From condition (EC) and the definitions above we obtain:\\
\centerline{ $ \widetilde{\Phi}_{c_j} ( J_{c_j} ) \cap \widetilde{\Phi}_{d_j} ( J_{d_j} ) = \mathscr{z}_j^{k(j)}$ and $\widetilde{\Phi}_{c_j} ( J_{c_j} ) \cup \widetilde{\Phi}_{d_j} ( J_{d_j} )$ is connected.
}
  Define the 
   ($\widetilde{\Phi}^{k(j)-1}$)-pre-images of $J_{c_j}$ and $J_{d_j}$ along the two orbits of $\widetilde{z}_j$ i.e., the left and the right orbits. 
   These pre-images belong respectively to the intervals $\widetilde{I}_j$ and $\widetilde{I}_{\zeta^{-1}(j)}$ and we obtain:
   \begin{equation}\label{Vcdj}
  V_j^{c_j} = [ \widetilde{\Phi}^{ - k(j) +1} (\widetilde{z}_{c_j} ) , \widetilde{z}_j   ] \subset 
  \widetilde{I}_{\zeta^{-1}(j)} 
  \textrm{ and }
  V_j^{d_j} = [  \widetilde{z}_j , \widetilde{\Phi}^{ - k(j) +1} (\widetilde{z}_{\zeta (d_j )} )   ] \subset \widetilde{I}_{j}.
\end{equation}  
     We define a neighborhood of the cutting point $\widetilde{z}_j$ by 
$ \widetilde{V}_j:=V_j^{c_j}\cup V_j^{d_j}.$
 
 
 \begin{figure}[h]
\begin{center}
\resizebox{0.7\textwidth}{!}{%
\begin{tikzpicture}[scale=0.7]
\draw [ black] circle [radius=6];
\draw [ black] circle [radius=1.8];
\draw [ black] circle [radius=4.8];

 \draw[line width=1.5mm, gray]
  (-0.3,1.85)
  arc[start angle=100,end angle=90,radius=1.8]
  node[midway]{{$$}};
  
   \draw[line width=1.5mm, black]
  (-0.,1.85)
  arc[start angle=92,end angle=80,radius=1.8]
  node[midway]{{$$}};

 \draw[line width=1.5mm, gray]
  (-4.15,-2.6)
  arc[start angle=215,end angle=232,radius=4.8]
  node[midway]{{$$}};

 \draw[line width=1.5mm, black]
  (2.3,-4.3)
  arc[start angle=296,end angle=309,radius=4.8]
  node[midway]{{$$}};

 \draw[line width=1.5mm, black]
  (5.4,2.5)
  arc[start angle=25,end angle=-175,radius=6]
  node[midway]{{$$}};
  
   \draw[line width=1.5mm, gray]
  (5.45,2.4)
  arc[start angle=23,end angle=155,radius=6]
  node[midway]{{$$}};

 \draw[dashed,gray,very thin,thick,-] (-1.1,-1.6) -- (-4,-4.5);
 
 \draw [decorate,color=black] (-2,-3.7)
   node[left] {\begin{tiny}$\tilde{z}_{_{c_j}}$\end{tiny}};
   
      \draw [decorate,color=black] (-2.8,-3.6)
   node[left] {$\boldsymbol{\cdot}$};

      \draw [decorate,color=black] (-4,-2.1)
   node[left] {$\boldsymbol{\cdot}$};
   
    \draw [decorate,color=black] (-3.5,-1.8)
   node[left] {\begin{tiny}$\tilde{z}_{_{\zeta(c_j)}}$\end{tiny}};

\draw[dashed,gray,very thin,thick,-] (.75,-2) -- (3,-5.2);

\draw [decorate,color=black] (-0.25,1.7)
   node[left] {$\boldsymbol{\cdot}$};

    \draw[dashed,gray,very thin,thick,-] (0,1.8) -- (0,6);
   
    \draw [decorate,color=black] (0.55,1.2)
   node[left] {\begin{tiny}${\tilde{z}_{_{j}}}$\end{tiny}};
   
    \draw [decorate,color=black] (0.55,2.5)
   node[left] {\begin{footnotesize}$\textcolor{black}{\widetilde{V}}_j$\end{footnotesize}};

   \draw[dashed,gray,very thin,thick,-] (-.57,1.65) -- (-1.6,5.8);
   
   \draw [decorate,color=black] (-0.1,1.6)
   node[left] {\begin{tiny}${\tilde{z}_{_{\zeta^{-1}(j)}}}$\end{tiny}};

   \draw [decorate,color=black] (1,1.7)
   node[left] {$\boldsymbol{\cdot}$};
   
    \draw[dashed,gray,very thin,thick,-] (0.6,1.65) -- (1.5,5.8);

    \draw [decorate,color=black] (1.7,1.25)
   node[left] {\begin{tiny}$\tilde{z}_{_{\zeta(j)}}$\end{tiny}};
   
    \draw[dashed,gray,very thin,thick,-] (-1.7,0) -- (-6,-0.2);
   
  \draw [decorate,color=black] (-1.44,0)
   node[left] {$\boldsymbol{\cdot}$};
   
   \draw[dashed,gray,very thin,thick,-] (-1.7,0.5) -- (-6,1);
   
   \draw [decorate,color=black] (-1.35,0.5)
   node[left] {$\boldsymbol{\cdot}$};
   
    \draw [decorate,color=black] (-1.15,1)
   node[left] {$\boldsymbol{\cdot}$};
   
    \draw[dashed,gray,very thin,thick,-] (-1.7,1) -- (-5.7,2.2);
      
    \draw [decorate,color=black] (-1.15,-1)
   node[left] {$\boldsymbol{\cdot}$};
   
    \draw[dashed,gray,very thin,thick,-] (-1.5,-1) -- (-5.5,-2.5);

    \draw [decorate,color=gray] (-1.7,-2.)
   node[left] {\begin{footnotesize}$\widetilde{I}_{_{c_j}}$\end{footnotesize}};
   
   \draw [decorate,color=black] (-2.8,-2.8)
   node[left] {\begin{scriptsize}$J_{_{c_j}}$\end{scriptsize}};

    \draw [decorate,color=black] (-0.6,-1.56)
   node[left] {$\boldsymbol{\cdot}$};

     \draw [decorate,color=black] (1,-1.7)
   node[left] {$\boldsymbol{\cdot}$};
   
     \draw [decorate,color=gray] (2.5,-2.5)
   node[left] {\begin{footnotesize}$\widetilde{I}_{_{d_j}}$\end{footnotesize}};
   
    \draw [decorate,color=black] (3.1,-3.6)
   node[left] {\begin{scriptsize}$J_{_{d_j}}$\end{scriptsize}};
   
    \draw [decorate,color=black] (3.9,-3.2)
   node[left] {$\boldsymbol{\cdot}$};
   
    \draw [decorate,color=black] (4.2,-2.9)
   node[left] {\begin{tiny}$\tilde{z}_{_{d_j}}$\end{tiny}};
   
     \draw [decorate,color=black] (2.7,-4.2)
   node[left] {$\boldsymbol{\cdot}$};
   
    \draw [decorate,color=black] (2.5,-4.2)
   node[left] {\begin{tiny}$\tilde{z}_{_{\zeta(d_j)}}$\end{tiny}};

     \draw [decorate,color=black] (1.65,-1.3)
   node[left] {$\boldsymbol{\cdot}$};
   
    \draw[dashed,gray,very thin,thick,-] (1.5,-1.45) -- (4.55,-4);
    
     \draw [decorate,color=black] (4.8,-.5)
   node[left] {\begin{footnotesize}$\widetilde{\Phi}^{k(j)-1}|_{\textcolor{black}{\widetilde{V}}_j}$\end{footnotesize}};
   
     \draw[black,line width=0.1mm,->] (2.1,-1.25) -- (4,-1.25);
     
       \draw [decorate,color=black] (5.8,-.5)
   node[left] {\begin{footnotesize}$\widetilde{\Phi}$\end{footnotesize}};
   
    \draw[black,line width=0.1mm,->] (5.2,-1.2) -- (5.6,-1.2);
    
     \draw [decorate,color=black] (1.9,1)
   node[left] {$\boldsymbol{\cdot}$};
   
    \draw[dashed,gray,very thin,thick,-] (1.5,1) -- (4.95,3.5);
    
     \draw [decorate,color=black] (2.1,.5)
   node[left] {$\boldsymbol{\cdot}$};
   
    \draw[dashed,gray,very thin,thick,-] (1.7,.5) -- (5.75,2);

 \draw [decorate,color=black] (0.48,1.63)
   node[left] {$\boldsymbol{\star}$};
   
    \draw [decorate,color=black] (5.65,2.35)
   node[left] {$\boldsymbol{\star}$};
   
    \draw [decorate,color=black] (7,2.8)
   node[left] {\begin{tiny}$\mathscr{z}_j^{k(j)}$\end{tiny}};
   
    \draw [decorate,color=black] (6.05,1.95)
   node[left] {$\boldsymbol{\cdot}$};
   
    \draw [decorate,color=black] (7.5,1.75)
     node[left] {\begin{tiny}$\tilde{z}_{_{\zeta(a_j)}}$\end{tiny}};

  \draw [decorate,color=black] (5.27,3.5)
   node[left] {$\boldsymbol{\cdot}$};
   
    \draw [decorate,color=black] (6,3.5)
     node[left] {\begin{tiny}$\tilde{z}_{_{a_j}}$\end{tiny}};

    \draw [decorate,color=black] (-3.5,-2.4)
   node[left] {$\boldsymbol{\star}$};
   
     \draw [decorate,color=black] (3.5,-3.5)
   node[left] {$\boldsymbol{\star}$};

    \draw [decorate,color=gray] (-4.6,1.5)
   node[left] {\begin{footnotesize}$\widetilde{I}_{_{\overline{c_j}}}$\end{footnotesize}};
   
    \draw [decorate,color=black] (-5.4,2.7)
   node[left] {\begin{tiny}$ \widetilde{\Phi}_{c_j}(\tilde{z}_{c_j})$\end{tiny}};
   
    \draw [decorate,color=black] (-5.75,1)
   node[left] {\begin{tiny}$\tilde{z}_{\overline{c_j}}$\end{tiny}};
   
    \draw [decorate,color=black] (-5.55,1)
   node[left] {$\boldsymbol{\cdot}$};
   
   \draw [decorate,color=black] (-5.9,-0.7)
   node[left] {\begin{tiny}$\widetilde{\Phi}_{d_j}(\tilde{z}_{\zeta(d_j)})$\end{tiny}};
   
   \draw [decorate,color=gray] (-4.7,0.3)
   node[left] {\begin{footnotesize}$\widetilde{I}_{_{\overline{d_j}}}$\end{footnotesize}};
 \end{tikzpicture}
 }
\caption{ \textcolor{black}{The neighborhood $\widetilde{V}_j$ and its image by $\widetilde{\Phi}^{k(j)}$} }\label{Vj} 
\end{center}
\end{figure}

\noindent By condition (EC),  $\widetilde{\Phi}^{k(j)}$ is continuous on $\widetilde{V}_j$ and:\\
 $\widetilde{\Phi}^{k(j)}(\widetilde{V}_j) = \widetilde{\Phi}_{c_j}( J_{c_j}) \cup 
  \widetilde{\Phi}_{d_j}( J_{d_j}) = 
 [\widetilde{\Phi}_{c_j}(\widetilde{z}_{c_j}), 
\widetilde{\Phi}_{d_j}(\widetilde{z}_{\zeta(d_j)})].$
It satisfies, by condition (SE), the following property (see Figure \ref{Vj}): 
   \begin{eqnarray}\label{k(j)-image} 
 \widetilde{\Phi}^{k(j)}(\widetilde{V}_j)\cap \widetilde{I}_k \neq \emptyset  ,  \forall k \neq \overline{c_j}, \overline{d_j}.
 \end{eqnarray}
 \noindent  By Lemma \ref{adj}, the indices $\overline{c_j}$ and $\overline{d_j}$ in condition (\ref{k(j)-image}) are adjacent with:
   $\zeta(\overline{d_j}) = \overline{c_j}$.
    From Lemma \ref{same-cycle}, the cycles containing $j$ and $\overline{c_j}$ are the same and thus $ k(\overline{c_j}) = k (j)$.

The map $\widetilde{\Phi}^{k(j)}|_{\widetilde{V}_j}$ is affine of slope 
$\lambda^{k(j)}$. Indeed, by definition of $ V_j^{c_j}$, $V_j^{d_j}$ and conditions ($\textrm{E}\pm $), the following properties are satisfied: \\
\centerline{$ \forall z \in  V_j^{d_j}:  \widetilde{\Phi}^{m}(z) \in \tilde{I}_{\delta^{m}(j)} 
\textrm{ and  } \forall z \in  V_j^{c_j} :
  \widetilde{\Phi}^{m}(z) \in \tilde{I}_{\gamma^{m}(\zeta^{-1} (j) )}, \textrm{ for } m=1,\dots,k(j)-1.$} Then we obtain:
\begin{equation}\label{compositionV}
  \begin{array}{c}
   \widetilde{\Phi}^{k(j)} (z) = 
  \widetilde{\Phi}_{\delta^{k(j)-1}(j)} \circ \dots \circ \widetilde{\Phi}_{\delta (j)} \circ \widetilde{\Phi}_{j} (z), \forall z \in  V_j^{d_j} \textrm{ and }\\
   \widetilde{\Phi}^{k(j)} (z) = \widetilde{\Phi}_{\gamma^{k(j)-1}(\zeta^{-1} (j) )} \circ \dots \circ 
\widetilde{\Phi}_{\gamma (\zeta^{-1} (j) )} \circ \widetilde{\Phi}_{\zeta^{-1} (j)} (z) , \forall z \in  V_j^{c_j}. 
     \end{array}
   \end{equation}

 \noindent Thus, $\widetilde{\Phi}^{k(j)} (z)$ is affine of slope $\lambda^{k(j)}$ for
$z \in V_j^{c_j} \cup V_j^{d_j} = \widetilde{V}_j$, as a composition of $k(j)$ affine maps, each of slope 
$\lambda$, on each side.
The definition of the intervals $J_{d_j}$ and $J_{c_j}$ in (\ref{cj-dj}) implies that in the above composition, 
$\widetilde{\Phi}_{\delta^{k(j)-1}(j)}$ and $\widetilde{\Phi}_{\gamma^{k(j)-1}(\zeta^{-1} (j) )}$ are affine of slope $\lambda$ and these intervals are maximal with that property for the compositions in (\ref{compositionV}). This completes the proof of the maximality property.
The neighborhood $V_j$ of the Lemma is then simply: $V_j = g (\widetilde{V}_j )$, where $g$ conjugates $\Phi$ with 
$\widetilde{\Phi}$.
\end{proof}

\subsection{ Affine extensions}

  In this subsection we extend the construction of the  diffeomorphisms in the \textcolor{black}{class} $[f_j]$ given by Lemma \ref{prop-gen}. The idea for these extensions comes from the properties (EC), ($\textrm{E}\pm$) and the expressions in (\ref{compositionV}) that are two compositions, equal at one point, and are expected to become an  equality on an interval.\\
    The first step is to enlarge the intervals on which the diffeomorphisms constructed in 
    Lemma \ref{prop-gen} are affine. 
    To that end we consider a collection $\nu := \{ \nu_j ; j = 1, \dots, 2N \} $ 
    of  neighborhoods $\nu_j = \nu_j (\widetilde{z}_j)$ of the cutting points $\widetilde{z}_j$.
  These neighborhoods are chosen small enough to satisfy:
   $\quad \nu_j \subset \widetilde{I}_j \cup \widetilde{I}_{\zeta^{-1}(j)}$ with $\nu_j \cap \nu_{\zeta (j)} = \emptyset$
  and $\nu_j \cap \nu_{\zeta^{-1} (j)} = \emptyset$.\\
    We define the {\em  $\lambda$-affine extension} 
   $\widetilde{\Phi}^{\nu}_{j}$ of $\widetilde{\Phi}_{j}$
    as a $\lambda$-affine map on the interval:
    \begin{eqnarray}\label{affine-extension}
  I^{\nu}_j := \widetilde{I}_j \cup \nu_j \cup \nu_{\zeta (j)}, \textrm{ so that }
  (\widetilde{\Phi}^{\nu}_{j})_{|I^{\nu}_j } \textrm{ is } \lambda\textrm{-affine and }
  (\widetilde{\Phi}^{\nu}_{j})_{|\widetilde{I}_j  } =
  (\widetilde{\Phi}_{j})_{|\widetilde{I}_j }.
  \end{eqnarray}

  \begin{Prop}\label{small-nu}
  If $\Phi$ satisfies the ruling conditions  $\rm(EC), (E\pm) \, \textrm{and} \, (CS\textrm{-}\lambda)$ for some number $\lambda > 1$, 
 then for small enough  neighborhoods $\nu_j$, satisfying (\ref{affine-extension}) for all 
  $j \in \{1, \dots, 2N \}$, the $\lambda$-affine extensions $\widetilde{\Phi}^{\nu}_{j}$ satisfy:\\
 \centerline{ $\widetilde{\Phi}^{\nu}_{j} (\nu_j) \subset \widetilde{I}_{\delta(j)} \setminus \nu_{\delta(j)}$ and 
  $\widetilde{\Phi}^{\nu}_{j} (\nu_{\zeta (j)} ) \subset \widetilde{I}_{\gamma(j)} \setminus \nu_{\iota(j)}$ for all 
   $j \in \{1, \dots, 2N \}$.}
  \end{Prop}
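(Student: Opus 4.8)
\noindent The plan is to carry everything out in the piecewise affine model $\widetilde{\Phi}$ of slope $\lambda>1$ and to reduce the statement to choosing a single small constant; distances are measured in the affine metric in which each $\widetilde{\Phi}_j$ multiplies lengths by $\lambda$. Fix $j$ and split $\nu_j=\nu_j^-\cup\nu_j^+$ with $\nu_j^-:=\nu_j\cap\widetilde{I}_{\zeta^{-1}(j)}$ and $\nu_j^+:=\nu_j\cap\widetilde{I}_j$, so that the cutting point $\widetilde{z}_j$ separates the two halves. By (\ref{affine-extension}), $I^{\nu}_j=\nu_j^-\cup\widetilde{I}_j\cup\nu_{\zeta(j)}^+$ is a single arc and $\widetilde{\Phi}^{\nu}_j$ is the orientation preserving $\lambda$-affine map on it extending $\widetilde{\Phi}_j|_{\widetilde{I}_j}$; hence $\widetilde{\Phi}^{\nu}_j(\widetilde{z}_j)=\widetilde{\Phi}_j(\widetilde{z}_j)=:p_j$ and $\widetilde{\Phi}^{\nu}_j(\widetilde{z}_{\zeta(j)})=\widetilde{\Phi}_j(\widetilde{z}_{\zeta(j)})=:q_j$ (the latter a left-hand value), the set $\widetilde{\Phi}^{\nu}_j(\nu_j)$ is an arc containing $p_j$ whose parts on the negative and positive sides of $p_j$ have lengths $\lambda|\nu_j^-|$ and $\lambda|\nu_j^+|$, and $\widetilde{\Phi}^{\nu}_j(\nu_{\zeta(j)})$ is an arc containing $q_j$ with side lengths $\lambda|\nu_{\zeta(j)}^-|$ and $\lambda|\nu_{\zeta(j)}^+|$. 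So the two required inclusions amount to fitting these short arcs inside $\widetilde{I}_{\delta(j)}$, respectively $\widetilde{I}_{\gamma(j)}$, while keeping them off $\nu_{\delta(j)}$, respectively $\nu_{\iota(j)}$.

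\noindent The geometric input I would establish first is that $p_j$ and $q_j$ lie in the \emph{interiors} of $\widetilde{I}_{\delta(j)}$ and $\widetilde{I}_{\gamma(j)}$. By (SE) the arc $\widetilde{\Phi}_j(\widetilde{I}_j)$, which equals $[p_j,q_j)$ in the orientation of $S^1$, misses only $\widetilde{I}_{\iota(j)}$; since $\widetilde{I}_{\delta(j)}$ and $\widetilde{I}_{\gamma(j)}$ flank $\widetilde{I}_{\iota(j)}$, with $\widetilde{z}_{\delta(j)}$ the endpoint common to $\widetilde{I}_{\iota(j)}$ and $\widetilde{I}_{\delta(j)}$ and $\widetilde{z}_{\iota(j)}$ the one common to $\widetilde{I}_{\gamma(j)}$ and $\widetilde{I}_{\iota(j)}$, this forces $p_j\in\widetilde{I}_{\delta(j)}$ and $q_j\in\widetilde{I}_{\gamma(j)}$ (the $m=0$ cases of (E+) and (E-)). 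Because the arc meets $\widetilde{I}_{\gamma(j)}$ non-trivially while $\widetilde{I}_{\gamma(j)}$ contains its negative endpoint, $q_j\neq\widetilde{z}_{\gamma(j)}$, so $q_j$ is interior. For $p_j$ --- the delicate case, since $\widetilde{I}_{\delta(j)}$ contains $\widetilde{z}_{\delta(j)}$ --- I would argue by contradiction: if $p_j=\widetilde{z}_{\delta(j)}$, the forward orbit of $\widetilde{z}_j$ from the positive side agrees from step $1$ on with that of $\widetilde{z}_{\delta(j)}$, so, with $d_j=\delta^{k(j)-1}(j)$ and iterating (E+) for $\widetilde{z}_{\delta(j)}$ (using $k(\delta(j))=k(j)$, as $\delta(j)$ lies in the $\delta$-cycle of $j$), the point $Z_j^{k(j)}$, which by (EC) is the common value of the two orbits of $\widetilde{z}_j$ at step $k(j)$, lies in $\widetilde{I}_{\delta(d_j)}$; on the other hand, by (E-) the negative orbit satisfies $\widetilde{\Phi}^{k(j)-1}(\widetilde{z}_j)\in\widetilde{I}_{c_j}$ with $c_j=\gamma^{k(j)-1}(\zeta^{-1}(j))$, so $Z_j^{k(j)}=\widetilde{\Phi}_{c_j}(\widetilde{\Phi}^{k(j)-1}(\widetilde{z}_j))\in\widetilde{\Phi}_{c_j}(\widetilde{I}_{c_j})$, which by (SE) is disjoint from $\widetilde{I}_{\iota(c_j)}$; but $\delta(d_j)=\zeta(\overline{d_j})=\overline{c_j}=\iota(c_j)$ by Lemma \ref{adj}, a contradiction. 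Hence $p_j$ is interior too, so $g_j:=d(p_j,\widetilde{z}_{\delta(j)})$, $g'_j:=d(p_j,\widetilde{z}_{\zeta\delta(j)})$, $h_j:=d(q_j,\widetilde{z}_{\iota(j)})$ and $h'_j:=d(q_j,\widetilde{z}_{\gamma(j)})$ are all strictly positive.

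\noindent The last step is bookkeeping. Writing out what it means for the arc $\widetilde{\Phi}^{\nu}_j(\nu_j)$ about $p_j$ to lie in $\widetilde{I}_{\delta(j)}$ and miss $\nu_{\delta(j)}$ --- whose part inside $\widetilde{I}_{\delta(j)}$ is the piece $\nu_{\delta(j)}^+$ abutting $\widetilde{z}_{\delta(j)}$ --- gives $\lambda|\nu_j^-|+|\nu_{\delta(j)}^+|<g_j$ and $\lambda|\nu_j^+|<g'_j$; symmetrically, for $\widetilde{\Phi}^{\nu}_j(\nu_{\zeta(j)})$ about $q_j$ to lie in $\widetilde{I}_{\gamma(j)}$ and miss $\nu_{\iota(j)}$ --- whose part inside $\widetilde{I}_{\gamma(j)}$ is the piece $\nu_{\iota(j)}^-$ abutting $\widetilde{z}_{\iota(j)}$ --- gives $\lambda|\nu_{\zeta(j)}^+|+|\nu_{\iota(j)}^-|<h_j$ and $\lambda|\nu_{\zeta(j)}^-|<h'_j$. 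Adjoining the standing requirements $\nu_j\subset\widetilde{I}_j\cup\widetilde{I}_{\zeta^{-1}(j)}$ and $\nu_j\cap\nu_{\zeta^{\pm1}(j)}=\emptyset$ (which hold once each $|\nu_i|$ is less than half of $\min_i|\widetilde{I}_i|$), one obtains a finite family of strict inequalities, each of the shape ``a $1$- or $\lambda$-weighted sum of at most two of the numbers $|\nu_i^{\pm}|$ is less than a fixed positive constant''. Taking $m>0$ to be the minimum of those finitely many positive constants and requiring $|\nu_j|<m/(2(\lambda+1))$ for every index $j$ makes all of them hold simultaneously, which is exactly the Proposition.

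\noindent The hard part is the second paragraph, the interiority of $p_j$: this is the only place where the ruling conditions genuinely intervene, since (SE) and the $m=0$ cases of (E$\pm$) alone leave open the boundary possibility $p_j=\widetilde{z}_{\delta(j)}$, which would make the statement false --- no neighbourhood $\nu_j$ could then separate $\widetilde{\Phi}^{\nu}_j(\nu_j)$ from $\nu_{\delta(j)}$. Its exclusion uses the coincidence structure of (EC), through the common point $Z_j^{k(j)}$ of the two orbits of $\widetilde{z}_j$, together with the combinatorial identity $\zeta(\overline{d_j})=\overline{c_j}$ of Lemma \ref{adj}, which identifies the interval reached along the positive orbit with precisely the one that (SE) forbids to $\widetilde{\Phi}_{c_j}(\widetilde{I}_{c_j})$. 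Once interiority is secured, what remains is the elementary finite system of one-sided length estimates above, and the coupling of $\nu_j$ with $\nu_{\delta(j)}$ (resp.\ $\nu_{\iota(j)}$) is harmless because that system admits solutions with all the $|\nu_i|$ as small as we wish.
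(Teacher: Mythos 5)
Your proof is correct and follows the same route as the paper, which disposes of the Proposition in three lines: by (SE) the images satisfy $\widetilde{\Phi}_j(\widetilde{z}_j)\in\widetilde{I}_{\delta(j)}$ and $\widetilde{\Phi}_j(\widetilde{z}_{\zeta(j)})\in\widetilde{I}_{\gamma(j)}$, the $\lambda$-affine extension is continuous at the cutting points, and so sufficiently small $\nu_j,\nu_{\zeta(j)},\nu_{\delta(j)},\nu_{\iota(j)}$ work ``by continuity''. Your last paragraph is exactly that argument made quantitative. The genuine added content is your second paragraph: the continuity argument tacitly needs $\widetilde{\Phi}_j(\widetilde{z}_j)$ to lie at positive distance from $\widetilde{z}_{\delta(j)}$ (and $\widetilde{\Phi}_j(\widetilde{z}_{\zeta(j)})$ from $\widetilde{z}_{\gamma(j)}$), and since $\widetilde{I}_{\delta(j)}$ contains its left endpoint, (SE) and the $m=0$ case of (E+) alone do not rule out $\widetilde{\Phi}_j(\widetilde{z}_j)=\widetilde{z}_{\delta(j)}$, in which case no choice of neighbourhoods could achieve $\widetilde{\Phi}^{\nu}_j(\nu_j)\cap\nu_{\delta(j)}=\emptyset$. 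Your exclusion of this boundary case --- comparing the two expressions for $Z_j^{k(j)}$ provided by (EC), pushing the positive orbit into $I_{\delta^{k(j)}(j)}$ via (E+) applied at $z_{\delta(j)}$ (using $k(\delta(j))=k(j)$), and noting that by (E-) and (SE) the common point lies in $\Phi_{c_j}(I_{c_j})$, which is disjoint from $I_{\iota(c_j)}=I_{\delta^{k(j)}(j)}$ by Lemma \ref{adj} --- is correct, and it proves an interiority statement on which the paper's one-line continuity argument silently relies (the analogous point for $q_j$ you get directly from (SE), as you note). So: same strategy, but your version supplies the missing justification that the relevant gaps $g_j$, $h'_j$ are strictly positive, after which the finite system of length inequalities and the uniform smallness choice are routine.
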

\begin{proof}  
  From condition (SE): $\widetilde{\Phi}_{j} (\widetilde{z}_j) \in \widetilde{I}_{\delta (j)}$ and 
  $\widetilde{\Phi}_{j} (\widetilde{z}_{\zeta (j)}) \in \widetilde{I}_{\gamma (j)}$.
  The $\lambda$-affine extension $\widetilde{\Phi}^{\nu}_{j}$ is continuous at $\widetilde{z}_{j}$ and
  $\widetilde{z}_{\zeta (j)}$. Thus, if the neighborhoods $\nu_j , \nu_{\zeta(j)} , \nu_{\delta(j)}, \nu_{\iota(j)}$ are sufficiently small then the conditions of the Proposition are satisfied by continuity.
\end{proof}

  If all the neighborhoods $\nu_j$ are small enough for Proposition \ref{small-nu} to apply then the sets $S^1 \setminus ( I^{\nu}_j \cup \widetilde{\Phi}^{\nu}_{\iota(j)}( I^{\nu}_{\iota(j)}))$
  and  $S^1 \setminus (I^{\nu}_{\iota(j)} \cup \widetilde{\Phi}^{\nu}_{j}( I^{\nu}_{j}))$ are non-empty and each one has two connected components:
   \begin{eqnarray}\label{complement-nu}
  S^1 \setminus ( I^{\nu}_j \cup \widetilde{\Phi}^{\nu}_{\iota(j)}( I^{\nu}_{\iota(j)})) = 
 L^{\nu}_j \cup R^{\nu}_j \textrm{ and }
 S^1 \setminus ( I^{\nu}_{\iota(j)} \cup \widetilde{\Phi}^{\nu}_{j}( I^{\nu}_{j})) = 
 L^{\nu}_{\iota(j)} \cup R^{\nu}_{\iota(j)} .
 \end{eqnarray}
 If all the neighborhoods $\nu_j$ are small enough so that each interval in (\ref{complement-nu}) contains an open interval,
  then we define the following family of diffeomorphisms for  $j\in \{1, \dots, 2N\}$:
  
  \vspace{3pt}
 \centerline{$ [ f^{\nu}_j ] \subset \rm{Diff}^+ (S^1)$, ``parametrised" by 
 $ \nu = \left\lbrace  \nu_j : j \in \{1, \dots , 2N\} \right\rbrace$ \textrm {such that:}} 
    \begin{equation}\label{Diff-nu}
  \begin{array}{l}
 \textrm{[i]  } ( f^{\nu}_{j})_{| I^{\nu}_{j}} := (\widetilde{\Phi}^{\nu}_{j})_{| I^{\nu}_{j}} \textrm{  and  }
  ( f^{\nu}_{j})_{| \widetilde{\Phi}^{\nu}_{\iota(j)}( I^{\nu}_{\iota(j)})} := 
  (\widetilde{\Phi}^{\nu}_{\iota(j)})^{-1}_{| \widetilde{\Phi}^{\nu}_{\iota(j)}( I^{\nu}_{\iota(j)})}, 
  \\ [0.85em]
   \textrm{[ii]  } ( f^{\nu}_{j})_{| L^{\nu}_{j}} \in {\rm{Diff}}^{(mon)}_{\lambda^{-1} , \lambda } ( L^{\nu}_{j} ; R^{\nu}_{\iota(j)}) 
   \textrm{  and  }   
   ( f^{\nu}_{j})_{| R^{\nu}_{j}} \in {\rm{Diff}}^{(mon)}_{\lambda, \lambda^{-1}} (R^{\nu}_{j} ; L^{\nu}_{\iota(j)}),\\
   [0.75em]

    \textrm{[iii]  }  (f^{\nu}_{j})^{-1} := f^{\nu}_{\iota (j)}.
   \end{array}
   \end{equation}  
   The diffeomorphisms in the class $[ f^{\nu}_j ]$ are similar but different from the class $[f_j]$ of Lemma \ref{prop-gen}. They are affine on larger intervals and the diffeomorphisms 
  $ f^{\nu}_j$ and $ f^{\nu}_{\zeta^{\pm 1}(j)}$ are affine on a common interval: $\nu_{\zeta(j)}$ or $\nu_j$ 
   respectively.
  
\begin{Lemma}\label{diffeo-V}
Let $\Phi$ satisfy the ruling conditions $\rm(EC), (E\pm),  \textrm{and} \, (CS\textrm{-}\lambda)$ for some number $\lambda > 1$, and
$\tilde{V}:=\{\widetilde{V}_j: j \in \{1,\dots,2N\}\}$ be the set of neighborhoods of Lemma \ref{affin-diffeo}.
Then, for all $j$:
\vspace{-3pt}
\begin{enumerate}[noitemsep, leftmargin=17pt]
 \item[$(a)$] $\widetilde{V}_j$ satisfies  Proposition \ref{small-nu}:
$\widetilde{\Phi}^{\tilde{V}}_{j}(\widetilde{V}_j) \subset \widetilde{I}_{\delta(j)} \setminus 
\widetilde{V}_{\delta(j)}$
and 
$\widetilde{\Phi}^{\tilde{V}}_{j}(\widetilde{V}_{\zeta(j)}) \subset \widetilde{I}_{\gamma(j)} \setminus 
\widetilde{V}_{\iota(j)}$, and
\item[$(b)$]
   $\left(  \widetilde{\Phi}^{\tilde{V}}_{\delta^{k(j)-1}(j)} \circ \dots \circ \widetilde{\Phi}^{\tilde{V}}_{\delta (j)} 
  \circ \widetilde{\Phi}^{\tilde{V}}_{j}\right)_{| \widetilde{V}_j} 
  =  
  \left(  \widetilde{\Phi}^{\tilde{V}}_{\gamma^{k(j)-1}(\zeta^{-1} (j) )} \circ \dots \circ 
\widetilde{\Phi}^{\tilde{V}}_{\gamma (\zeta^{-1} (j) )} \circ \widetilde{\Phi}^{\tilde{V}}_{\zeta^{-1} (j)} \right)_{| \widetilde{V}_j} $,\\
   where $k(j)$ is the integer of condition $\rm(EC)$ at the cutting point $\widetilde{z}_j$.
   \end{enumerate}
   \end{Lemma}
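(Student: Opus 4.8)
Assertion (a) is exactly the conclusion of Proposition~\ref{small-nu} for the particular choice of neighborhoods $\nu=\tilde V$, so it amounts to verifying the two displayed inclusions, and the plan is the same ``smallness of the neighborhoods'' continuity argument as in that proposition. Assertion (b) will then be deduced from (a) together with Lemma~\ref{affin-diffeo} and the ruling conditions (E+), (E-), (EC).

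For (a): by (\ref{Vcdj}), $\widetilde V_j=V_j^{c_j}\cup V_j^{d_j}$ is an interval with $\widetilde z_j$ in its interior, $V_j^{d_j}\subset\widetilde I_j$ lying on the positive side of $\widetilde z_j$ and $V_j^{c_j}\subset\widetilde I_{\zeta^{-1}(j)}$ on the negative side, and the extension $\widetilde\Phi^{\tilde V}_j$ agrees with $\widetilde\Phi_j$ on $\widetilde I_j$. On $V_j^{d_j}$ this gives $\widetilde\Phi^{\tilde V}_j(V_j^{d_j})=\widetilde\Phi(V_j^{d_j})\subset\widetilde I_{\delta(j)}$ by the orbit inclusions established in the proof of Lemma~\ref{affin-diffeo} (the case $m=1$, available because $k(j)\geq 2$). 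On $V_j^{c_j}$ the extension is $\lambda$-affine and continuous at $\widetilde z_j$ with value $\widetilde\Phi_j(\widetilde z_j)$; this point is not a cutting point — otherwise its forward orbit, used in (E+) and (EC), would be ambiguous — hence it lies in the interior of $\widetilde I_{\delta(j)}$, so a short enough $V_j^{c_j}$ is again mapped into $\widetilde I_{\delta(j)}$. Thus $\widetilde\Phi^{\tilde V}_j(\widetilde V_j)$ is a short interval around $\widetilde\Phi_j(\widetilde z_j)$ contained in $\widetilde I_{\delta(j)}$; since $\widetilde\Phi_j(\widetilde z_j)\neq\widetilde z_{\delta(j)}$, it is also disjoint from the (equally short) neighborhood $\widetilde V_{\delta(j)}$, which is the first inclusion. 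The second, $\widetilde\Phi^{\tilde V}_j(\widetilde V_{\zeta(j)})\subset\widetilde I_{\gamma(j)}\setminus\widetilde V_{\iota(j)}$, is obtained identically from (E-), using that $\widetilde\Phi_j(\widetilde z_{\zeta(j)})$ lies in the interior of $\widetilde I_{\gamma(j)}$, away from $\widetilde z_{\iota(j)}$. To make these ``short enough'' requirements hold I would, if necessary, first replace all the $\widetilde V_j$ by smaller neighborhoods of the $\widetilde z_j$ chosen simultaneously; this preserves everything needed from Lemma~\ref{affin-diffeo} (the maps $\widetilde\Phi^{k(j)}|_{\widetilde V_j}$ remain continuous and $\lambda^{k(j)}$-affine and the orbit inclusions of its proof persist) and only changes the extensions $\widetilde\Phi^{\nu}_{\bullet}$ accordingly.

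For (b): set $F_\delta:=\widetilde\Phi^{\tilde V}_{\delta^{k(j)-1}(j)}\circ\cdots\circ\widetilde\Phi^{\tilde V}_{j}$ and $F_\gamma:=\widetilde\Phi^{\tilde V}_{\gamma^{k(j)-1}(\zeta^{-1}(j))}\circ\cdots\circ\widetilde\Phi^{\tilde V}_{\zeta^{-1}(j)}$. Iterating the argument of (a) along the cycle of $\delta$ through $j$ (resp. of $\gamma$ through $\zeta^{-1}(j)$) shows that when $\widetilde V_j$ is pushed forward by the successive factors of $F_\delta$ (resp. $F_\gamma$), each intermediate image stays inside the interval $\widetilde I_{\delta^m(j)}$ (resp. $\widetilde I_{\gamma^m(\zeta^{-1}(j))}$) on which the next factor coincides with the honest branch $\widetilde\Phi$, and in particular inside its affine domain; hence $F_\delta$ and $F_\gamma$ are well defined, continuous, and $\lambda^{k(j)}$-affine on all of $\widetilde V_j$. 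On the positive half $V_j^{d_j}\subset\widetilde I_j$ the first factor of $F_\delta$ is $\widetilde\Phi_j=\widetilde\Phi$, and by (E+) each subsequent factor acts as $\widetilde\Phi$ on $\widetilde I_{\delta^m(j)}$, so $F_\delta$ coincides with the honest iterate $\widetilde\Phi^{k(j)}$ on $V_j^{d_j}$; symmetrically, by (E-), $F_\gamma=\widetilde\Phi^{k(j)}$ on the negative half $V_j^{c_j}$. Since $F_\delta$, $F_\gamma$ and $\widetilde\Phi^{k(j)}$ are all continuous and $\lambda^{k(j)}$-affine on $\widetilde V_j$ (Lemma~\ref{affin-diffeo} for the last), and two affine maps of equal slope that agree on a subinterval agree everywhere, it follows that $F_\delta=\widetilde\Phi^{k(j)}=F_\gamma$ on $\widetilde V_j$, which is the asserted identity. (One could instead argue that $F_\delta(\widetilde z_j)=F_\gamma(\widetilde z_j)=Z_j^{k(j)}$ by (EC) and use that $\widetilde z_j$ is interior to $\widetilde V_j$.)

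The step I expect to be the main obstacle is precisely the bookkeeping just invoked: showing that along either forward orbit of $\widetilde z_j$ every intermediate image of $\widetilde V_j$ under the affine extensions remains in the interval where the next extension equals $\widetilde\Phi$, equivalently inside the relevant affine domain $I^{\tilde V}_{\bullet}$. This controls how small the $\widetilde V_j$ must be chosen in (a), and is the reason (a) and (b) should be proved together. Once it is in place, (b) is immediate from the two one-sided identifications with $\widetilde\Phi^{k(j)}$ and the rigidity of affine maps of fixed slope.
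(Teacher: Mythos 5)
Your part (b) follows the paper's own route (both compositions are $\lambda^{k(j)}$-affine on $\widetilde V_j$ and agree at $\widetilde z_j$ by (EC), hence agree), but your part (a) has a genuine gap, and (b) inherits it because the affineness of the two compositions on all of $\widetilde V_j$ rests on the same containments. The neighborhoods $\widetilde V_j$ are not at your disposal: Lemma \ref{affin-diffeo} defines $\widetilde V_j=V_j^{c_j}\cup V_j^{d_j}$ by pulling back the specific intervals $J_{c_j},J_{d_j}$ of (\ref{cj-dj}) along the two orbits of $\widetilde z_j$, it proves a maximality property for them, and the sequel uses them with exactly this size ($\widetilde V_j$ is identified with the type (ii) interval $I_{\tilde v_j}$ of (\ref{Ivtilde}), it is the base $W^{0,0}_j$ of the recursion (\ref{Wpq}), and Proposition \ref{type-2} invokes Lemma \ref{diffeo-V}(a) for these exact intervals). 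So the fallback ``replace all $\widetilde V_j$ by smaller neighborhoods chosen simultaneously'' is not available, and with it falls the ``short enough'' justification of both inclusions. Moreover the disjointness $\widetilde\Phi^{\tilde V}_j(\widetilde V_j)\cap\widetilde V_{\delta(j)}=\emptyset$ is not a soft consequence of $\widetilde\Phi_j(\widetilde z_j)\neq\widetilde z_{\delta(j)}$: condition (E+) with $m=1$ forces $\widetilde\Phi_j(\widetilde z_j)$ into $I_{\delta(j),\delta^2(j)}$, the same subinterval of $\widetilde I_{\delta(j)}$ adjacent to $\widetilde z_{\delta(j)}$ that contains the right half of $\widetilde V_{\delta(j)}$; both intervals whose disjointness is claimed are anchored near $\widetilde z_{\delta(j)}$ and have sizes dictated by the dynamics, so ``both are short'' proves nothing here.

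What is missing is the quantitative localization that the paper supplies for the actual $\widetilde V_j$: starting from property (\ref{k(j)-image}), one pulls $\widetilde\Phi^{k(j)}(\widetilde V_j)$ back through the inverse branches $\widetilde\Phi^{-1}_{\delta^{k(j)-1}(j)},\dots,\widetilde\Phi^{-1}_{\delta(j)}$, using (E+) at the shifted cutting point $\widetilde z_{\delta(j)}$ to check that each pullback stays inside the image of the previous branch; the resulting $\lambda$-affine map agrees with $\widetilde\Phi_j$ on $\widetilde V_j\cap\widetilde I_j$, hence coincides with the extension $\widetilde\Phi^{\tilde V}_j$, giving $\widetilde\Phi^{\tilde V}_j(\widetilde V_j)\subset\widetilde I_{\delta(j)}$ with no shrinking. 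Applying the same computation to $\widetilde V_{\delta(j)}$ and using $\delta^{k(j)}(j)=\iota\big(\gamma^{k(j)-1}(\zeta^{-1}(j))\big)$ (Lemma \ref{gamma-delta}) places $\widetilde\Phi^{\tilde V}_{\delta(j)}(\widetilde V_{\delta(j)})$ and $\widetilde\Phi^{\tilde V}_{\delta(j)}\circ\widetilde\Phi^{\tilde V}_j(\widetilde V_j)$ in disjoint subintervals of $\widetilde I_{\delta^2(j)}$, which is what yields $\widetilde\Phi^{\tilde V}_j(\widetilde V_j)\cap\widetilde V_{\delta(j)}=\emptyset$. These are precisely the containments you flag as ``the main obstacle'' in (b); they must be proved from (E+), (E--) and (\ref{k(j)-image}), not arranged by choosing the neighborhoods small.
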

  \begin{proof} From the proof of Lemma \ref{affin-diffeo}, the condition (\ref{k(j)-image}) is satisfied for the neighborhoods $\widetilde{V}_j$.
 To simplify the formulation we consider the situation where $k(j) = 3$. 
 Conditions (\ref{k(j)-image}) and (SE) imply, in particular that: \\
 \centerline{ $ \widetilde{\Phi}^{3}(\widetilde{V}_j) \subset \widetilde{\Phi}_{\delta^2 (j)}(\widetilde{I}_{\delta^2 (j)})$ and symmetrically
 $ \widetilde{\Phi}^{3}(\widetilde{V}_j) \subset \widetilde{\Phi}_{\gamma^2 (\zeta^{-1} (j))}
 (\widetilde{I}_{\gamma^2 (\zeta^{-1} (j))})$. Thus: }\\ 
 \centerline{$  \widetilde{\Phi}_{\delta^2 (j)}^{-1}\left( \widetilde{\Phi}^{3}(\widetilde{V}_j)\right)  \subset \widetilde{I}_{\delta^2 (j)}$ and 
 $  \widetilde{\Phi}_{\gamma^2 (\zeta^{-1} (j))}^{-1}\left( \widetilde{\Phi}^{3}(\widetilde{V}_j)\right)  \subset 
 \widetilde{I}_{\gamma^2 (\zeta^{-1} (j))}$. }
 We focus on one side, for instance the $\delta(j)$-side.
 The inclusion is in fact more restrictive:
$\widetilde{\Phi}_{\delta^2 (j)}^{-1}\left( \widetilde{\Phi}^{3}(\widetilde{V}_j)\right) = \left[ \alpha ; \widetilde{z}_{\zeta (\delta^2(j))} \right] \subset  \widetilde{I}_{\delta^2 (j)}$ and $\alpha$ 
 satisfies: $ \alpha > \widetilde{\Phi}_{\delta (j)} (\widetilde{z}_{\delta (j)})$ with $\widetilde{\Phi}_{\delta (j)} (\widetilde{z}_{\delta (j)}) \in \widetilde{I}_{\delta^2 (j)}$.
 Indeed, by condition (E+) for the cutting point $\widetilde{z}_{\delta (j)}$, we have:\\ 
  $ \widetilde{\Phi}_{\delta^2 (j)} ( \widetilde{\Phi}_{\delta (j)} (\widetilde{z}_{\delta (j)} ) ) \in 
 \widetilde{I}_{\overline{\gamma^2} (\zeta^{-1}(j))}$ and 
 $ \widetilde{\Phi}_{\delta^2 (j)} ( \alpha) \notin 
 \widetilde{I}_{\overline{\gamma^2} (\zeta^{-1}(j))}$, by (\ref{k(j)-image}).
 This implies:\\
   $   \widetilde{\Phi}_{\delta^2 (j)}^{-1}\left( \widetilde{\Phi}^{3}(\widetilde{V}_j)\right)  
   \subset  \widetilde{\Phi}_{\delta (j)} ( \widetilde{I}_{\delta (j)} ) \cap \widetilde{I}_{\delta^2 (j)}$ and thus we obtain
 $\widetilde{\Phi}_{\delta (j)}^{-1} \circ \widetilde{\Phi}_{\delta^2 (j)}^{-1}\left( \widetilde{\Phi}^{3}(\widetilde{V}_j)\right)   \subset   \widetilde{I}_{\delta (j)}.$ 
  The map 
 $\widetilde{\Phi}_{\delta (j)}^{-1} \circ \widetilde{\Phi}_{\delta^2 (j)}^{-1} \circ \widetilde{\Phi}^{3}_{| \widetilde{V}_j}$
    is defined from $\widetilde{V}_j $ to  $\widetilde{I}_{\delta (j)}$.
     It is an affine map of slope $\lambda$ since 
 $\widetilde{\Phi}_{\delta (j)}^{-1}$ and $ \widetilde{\Phi}_{\delta^2 (j)}^{-1}$ are affine of slope $\lambda^{-1}$ and 
 $\widetilde{\Phi}^{3}_{| \widetilde{V}_j}$ is affine of slope $\lambda^3$ by Lemma \ref{affin-diffeo}.
 By definition of the $\lambda$-affine extension 
 $\widetilde{\Phi}^{\nu}_{j}$ with $\nu = \tilde{V}$ in (\ref{affine-extension}) and,  since: \\
 $\widetilde{\Phi}_{\delta (j)}^{-1} \circ 
 \widetilde{\Phi}_{\delta^2 (j)}^{-1} \circ \widetilde{\Phi}^{3}_{| \widetilde{V}_j \cap \tilde{I}_j} = 
 {\widetilde{\Phi_j}}_{{| \widetilde{V}}_j \cap \tilde{I}_j}$,
 we obtain 
$\widetilde{\Phi}^{\tilde{V}}_{j} (\widetilde{V}_j) = \widetilde{\Phi}_{\delta (j)}^{-1} \circ 
 \widetilde{\Phi}_{\delta^2 (j)}^{-1} \circ \widetilde{\Phi}^{3} ( \widetilde{V}_j) \subset 
 \widetilde{I}_{\delta (j)}$, this is a part of the result $(a)$ in the Lemma.\\
 We apply the same arguments to the neighborhood $\widetilde{V}_{\delta (j)}$ and we obtain:\\
 \centerline{ $\widetilde{\Phi}^{\tilde{V}}_{\delta(j)} (\widetilde{V}_{\delta(j)}) \subset \widetilde{I}_{\delta^2 (j)}$ and 
 $\widetilde{\Phi}^{\tilde{V}}_{\delta^2(j)} \circ \widetilde{\Phi}^{\tilde{V}}_{\delta(j)} (\widetilde{V}_{\delta(j)}) \subset 
 \widetilde{I}_{\overline{\gamma^2} (\zeta^{-1}(j))}$.}
 The last inclusion comes from Lemma \ref{gamma-delta} for $k(j) = 3$, i.e.: 
 $\delta^3 (j) = \overline{\gamma^2} (\zeta^{-1}(j))$.\\
 Hence, we obtain that: $\widetilde{\Phi}^{\tilde{V}}_{\delta(j)} (\widetilde{V}_{\delta(j)})$ and 
 $\widetilde{\Phi}^{\tilde{V}}_{\delta(j)} \circ \widetilde{\Phi}^{\tilde{V}}_{j} (\widetilde{V}_{j})$ are two disjoint sub-intervals of 
 $\widetilde{I}_{\delta^2 (j)}$ and then $ \widetilde{V}_{\delta(j)} \cap \widetilde{\Phi}^{V}_{j} (\widetilde{V}_{j}) = \emptyset$. 

  This completes the proof of condition $(a)$ for the $\delta$-side in the case $k(j) =3$.
 For the $\gamma$-side we replace condition (E+) by (E-) and use the same arguments. The general argument, for any $k(j)$, is the same with more compositions.
  
The neighborhoods $\widetilde{V}_{j} = V_j^{c_j} \cup V_j^{d_j}$ in the proof of 
 Lemma \ref{affin-diffeo}  satisfy the equalities in (\ref{compositionV}).
 Moreover, by  definition of the $\lambda$-affine extension $\widetilde{\Phi}^{\tilde{V}}_{j}$ on the interval $I^{\tilde{V}}_j$
 in (\ref{affine-extension}), the two maps $\widetilde{\Phi}^{\tilde{V}}_{j}$ and 
 $\widetilde{\Phi}^{\tilde{V}}_{\zeta^{-1} (j)}$ are  $\lambda$-affine on $\widetilde{V}_{j}$ with:\\
 $\widetilde{\Phi}^{\tilde{V}}_{j}(\widetilde{V}_j) \subset \widetilde{I}_{\delta(j)} \setminus \widetilde{V}_{\delta(j)}$
and 
$\widetilde{\Phi}^{\tilde{V}}_{\zeta^{-1} (j)}(\widetilde{V}_{j}) \subset \widetilde{I}_{\gamma(\zeta^{-1} (j))} 
\setminus \widetilde{V}_{\iota(\zeta^{-1} (j))}$, from the property $(a)$ 	above.\\
 Hence, as in Lemma \ref{affin-diffeo}, both compositions in each side of $(b)$ are affine of slope $\lambda^{k(j)}$ and by (EC) they are equal at the point $\{\tilde{z}_j\}= V_{j}^{c_j} \cap V_{j}^{d_j}$. Thus the equality $(b)$ is satisfied in $ \widetilde{V}_{j}$ .
\end{proof}
\subsection{ A parametrised extension family from $\Phi$}\label{parametrised-family}

 
 The goal of this paragraph is to extend further, in a parametrised way, the set of neighborhoods 
 $\nu= \{ \nu_j : j = 1, \dots, 2N \}$ used in the family $[ f^{\nu}_j]$ in (\ref{Diff-nu}).\\
We first enlarge each neighborhood in the collection from 
 $ \widetilde{V}_j$ to  $W_j$, on which the diffeomorphisms are affine.
   Recall the definition of $  \widetilde{V}_j$ via the left and right pre-images of the intervals:
 $J_{c_j}:=[ \widetilde{z}_{c_j} ,  \mathscr{z}_{_{c_j}} ] \subset \widetilde{I}_{c_j}$ and
 $J_{d_j}:=[ \mathscr{z}_{_{d_j}} ,  \widetilde{z}_{\zeta (d_j)} ] \subset \widetilde{I}_{d_j}$, given in (\ref{cj-dj}).
Recall that $I^{\tilde{V}}_j = \widetilde{V}_j \cup \widetilde{I}_j \cup \widetilde{V}_{\zeta (j)}$,  
as in (\ref{affine-extension}), for $\nu=\tilde{V}=\{\widetilde{V}_j: j \in \{1,\dots,2N\}\}$,  the set of neighborhoods of Lemma \ref{affin-diffeo}. 
Let us consider the intervals:\\ 
 \centerline{  $J'_{c_j} := [ \partial^{-}(I_{c_j}^{\tilde{V}}) ;  \mathscr{z}_{_{c_j}} ]$ and
    $J'_{d_j} := [ \mathscr{z}_{_{d_j}} ; \partial^{+}(I_{d_j}^{\tilde{V}})]$, they satisfy:}
     \vspace{-5pt}
  \begin{equation}\label{J'}
  \begin{array}{l}
  \textrm{[i]  }  {I}_{c_j}^{\tilde{V}} \supset J'_{c_j} \supset J_{c_j} \textrm{ and } 
   I_{d_j}^{\tilde{V}} \supset J'_{d_j} \supset J_{d_j}, 
  \\ 
   \textrm{[ii]  } \widetilde{\Phi}^{\tilde{V}}_{c_j} (J'_{c_j}) \cap \widetilde{\Phi}^{\tilde{V}}_{d_j} (J'_{d_j}) = \mathscr{z}_{j}^{k(j)} \textrm{ by condition (EC) }.
     \end{array}
   \end{equation}
    \vspace{-6pt}
\noindent Let  ${\cal W}_j := \widetilde{\Phi}^{\tilde{V}}_{c_j} (J'_{c_j}) \cup 
    \widetilde{\Phi}^{\tilde{V}}_{d_j} (J'_{d_j})$, then from Lemma \ref{diffeo-V}, exactly as in (\ref{k(j)-image}), it satisfies:
     \vspace{-5pt}
   \begin{equation}\label{almost all}  
  {\cal W}_j \cap  {I}^{\tilde{V}}_{k} \neq \emptyset, \textrm{ for all }  k \neq \overline{c_j}, \overline{d_j} \textrm{ and } j\in \{1,\dots,2N\}. 
 \end{equation}
  
 \vspace{-6pt}
 \noindent The neighborhood $\widetilde{V}_j$ was defined as the pre-images of the intervals $J_{c_j}$ and $J_{d_j}$ along the orbits of the cutting point $\tilde{z_j}$. We do the same for the larger  intervals $J'_{c_j}$ and $J'_{d_j}$. The various pre-images of $J'_{c_j}$ and $J'_{d_j}$ under $\widetilde{\Phi}$ are well defined, for instance:\\
    \centerline{$J'_{d_j} \subset \widetilde{\Phi}_{\delta^{k(j) - 2}} (\widetilde{I}_{\delta^{k(j) - 2}})$ and thus 
  $(\widetilde{\Phi}_{\delta^{k(j) - 2}} )^{-1} (J'_{d_j} ) \subset \widetilde{I}_{\delta^{k(j) - 2}}$.}\\
  We consider the $\widetilde{\Phi}^{k(j) - 1}$ pre-images of $J'_{d_j}$ and $J'_{c_j}$ along the two orbits of 
 $\widetilde{z}_j$ exactly as in (\ref{Vcdj}), and we define: 
  \begin{equation} \hspace{-5pt}
\label{Wj}
  \begin{array}{l}  
  W_{j}^{-} := \Big[\widetilde{\Phi}^{- k(j) + 1} \Big(\partial^{-}(I_{c_j}^{\tilde{V}})\Big) ; \widetilde{z}_j \Big ] 
  \subset \widetilde{I}_{\zeta^{-1}(j)},\, 
  W_{j}^{+} := \Big[ \widetilde{z}_j ; \widetilde{\Phi}^{- k(j) + 1} \Big(\partial^{+}(I_{d_j}^{\tilde{V}})\Big) \Big ] 
  \subset \widetilde{I}_{j} \\
\hspace{5cm} \,\textrm{ and }\; W_j :=  W_{j}^{-} \cup  W_{j}^{+} \supset \widetilde{V}_j.
  \end{array}
   \end{equation}
   \noindent \textcolor{black}{ At this point,  we obtain two  neighborhoods for each cutting point $\tilde{z}_j$,  either $ W_j^{0,0} := \tilde{V}_j$ or $ W_j^{1,1} := W_j $, }
 \textcolor{black}{and we can define  several extensions of the intervals $\tilde{I}_j$, replacing $I^{\tilde{V}}_j$,} namely:
    \begin{equation}\hspace{-3pt}\label{Interval-IW}
  \begin{array}{l} 
       \textcolor{black}{ I^{{1,0}}_{j}} := W_j \cup \widetilde{I}_j \cup 
   \widetilde{V}_{\zeta (j)} ;\quad
  \textcolor{black}{ I^{{0,1}}_{j}} := \widetilde{V}_j \cup \widetilde{I}_j \cup 
   W_{\zeta (j)} ; \quad
   \textcolor{black}{ I^{{1,1}}_{j} }:= W_j \cup \widetilde{I}_j \cup W_{\zeta (j)}.
   \end{array}
    \end{equation}
     \textcolor{black}{We also denote the various $\lambda$-affine extensions, as in (\ref{affine-extension}), by
   \textcolor{black}{$\widetilde{\Phi}^{*}_j$}, where the symbol $*$ stands for one possible choice }\textcolor{black}{ in $\{(0,0),(0,1), (1,0), (1,1)\}$.}

   \noindent    The enlargement operation: $\widetilde{V}_j \rightarrow W_j $ defined above can be iterated by replacing the intervals $ I^{\tilde{V}}_{j}$ 
  in definition (\ref{Wj}) by any of the intervals in (\ref{Interval-IW}).
   This iteration can be done ``$p_j$" times on the left $(-)$ and ``$q_j$" times on the right $(+)$, for $p_j,q_j$ arbitrary positive integers. 
   More precisely, consider the \textcolor{black}{recursive definition} for each $j = 1, \dots, 2N:$ 
 \begin{eqnarray}\label{Wpq}
 \begin{array}{l} 
  W_j^{0,0} = \widetilde{V}_j,\textrm { and }
  W_j^{p_j,q_j} := \Big[   \widetilde{\Phi}^{- k(j) + 1} \Big(\partial^{-}(W_{c_j}^{p_j-1, q'})\Big) ; 
  \widetilde{\Phi}^{- k(j) + 1} \Big(\partial^{+}(W_{\zeta(d_j)}^{p', q_j-1})\Big)    \Big], \\
  \textrm {for }  p_j, q_j > 0 \textrm { and }  p'\leq p_j-1, q'\leq q_j-1 .
  \end{array}
  \end{eqnarray} 
   This iterated enlargement defines a family of neighborhoods  
  $$W^* := \{ W^{p_j, q_j}_{j} , j\in \{1, \dots, 2N\},\; p_j \geq 0, \; q_j \geq 0\},$$ (see Figure \ref{VariationInterval}), and we define a
  $\lambda$-affine extensions \textcolor{black}{$\widetilde{\Phi}^{p_j,q_{\zeta(j)}}_j$} of 
  \textcolor{black}{$\widetilde{\Phi}_j$}, on the interval
   \vspace{-5pt}
  \textcolor{black}{
  \begin{equation}\label{Interval-IW-pq}
   I^{p_j,q_{\zeta(j)}}_j  := W^{p_j,q'}_j \cup \widetilde{I}_j \cup 
   W^{p',q_{\zeta(j)}}_{\zeta (j)},
      \end{equation} 
      }
   for $p',\; q' \geq 0$ free indices.        \\
     \noindent  The following result is a version of Lemma \ref{diffeo-V} for the neighorhoods \textcolor{black}{in $W^{*}$}.
     
      \textcolor{black}{In order to make notation  lighter in the following results, we will use the generic symbol $\widetilde{\Phi}^{W^{*}}_j$ for the different affine extensions associated to  neighborhoods in $W^*$. }  
          \begin{Prop}\label{gen-ext}
      For the intervals  $I^{p_j,q_{\zeta(j)}}_j$ and the extensions $\widetilde{\Phi}^{W^*}_j$ defined above,
      and all pairs of integers $(p_j,q_{j}) \in \mathbb{N} \times \mathbb{N}$, 
      $j \in \{1, \dots, 2N\}$
      the following properties are satisfied:
     \begin{enumerate}[noitemsep, leftmargin=17pt]
\item[$(a)$]  $\widetilde{\Phi}^{W^*}_j(W^{p_j,q_j}_j) \subset \widetilde{I}_{\delta(j)} \setminus  W^{p_{\delta(j)},q_{\delta(j)}}_{\delta(j)}$ and 
  $\widetilde{\Phi}_j^{W^*}(W^{p_{\zeta(j)},q_{\zeta(j)}}_{\zeta(j)}) \subset \widetilde{I}_{\gamma(j)} 
  \setminus  W^{p_{\iota(j)},q_{\iota(j)}}_{\iota(j)}$,
\item[$(b)$] $\left(  \widetilde{\Phi}^{{W^{*}}}_{\delta^{k(j)-1}(j)} \circ \dots \circ \widetilde{\Phi}^{{W^*}}_{\delta (j)} 
  \circ \widetilde{\Phi}^{{W^{*}}}_{j}\right)_{|_{W^{p_j,q_j}_j}}
  =  
  \left(  \widetilde{\Phi}^{{W^*}}_{\gamma^{k(j)-1}(\zeta^{-1} (j) )} \circ \dots \circ 
\widetilde{\Phi}^{{W^*}}_{\gamma (\zeta^{-1} (j) )} \circ \widetilde{\Phi}^{{W^*}}_{\zeta^{-1} (j)} \right)_{|_{W^{p_j,q_j}_j}}.$ 
\end{enumerate}
\end{Prop}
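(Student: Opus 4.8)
The plan is to prove Proposition \ref{gen-ext} by induction on $p+q$, the base case $p=q=0$ being exactly Lemma \ref{diffeo-V}. The inductive step amounts to checking that the enlargement operation $W^{p-1,\cdot}_j,\,W^{\cdot,q-1}_j \rightsquigarrow W^{p,q}_j$ of (\ref{Wpq}) preserves both conclusions (a) and (b), and the argument is the one already used in Lemma \ref{affin-diffeo} and Lemma \ref{diffeo-V}, pushed one level further. Throughout I work, as in the previous proofs, with the piecewise affine model $\widetilde{\Phi}$ of slope $\lambda$, so that all the $\lambda$-affine extensions $\widetilde{\Phi}^{W^*}_j$ are genuinely affine of slope $\lambda$ on the interval $I^{p,q}_j$ on which they are defined.

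First I would establish (a). Fix $j$ and $(p,q)$; by definition (\ref{Wpq}), $W^{p,q}_j$ is the $\widetilde{\Phi}^{-k(j)+1}$-preimage, along the left and right orbits of $\widetilde{z}_j$, of intervals $W^{p-1,q'}_{c_j}$ and $W^{p',q-1}_{\zeta(d_j)}$. As in Lemma \ref{affin-diffeo}, conditions (E$+$), (E$-$) guarantee that these preimages lie respectively in $\widetilde{I}_j$ and $\widetilde{I}_{\zeta^{-1}(j)}$, so $\widetilde{\Phi}^{k(j)}|_{W^{p,q}_j}$ is a composition of $k(j)$ affine pieces of slope $\lambda$ on each side, hence affine of slope $\lambda^{k(j)}$; an analogue of (\ref{k(j)-image}) then holds, namely $\widetilde{\Phi}^{k(j)}(W^{p,q}_j)$ misses only $\widetilde{I}_{\overline{c_j}}$ and $\widetilde{I}_{\overline{d_j}}$. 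Peeling off the last $k(j)-1$ affine factors on the $\delta$-side exactly as in Lemma \ref{diffeo-V} (using (E$+$) at the cutting point $\widetilde{z}_{\delta(j)}$, and more generally at $\widetilde{z}_{\delta^m(j)}$) gives $\widetilde{\Phi}^{(p,q)}_j(W^{p,q}_j) \subset \widetilde{I}_{\delta(j)}$; the sharper inclusion $\subset \widetilde{I}_{\delta(j)} \setminus W^{p,q}_{\delta(j)}$ comes, as there, from comparing $\widetilde{\Phi}^{(p,q)}_{\delta(j)}(W^{p,q}_{\delta(j)})$ and $\widetilde{\Phi}^{(p,q)}_{\delta(j)} \circ \widetilde{\Phi}^{(p,q)}_j(W^{p,q}_j)$ inside $\widetilde{I}_{\delta^2(j)}$ via Lemma \ref{gamma-delta} and showing they are disjoint. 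The $\gamma$-side is symmetric, replacing (E$+$) by (E$-$). The one genuinely new point over Lemma \ref{diffeo-V} is that, since $W^{p,q}_{\delta(j)}$ is now larger than $\widetilde{V}_{\delta(j)}$, one must check the disjointness still holds; this is where the asymmetry $p' \le p-1$, $q' \le q-1$ in (\ref{Wpq}) is used — the enlargements on the two sides are bounded by strictly earlier stages, so the nested-preimage structure that forced disjointness in the base case is preserved. This bookkeeping is the main obstacle, though it is more a matter of careful indexing than of a new idea.

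For (b), once (a) is known the argument is immediate and mirrors the end of Lemma \ref{diffeo-V}: by (a) and the definition (\ref{affine-extension}) of the $\lambda$-affine extension, each $\widetilde{\Phi}^{W^*}_{\delta^m(j)}$ is $\lambda$-affine on the relevant enlarged neighborhood, so the left-hand composition in (b) is $\lambda^{k(j)}$-affine on $W^{p,q}_j$, and likewise the right-hand composition is $\lambda^{k(j)}$-affine on $W^{p,q}_j$ using the $\gamma$-side half of (a). Two affine maps of the same slope that agree at one point agree everywhere on a connected domain; by condition (EC) the two compositions agree at the single point $\{\widetilde{z}_j\} = W_j^{-} \cap W_j^{+}$, and $W^{p,q}_j = W_j^{-} \cup W_j^{+}$ is connected, so they coincide on all of $W^{p,q}_j$. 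This closes the induction and proves the Proposition.
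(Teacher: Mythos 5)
Your proposal is correct and follows essentially the same route as the paper: the base case is Lemma \ref{diffeo-V}, and the enlargement $\widetilde{V}_j \rightsquigarrow W^{p,q}_j$ is handled by induction, with the analogue of (\ref{k(j)-image}) for the $W^{p,q}_j$ (the paper's condition (\ref{PhiWk}), first obtained from (\ref{almost all}) for $W^{1,1}_j$) replacing (\ref{k(j)-image}) so that the arguments of Lemma \ref{diffeo-V} can be repeated verbatim. Your derivation of (b) from (a) — both compositions are $\lambda^{k(j)}$-affine on the connected set $W^{p,q}_j$ and agree at $\widetilde{z}_j$ by (EC) — is exactly the paper's argument as well.
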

\begin{proof}
 The statement is difficult to read, in particular because of all the indices. We simplify the notations by writing $(p,q)$ instead of $(p_j, q_j)$. For $(p,q) = (0,0)$ the result is Lemma \ref{diffeo-V}, whose proof is based on the property (\ref{k(j)-image}). We observe that the condition (\ref{almost all}) is exactly (\ref{k(j)-image}) when 
$\widetilde{V}_j$ is replaced by $W_j = W_j^{1,1}$ as given in (\ref{Wpq}).
The condition (\ref{almost all}) can be expressed as:
$ (\widetilde{\Phi}^{{W^*}})^{k(j)} (W^{1,1}_{j}) \cap  \tilde{I}^{1,1}_{k} \neq \emptyset, 
\textrm{ for all }  k \neq \overline{c_j}, \overline{d_j}$.\\
This is the first step of an induction giving, with an abuse of notations:
\begin{equation}\label{PhiWk}
 (\widetilde{\Phi}^{{W^*}})^{k(j)} (W^{p,q}_{j}) \cap \tilde{I}^{p,q}_{k}  \neq \emptyset, 
\textrm{ for all }  k \neq \overline{c_j}, \overline{d_j}, \textrm{ and all finite } (p,q).
\end{equation}
  The arguments in the proof of Lemma \ref{diffeo-V} are now used inductively, using (\ref{PhiWk})
  in place of (\ref{k(j)-image}) with no new difficulties.
  \end{proof}
   
 \subsection{Generators and relations from $\Phi$}\label{generators-relations}
 The family of diffeomorphism classes  $[f^{\nu}_j]$ defined in (\ref{Diff-nu}) requires the collection of 
neighborhoods  $ \nu = \left\lbrace  \nu_j \right\rbrace$ to satisfy the conditions of Proposition \ref{small-nu}. 
This is part $(a)$ in Lemma \ref{diffeo-V} and Proposition \ref{gen-ext} for the collection of neighborhoods 
\textcolor{black}{ $V= \{ \tilde{V}_j \}$ and $W^* = \{ W^{p_j,q_j}_j \}$}.
Therefore the family of diffeomorphism classes $\{ [ f^{W^*}_j]: j \in \{1,\dots, 2N\} \}$ obtained from the neighborhoods in
$ W^*$ is well defined. In the previous notation, the set of ``parameters" is hidden in the symbol 
$ *$, it represents  $ * = \big\{ \big(p_j, q_j\big)\in \mathbb{N} \times \mathbb{N} : j \in \{1, \dots, 2N\} \big\}$.

      
       Each diffeomorphism \textcolor{black}{in $[f^{W^*}_{j}]$} is affine of slope $\lambda$ on the interval $I^{p_j,q_{\zeta(j)}}_j$, defined in  (\ref{Interval-IW-pq}), 
 and is affine of slope $\lambda^{-1}$ on the interval
      $ \widetilde{\Phi}^{W^*}_{\iota(j)}( I^{p_{\iota(j)} , q_{\delta(j)} }_{\iota(j)})$. 
   The complementary intervals, called the {\em variation intervals}, are defined by (\ref{complement-nu}), for $j=1,\dots,2N$: 
         \begin{equation}\label{complement-W}
 \begin{array}{l}       
        S^1 \setminus \{  I^{p_j,q_{\zeta(j)}}_j \cup 
  \widetilde{\Phi}^{W^*}_{\iota(j)}( I^{p_{\iota(j)} , q_{\delta(j)} }_{\iota(j)}) \} = 
 L^{q_{\delta(j) } , p_{ j}  }_j \cup R^{{q_{\zeta(j) } , p_{ \iota (j)}  }}_j .
  \end{array}
 \end{equation}

 The first goal in this section is to study how the various parameters $(p_j, q_j)$ are related from one variation interval to another. This is obtained in Lemma \ref{equality-variation} below. It is a key step, it defines an induction on the variation intervals, via an equality among some of the $R^{*}_j$ or $L^{*}_j$, for specific choices of the indices $(p, q)$.\\
  

 
  \begin{Lemma}\label{equality-variation}
  
  With the above notations, the following equalities, among variation intervals around the cutting point 
  $\widetilde{z}_j$ are satisfied, for $a,b,m,n \geq 1$:
  
  \vspace*{-3pt}
     \begin{enumerate}[noitemsep, leftmargin=20pt]
\item[$(a)$] $R^{a,b}_{\zeta^{-1} (j)} =  (\widetilde{\Phi}^{W^*}_{j})^{-1} \circ (\widetilde{\Phi}^{W^*}_{\delta (j)})^{-1} \circ \dots \circ
  (\widetilde{\Phi}^{W^*}_{\delta^{k(j) -2} (j)})^{-1}
  \Big[  R^{a-1,b-1}_{\delta^{k(j) - 1} (j)} \Big] $,
 \item[$(b)$]   $L^{m,n}_{j} = (\widetilde{\Phi}^{W^*}_{\zeta^{-1} (j)})^{-1} \circ 
(\widetilde{\Phi}^{W^*}_{\gamma (\zeta^{-1} (j))})^{-1} 
\circ \dots \circ (\widetilde{\Phi}^{W^*}_{\gamma^{k(j) -2} (\zeta^{-1} (j))})^{-1} 
\Big[  L^{m-1, n-1 }_{\gamma^{k(j) - 1} (\zeta^{-1} (j))} \Big].$
 \end{enumerate}
 The maps $ \beta : \zeta^{-1} (j) \rightarrow \delta^{k(j) - 1} (j) $ and 
$ \alpha : j \rightarrow \gamma^{k(j) - 1} (\zeta^{-1} (j)) $ that appear in $(a)$ and $(b)$ are the permutations of Lemma \ref{alpha-beta}.


\end{Lemma}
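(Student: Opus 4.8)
The plan is to establish both equalities (a) and (b) by exhibiting each "variation interval" as a pre-image of another along the orbit of the cutting point, using the same backward-orbit bookkeeping that defined the neighborhoods $W_j^{p,q}$ in (\ref{Wpq}). I will do (a) in detail; (b) follows by the symmetry $\gamma \leftrightarrow \delta$, $j \leftrightarrow \zeta^{-1}(j)$, replacing the role of condition (E+) by (E-), exactly as in the proof of Lemma \ref{diffeo-V}.

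First I would unwind the definition of $R^{a,b}_{\zeta^{-1}(j)}$ from (\ref{complement-W}): it is one of the two components of $S^1 \setminus \{ I^{p,q}_{\zeta^{-1}(j)} \cup \widetilde{\Phi}^{W^*}_{\iota(\zeta^{-1}(j))}(I^{*}_{\iota(\zeta^{-1}(j))})\}$, so its two boundary points are $\widetilde{z}_j = \partial^{+}(\widetilde{I}_{\zeta^{-1}(j)})$-side endpoint of $I^{p,q}_{\zeta^{-1}(j)}$ and the corresponding endpoint of the $\widetilde{\Phi}^{W^*}_{\iota(\zeta^{-1}(j))}$-image. By (\ref{Interval-IW-pq}) the right-hand endpoint of $I^{a,b}_{\zeta^{-1}(j)}$ inside $\widetilde{I}_{\zeta^{-1}(j)}$ is governed by $W^{a,b'}_{\zeta^{-1}(j)}$, hence — by the recursion (\ref{Wpq}) applied at the cutting point $\widetilde{z}_j$, which sits between $\widetilde{I}_{\zeta^{-1}(j)}$ and $\widetilde{I}_j$ — it is the point $\widetilde{\Phi}^{-k(j)+1}(\partial^{+}(W^{a-1,b-1}_{\zeta(d_j)}))$ pulled back along the right orbit of $\widetilde{z}_j$. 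The key combinatorial facts I would invoke are $c_j = \gamma^{k(j)-1}(\zeta^{-1}(j))$ and $d_j = \delta^{k(j)-1}(j)$ from (\ref{cj-dj}) together with Lemma \ref{adj} giving $\zeta(\overline{d_j}) = \overline{c_j}$, and the fact from Lemma \ref{same-cycle} that all these indices lie in one $\gamma$- (resp. $\delta$-) cycle of length $\ell[j]$ so that $k(\cdot)$ is constant along it.

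Next I would identify the backward orbit explicitly. By conditions (E+) and the construction in Lemma \ref{affin-diffeo}, on the right side of $\widetilde{z}_j$ the iterates satisfy $\widetilde{\Phi}^m(z) \in \widetilde{I}_{\delta^m(j)}$ for $0 \le m \le k(j)-1$, so the $(k(j)-1)$-fold pre-image of a sub-interval of $\widetilde{I}_{\delta^{k(j)-1}(j)}$ along this orbit is computed by the composition $(\widetilde{\Phi}^{W^*}_j)^{-1} \circ (\widetilde{\Phi}^{W^*}_{\delta(j)})^{-1} \circ \cdots \circ (\widetilde{\Phi}^{W^*}_{\delta^{k(j)-2}(j)})^{-1}$ — precisely the operator on the right of (a). Thus it remains to check that the interval to which this operator is applied is exactly $R^{a-1,b-1}_{\delta^{k(j)-1}(j)}$. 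This is where I would re-examine the complementary-interval definition (\ref{complement-W}) for the cutting point $\widetilde{z}_{\delta^{k(j)-1}(j)} = \widetilde{z}_{d_j}$: its relevant $R$-interval has boundary points determined by $\partial^{+}(I^{*,*}_{d_j})$ and the $\widetilde{\Phi}^{W^*}_{\iota(d_j)}$-image, and decrementing both parameters by one accounts precisely for one application of the recursion (\ref{Wpq}). Matching endpoints on both sides, using $\zeta(\overline{d_j})=\overline{c_j}$ to line up the "other" boundary point, finishes (a); then I feed this back into Proposition \ref{gen-ext}(b) to know the extensions involved are genuinely $\lambda$-affine on the relevant domains, so the composition is a bona fide diffeomorphism onto its image.

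The main obstacle is bookkeeping, not conceptual: one must track which parameter pair $(p,q)$ attaches to which side of each interval $I^{p,q}_j$ in (\ref{Interval-IW-pq}) and verify that one step of backward orbit under $\widetilde{\Phi}$ shifts exactly one index of one $W$-neighborhood by one, coherently on both the $c_j$- and $d_j$-sides, so that after $k(j)-1$ steps the pair $(a,b)$ has become $(a-1,b-1)$. The delicate point is the endpoint that is \emph{not} the cutting point $\widetilde{z}_j$ itself: showing it maps correctly requires Lemma \ref{adj} and Lemma \ref{gamma-delta} (the identities $\gamma(\overline{\delta^m(j)}) = \overline{\delta^{m-1}(j)}$ and its $\delta$-counterpart), which is exactly the mechanism that forces the two one-sided orbits of $\widetilde{z}_j$ to land in adjacent intervals after $k(j)$ steps. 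Once this adjacency is in hand the equality of the two closed intervals is immediate since they are pre-images of the same interval under the same composition of affine maps.
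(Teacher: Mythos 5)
Your overall scheme --- read off the two boundary points of $R^{a,b}_{\zeta^{-1}(j)}$ from (\ref{complement-W}), transport them along the $\delta$-orbit of $\widetilde{z}_j$ using the recursion (\ref{Wpq}), and get (b) by the $\gamma/\delta$ symmetry --- is the same in outline as the paper's, which pushes $R^{a,b}_{\zeta^{-1}(j)}$ forward under $\widetilde{\Phi}^{W^*}_{\delta^{k(j)-2}(j)}\circ\dots\circ\widetilde{\Phi}^{W^*}_{\delta(j)}\circ\widetilde{\Phi}^{W^*}_{j}$ and identifies the image with $R^{a-1,b-1}_{\delta^{k(j)-1}(j)}$. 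However, there is a genuine gap at the decisive step. The variation interval is
$R^{a,b}_{\zeta^{-1}(j)}=\big[\partial^{+}(W^{p,a}_j)\,,\,\partial^{-}\big(\widetilde{\Phi}^{W^*}_{\iota(\zeta^{-1}(j))}[W^{b,q}_{\iota(\zeta^{-1}(j))}]\big)\big]$; in particular its left end is $\partial^{+}(W^{p,a}_j)$, not $\widetilde{z}_j$, and by (\ref{Interval-IW-pq}) it is governed by the $W$-neighborhood of $\widetilde{z}_j$, not by $W^{a,b'}_{\zeta^{-1}(j)}$ as you write. For this endpoint the recursion (\ref{Wpq}) indeed shifts one index (only one: the parameter $a$), as you say. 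The whole difficulty is the other endpoint, which lives on the image $\widetilde{\Phi}^{W^*}_{\iota(\zeta^{-1}(j))}(I^{*}_{\iota(\zeta^{-1}(j))})$ and is responsible for the decrement $b\to b-1$. You propose to ``line it up'' using only the combinatorial identities $\zeta(\overline{d_j})=\overline{c_j}$, Lemma \ref{adj} and Lemma \ref{gamma-delta}, and you invoke Proposition \ref{gen-ext}(b) merely to guarantee affineness (affineness is anyway built into the definition (\ref{affine-extension}) of the extensions). That does not suffice: to locate the image of this endpoint under the $\delta$-side composition one must rewrite that composition, restricted to $W^{b,q}_{\iota(\zeta^{-1}(j))}$, as the $\gamma$-side composition by means of the partial equality of Proposition \ref{gen-ext}(b) applied at the adjacent cutting point $\widetilde{z}_{\iota(\zeta^{-1}(j))}$ --- this is exactly where condition (EC) enters --- and then apply the recursion (\ref{Wpq}) for that neighborhood to conclude (in the $k(j)=3$ model) that the image lies in $I^{b-1,q}_{\iota(\delta^{2}(j))}$, which produces both the correct endpoint and the parameter $b-1$.

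Your closing assertion that ``once this adjacency is in hand the equality of the two closed intervals is immediate since they are pre-images of the same interval under the same composition of affine maps'' begs the question: that the two non-cutting-point endpoints correspond under the composition is precisely what has to be proved, and the purely combinatorial lemmas cannot deliver it because they say nothing about where $\widetilde{\Phi}^{W^*}_{\iota(\zeta^{-1}(j))}\big(\partial^{-}(W^{b,q}_{\iota(\zeta^{-1}(j))})\big)$ is sent by $\widetilde{\Phi}^{W^*}_{\delta(j)}\circ\widetilde{\Phi}^{W^*}_{j}$; only the (EC)-induced coincidence of the two $k$-fold compositions at the neighboring cutting point, combined with (\ref{Wpq}), does. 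A smaller bookkeeping slip of the same origin: you write the pulled-back endpoint as $\widetilde{\Phi}^{-k(j)+1}(\partial^{+}(W^{a-1,b-1}_{\zeta(d_j)}))$, but a single application of (\ref{Wpq}) at $\widetilde{z}_j$ decrements only one of the two parameters; the second decrement comes from the other endpoint via the mechanism just described.
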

  \begin{proof}
  As in the proof of Lemma \ref{diffeo-V}, we focus on the case $k(j) =3$ and on one of the two symmetric equalities. For simplicity we use specific parameters $(p,q)$ on the neighborhoods only when it is necessary for the formulation, otherwise the indices are replaced by a \textcolor{black}{ ``$\star$"}, the important indices will be in bold, Figure \ref{VariationInterval} should help.
     \begin{figure}[htbp]
\centerline{\includegraphics[height=70mm]{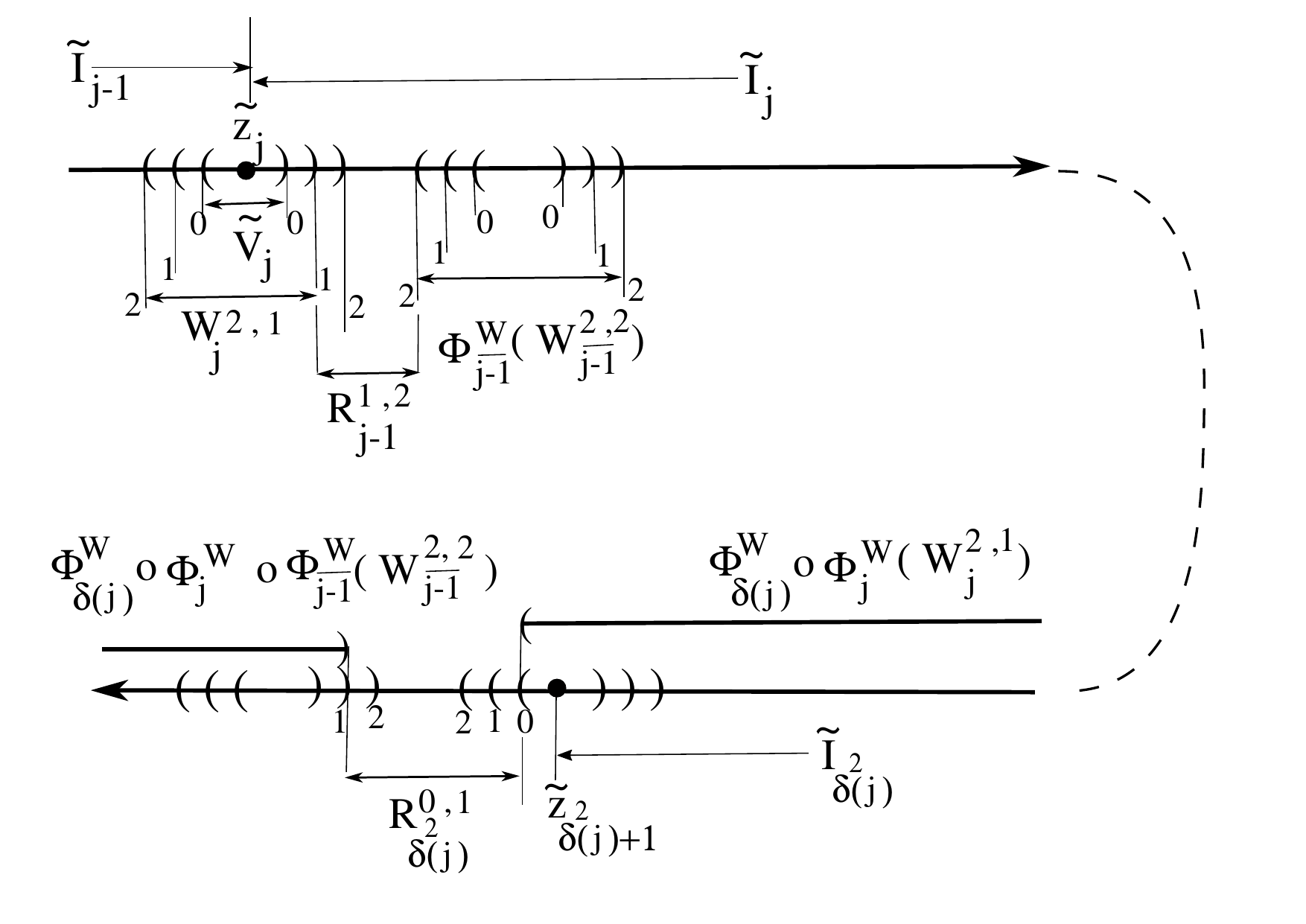}}
\vspace*{-0.25cm}
\caption{ Variation interval equalities for $k(j) =3$, with $\iota(j) = \overline{j}$
   and $\zeta^{\pm 1}(j) = j \pm 1$}
\label{VariationInterval}
\end{figure}

\noindent By definition (see (\ref{complement-W})), the variation interval $R^{\bold{a,b}}_{\zeta^{-1} (j)}$ in equality $(a)$ is the interval between 
$W^{p, \bold{a}}_j \textrm{ and } \widetilde{\Phi}^{W^*}_{\iota(\zeta^{-1} (j))} 
 [ W^{\bold{b}, q}_{\iota(\zeta^{-1} (j))} ]$, see Figure \ref{VariationInterval}, thus: 
\begin{equation}\label{Rab}
 R^{\bold{a,b}}_{\zeta^{-1} (j)}=[\partial^{+}(W^{p, \bold{a}}_j),\partial^{-}(\widetilde{\Phi}^{W^*}_{\iota(\zeta^{-1} )(j)} 
 [ W^{\bold{b}, q}_{\iota(\zeta^{-1} (j))})].
 \end{equation}
 These three intervals, by definition, are contained in 
  \textcolor{black}{$I^{p, \star}_{j}$}, and belong to the domain of definition of $\widetilde{\Phi}^{W^*}_j$.
   The image of $W^{p, \bold{a}}_j$ under $\widetilde{\Phi}^{W^*}_{j}$ is contained in 
   $\widetilde{I}_{\delta(j)}$ by Proposition \ref{gen-ext}-$(a)$ and thus, in the \textcolor{black}{domain} of definition of 
 $\widetilde{\Phi}^{W^*}_{\delta(j)}$ for any parameters  $(p_j,q_j)$.
 From the recursive definition of $W^{p, \bold{a}}_j$ in (\ref{Wpq}) we obtain
  $\widetilde{ \Phi}^{W^*}_{\delta(j)} \circ  \widetilde{\Phi}^{W^*}_{j} \left[  W^{p,\bold{a} }_j  \right] 
 \subset \textcolor{black}{I^{\star, \bold{a -1} }_{\delta^2 (j)} }$ and, in addition:
\begin{equation}\label{bord+}
 \partial^{+} \Big( \widetilde{\Phi}^{W^*}_{\delta(j)} \circ \widetilde{\Phi}^{W^*}_{j} 
 \left[  W^{p,\bold{a} }_j  \right] \Big) = 
 \partial^{+} \Big(\textcolor{black}{ I^{\star, \bold{a -1} }_{\delta^2 (j)}} \Big).
\end{equation} 
\noindent  The image of $\widetilde{\Phi}^{W^*}_{\iota(\zeta^{-1} (j))} 
 [ W^{\bold{b}, q}_{\iota(\zeta^{-1} (j))} ]$ by the same map:
$ \widetilde{\Phi}^{W^*}_{\delta(j)} \circ \widetilde{ \Phi}^{W^*}_{j}  \left[
 \widetilde{\Phi}^{W^*}_{\iota(\zeta^{-1} )(j)} [ W^{\bold{b}, q}_{\iota(\zeta^{-1} (j))} ] \right]$ is one side of 
 the equality $(b)$ in Proposition
 \ref{gen-ext}  for
 $ W^{\bold{b}, q}_{\iota(\zeta^{-1} (j))}$.
 It gives:\\
 $\widetilde{\Phi}_{\delta(j)} ^{W^*}\circ \widetilde{\Phi}_{j} ^{W^*}\circ
 \widetilde{\Phi}^{W^*}_{\iota(\zeta^{-1} (j))} [ W^{ \bold{b}, q}_{\iota(\zeta^{-1} (j))} ] =  
 \widetilde{\Phi}^{W^*}_{\iota (\delta^2(j))} \circ  \widetilde{\Phi}^{W^*}_{\gamma^2( \zeta^{-1}(j))}  \circ
\widetilde{\Phi}^{W^*}_{\gamma(\zeta^{-1} (j))} [ W^{\bold{b}, q}_{\iota(\zeta^{-1} (j))} ]. $
 
\noindent From (\ref{Wpq}) applied to  $W^{\bold{b}, q}_{\iota(\zeta^{-1} (j))}$  we have:
 $\widetilde{\Phi}^{W^*}_{\gamma^2( \zeta^{-1}(j))}  \circ
\widetilde{ \Phi}^{W^*}_{\gamma(\zeta^{-1} (j))} [ W^{ \bold{b}, q}_{\iota(\zeta^{-1} (j))} ] 
 \subset I^{\bold{b-1},q}_{\iota (\delta^2(j) )}$
  and:
   $ \partial^{-} \Big( \widetilde{\Phi}^{W^*}_{\gamma^2( \zeta^{-1}(j))}  \circ
\widetilde{\Phi}^{W^*}_{\gamma(\zeta^{-1} (j))} [ W^{\bold{b}, q}_{\iota(\zeta^{-1} (j))} ] \Big) 
 = \partial^{-}\Big(  I^{\bold{b-1},q}_{\iota (\delta^2(j))} \Big) $.
  Applying $\widetilde{\Phi}^{W^*}_{\iota (\delta^2(j))}$ on both sides of this equality gives:
\begin{equation}\label{bord-}
 \partial^{-} \Big( \widetilde{\Phi}^{W^*}_{\iota (\delta^2(j))} \circ \widetilde{\Phi}^{W^*}_{\gamma^2( \zeta^{-1}(j))}  \circ
 \widetilde{\Phi}^{W^*}_{\gamma(\zeta^{-1} (j))} [ W^{\bold{b} , q}_{\iota(\zeta^{-1} (j))} ] \Big)
 = \partial^{-} \Big( \widetilde{\Phi}^{W^*}_{\iota (\delta^2(j))} [ I^{\bold{b-1} ,q}_{\iota (\delta^2(j))} ] \Big).
\end{equation}
 \noindent Hence, from (\ref{Rab}), (\ref{bord+}) and (\ref{bord-}) we obtain:\\
$ \widetilde{\Phi}^{W^*}_{\delta(j)} \circ  \widetilde{\Phi}_{j}^{W^*}(R^{\bold{a,b}}_{\zeta^{-1} (j)})= [ \partial^{+} \Big(\textcolor{black}{ I^{\star, \bold{a -1} }_{\delta^2 (j)}} \Big), \partial^{-} \Big( \widetilde{\Phi}^{W^*}_{\iota (\delta^2(j))} [ I^{\bold{b-1} ,q}_{\iota (\delta^2(j))} ] \Big)]=R^{\bold{a -1, b -1} }_{\delta^2 (j)} ,$
 which is another formulation of the equality $(a)$ in the case $k(j) =3$.
 
\noindent   The equality $(b)$ of the Lemma is obtained exactly by the same arguments on the other side of the neighborhood $W^*_j$.
   The general case, for any $k(j)$, is obtained with the same arguments using $k(j) - 1$ and $k(j)$ compositions instead of 2 and 3 as above.
   \end{proof}
Lemma \ref{equality-variation} gives a pairing between the variation intervals:
$ R_{\zeta^{-1}(j)}^{p,q}$ and $ R_{\beta (\zeta^{-1}(j))}^{p-1,q-1}$ and between:
$ L_{j}^{p,q}$ and $ L_{\alpha (j)}^{p-1,q-1}$ around the neighborhood $W^*_j$. The combinatorics of the permutations 
$\alpha$ and $\beta$ of Lemma \ref{alpha-beta} are important for the next results and allow an induction process on each variation interval.

\begin{Lemma}\label{induction}
For each $j \in \{1,\dots, 2N\}$, let $r:= r_{\alpha}(j) \geq 1$ be the length of the cycle of $\alpha(j)$
and let $r':= r_{\beta}(\zeta^{-1}(j)) \geq 1$ be the length of the cycle of $\beta (\zeta^{-1}(j))$.\\
 There are two integers $K (r),\, K (r')  \geq k(j) -1$  such that, for all $p,q$:
 
$(a)$ $\widetilde{\Phi}^{-K(r)} (L_j^{p,q}) = L_j^{p+r,q+r}$ and  $ | L_j^{p+r,q+r} | = \lambda^{-K(r)}| L_j^{p,q} |$,

$(b)$ $\widetilde{\Phi}^{-K(r')} (R_{\zeta^{-1}(j)}^{p,q}) = R_{\zeta^{-1}(j)}^{p+r',q+r'}$ and 
$ | R_{\zeta^{-1}(j)}^{p+r',q+r'}| = \lambda^{-K(r')}| R_{\zeta^{-1}(j)}^{p,q} |$,\\
where $|\quad|$ is the length of the  metric intervals  on $S^1$ for which the map $\widetilde{\Phi}$ is affine.
\end{Lemma}
\begin{proof}
The two statements $(a)$ and $(b)$ are symmetric, let us focus on the case $(a)$.\\
 We start with the simple situation when the permutation $\alpha$ has a fixed point: $\alpha (j) = j$. In this case, Lemma \ref{equality-variation}-$(b)$  gives:

$L^{p, q}_{j} = (\widetilde{\Phi}^{W^*}_{\zeta^{-1} (j)})^{-1} \circ 
(\widetilde{\Phi}^{W^*}_{\gamma (\zeta^{-1} (j))})^{-1} 
\circ \dots \circ (\widetilde{\Phi}^{W^*}_{\gamma^{k(j) -2} (\zeta^{-1} (j))})^{-1} 
\Big[  L^{p-1, q-1 }_{j} \Big]$.\\
From the definition of the variation intervals in (\ref{complement-W}) we obtain: 
$L_j^{p,q} \subset \widetilde{I}_{\zeta^{-1} (j)}$ for all parameters $(p,q)$ and thus 
$\widetilde{\Phi}^{W^*}_{\zeta^{-1} (j)} (L_j^{p,q}) = \widetilde{\Phi}(L_j^{p,q})$. Proposition \ref{gen-ext}-$(a)$ gives, in a similar way: 
$\widetilde{\Phi}^{W^*}_{\zeta^{-1} (j)} (L_j^{p,q}) \subset \widetilde{I}_{\gamma (\zeta^{-1} (j))}$ and thus:
$\widetilde{\Phi}^{W^*}_{\gamma (\zeta^{-1} (j) )} \circ \widetilde{\Phi}^{W^*}_{\zeta^{-1} (j)} (L_j^{p,q}) = \widetilde{\Phi}^2(L_j^{p,q})$. By induction, using the same argument, we obtain:\\
\centerline{ $\widetilde{\Phi}^{W^*}_{\gamma^{k(j) -2} (\zeta^{-1} (j) )}\circ \dots \circ \widetilde{\Phi}^{W^*}_{\zeta^{-1} (j)} (L_j^{p,q}) = \widetilde{\Phi}^{k(j) - 1}(L_j^{p,q})$}

\noindent  and thus, from the equality in Lemma \ref{equality-variation}-$(b)$: $L_j^{p,q} = (\widetilde{\Phi}^{k(j) - 1})^{-1}(L_j^{p -1,q -1}) \subset L_j^{p -1,q -1}$.
This is the first part of property $(a)$ when $\alpha (j) = j$ i.e., when $r= 1$. The second statement in $(a)$ in this case is immediate: $ | L_j^{p,q} | = \lambda^{-K(1)}.| L_j^{p-1,q-1} |$, with $K(1) = k(j) -1$, since 
$\widetilde{\Phi}$ is affine of slope $\lambda$ for the metric $ | \quad |$.\\
In the general case, $\alpha (j)$ has a cycle of length $1 \leq r \leq 2N$, i.e., $\alpha^r (j) = j $ and $r$ is minimal for that property. Lemma \ref{equality-variation}-$(b)$ defines a sequence of maps:

\centerline{ $L_j^{p,q} \rightarrow L_{\alpha (j)}^{p-1,q-1} \dots\rightarrow L_{\alpha^r (j) = j}^{p-r,q-r}$,}
\noindent  where each arrow is a composition of 
$k(\alpha^m (j)) -1$ affine maps, for $0 \leq m \leq r-1$.
 From the above arguments, each map in this composition is 
$( \widetilde{\Phi})^{-1}$.
This gives the first statement in $(a)$ for the integer: $K (r) = \sum_{m=0}^{r-1} (k(\alpha^m (j)) -1) \geq k(j) -1$.
\textcolor{black}{The second statement in $(a)$ is immediate, as above}.
The statement $(b)$ is the same by exchanging the permutations $\alpha$ and $\beta$.
\end{proof}

\begin{Cor}\label{PeriodicPoints}
For every $j\in \{ 1, \dots, 2N\}$ and all 
$(p,q)$, there is a unique expanding periodic point $l_j^0 \in L_j^{p,q}$ of period $K( r_{\alpha}(j))$ and a unique periodic point 
$r_{\zeta^{-1}(j)}^0 \in R_{\zeta^{-1} (j)}^{p,q}$ of period $K( r_{\beta}(\zeta^{-1}(j)))$ of the map 
$\widetilde{\Phi}$.
\end{Cor}
\begin{proof} The proof is direct from the Lemma. Indeed, $\widetilde{\Phi}^{K( r_{\alpha}(j))}$ has a unique expanding fixed
point $l_j^0$ in $L_j^{p,q}$ for all $p,q$. This point is periodic of period $K( r_{\alpha}(j))$ under 
$\widetilde{\Phi}$
by the minimality $r$ as the length of the cycle $\alpha (j)$. The integer $K( r_{\alpha}(j))$ is the first iterate of return in 
$L_j^{p,q}$. The arguments are the same for the intervals $R_{\zeta^{-1}}^{p,q}$.
\end{proof}

\begin{Rm}\label{PeriodicOrbits}
Existence and uniqueness of periodic orbits are invariant under conjugacy and thus the original map $\Phi$ has periodic orbits as above.\\
The other observation comes from the combinatorics of the various permutations: $\delta, \gamma, \alpha, \beta$  as discussed in section \ref{combinatorics}.
 \textcolor{black}{ In particular, the cycle of $\alpha$ that permutes $j$ is associated to the orbit of the point $l_j^0 \in L_j^*$ and the orbit of the point 
 $r_{\iota(j)}^0 \in R_{\iota(j)}^* $ is associated to the cycle of  $\beta$ that permutes
 $\iota (j) $, }\textcolor{black}{by Lemma \ref{equality-variation}. From Lemma \ref{alpha-beta}, if the cycle of $\alpha(j)$ has length $r$ then $\beta(\iota (j) )$ is also in a cycle of length $r$ and the two $\widetilde{\Phi}$-orbits are in bijection.}
\end{Rm}

 Before going further, let us make some observations and fix some notations:\\
\centerline{set $W_{j}^{\infty}:=[l_j^0, r_{\zeta^{-1}(j)}^0] $ and $I_{j}^{\infty}:=[l_j^0, r_{j}^0]$, notice that:}
\begin{equation}\label{IntervalLimit}
\forall p, q \geq 1, {\textrm{  }} W_{j}^{p,q} \subset   W_{j}^{\infty}  {\textrm{ and }}
 I_{j}^{p,q} \subset  I_{j}^{\infty}.
\end{equation}

\begin{definition}\label{LimitHomeo}
For all $j \in \{1,\dots, 2N\}$, let $\varphi_{j}^{\infty} \in {\textrm{Homeo}}^+ (S^1)$ be defined as:\\
\centerline{ - $\varphi_{j}^{\infty} : I_{j}^{\infty} \rightarrow [r_{\iota(j)}^0, l_{\iota(j)}^0] $ is affine of slope $\lambda$,}
\centerline{ - $\varphi_{j}^{\infty} : [r_{j}^0, l_{j}^0 ] \rightarrow I_{\iota(j)}^{\infty}$ is affine of slope $\lambda^{-1}$,}
where each interval $[x, y]$ is from $x$ to $y$ along $S^1$ following the positive clockwise orientation.
The points $r_j^0$, $l_j^0$ are called the {\rm {breaking points}} of $\varphi_{j}^{\infty}$.
\end{definition}

The maps $\varphi_{j}^{\infty}$ are indeed homeomorphisms, they are continuous but not differentiable at the breaking points, where the left and right derivatives are in $\{ \lambda, \lambda^{-1} \}$. From the definition, the following properties are immediate:
\begin{equation}\label{varphiInverse}
\varphi_{\iota(j)}^{\infty} = ( \varphi_{j}^{\infty} )^{-1},
\end{equation}
\begin{equation}\label{restrictVarphi}
{\textrm{ for all }} p,q \geq 0,  {\textrm{  }} {\varphi_{j}^{\infty}}_{| \widetilde{I}_{j}^{p,q}} = {{\widetilde{\Phi}}_{j}}^{p,q} 
{\textrm{ and }} 
{\varphi_{j}^{\infty}} ( W_{j}^{p,q} ) = {\widetilde{\Phi}_{j}^{p,q}} ( W_{j}^{p,q} ).
\end{equation}
 
\begin{Lemma}\label{limit}
With the previous notations,  for all
$j \in \{1, \dots,2N\}$ and all
$f_{j}^{p,q} \in \textcolor{black}{[f_j^{W^{*}}]} $:
 $$ \lim_{p,q \rightarrow \infty} f_{j}^{p,q} = \varphi_{j}^{\infty}$$ 
for the topology of uniform convergence in ${\rm{Homeo}}^+ (S^1)$.
\end{Lemma}

\begin{proof}
 The notations in the Lemma are already simplified, we simplity further to avoid confusion on the indices by assuming that all the indices $(p_j,q_j)$ are equal to a single index $p$. For each integer $p \geq 1$, the observation (\ref{restrictVarphi}) implies that the homeomorphisms 
$f_{j}^{p,p}$ and  $\varphi_{j}^{\infty}$ are different only on the compact sets $L_j^{p,p}$ and $R_j^{p,p}$. 
   The image of these compact sets being $R_{\iota(j)}^{p,p}$ and $L_{\iota(j)}^{p,p}$. Each of these compact intervals converge, when $p$ tends to infinity, in the metric topology of $S^1$, to a periodic point, namely to $ l_{j}^0, r_{j}^0, r_{\iota(j)}^{0},  l_{\iota(j)}^{0}$, at an exponential rate by Lemma \ref{induction}. Thus, for the uniform convergence topology on 
${\textrm{Homeo}}^+ (S^1)$, the limit exists and is equal to  $\varphi_{j}^{\infty}$.
\end{proof}

The goal now is to study some properties of the limit homeomorphisms. The next result is central for our global goal:  to define a group from the map $\Phi$.

\begin{Lemma}\label{PartitionVarphi}
  For each cutting point $\widetilde{z}_j$ of $\widetilde{\Phi}$, there is a partition of $S^1$ into $2k(j)$ intervals:
$$S^1 = A^{\infty}_0 \bigcup A^{\infty}_{k(j)} \bigcup_{m = 1}^{k(j) - 1} A_m^{\infty,\pm}, 
    \textrm{ with  }  A^{\infty}_0 = A_0^{\infty,\pm} \textrm{ and } A^{\infty}_{k(j)} = A_{k(j)}^{\infty, \pm} ,$$
\noindent on which the two compositions:

   \vspace{5pt}
\centerline{ $ \Psi^{+}_j := \varphi^{\infty}_{\delta^{k(j)-1}(j)} \circ \dots \circ \varphi^{\infty}_{\delta(j)} \circ \varphi^{\infty}_{j} \;$ and 
   $\; \Psi^{-}_j := \varphi^{\infty}_{\gamma^{k(j)-1}(\zeta^{-1}(j))} \circ \dots \circ \varphi^{\infty}_{\gamma(\zeta^{-1}(j))} 
   \circ \varphi^{\infty}_{\zeta^{-1}(j)}$,}
 
  \vspace{5pt}  

\noindent  satisfy the following properties:
   \vspace*{-3pt}
     \begin{itemize}[noitemsep, leftmargin=20pt]
 \item[$(a)$] $ ( \Psi^{+}_j)_{| A_{m}^{\infty,\pm }} =  ( \Psi^{-}_j)_{| A_{m}^{\infty,\pm }}$, are affine maps of slope 
   $\lambda^{k(j) - 2m}$ for each $m \in \{ 0,\dots, k(j) \}$.
 \item[$(b)$] Two adjacent intervals $A^{\infty,\pm}_{m}$ along $S^1$ intersect either at a breaking point or a point in a
 $\widetilde{\Phi}$-orbit of a breaking point.
 \end{itemize}
    \end{Lemma}
    
   \begin{proof}
   To start the proof let us simplify a little bit the notation. Consider a single integer $p\geq 1$ and the neighborhoods
   $W^p_i := W^{p,p}_i$ for $i$ in the cycle of $\delta(j)$. We denote $\overline{j} = \iota (j)$ and 
   $j\pm 1 $ for $\zeta^{\pm 1} (j)$, we also denote $\Phi^p_j$ instead of $\widetilde{\Phi}^{W^{p,p}}_j$.\\ 
   By Proposition \ref{gen-ext}-$(a)$, the following intervals are disjoint and ordered along $S^1$:
   
          \begin{equation}\label{PartitionAm}
  \begin{array}{l}
 \textrm{$[+]$} \; \; \,  A_{0}^p := W_j^p,\\
 \quad \quad A_{1}^{p,+} := \Phi^p_{\overline{j-1}} (W_{\overline{j-1}}^p), \\
 \quad \quad \quad \vdots \\
  \quad \quad A_{k(j)-1}^{p,+}: = \Phi^p_{\overline{j-1}} \circ \Phi^p_{\overline{\gamma(j-1)}} \circ \dots 
  \circ \Phi^p_{\overline{\gamma^{k(j)-2}(j-1)}}\big(W_{\overline{\gamma^{k(j)-2}(j-1)}}^p\big),\\
 \quad \quad  A_{k(j)}^{p} := \Phi^p_{\overline{j-1}} \circ \dots  
  \circ \Phi^p_{\overline{\gamma^{k(j)-1}(j-1)}}\big(W_{\overline{\gamma^{k(j)-1}(j-1)}}^p\big),\\ [0.85em]
   \textrm{$[-]$} \; \; \,  A_{k(j)-1}^{p,-} := \Phi^p_{\overline{j}} \circ \Phi^p_{\overline{\delta(j)}} \circ \dots 
  \circ \Phi^p_{\overline{\delta^{k(j)-2}(j)}}\big(W_{\textcolor{black}{\delta^{k(j)-1}(j)}}^p\big),\\
 \quad \quad \quad  \vdots \\
  \quad \quad  A_{1}^{p,-} := \Phi^p_{\overline{j}} (W_{\textcolor{black}{\delta (j)}}^p).
   
   \end{array}
   \end{equation} 
  These intervals $A_m^{p,\pm}$ are disjoint on $S^1$ and  separated by either some variation intervals or some images of variation intervals. More precisely, we obtain the following intervals, ordered along $S^1$ after $A_0^p$, with  $R^p_i:=R^{p,p}_i$ and $L^p_i:=L^{p,p}_i $
   (see Figure \ref{Lemm3.5}):

   \begin{equation}\label{PartitionDm}
  \begin{array}{l}
 \textrm{$[+]$} \; \; \, D_1^{p,+} := R_{j-1}^p,\\
 \quad \quad D_2^{p,+} := \Phi^p_{\overline{j-1}} ( R_{\gamma(j-1)}^p),\\
\quad \quad \quad  \vdots \\
 \quad \quad D_{k(j)}^{p,+} := \Phi^p_{\overline{j-1}} \circ \Phi^p_{\overline{\gamma(j-1)}} \circ \dots  
  \circ \Phi^p_{\overline{\gamma^{k(j)-2}(j-1)}} \big( R_{\gamma^{k(j)-1}(j-1)}^p\big),\\ [0.85em]
    \textrm{$[-]$} \; \; \, D_{k(j)}^{p,-} := \Phi^p_{\overline{j}} \circ \Phi^p_{\overline{\delta(j)}} \circ \dots 
  \circ \Phi^p_{\overline{\delta^{k(j)-2}(j)}}(L_{\delta^{k(j)-1}(j)}^p),\\
 \quad \quad \quad   \vdots \\ 
  \quad \quad D_{2}^{p,-} := \Phi^p_{\overline{j}}(L_{\delta(j)}^p),\\
  \quad \quad  D_{1}^{p,-} := L_{j}^p.
 \end{array}
   \end{equation}  
   
 \noindent   The proof of the Lemma has two steps at this point:\\
   (A) To prove that  $ ( \Psi^{+}_j)_{| A_{m}^{p,\pm}} =  ( \Psi^{-}_j)_{| A_{m}^{p,\pm}}$, are affine maps of slope 
   $\lambda^{k(j) - 2m}$ for each\\
    $m \in \{ 0,\dots, k(j) \}$ and all $p \geq 1$.\\
   (B) To take the limit when $p \rightarrow \infty$.

   Step (A) is obtained as a consequence of the equality $(b)$ in Proposition \ref{gen-ext}, applied to each $W_i^p$ for $i$ in the cycle of $\delta (j)$.\\
 $\bullet$  Let us start by the simplest situation: $A_0^p = W_j^p$. In this case the equality $(b)$ of  Proposition \ref{gen-ext}
   gives: 
   $(\Phi^p_{\delta^{k(j)-1}(j)} \circ \dots \circ \Phi^p_{\delta(j)} \circ \Phi^p_{j})_{| W_j^p} =
   (\Phi^p_{\gamma^{k(j)-1}(j-1)} \circ \dots \circ \Phi^p_{\gamma(j-1)} \circ \Phi^p_{j-1})_{| W_j^p}$.\\
 This map is affine of slope $\lambda^{k(j)}$  since it is a composition of $k(j)$ affine maps each of slope $\lambda$.
   By the observation (\ref{restrictVarphi}): 
   $(\varphi_{i}^{\infty})_{ | W_{i}^{p} } = (\Phi_{i}^{p})_{ | W_{i}^{p} }$ for all $i$ and all $p$.
   We replace, in the equality above, each $\Phi_{i}^{p}$ by $\varphi_{i}^{\infty}$ and we obtain the identity:
   $ ( \Psi^{+}_j)_{| A_{0}^{p}} =  ( \Psi^{-}_j)_{| A_{0}^{p}}$ which is the property (A) for $m=0$.\\
    $\bullet$ The next simplest case is for the interval $A_{k(j)}^{p}$. The equality $(b)$ of  Proposition \ref{gen-ext} in this case gives:\\
     $A_{k(j)}^{p} = \Phi^p_{\overline{j-1}} \circ \dots 
  \circ \Phi^p_{\overline{\gamma^{k(j)-1}(j-1)}}\Big(W_{\overline{\gamma^{k(j)-1}(j-1)}}^p\Big) =
  \Phi^p_{\overline{j}} \circ \dots 
  \circ \Phi^p_{\overline{\delta^{k(j)-1}(j)}}\Big(W_{\overline{\gamma^{k(j)-1}(j-1)}}^p\Big) $. We observe that on this interval, each homeomorphism in both compositions in $ \Psi^{+}_j$ and $ \Psi^{-}_j$ are affine of slope $\lambda^{-1}$ and we check that $ \Psi^{+}_j (A_{k(j)}^{p}) = W_{\overline{\gamma^{k(j)-1}(j-1)}}^p$ and 
  $ \Psi^{-}_j (A_{k(j)}^{p}) = W_{\overline{\gamma^{k(j)-1}(j-1)}}^p$ since at each step in both compositions we compose a map with its inverse. We obtain the equality: 
  $ (\Psi^{+}_j)_{ | A_{k(j)}^{p}} = (\Psi^{-}_j)_{ | A_{k(j)}^{p}}$ and the map is affine of slope $\lambda^{-k(j)}$, which is the case $m=k(j)$ in (A).\\
  $\bullet$ For the other intervals $A_m^{p,\pm}$ for $m \in \{ 1, \dots , k(j)-1 \}$ the proofs are essentially the same.
\textcolor{black}{  Each interval $A_m^{p,\pm}$ is a composition of $m$ maps applied to an interval $W_i^p$
  for $i$ in the cycle of $\delta(j)$.} Proposition \ref{gen-ext}-$(b)$  provides  equalities for a composition of $k(j)$ affine maps on the respectives $W_i^p$. We apply  $ \Psi^{+}_j$ and $ \Psi^{-}_j$ on both sides of $A_m^{p,\pm}$ and we observe that, among the $k(j)$ homeomorphisms of each composition, $m$ of them are affine of slope
  $\lambda^{-1}$ and $k(j)-m$ are affine of slope $\lambda$.\\
  Let us give some details for the interval $A_1^{p,+} = \Phi^p_{\overline{j-1}} (W^p_{\overline{j-1}})$.
  The equality $(b)$ of Proposition \ref{gen-ext} gives:\\
  $(\Phi^p_{\delta^{k(j)-2}(j)} \circ \dots \circ \Phi^p_{j} \circ \Phi^p_{\overline{j-1}} )_{| W_{\overline{j-1}}^p} =
   (\Phi^p_{\overline{\delta^{k(j)-1}(j)}} \circ \Phi^p_{\gamma^{k(j)-1}(j-1)} \circ \dots
   \circ \Phi^p_{\gamma(j-1)} )_{| W_{\overline{j-1}}^p}.$\\
   We apply now the two compositions $ \Psi^{+}_j$ and $ \Psi^{-}_j$  on $A_1^{p,+}$  and we use the above equality, with a little  computation we obtain:\\
 \centerline{  $ \Psi^{+}_j (A_1^{p,+}) =  \Psi^{-}_j (A_1^{p,+}) = \Phi^p_{\gamma^{k(j)-1}(j-1)} \circ \dots
   \circ \Phi^p_{\gamma(j-1)} ( W_{\overline{j-1}}^p)$.}\\
    We observe that on $A_1^{p,+} $ the two compositions are affine of  slope $\lambda^{k(j) -2}$. This is the case (A) for $m=1$.\\
    Let us now turn to the step (B) above and study the limit $p \rightarrow \infty$.
   The collection of intervals given in (\ref{PartitionAm}) and (\ref{PartitionDm}) defines the following partition of $S^1$:
   \begin{equation}\label{PartitionAD}
   S^1 = A^p_0 \bigcup A^p_{k(j)} \bigcup_{m = 1}^{k(j) - 1} A_{m}^{p,\pm} 
   \bigcup_{m = 1}^{k(j)} D_{m}^{p,\pm}
   \end{equation}
    \begin{figure}[h!]
\centerline{\includegraphics[height=70mm]{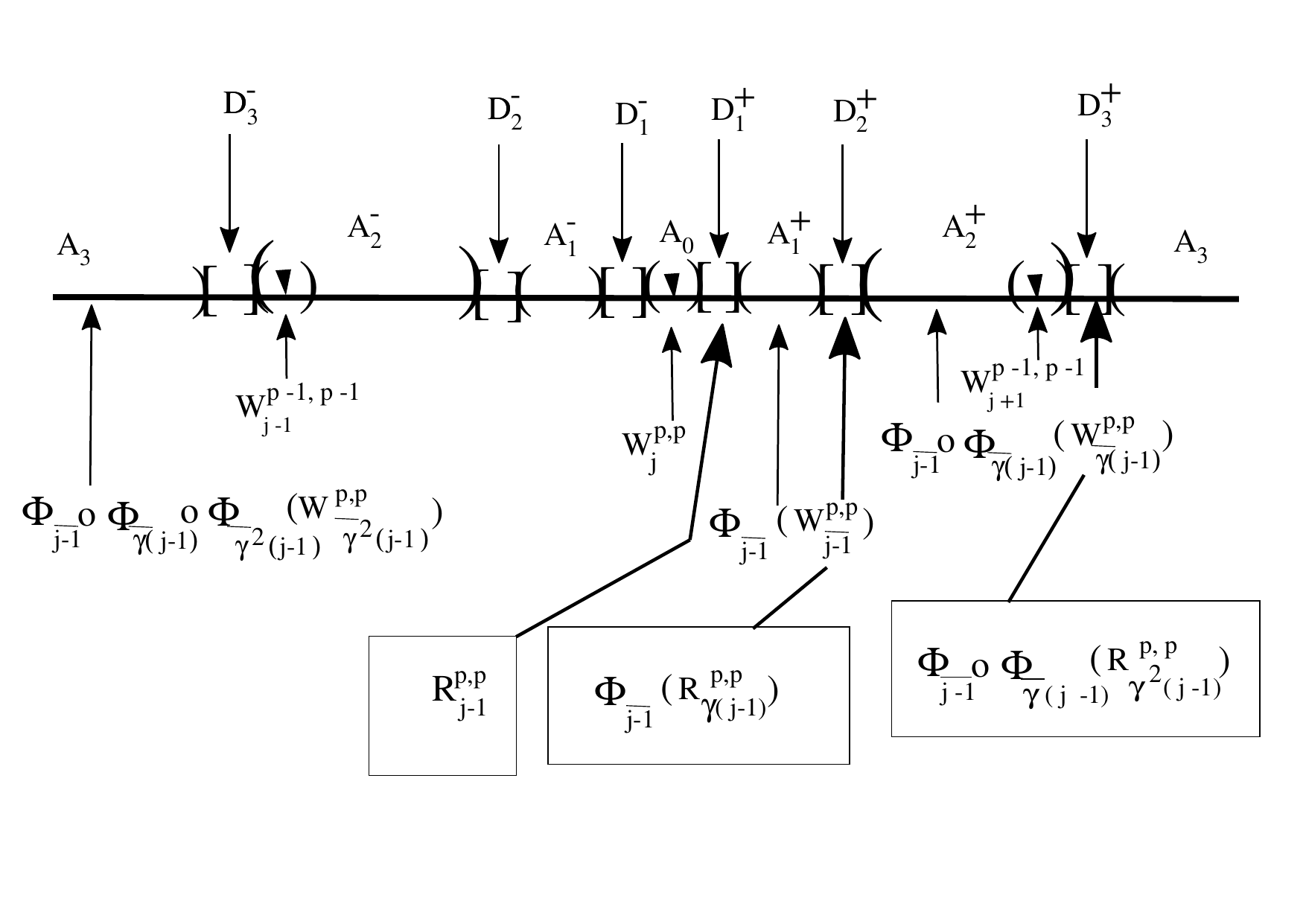} }
\vspace*{-1cm}
\caption{  The partition (\ref{PartitionAD}) in the proof of Lemma \ref{PartitionVarphi} for $k(j) =3$.}
\label{Lemm3.5}
\end{figure}
    From Lemma \ref{induction} and Corollary \ref{PeriodicPoints}, each variation interval converges to a periodic point of $ \widetilde{\Phi}$ when $p \rightarrow \infty$ : $R_i^p \rightarrow r_i^0$ and $L_i^p \rightarrow l_i^0$.
   These periodic points are also, by definition, the breaking points of the homeomorphisms  $\varphi_{i}^{\infty}$.
      From the definition of the intervals $D_{m}^{p,\pm}$ in (\ref{PartitionDm}) we obtain the following limit points:
       \begin{equation}\label{limitD}
      \begin{array}{l}
 \textrm{$[+]$ }  D_1^{p,+} \rightarrow r_{j-1}^0,\; D_2^{p,+} \rightarrow \widetilde{\Phi} \Big(r_{\gamma(j-1)}^0\Big),\dots,\;
D_{k(j)}^{p,+} \rightarrow \widetilde{\Phi}^{k(j)-1} \Big(r_{\gamma^{k(j)-1}(j-1)}^0\Big),\\ 
 
 \textrm{$[-]$ } D_{k(j)}^{p,-} \rightarrow \widetilde{\Phi}^{k(j)-1} \Big(l_{\delta^{k(j)-1}(j)}^0\Big), \dots ,\; D_2^{p,-} \rightarrow \widetilde{\Phi} \Big(l_{\delta(j)}^0\Big),\; D_1^{p,-} \rightarrow l_{j}^0.
           \end{array}
      \end{equation}
   The definition of the intervals $A_m^{p, \pm}$ in (\ref{PartitionAm}) gives, by taking the limit 
  $p \rightarrow \infty$, that each interval $W_i^p \rightarrow W_i^{\infty}$ as defined in (\ref{IntervalLimit})
  for the metric convergence on $S^1$. Lemma \ref{limit} implies that on each interval 
  $W_i^p$ the image interval $\widetilde{\Phi}_i^p (W_i^p)$ converges to
  $\varphi_{i}^{\infty} (W_i^{\infty})$. Putting these convergences together we obtain finally that each interval 
  $A_m^{p, \pm}$ converges to $A_m^{\infty, \pm}$ and is the limit of the corresponding composition of $m$ homeomorphisms $\varphi_{i}^{\infty}$.
  The partition (\ref{PartitionAD}) thus defines a limit partition when $p \rightarrow \infty$:
  \begin{equation}\label{PartitionLimit}
   S^1 = A^{\infty}_0 \bigcup A^{\infty}_{k(j)} \bigcup_{m = 1}^{k(j) - 1} A_{m}^{\infty,\pm}. 
   \end{equation}
$\bullet$ Two adjacent intervals of that partition intersect in a limit point given by (\ref{limitD}), i.e., in a point which is either a breaking point of $\varphi_{i}^{\infty}$ or in a $\widetilde{\Phi}$-orbit of a breaking point.
This is statement $(b)$ of the Lemma.\\
$\bullet$ To complete the proof of the Lemma we have to check the equality:\\
$ ( \Psi^{+}_j)_{| A_{m}^{\infty,\pm}} =  ( \Psi^{-}_j)_{| A_{m}^{\infty, \pm}}$, and that each composition is an affine map of slope 
   $\lambda^{k(j) - 2m}$, for each $m \in \{ 0,\dots, k(j) \}$ and each $j$.
   These properties are obtained by taking the limit $p \rightarrow \infty$ in the equalities in  (A) above.
  \end{proof}

The limit homeomorphisms of Lemma \ref{limit} together with the partition and the properties of Lemma \ref{PartitionVarphi} are the main steps in our goal to construct a group from our initial map $\Phi$.
Observe that the limit homeomorphisms are well defined and unique from the dynamical properties of the map $\Phi$.


\begin{theo}\label{CP-relations}
Let $\Phi$ be a piecewise homeomorphism of $S^1$ satisfying the ruling conditions: $\rm (EC), (E\pm), (CS\textrm{-}\lambda)$, for some $\lambda> 1$ and let $\varphi_{i}^{\infty} \in {\rm{Homeo}}^+ (S^1)$, $ i \in \{1, \dots, 2N\}$ be the set of homeomorphisms of Definition \ref{LimitHomeo},
then each cutting point $z_j$ of $\Phi$ defines an equality in ${\rm Homeo}^+ (S^1)$, called a  cutting point relation:

\vspace{3pt}
\noindent (CPj) \centerline{  $ \varphi^{\infty}_{\delta^{k(j)-1}(j)} \circ \dots\circ \varphi^{\infty}_{\delta(j)} \circ \varphi^{\infty}_{j} 
= \varphi^{\infty}_{\gamma^{k(j)-1}(\zeta^{-1}(j))} \circ \dots \circ \varphi^{\infty}_{\gamma(\zeta^{-1}(j))} 
   \circ \varphi^{\infty}_{\zeta^{-1}(j)}. $ }
\end{theo}
\begin{proof}
From Lemma \ref{PartitionVarphi} there is a partition  of $S^1$ for each index 
$j \in \{ 1, \dots, 2N\}$ so that the equality {\it (CPj)} is satisfied on each partition interval 
$A_m^{\infty, \pm}$ for $m \in \{ 0, \dots, k(j) \}$ since 
$ ( \Psi^{+}_j)_{| A_{m}^{\infty,\pm}} =  ( \Psi^{-}_j)_{| A_{m}^{\infty, \pm}}$. Each composition is affine of slope
$\lambda^{k(j)-2m}$ on each such interval $A_m^{\infty, \pm}$. The intersection of two consecutive intervals is a breaking point of some $\varphi_j^{\infty}$ or an image of a breaking point. At these points the two compositions are not differentiable, they are affine of slope $\lambda^{k(j)-2m}$ on one side and of slope $\lambda^{k(j)-2(m \pm 1)}$ on the other side. By continuity the two compositions are equal at each such extreme point of the partition intervals. Thus the equality {\it(CPj)} is satisfied on $S^1$: this is a relation on $ {\textrm{Homeo}}^+ (S^1)$.
\end{proof}

Observe that for two indices $j$ and $j'$ in the same cycle of the permutation $\delta$, the two relations {\it(CPj)} and {\it(CPj')} differ by a cyclic permutation of the indices along the cycle of $\delta$. These two relations are conjugate in 
$ {\textrm{Homeo}}^+ (S^1)$. This means that the number of non-conjugate cutting point relations is the \textcolor{black}{number of cycles} of the permutation $\delta$.


At this point we have all the tools to define a group from our map $\Phi$.
 
 \begin{definition}\label{groupGPhi} Let $\Phi$ be a piecewise homeomorphism of $S^1$ satisfying the 
 ruling conditions $\rm (EC), (E\pm),  (CS\textrm{-}\lambda),$ for some $\lambda > 1$. Let
 $G_{X_{\Phi}}<{\rm Homeo}^+ (S^1)$ be generated by the set of  homeomorphisms 
 $X_{\Phi} := \{ \varphi_j^{\infty} : j \in \{ 1, \dots, 2N\}  \}$ of Definition \ref{LimitHomeo}. These generators verify, in particular, all the cutting point relations {\it(CPj)} of Theorem \ref{CP-relations}. 
  \end{definition}   
       
\noindent  The set of generators $X_{\Phi}$ is well defined from the map $\Phi$ and the limit process of Lemma 
\ref{limit} remove all the choices that were made in the first steps of the construction, i.e., for the diffeomorphisms 
$f_j^{W^*}$. \textcolor{black}{The goal for the rest of the paper is to study the group $G_{X_{\Phi}}$, and its action on $S^1$ using the dynamics of the map 
$\Phi$.}
}
    
 \section{Some metric spaces associated to  $\Phi$}
 The group $G_{X_{\Phi}}$ of Definition \ref{groupGPhi} is obtained from the map
 $\Phi$. The classical strategy to study the geometry of such groups is via a {\em geometric action} on a well chosen metric space. Unfortunately no ``natural" metric space is given here so we have to construct one from the given data i.e., the dynamic of the map $\Phi$.\\
  The goal of this section is to define a metric space suited to the class of maps $\Phi$ of Section $\S$2.
  The construction of an action will be given in the next section.
  
    \vspace{3pt}
 In the following we will not distinguish between the maps $\Phi$ and $\widetilde{\Phi}$ nor between the partition intervals $I_j$ and 
 $\widetilde{I}_j$.
   \subsection{A first space: $\Gamma^0_{\Phi}$}\label{gamma0}
 The first space we consider is directly inspired by a construction due to P. Haissinsky and K. Pilgrim 
 \cite{HP} (see also \cite{H18}) in the context of coarse expanding conformal maps. In these papers, the authors use the dynamics of a map $F$ on a compact metric space $Y$. They construct a graph out of a sequence of coverings of the space $Y$ by open sets obtained from one covering by the sequence of pre-image coverings. They prove that if the map is ``expanding", in a topological sense, then the resulting space is Gromov hyperbolic with boundary the space $Y$. 

\noindent  We use the same idea where the space is $S^1$ and the dynamic is given by   $\Phi$. \\We replace their coverings by our partition and their sequence of pre-image coverings by the sequence of pre-image partitions. In order to fit with this description we use a partition by closed intervals, so that adjacent intervals do intersect in the simplest possible way i.e., points. \textcolor{black}{With our notations this gives:}\\
  $ S^1 = \bigcup _{j=1}^{2N} I_j, \textrm{ with } I_j = [  z_j, z_{\zeta(j)} ]$, keeping the same notation for simplicity. 
Thus, each interval $I_j$ intersects the two adjacent intervals $I_{\zeta^{\pm 1}(j)}$ exactly at  cutting points.
\begin{definition}
  \noindent We define a graph $\Gamma^0_{\Phi}$, from each map $\Phi$ of section $\S$2, by an iterative process (see Figure \ref{fig:6}):
  
\noindent   $\bullet$ Level 0:
  A base vertex $v_0$ is defined.\\
$\bullet$   Level 1:\\
     $(a)$ Each interval $I_j$ of the partition  defines  a vertex $v_j$.\\
  $(b)$  $v_0$ is connected to $v_j$ by an edge.\\
  $(c)$   $v_j$ is connected to $v_k$ if $I_j \neq I_k $ and $I_j \cap I_k \neq \emptyset$.\\
$\bullet$ Level 2:\\
 $(a)$  A vertex $v_{j_1, j_2}$ is defined for each non-empty connected component (that is not a point) 
  $I_{j_1, j_2} := I_{j_1} \cap \Phi^{-1} (I_{j_2})$.
 This notation is unambiguous since $\Phi^{-1} (I_{j_2})$ has at most one connected component in $I_{j_1}$,
\textcolor{black}{ by condition (SE)}.\\
    $(b)$  $v_{j_1}$ is connected to $v_{j_1, j_2}$ by an edge.\\
 $(c)$  $v_{j_1, j_2}$ is connected to $v_{j'_1, j'_2}$ if  $I_{j_1, j_2} \neq  I_{j'_1, j'_2} $ and 
  $I_{j_1, j_2} \cap I_{j'_1, j'_2} \neq \emptyset$.
   \begin{figure}[htbp]
\centerline{\includegraphics[height=35mm]{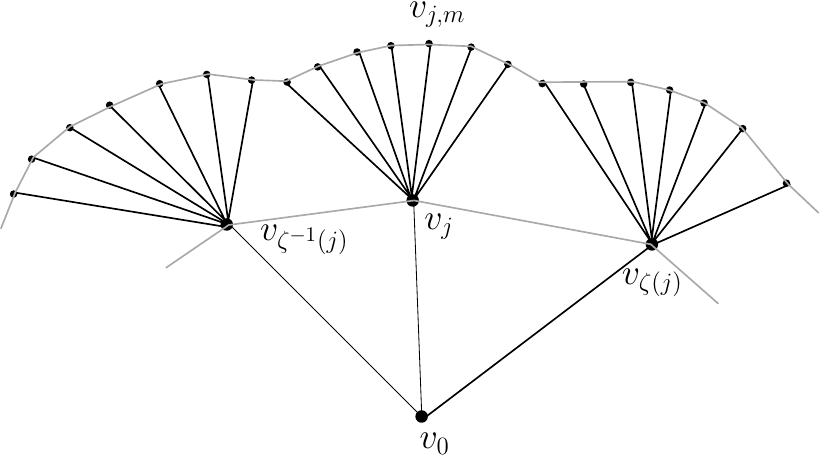} }
\caption{The first levels of the graph $\Gamma^0_{\Phi}$}
\label{fig:6}
\end{figure}

\noindent $\bullet$ Level k:\\
  $(a)$ We repeat level 2 by iteration i.e., we consider a sequence of intervals\\ 
  $ \{ I_{j_1} ; I_{j_1, j_2} ; \dots ; I_{j_1, j_2, \dots, j_k} ; \dots \}$ such  that:\\ 
  \centerline{$I_{j_1, j_2, \dots ,j_k} := I_{j_1, j_2, \dots, j_{k-1}} \cap \Phi^{-k+1} (I_{j_k})\neq \emptyset .$}
   Notice that if the sequence $j_1, j_2, \dots ,j_k$ defines an interval of level $k$, then  
   $j_{i +1} \neq \overline{j_{i}}$, for $1\leq i \leq k-1$, from condition (SE).\\
   $(b)$ A vertex $v_{j_1, j_2, \dots ,j_k}$
  is defined by the interval $I_{j_1, j_2, \dots ,j_k}$,\\
   $(c)$ $v_{j_1, j_2, \dots , j_k}$ is connected to $v_{j_1, j_2, \dots , j_{k-1}}$ by an edge,\\
  $(d)$ $v_{j_1, j_2, \dots, j_k}$ is connected to $v_{j'_1, j'_2, \dots, j'_k}$ if:\\ 
\centerline{  $  I_{j'_1, j'_2, \dots, j'_k}\neq I_{j_1, j_2, \dots, j_k}$
  and 
   $I_{j_1, j_2, \dots, j_k} \cap I_{j'_1, j'_2, \dots, j'_k}\neq \emptyset.$}
\end{definition}
 \begin{Lemma}\label{Graph0Hyper}
  If $\Phi$ is a piecewise homeomorphism of $S^1$ satisfying the condition {\rm(SE)}  and {\rm(CS)} then the graph 
  $\Gamma^0_{\Phi}$, endowed with the combinatorial metric (each edge has length one), is Gromov hyperbolic with boundary $S^1$.
   \end{Lemma}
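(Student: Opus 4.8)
The plan is to recognise $\Gamma^0_\Phi$ as an instance of the graph construction of Ha\"issinsky--Pilgrim \cite{HP} and then to run their machinery, the only genuinely new point being to supply the ``expansion'' input from condition (SE). First I would observe that, although $\Phi$ is discontinuous, the construction of $\Gamma^0_\Phi$ uses $\Phi$ only through the combinatorial data of the pull-back partitions: the poset of \emph{tiles} $\{I_{j_1,\dots,j_k}\}$ ordered by reverse inclusion (which produces the vertical edges $v_{j_1,\dots,j_k}\!-\!v_{j_1,\dots,j_{k-1}}$) together with the adjacency relation $I_{j_1,\dots,j_k}\cap I_{j'_1,\dots,j'_k}\neq\emptyset$ at each level (the horizontal edges). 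This is exactly the incidence structure on which the arguments of \cite{HP} (see also \cite{H18}) operate, so the discontinuity of $\Phi$ causes no difficulty.

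Second, and this is the heart of the matter, I would establish the expansion property: $\operatorname{diam}(I_{j_1,\dots,j_k})\to 0$ uniformly in the admissible strings $(j_1,\dots,j_k)$ as $k\to\infty$, equivalently that for every admissible infinite string the nested intersection $\bigcap_{k}I_{j_1,\dots,j_k}$ is a single point of $S^1$. This is where (SE) does its work: by consequence (I) each branch $\Phi_j$ carries $I_j$ homeomorphically onto an arc $\Phi(I_j)$ that contains $I_j$ compactly and meets every partition interval except $I_{\iota(j)}$, and by consequence (II) the fixed point of $\Phi_j$ in $I_j$ is repelling; hence no non-degenerate sub-arc can have its entire $\Phi$-itinerary confined to single partition intervals, which forces the meshes of the successive pull-back partitions to shrink to $0$. (When $\Phi$ also satisfies (CS) this is immediate, since each pull-back branch contracts lengths by exactly $\lambda^{-1}$, but only (SE) is needed for the topological statement.)

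Third, with the expansion property in hand $\Gamma^0_\Phi$ is precisely the ``cone graph over $S^1$'' of an expanding sequence of covers, and the hyperbolicity theorem of \cite{HP} applies verbatim: $\Gamma^0_\Phi$ with the combinatorial metric is Gromov hyperbolic, a geodesic ray issued from $v_0$ follows (up to bounded Hausdorff distance) a nested sequence of tiles $I_{j_1}\supset I_{j_1,j_2}\supset\cdots$, and the Gromov product of two rays is, up to an additive constant, the level of their deepest common tile. Finally I would identify the boundary: the assignment sending a ray to the point $\bigcap_k I_{j_1,\dots,j_k}\in S^1$ is well defined by expansion, surjective (every $x\in S^1$ lies in some nested sequence of tiles, the two-fold ambiguity at a cutting point being bounded and hence invisible in $\partial_\infty$), and a homeomorphism onto $S^1$: two rays are asymptotic iff they remain at bounded distance iff, using the horizontal (overlap) edges together with $\operatorname{mesh}\to 0$, the corresponding tile sequences are eventually $O(1)$-adjacent iff they converge to the same point; bicontinuity follows because nearby points of $S^1$ share arbitrarily long tile itineraries while distant points are separated at a bounded level.

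The main obstacle I anticipate is the second step, the expansion statement, since (SE) is a purely combinatorial hypothesis and one must extract from it genuine topological contraction of the pull-back branches; once that is secured, the remaining steps are essentially a transcription of \cite{HP}. A secondary technical point is the bicontinuity in the boundary identification, which requires mild metric control (a doubling / linear-connectivity type property of the tile decompositions); this holds here because the tiles are genuine sub-arcs of $S^1$ with uniformly bounded overlap multiplicity.
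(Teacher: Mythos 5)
Your proposal follows essentially the same route as the paper's proof: it reduces the statement to the Ha\"issinsky--Pilgrim machinery, supplies the same two inputs the paper uses (uniformly bounded multiplicity of the pull-back tiles and mesh of the pull-back partitions tending to zero), and identifies $\partial_\infty\Gamma^0_{\Phi}$ with $S^1$ through the nested tiles, just as the paper does via the spheres centered at $v_0$. The only place you go beyond the paper is in trying to extract the mesh-to-zero property from (SE) alone (the paper simply asserts it ``by the expansivity property (SE)''), and your parenthetical fallback to (CS), under which each pull-back branch contracts by exactly $\lambda^{-1}$, is in fact the safer justification, since (SE) by itself is a purely combinatorial condition.
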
 
   
  \begin{proof}  We adapt word for word the proof in \cite{HP}. Indeed, the essential ingredients for the proof in \cite{HP} are the facts that each vertex is associated to a connected component of the pre-image cover with two properties:
  
 \noindent $ \bullet$ {\em Each component has a uniformly bounded number of pre-images}.\\
  In our case, each interval has at most $2N - 1$ pre-images \textcolor{black}{and at least $2N-2$.}
  
\noindent $ \bullet$ {\em The size of each connected component goes to zero when the level goes to infinity}.\\
 In our case, the size of the intervals $I_{j_1, j_2, \dots,j_k}$ in the sequence of pre-images goes to zero when $k$ goes to infinity by the expansivity properties (SE) and (CS).
 
 \noindent  In fact a much weaker expansivity property than our conditions (SE) and (CS) would be enough to conclude that the graph is hyperbolic.
  Observe that the distance of any vertex to the base vertex is simply the level $k$ and the edge connecting 
  $v_{j_1, j_2, \dots, j_k}$ to $v_{j'_1, j'_2, \dots, j'_k}$, if any, belongs to the sphere of radius $k$ centred at the base vertex. By this observation and our definition of the edges, each sphere of radius $k$ centred at the based vertex is homeomorphic to $S^1$. Therefore the limit space when $k$ goes to infinity is homeomorphic to $S^1$ and the Gromov boundary 
  $\partial \Gamma^0_{\Phi}$ is homeomorphic to $S^1$.
  \end{proof}

\subsection{The dynamical graph $\Gamma_{\Phi}$}\label{DG}

Consider the tree $T_{\Phi}$ obtained from $\Gamma^0_{\Phi}$  by removing the edges on the spheres. 
We define on $T_{\Phi}$   an equivalence relation that identifies some vertices on some of the spheres using the specific properties (EC), (E$\pm$) of the map $\Phi$.

\noindent For $T_{\Phi}$ we use  the same definitions  for the intervals and vertices of Level $0$, Level $1: (a), (b)$, Level $2: (a), (b)$ and Level $k: (a), (b), (c)$ as in $\Gamma^0_{\Phi}$.

The equivalence relation we define below is inductive, due to the dynamical origin of the space we construct. This is not a standard approach in group theory.

\vspace{5pt}
  \noindent  {\bf Labeling the edges:}
   The edge connecting the two vertices  $(v_{j_1, j_2, \dots , j_{k-1}},\,v_{j_1, j_2, \dots , j_{k}})$ is labelled by
   a symbol $\Psi_{j_k}$
   and the reverse edge, i.e., the same edge but read from $v_{j_1, j_2, \dots , j_{k}}$,
    is labelled  $\Psi_{\overline{j_k}}$.
       
    \begin{definition}\label{graph} 
    
     The dynamical graph is defined by $\Gamma_{\Phi} := T_{\Phi} / \sim_{\Phi}$, where 
  $ \sim_{\Phi}$ is the following relation:
  
  \vspace*{5pt}
 \noindent  $(\mathscr{V})$ Two vertices of the same level $k >1$ in $T_{\Phi}$:  
  $v_{j_1, j_2, \dots , j_{k}}$ and $v_{l_1, l_2, \dots , l_{k}}$  are identified if: 
     \begin{itemize}[noitemsep, leftmargin=25pt]
  \item[$(a)$] There is a level $ 0 \leq r < k-1 $ such that:
  \begin{itemize}[noitemsep, leftmargin=10pt]
  \item[$(a1)$] $ I_{j_1, \dots , j_i} = I_{l_1, \dots , l_i}$ as intervals in $S^1$, for $i=1,\dots,r$ (if $r=0$ the vertex is $v_0$).
  \item[$(a2)$] For all $1\leq p < k-r$, the intervals 
  $ I_{j_1, \dots , j_r, j_{r+1}, \dots, j_{r+p}}$ and  
  $I_{l_1, \dots , l_r , l_{r+1},\dots,l_{r+p} }$ are adjacent in the cyclic ordering of $S^1$.
 
     \end{itemize}
  \item[$(b)$] At level $k$: the intervals $  I_{j_1, \dots , j_{k}}$ and $  I_{l_1, \dots , l_{k}} $ are adjacent along $S^1$ and:
  \begin{itemize}[noitemsep, leftmargin=10pt]
  \item[$(b1)$] 
  $\Phi^m (  I_{j_1, \dots , j_{k}} ) \bigcap \Phi^m (   I_{l_1, \dots , l_{k}} ) = \emptyset $, for all 
  $r < m < k$,
  \item[$(b2)$] $\Phi^k (  I_{j_1, \dots , j_{k}} ) \bigcap \Phi^k (   I_{l_1, \dots , l_{k}} ) = $ one point and\\
  $\Phi^k (  I_{j_1, \dots , j_{k}} ) \bigcup \Phi^k (   I_{l_1, \dots , l_{k}} ) = $ a non-degenerate interval.
  \end{itemize}
  \end{itemize}
$(\mathscr{E})$ Two edges, connecting vertices from a level $m$ to level $m+1$, with the same label and starting from an identified vertex at level $m$ are identified to an edge labeled with the common label. \textcolor{black}{This identification extends up to the terminal vertices of both edges to give a single vertex.}
   \end{definition}
   
   We have to verify this dynamical graph is well defined for our class of maps and to find an interpretation of the vertices and edges of the graph. 

  \begin{Lemma}\label{vertexInterval}
  If $\Phi$ is a piecewise homeomorphism of $S^1$ satisfying the conditions {\rm(SE)}, {\rm(EC)}, {\rm(E$\pm$), (CS)} then the dynamical graph $\Gamma_{\Phi}$ is well defined.\\
In addition every vertex 
$w\neq v_0$ in $\Gamma_{\Phi}$, in a level $k\geq 1$, is associated to an interval  of $S^1$ of the following types: 
  \vspace*{-5pt}
     \begin{enumerate}[noitemsep, leftmargin=20pt]
   \item[$(i)$] $ I_w =  I_{j_1, \dots, j_{k}}$, \textrm{ or }
    \item[$(ii)$] $ I_w =  I_{j^1_1, \dots , j^1_{k}} \bigcup  I_{j^2_1, \dots , j^2_{k}} \dots \bigcup  I_{j^n_1, \dots , j^n_{k}}$, for some integer $ n = n(k, \Phi) $. The 
    intervals  $I_{j^i_1, \dots , j^i_{k}} $  belong to the same level $k$ and are pairewise adjacent along $S^1$.
  \end{enumerate}
  \end{Lemma}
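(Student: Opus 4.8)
The plan is to follow the quotient $\Gamma_\Phi=T_\Phi/\!\sim_\Phi$ level by level, and to show that at each fixed level $k$ an equivalence class consists of a block of consecutive level-$k$ intervals, whose union is then automatically an interval of $S^1$.

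First I would record that $\sim_\Phi$ preserves the level of a vertex: condition (v) of Definition \ref{graph} only relates two vertices sitting at a common level $k$, and condition (e) identifies edges joining level $m$ to level $m+1$, hence can only identify vertices already lying at the same level. Since the vertices of $T_\Phi$ are in bijection with the level-$k$ intervals $I_{j_1,\dots,j_k}$ (each one a single connected pre-image component, exactly as in $\Gamma^0_\Phi$), any vertex $w\neq v_0$ of $\Gamma_\Phi$ is the image of a nonempty finite set $S_w=\{v_{j^1_1,\dots,j^1_k},\dots,v_{j^n_1,\dots,j^n_k}\}$ of level-$k$ vertices of $T_\Phi$, for a well-defined $k\geq 1$ and a finite $n$. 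If $n=1$ this is case (i) with $I_w=I_{j_1,\dots,j_k}$, so from now on assume $n\geq 2$ and set $I_w:=\bigcup_i I_{j^i_1,\dots,j^i_k}$.

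Next I would analyse the elementary identifications that generate $\sim_\Phi$ at level $k$, and show each of them pairs two level-$k$ intervals that are \emph{adjacent} along $S^1$ (sharing exactly one endpoint, i.e. consecutive in the cyclic order of level-$k$ intervals). For a (v)-identification this adjacency is part of condition (b). An (e)-identification at level $k$ arises, as described in the proof of Lemma \ref{GraphWellDef}, by propagating an identification of two level-$(k-1)$ vertices $v_{j_1,\dots,j_{k-1}}$ and $v_{l_1,\dots,l_{k-1}}$ through two equally labelled edges $\Psi_\alpha$; the resulting level-$k$ intervals are $I_{j_1,\dots,j_{k-1},\alpha}\subset I_{j_1,\dots,j_{k-1}}$ and $I_{l_1,\dots,l_{k-1},\alpha}\subset I_{l_1,\dots,l_{k-1}}$. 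The two parents share an endpoint $p$ whose $\Phi^{k-1}$-image $Z$ lies in the interior of $I_\alpha$, hence $p$ belongs to, and is an endpoint of, both $I_{j_1,\dots,j_{k-1},\alpha}$ and $I_{l_1,\dots,l_{k-1},\alpha}$, so the two children are again adjacent. Since $\sim_\Phi$ restricted to level $k$ is generated by finitely many such elementary identifications, any two members of $S_w$ are joined by a chain of members of $S_w$ in which consecutive ones are adjacent along $S^1$; in other words $S_w$, with the adjacency relation among its members, is connected inside the cyclic graph $C_{M_k}$ whose vertices are the $M_k$ level-$k$ intervals.

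A connected vertex set of a cycle graph is either a proper arc or the whole cycle, so it remains to exclude $S_w=C_{M_k}$, i.e. $I_w=S^1$. Here I would use a counting argument together with the hypotheses $k(j)\geq 2$ and (E$\pm$), (EC): every endpoint at which an identification can occur at level $k$ is a $\Phi^{\,k-k(j)}$-preimage of one of the $2N$ cutting points, and $k-k(j)\le k-2$, so the number of pairs identified at level $k$ (adding the forward propagations from earlier levels) is at most a fixed multiple of $(2N-1)^{k-1}$, while the number $M_k$ of level-$k$ intervals is of order $(2N-1)^{k}$; for $N\geq 4$ there are then strictly fewer than $M_k-1$ identified pairs, which is not enough to connect all of $C_{M_k}$. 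Hence $S_w$ is a proper arc: its intervals can be ordered $A_1,\dots,A_n$ with $A_i$ adjacent to $A_{i+1}$, so $I_w=A_1\cup\dots\cup A_n$ is an interval of $S^1$ and $n$ is finite, bounded in terms of $k$ and $\Phi$; this is case (ii). The main obstacle is the third paragraph: verifying that the level-$k$ identifications are exactly the (v)-identifications together with the forward propagations of level-$(k-1)$ ones, and that each keeps the associated intervals adjacent — this is where the precise form of conditions (a) and (b) of Definition \ref{graph}, and the description of the two one-sided orbits of a cutting point via (E$\pm$) and (EC), must be used carefully.
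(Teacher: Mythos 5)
Your overall strategy is the same as the paper's: you show that every elementary identification (a (v)-identification, or an (e)-identification propagated from the previous level) glues two level-$k$ intervals sharing an endpoint, and conclude that each class is a block of consecutive level-$k$ intervals; this is exactly how the paper produces the types (ii-v), (ii-e) and their interactions. The step you add -- ruling out that a class is the whole cycle $C_{M_k}$ -- is where there is a genuine gap. Your count compares an upper bound of order $(2N-1)^{k-1}$ for identified pairs with a claimed lower bound of order $(2N-1)^{k}$ for $M_k$, but the latter is not justified: a point may have only $2N-2$ preimages, and preimages of different orders can coincide whenever the (post-coincidence) orbit of a cutting point meets a cutting point, which the hypotheses allow (this is precisely why Lemma \ref{Phi-Phi'} modifies $\Phi$). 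With the correct worst-case bounds ($M_k\geq 2N(2N-2)^{k-1}$ against at most $\tfrac{2N}{2N-2}(2N-1)^{k-1}$ possible merge endpoints) your inequality fails for all $k$ beyond roughly $(2N-2)\log(2N-2)$, so as written it does not exclude the wrap-around case. The conclusion is nevertheless true and can be obtained by a cleaner count: any endpoint at which a level-$k$ merge occurs has first entry time at most $k-2$ into the cutting set (since $k(j)\geq 2$, and (e)-merges propagate earlier ones), hence is already an endpoint of level-$(k-1)$ intervals, so there are at most $M_{k-1}$ merged pairs; on the other hand, if $a_t$ denotes the number of points whose first entry time into the set of cutting points equals $t$, then $a_{t+1}\geq (2N-2)a_t-2N\geq 2$ for $N\geq 4$, so $M_k\geq M_{k-1}+a_{k-1}\geq M_{k-1}+2$ and at most $M_k-2<M_k-1$ edges of $C_{M_k}$ can be contracted.

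A second point, which you flag in your last sentence but do not carry out: an (e)-identification emanates from an identified vertex which may already carry three or more intervals, and Definition \ref{graph}(e) identifies \emph{all} equally labelled outgoing edges. To keep the class an interval you must check that the corresponding $\alpha$-children form a consecutive block, i.e. that $I_\alpha$ cannot meet the $\Phi^{k-1}$-images of two non-consecutive members of the class in nondegenerate intervals without meeting the image of the member in between. The paper handles exactly this in its proof, by describing how type (ii-e) and type (ii-v) identifications interact along the merged orbits of the cutting points (using (E$\pm$), (EC) and the neighborhoods $V_\alpha$ of Lemma \ref{affin-diffeo}); your argument needs this verification and not only the two-interval case you treat explicitly.
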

  
 \begin{proof}  
   $\bullet$ If a vertex $v \in T_{\Phi}$, at a level $k>0$, is so that no identification occur for $v$ then it is associated to an interval of level $k$, say
 $ I_{j_1, \dots, j_k}$. We define $I_v:= I_{j_1, \dots, j_k}$  and we say that $I_v$ is an {\it interval of type $(i)$.}
 
 $\bullet$ If two vertices $v_{j_1, \dots, j_k}$ and $v_{l_1, \dots, l_k}$ of $ T_{\Phi}$ satisfy the conditions {\it(a)} and {\it(b)} of $(\mathscr{V})$ let us study the relation $ \sim_{\Phi}$.
 
 - Assume first that $r=0$ in $(a1)$. By 
  condition $(a2)$, the two intervals $I_{j_1}$ and $I_{l_1}$ are adjacent, so they have a cutting point $ z $ in common and the $k-1$ first intervals in the sequence, up to  $ I_{j_1, \dots , j_{k-1}}$ and 
  $ I_{l_1, \dots , l_{k-1}}$ are adjacent. By conditions (E+), (E-), this property is satisfied for our map $\Phi$ on the intervals containing the cutting point $z$ on one side, for the integer $k = k(z)$ given by (EC) at $z$.\\
  By condition $(b)$, the intervals $ I_{j_1, \dots , j_{k}}$ and 
  $ I_{l_1, \dots , l_{k}}$ are adjacent. As above, this property is satisfied  for $\Phi$ by the intervals containing the cutting point $z$ as an extreme point. By conditions (E+), (E-), the $\Phi^m $ images of these intervals are disjoint for $m < k(z)-1$, so condition $(b1)$ is satisfied.\\
  By condition $(b2)$, the $\Phi^k$-images of $ I_{j_1, \dots , j_{k}}$ and 
  $ I_{l_1, \dots , l_{k}}$ have one point in common. 
  For the map $\Phi$ this point  is  the $\Phi^{k(z)} $ image of the cutting point $z$ given above,  by the eventual coincidence condition (EC) on $\Phi$ at $z$.
  The conditions $(b1)$ and $(b2)$ are satisfied for this iterate $k(z)$ and such a condition is satisfied for each cutting point and therefore for each pair of adjacent intervals. The second condition in $(b2)$ is satisfied since $\Phi$ is  a piecewise homeomorphism.
  
-  When $r>0$ in $(a)$, then the pair of adjacent intervals 
  $ I_{j_1, \dots , j_r, j_{r+1}}$ and  $I_{j_1, \dots , j_r , l_{r+1}}$ given by condition $(a2)$ are so that the 
  $\Phi^r$-image of these  intervals are adjacent at level $1$. 
  Thus they have a cutting point in common and the arguments above  apply:  there is an integer $k$ for which conditions $(b1)$ and $(b2)$ are satisfied.
   In addition, if we denote by $\tilde{v}$ the vertex obtained by the identification $ \sim_{\Phi}$ 
   from $v_{j_1, \dots, j_k}$ and $v_{l_1, \dots , l_k}$, then we define 
   $I_{\tilde{v}} := I_{j_1, \dots , j_{k}} \cup I_{l_1, \dots , l_{k}}$, which is an interval
   since  $ I_{j_1, \dots , j_{k}}$ and $ I_{l_1, \dots , l_{k}}$ are adjacent along $S^1$, we say that $I_{\tilde{v}}$  is an {\it interval of type $(ii)$}.\\
     The identification  in Definition \ref{graph} -$(\mathscr{V})$ is well defined and occurs at each level after some minimal level:
   $K _0= {\rm min}\{k(j)|j = 1,\dots, 2N\}$, where the $k(j)$'s are the integers of condition (EC).
   
$\bullet$  Let us consider the identification $(\mathscr{E})$ in Definition \ref{graph}. This is an inductive operation from a level $k+1$
   if some identification occurred at level $k$. If an identification $(\mathscr{V})$ occurred at level $k$ defining a vertex 
   $\tilde{v}$ then, by condition $(b)$,
   there is a point\\
    $\mathscr{z}= \Phi^k (  I_{j_1, \dots , j_{k}} ) \bigcap \Phi^k (   I_{l_1, \dots , l_{k}} )$ and
   two cases can arise:
   
       (1)  $\mathscr{z}= \Phi^k (  I_{j_1, \dots , j_{k}} ) \bigcap \Phi^k (   I_{l_1, \dots , l_{k}} ) $ is a cutting point $z$,
    
   (2)  $ \mathscr{z} = \Phi^k (  I_{j_1, \dots , j_{k}} ) \bigcap \Phi^k (   I_{l_1, \dots , l_{k}} ) $ belongs to the interior of an interval, say $I_{a_1}$.\\
    In case (1), all edges starting from the identified vertex $\tilde{v}$ have different labels and the identification $(\mathscr{E})$ does not happen.  
  
In case (2),  there is a sub-interval $ I_{j_1, \dots , j_{k}, a_1}$ of 
   $ I_{j_1, \dots , j_{k}}$ and an edge labeled $\Psi_{a_1}$ connecting $v_{j_1, \dots , j_{k}}$ to 
   $ v_{j_1, \dots , j_{k}, a_1}$ and an edge, labeled $\Psi_{a_1}$, connecting 
   $v_{l_1, \dots , l_{k}}$ to $ v_{l_1, \dots , l_{k}, a_1}$ in $T_{\Phi}$. 
   The identification of the two vertices: $v_{j_1, \dots , j_{k}}$ and $v_{l_1, \dots , l_{k}}$ by 
   $ \sim_{\Phi}$ at level $k$ implies that two edges labelled  $\Psi_{a_1}$ start from the new vertex 
   $\tilde{v}$. 
   The identification in Definition \ref{graph}-{($\mathscr{E}$)} identifies these two edges to a single edge, labelled $\Psi_{a_1}$, connecting 
   $\tilde{v}$ to $\tilde{v}^1$ at level $k+1$. 
   This identification is well defined at level $k+1$.\\
   In addition the two intervals $ I_{j_1, \dots , j_{k}, a_1}$ and $ I_{l_1, \dots , l_{k}, a_1}$ are adjacent on $S^1$ and we associate to the vertex $\tilde{v}^1$ the interval:
   $ I_{\tilde{v}^1} = I_{j_1, \dots , j_{k}, a_1} \cup I_{l_1, \dots , l_{k}, a_1}$, this is an interval of type $(ii)$.
    
    The identification of type $(\mathscr{E})$ is then applied inductively on each level following $k+1$. At level $k+2$, if the image $\Phi (\mathscr{z})$ is a cutting point then, as in case (1), the identification of type $(\mathscr{E})$ stops i.e., the edges starting from $\tilde{v}^1$
   have different label and there is no identification of the type $(\mathscr{E})$.
   If $\Phi (\mathscr{z})$ belongs to the interior of an interval $I_{a_2}$ then, as in case (2), two edges with label 
   $\Psi_{a_2}$ start at $\tilde{v}^1$ and a new identification of type $(\mathscr{E})$ occurs.
   The inductive identification of type $(\mathscr{E})$, starting at $\tilde{v}$, depends on the orbit $\Phi^m (\mathscr{z})$:\\
    -  If, for some $m\geq 0$, $\Phi^m (\mathscr{z})$ is a cutting point then the identification starting at level $k$ at 
    $\tilde{v}$ stops at level $k+m$, as in case (1).\\
  -  If $\Phi^m (\mathscr{z})$ is not a cutting point for all $m \geq 0$ then the identification of type $(\mathscr{E})$ starting at 
   $\tilde{v}$ does not stop and is well defined for each level $k+m$.
   At each level of this inductive identification, a new vertex is defined and is associated to an interval which is the union of the adjacent intervals associated to the identified vertices in $T_{\Phi}$. At each such level the new vertex is of type $(ii)$.\\
Finally, we have to check if an identification of type $(\mathscr{V})$ and one of type 
$(\mathscr{E})$ could possibly interact i.e., occur at the same level.\\ 
       Let us observe that the neighborhood: $V_j = V_j^{c_j} \cup V_j^{d_j}$ in the proof of 
  Lemma \ref{affin-diffeo} is in fact an interval of the form:
  $ I_w := I_{j_1, \dots , j_{k(j)}} \cup  I_{l_1, \dots , l_{k(j)}}$ i.e., of type $(ii)$ by 
  $(\mathscr{V})$, at level $k(j)$.\\
    It turns out that the identifications of type $(ii)$  by $(\mathscr{V})$ and the identifications  of type $(ii)$ by $(\mathscr{E})$,  can indeed interact. This happens in the following situations:\\
  An  identification of type $(ii)$  by $(\mathscr{E})$  occurs if $\mathscr{z} = \Phi^{k(j)} (z_j) \in \textrm{int} ( I_{a} )$
  for some $a$. Assume that $\mathscr{z} = \Phi^{k(j)} (z_j) \in V_{a} \cap I_{a}$, where $V_{a}$ is the neighborhood of the cutting point $z_{a}$ described above.
   An identification of type $(ii)$  by $(\mathscr{E})$ occurs at level $k(j) + 1$ and, by condition (E+): 
  $\Phi^m (\mathscr{z}) \in I_{\delta^m(a)}$ for $m \leq k(a) -1$. This implies that 
  $\Phi^m (\mathscr{z})$ is not a cutting point for all $m \leq k(a) -1$. By the condition (2) above, an identification of type $(ii)$  by $(\mathscr{E})$ occurs for each level from $k(j) + 1$ up to
  $k(j) + k(a) -1$.
  At level $k(j) + k(a) $ an identification of type $(ii)$  by $(\mathscr{E})$ and one of type $(ii)$  by 
  $(\mathscr{V})$ occur at the same level, and three vertices of the tree $ T_{\Phi}$  will be identified. These vertices are related with three intervals that are  pairwise adjacent along $S^1$ , say:
 $$ I_{j_1, \dots, j_{k(j)}, a',a'_2, \dots ,a'_{k(a)}},\; 
   I_{j_1, \dots, j_{k(j)}, a, a_2, \dots, a_{k(a)}},\; 
   I_{l_1, \dots, l_{k(j)}, a, a_2, \dots, a_{k(a)}}, \textrm{ where $a' = \zeta^{-1} (a).$}$$
 We associate the vertex obtained from the  identifications of the three vertices with the union of these three intervals, this is an interval of type $(ii)$.
  
For the next levels, the two cases (1) or (2) above can occur, depending on the orbits of each  cutting point i.e.,  $z_j$ and $z_{a}$.

The phenomenon described above, where two identifications of different type 
 arise for the same vertex, can possibly occur at any level large enough. 
 The intervals associated to vertices in $ T_{\Phi}$ that are involved are pairwise adjacent, as above, and the union of these intervals is an interval. 
  The number $n_m$ of these intervals depends on the map $\Phi$ via the orbits of the cutting points and the level $m$.\\
   Hence, the dynamical graph $\Gamma_{\Phi}$ is well defined from the map $\Phi$.
   \end{proof}

   \begin{Rm}\label{V-E-ii}
    Notice that  a vertex obtained by an identification of type $(\mathscr{V})$ has two incoming edges i.e., from level $k-1$ to level $k$. A vertex obtained by an identification of type $(\mathscr{E})$ has only one incoming edge, as the vertices of type {\it (i)}.
   If necessary, we will mark the difference by denoting the corresponding vertices or intervals of  {\it type} ({\it ii}-$\mathscr{V}$) or {\it type (ii-$\mathscr{E})$}. \\
    It is interesting to observe that the identification of type $(\mathscr{E})$  is essentially a Stallings folding \cite{Sta}.
 \end{Rm}
 
  \begin{Prop}\label{GammaHomeo}
  If $\Phi$ and $\Phi'$ are two piecewise homeomorphisms of $S^1$ with the same combinatorics i.e., the same permutations $\zeta$ and $\iota$, the same properties $\rm(SE),$ $\rm(EC),$ $\rm(E\pm),$  $\rm(CS\textrm{-}\lambda)$ with the same slope $\lambda$,
  then the graphs $\Gamma_{\Phi}$ and $\Gamma_{\Phi '}$ are homeomorphic.
  \end{Prop}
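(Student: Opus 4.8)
The plan is to exhibit a label-preserving, cyclic-order-preserving isomorphism of the trees $T_{\Phi}\to T_{\Phi'}$ that carries the equivalence relation $\sim_{\Phi}$ of Definition~\ref{graph} onto $\sim_{\Phi'}$, and then to pass to the quotient. Since the graph $\Gamma^0_{\Phi}$, the tree $T_{\Phi}$ with its edge labels, and the relation $\sim_{\Phi}$ are built only from the iterated preimage intervals $I_{j_1,\dots,j_k}$, the cyclic order of $S^1$, the intersection pattern of these intervals, and the disjointness of their $\Phi^m$-images, the whole construction is a topological conjugacy invariant; in particular $\Gamma_{\Phi}=\Gamma_{\widetilde\Phi}$, so by condition (CS) I may assume from the start that $\Phi$ and $\Phi'$ are piecewise affine with common slope $\lambda$. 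This loses nothing and makes the combinatorial analysis transparent. The content of the statement is that, once $(\zeta,\iota)$ is fixed, every further choice in the construction is \emph{forced} combinatorially.

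The heart of the argument is the claim that the quadruple $\bigl(T_{\Phi},\ \text{edge labels},\ \text{cyclic order on each sphere},\ \sim_{\Phi}\bigr)$ depends only on $(\zeta,\iota)$, which I would prove by induction on the level $k$. At level $k+1$ one must show that inside each level-$k$ interval $I_{j_1,\dots,j_k}$ the list of nonempty sub-intervals $I_{j_1,\dots,j_k,j_{k+1}}$, their cyclic order, their labels $\Psi_{j_{k+1}}$, and the adjacencies among level-$(k+1)$ intervals are determined by $(\zeta,\iota)$. Because $\Phi^k$ is an orientation-preserving homeomorphism on $I_{j_1,\dots,j_k}$, this reduces to locating, in terms of $(\zeta,\iota)$, the two partition intervals that contain the endpoints of the arc $\Phi^k(I_{j_1,\dots,j_k})$; those endpoints are forward iterates of cutting points, so what is needed is an itinerary control for cutting points valid at \emph{all} iterates. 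Conditions (E+) and (E-) give the control up to iterate $k(j)-1$; condition (EC) merges the two one-sided orbits of $z_j$ at iterate $k(j)$; and the crucial observation is that the merged point $Z_j^{k(j)}=\Phi^{k(j)-1}(\Phi_j(z_j))$ lies in $I_{\delta^{k(j)}(j)}$, near the cutting point $z_{\delta^{k(j)}(j)}$ and on its positive side — this follows from (E+) applied to $z_{d_j}$ with $d_j=\delta^{k(j)-1}(j)$, together with $\ell[j]=2k(j)$ and $\delta(d_j)=\delta^{k(j)}(j)$ (Lemmas~\ref{conj} and \ref{same-cycle}). Iterating this observation, the forward orbit of $Z_j^{k(j)}$ shadows the $\delta$-cycle of $j$, hence visits $I_{\delta^{k(j)}(j)},I_{\delta^{k(j)+1}(j)},\dots$ periodically and never meets a cutting point: its entire itinerary is combinatorial. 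Since the integers $k(j)=\ell[j]/2$ are themselves functions of $(\zeta,\iota)$ alone, all endpoints occurring at every level are combinatorially located.

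Granting this itinerary control, the verification that $\sim_{\Phi}$ matches $\sim_{\Phi'}$ is bookkeeping along the lines of the proofs of Lemmas~\ref{GraphWellDef} and \ref{vertexInterval}. The conditions defining $\sim_{\Phi}$ — equality of level intervals in (a1), adjacency in (a2) and (b), the disjointness of $\Phi^m$-images in (a2) and (b1), the ``one point'' and ``non degenerate interval'' clauses (b1) and (b2), and the folding rule (e) — are all decided by the combinatorially determined arcs above; in particular (b1) holds exactly at the iterate $k(z)$ predicted by (EC) for the cutting point $z$ involved, and the type-(e) identifications propagate precisely along the combinatorially known merged orbit, continuing indefinitely in the generic case and otherwise stopping at a combinatorially predictable level. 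Hence the canonical isomorphism $T_{\Phi}\cong T_{\Phi'}$ of cyclically ordered labelled trees descends to a graph isomorphism $\Gamma_{\Phi}=T_{\Phi}/\!\sim_{\Phi}\ \cong\ T_{\Phi'}/\!\sim_{\Phi'}=\Gamma_{\Phi'}$, which is in particular a homeomorphism.

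The main obstacle is exactly the induction of the second paragraph: (E+) and (E-) control only the first $k(j)$ iterates of a cutting point, so one must show the endpoint-location control genuinely persists for all iterates and through all nested preimages — the crux being that the merged post-coincidence orbit stays trapped near the $\delta$-cycle, which is where Lemmas~\ref{adj} and \ref{gamma-delta} and the evenness hypothesis $\ell[j]=2k(j)$ are essential. A secondary point to address is the non-generic possibility that a merged orbit hits a cutting point exactly (an orbit connection between two cutting points); one must check such coincidences are themselves combinatorially rigid, for which the passage to the constant-slope affine model of the first paragraph is the natural tool. Everything else — the reduction to the affine models and the descent to the quotient — is formal.
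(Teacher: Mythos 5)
There is a genuine gap, and it lies exactly where you locate your ``main obstacle'': the claim that the post-coincidence itinerary of each cutting point is determined by $(\zeta,\iota)$ alone. Conditions (E+), (E-) constrain the one-sided orbits of $z_j$ only up to iterate $k(j)-2$, and condition (EC) merges them at iterate $k(j)$; nothing in the hypotheses pins down which partition interval the merged point $Z_j^{k(j)}$ falls into, let alone its subsequent itinerary. Your ``crucial observation'' misapplies (E+): applied to the cutting point $z_{d_j}$ it controls the orbit of $z_{d_j}$ itself, not of the interior point $Z_{d_j}=\Phi^{k(j)-2}(\Phi_j(z_j))\in I_{d_j}$, and the image $\Phi_{d_j}(J_{d_j})$ can sweep across almost all partition intervals (this is precisely (\ref{k(j)-image})), so the index $a_j$ with $Z_j^{k(j)}\in I_{a_j}$ is not a function of $(\zeta,\iota)$ — the proof of Lemma \ref{affin-diffeo} deliberately writes ``for some $a_j$''. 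Nor does passing to the constant-slope affine model rigidify this: Lemma \ref{Phi-Phi'} constructs, from $\widetilde\Phi$, a map $\Phi'$ with the same slope, the same $(\zeta,\iota)$ and the same integers $k(j)$, but with relocated cutting points whose merged orbits land on fixed points, thereby changing which type-(e) identifications occur and for how long. Consequently the relations $\sim_{\Phi}$ and $\sim_{\Phi'}$ do \emph{not} in general correspond under the canonical labelled-tree isomorphism $T_{\Phi}\cong T_{\Phi'}$, and your strategy of descending that isomorphism to the quotients cannot work as stated.

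The paper's proof takes a weaker, purely local route that avoids this issue: it concedes that ``the identifications process could be quite different for different maps'' and argues only that every vertex of $\Gamma_{\Phi}$ has the same local structure — type (ii-v) vertices have two incoming and $2N-2$ outgoing edges (by (\ref{k(j)-image})), while vertices of types (i) and (ii-e) have one incoming and $2N-1$ outgoing edges (by (SE)) — and concludes homeomorphism from this common labelled local structure rather than from a global matching of the identification patterns. If you want to salvage your approach, you would have to either downgrade the claimed isomorphism to one that is not label/level-preserving (which is essentially the paper's point of view), or add hypotheses forcing the merged orbits of $\Phi$ and $\Phi'$ to have the same symbolic itineraries, which the stated combinatorial data do not provide.
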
 
\begin{proof}
Since the combinatorics are the same, all the combinatorial data used in the constructions: $k(j)$,
$\gamma$ and $\delta$ are the same for $\Phi$ and $\Phi'$.
The identification of type 
  ({\it ii}-$\mathscr{V}$) defines vertices with two incoming edges, and $2N-2$ outgoing edges, by condition (\ref{k(j)-image}) in the proof of Lemma \ref{affin-diffeo}. The vertices of types $(i)$ and   ({\it ii}-$\mathscr{E}$)  have one incoming edge and $2N-1$ outgoing edges by condition (SE). The sequence of identifications that occur for the two maps can be quite different (see (1) and (2) in the proof of Lemma \ref{vertexInterval}) but each resulting vertex has the same structure. Therefore the two graphs are homeomorphic.
  Notice that the two maps $\Phi$ and $\Phi'$ are in general quite different and in particular non-conjugate.
\end{proof} 
 \begin{Ex} To illustrate the possible types of identification, as mentioned in the proof of Lemma \ref{vertexInterval} and Proposition \ref{GammaHomeo}, let us consider the following example:\\
  for  $\zeta=(1\, 2\, \overline{1}\, \overline{2}\, 5\, 6\, \overline{5}\, \overline{6} )$  we have $\delta=(1\, \overline{2}\, \overline{1}\, 2 \, 5 \, \overline{6}\, \overline{5}\, 6)$ and  $k(j)=4$ for all $j$. If $j=1$ and 
  $\mathscr{z}^4$ is the point given by condition {\rm(EC)},  Figures \ref{i1} and \ref{i2} exhibit two possible identifications, here at some levels up to 8.
 \end{Ex}
 \begin{figure}[ht!]
\begin{subfigure}[h]{0.65\linewidth}
\resizebox{0.95\textwidth}{!}{%
\begin{tikzpicture}[grow'=up,scale=1]
\tikzstyle{level 1}=[sibling distance=5.0in]
\tikzstyle{level 2}=[sibling distance=0.3in]
\tikzstyle{level 3}=[sibling distance=0.4in]
\tikzstyle{level 4}=[sibling distance=0.4in]
\tikzstyle{level 5}=[sibling distance=0.75in]
\tikzstyle{level 6}=[sibling distance=0.35in]
\tikzstyle{level 7}=[sibling distance=0.3in]
\tikzstyle{level 8}=[sibling distance=0.3in]

\node [label=$\tiny{0}$]
 {} coordinate (t9)
child{ coordinate (to6) edge from parent[color=black, thin]
child{ coordinate (to62) edge from parent[color=black, thin]}
child{ coordinate (to6o2) edge from parent[color=black, thin]}
child{ coordinate (to65) edge from parent[color=black, thin]
child{ coordinate (to65o2) edge from parent[color=black, thin]}
child{ coordinate (to655) edge from parent[color=black, thin]}
child{ coordinate (to656) edge from parent[color=black, thin]
child{ coordinate (to6565) edge from parent[color=black, thin]}
child{ coordinate (to6566) edge from parent[color=black, thin]}
child{ coordinate (to656o5) edge from parent[color=black, thin]
child{ coordinate (to656o5-) edge from parent[color=black, thin]}
child{ coordinate (to656o5--) edge from parent[color=black, thin]}
child{ coordinate (to656o5o6) edge from parent[color=black, thin]
child{ coordinate (to656o5o6o2) edge from parent[color=black, thin]}
child{ coordinate (to656o5o65) edge from parent[color=black, thin]
child{ coordinate (to656o5o650) edge from parent[color=black, thin]}
child{ coordinate (to656o5o6500) edge from parent[color=black, thin]
child{ coordinate (to656o5o650-) edge from parent[color=black, thin]}
child{ coordinate (to656o5o650--) edge from parent[color=black, thin]}}
}}
child{ coordinate (to656o51) edge from parent[color=red,ultra thick]
child{ coordinate (to656o51o2) edge from parent[color=red,ultra thick]
child{ coordinate (to656o51o2-) edge from parent[color=red,ultra thick]
child{ coordinate (to656o51o2-i) edge from parent[color=red,ultra thick]}}}}
}}}}
child{ coordinate (t1) edge from parent[color=black, thin]
child{ coordinate (t1o2) edge from parent[color=black, thin]
child{ coordinate (t1o2o1) edge from parent[color=black, thin]
child{ coordinate (t1o2o12) edge from parent[color=black, thin]
child{ coordinate (t1o2o121) edge from parent[color=red,ultra thick]
child{ coordinate (t1o2o121o2) edge from parent[color=red,ultra thick]
child{ coordinate (t1o2o121o2-) edge from parent[color=red,ultra thick]
child{ coordinate (t1o2o121o2-i) edge from parent[color=red,ultra thick]}
child{ coordinate (t1o2o121o2-ii) edge from parent[color=black, thin]}}
child{ coordinate (t1o2o121o2--) edge from parent[color=black, thin]}}
child{ coordinate (t1o2o121-) edge from parent[color=black, thin]}}
child{ coordinate (t1o2o122) edge from parent[color=black, thin]}
child{ coordinate (t1o2o12o1) edge from parent[color=black, thin]}}
child{ coordinate (t1o2o1o1) edge from parent[color=black, thin]}
child{ coordinate (t1o2o1o2) edge from parent[color=black, thin]}}
child{ coordinate (t1o2o2) edge from parent[color=black, thin]}
child{ coordinate (t1o25) edge from parent[color=black, thin]}}
child{ coordinate (t15) edge from parent[color=black, thin]}
child{ coordinate (t1o5) edge from parent[color=black, thin]}};

\node[circle, fill=white,inner sep=1pt, minimum size=1pt] at (t1) {\small{$1$}};
\node[circle, fill=white,inner sep=1pt, minimum size=1pt] at (t1o2) {\small{${1\overline{2}}$}};
\node[circle, fill=white,inner sep=1pt, minimum size=1pt] at (t1o2o1) {\small{${1\overline{21}}$}};
\node[circle, fill=white,inner sep=1pt, minimum size=1pt] at (t1o2o12) {\color{blue}{\small{$\boldsymbol{1\overline{21}2}$}}};
\node[circle, fill=white,inner sep=1pt, minimum size=1pt] at (t1o2o121) {\color{blue}{\small{$\boldsymbol{1\overline{21}21}$}}};
\node[circle, fill=white,inner sep=1pt, minimum size=1pt] at (t1o2o121o2) {\color{blue}{\small{$\boldsymbol{1\overline{21}21\overline{2}}$}}};
\node[circle, fill=white,inner sep=1pt, minimum size=1pt] at (t1o2o121o2-) {\color{blue}{$\bullet$}};
\node[circle, fill=white,inner sep=1pt, minimum size=1pt] at (t1o2o121o2-i) {\color{blue}{$\bullet$}};
\node[circle, fill=white,inner sep=1pt, minimum size=1pt] at (t1o2o121o2-ii) {{$\dots$}};

\node[circle, fill=white,inner sep=1pt, minimum size=1pt] at (t1o2o121o2--) {{$\dots$}};

\node[circle, fill=white,inner sep=1pt, minimum size=1pt] at (t1o2o121-) {\dots};

\node[circle, fill=white,inner sep=1pt, minimum size=1pt] at (t1o2o122) {\small{${1\overline{21}22}$}};
\node[circle, fill=white,inner sep=1pt, minimum size=1pt] at (t1o2o12o1) {\small{${1\overline{21}2\overline{1}}$}};
\node[circle, fill=white,inner sep=1pt, minimum size=1pt] at (t1o2o1o1) {\small{${1\overline{211}}$}};
\node[circle, fill=white,inner sep=1pt, minimum size=1pt] at (t1o2o1o2) {\dots};

\node[circle, fill=white,inner sep=1pt, minimum size=1pt] at (t1o2o2) {\small{${1\overline{22}}$}};
\node[circle, fill=white,inner sep=1pt, minimum size=1pt] at (t1o25) {\dots};

\node[circle, fill=white,inner sep=1pt, minimum size=1pt] at (t15) {\small{${15}$}};
\node[circle, fill=white,inner sep=1pt, minimum size=1pt] at (t1o5) {\dots};

\node[circle, fill=white,inner sep=1pt, minimum size=1pt] at (to6) {\small{$\overline{6}$}};
\node[circle, fill=white,inner sep=1pt, minimum size=1pt] at (to62) {\dots};
\node[circle, fill=white,inner sep=1pt, minimum size=1pt] at (to6o2) {\small{$\overline{62}$}};
\node[circle, fill=white,inner sep=1pt, minimum size=1pt] at (to65) {\small{$\overline{6}5$}};
\node[circle, fill=white,inner sep=1pt, minimum size=1pt] at (to65o2) {\small{$\dots$}};

\node[circle, fill=white,inner sep=1pt, minimum size=1pt] at (to655) {\small{$\overline{6}55$}};
\node[circle, fill=white,inner sep=1pt, minimum size=1pt] at (to656) {\small{$\overline{6}56$}};
\node[circle, fill=white,inner sep=1pt, minimum size=1pt] at (to6565) {\small{$\overline{6}565$}};
\node[circle, fill=white,inner sep=1pt, minimum size=1pt] at (to6566) {\small{$\overline{6}566$}};
\node[circle, fill=white,inner sep=1pt, minimum size=1pt] at (to656o5) {\color{blue}{\small{$\boldsymbol{\overline{6}56\overline{5}}$}}};
\node[circle, fill=white,inner sep=1pt, minimum size=1pt] at (to656o5-) {\small{$\overline{6}56\overline{5}6$}};
\node[circle, fill=white,inner sep=1pt, minimum size=1pt] at (to656o5--) {\small{$\overline{6}56\overline{55}$}};

\node[circle, fill=white,inner sep=0pt, minimum size=0pt] at (to656o5o650) {$\dots$};
\node[circle, fill=white,inner sep=0pt, minimum size=0pt] at (to656o5o650-) {$\dots$};
\node[circle, fill=white,inner sep=0pt, minimum size=0pt] at (to656o5o650--) {\color{blue}{$\bullet$}};

\node[circle, fill=white,inner sep=0pt, minimum size=0pt] at (to656o5o6500) {\color{gray}$\bullet$};
\node[circle, fill=white,inner sep=0pt, minimum size=0pt] at (to656o51o2-) {\color{blue}{$\bullet$}};
\node[circle, fill=white,inner sep=0pt, minimum size=0pt] at (to656o51o2-i) {\color{blue}{$\bullet$}};

\node[circle, fill=white,inner sep=0pt, minimum size=0pt] at (to656o5o6) {\small{${\overline{6}56\overline{56}}$}};
\node[circle, fill=white,inner sep=0pt, minimum size=0pt] at (to656o5o6o2) {\color{gray}\dots};
\node[circle, fill=white,inner sep=0pt, minimum size=0pt] at (to656o5o65) {\small{${\overline{6}56\overline{56}5}$}};

\node[circle, fill=white,inner sep=0pt, minimum size=0pt] at (to656o51) {\color{blue}{\small{$\boldsymbol{\overline{6}56\overline{5}1}$}}};
\node[circle, fill=white,inner sep=0pt, minimum size=0pt] at (to656o51o2) {\color{blue}{\small{$\boldsymbol{\overline{6}56\overline{5}1\overline{2}}$}}};

 \draw [decorate,color=black] (0.5,3)
   node[left] {\begin{Large}{$T_{\Phi}$}\end{Large}};
\end{tikzpicture}
}
\hspace*{-3cm}
\captionsetup{font=footnotesize} 

\end{subfigure}
\begin{subfigure}[h]{0.65\linewidth}
\resizebox{0.6\textwidth}{!}{%
\begin{tikzpicture}[grow'=up,scale=1]
\tikzstyle{level 1}=[sibling distance=3.in]
\tikzstyle{level 2}=[sibling distance=0.5in]
\tikzstyle{level 3}=[sibling distance=0.5in]
\tikzstyle{level 4}=[sibling distance=0.5in]
\tikzstyle{level 5}=[sibling distance=0.75in]
\tikzstyle{level 6}=[sibling distance=0.2in]
\tikzstyle{level 7}=[sibling distance=0.2in]
\tikzstyle{level 8}=[sibling distance=0.35in]
%

\node [label=$\tiny{0}$]
 {} coordinate (t9)
child{ coordinate (to6) edge from parent[color=black, thin]
child{ coordinate (to62) edge from parent[color=black, thin]}
child{ coordinate (to6o2) edge from parent[color=black, thin]}
child{ coordinate (to65) edge from parent[color=black, thin]
child{ coordinate (to65o2) edge from parent[color=black, thin]}
child{ coordinate (to655) edge from parent[color=black, thin]}
child{ coordinate (to656) edge from parent[color=black, thin]
child{ coordinate (to6565) edge from parent[color=black, thin]}
child{ coordinate (to6566) edge from parent[color=black, thin]}
child{ coordinate (to656o5) edge from parent[color=black, thin]
child{ coordinate (to656o51) edge from parent[color=black, thin]}
child{ coordinate (to656o52) edge from parent[color=black, thin]}
child{ coordinate (to656o53) edge from parent[color=black, thin]
child{ coordinate (to656o532) edge from parent[color=black, thin]}
child{ coordinate (to656o533) edge from parent[color=black, thin]
child{ coordinate (to656o5331) edge from parent[color=black, thin]
}
child{ coordinate (to656o5332) edge from parent[color=black, thin]
child{ coordinate (to656o53321) edge from parent[color=black, thin]}
child{ coordinate (to656o53322) edge from parent[color=black, thin]}
}
}}
child{ coordinate (to656o54) edge from parent[color=red,ultra thick]
child{ coordinate (to656o541) edge from parent[color=red,ultra thick]
child{ coordinate (to656o5411) edge from parent[color=red,ultra thick]
child{ coordinate (to656o54111) edge from parent[color=red,ultra thick]}
child{ coordinate (to656o54112) edge from parent[color=black, thin]}}
child{ coordinate (to656o5412) edge from parent[color=black, thin]
}}
child{ coordinate (to656o542) edge from parent[color=black, thin]}}
child{ coordinate (to656o55) edge from parent[color=black, thin]}
child{ coordinate (to656o56) edge from parent[color=black, thin]}
}}}}
child{ coordinate (t1) edge from parent[color=black, thin]
child{ coordinate (t1o2) edge from parent[color=black, thin]
child{ coordinate (t1o2o1) edge from parent[color=black, thin]
child{ coordinate (t1o2o12) edge from parent[color=black, thin]}
child{ coordinate (t1o2o121) edge from parent[color=black, thin]}
child{ coordinate (t1o2o121o2) edge from parent[color=black, thin]}}
child{ coordinate (t1o2o2) edge from parent[color=black, thin]}
child{ coordinate (t1o25) edge from parent[color=black, thin]}}
child{ coordinate (t15) edge from parent[color=black, thin]}
child{ coordinate (t1o5) edge from parent[color=black, thin]}};

\node[circle, fill=white,inner sep=1pt, minimum size=1pt] at (t1) {$\boldsymbol{\cdot}$};
\node[circle, fill=white,inner sep=1pt, minimum size=1pt] at (t1o2) {$\boldsymbol{\cdot}$};
\node[circle, fill=white,inner sep=1pt, minimum size=1pt] at (t1o2o1) {$\boldsymbol{\cdot}$};
\node[circle, fill=white,inner sep=1pt, minimum size=1pt] at (t1o2o12) {\color{blue}{$\bullet$}};
\node[circle, fill=white,inner sep=1pt, minimum size=1pt] at (t1o2o121) {{$\boldsymbol{\cdot}$}};
\node[circle, fill=white,inner sep=1pt, minimum size=1pt] at (t1o2o121o2) {\dots};

%
%

\node[circle, fill=white,inner sep=1pt, minimum size=1pt] at (t1o2o2) {$\boldsymbol{\cdot}$};
\node[circle, fill=white,inner sep=1pt, minimum size=1pt] at (t1o25) {\dots};

\node[circle, fill=white,inner sep=1pt, minimum size=1pt] at (t15) {$\boldsymbol{\cdot}$};
\node[circle, fill=white,inner sep=1pt, minimum size=1pt] at (t1o5) {\dots};

\node[circle, fill=white,inner sep=1pt, minimum size=1pt] at (to6) {$\boldsymbol{\cdot}$};
\node[circle, fill=white,inner sep=1pt, minimum size=1pt] at (to62) {\dots};
\node[circle, fill=white,inner sep=1pt, minimum size=1pt] at (to6o2) {$\boldsymbol{\cdot}$};
\node[circle, fill=white,inner sep=1pt, minimum size=1pt] at (to65) {$\boldsymbol{\cdot}$};
\node[circle, fill=white,inner sep=1pt, minimum size=1pt] at (to65o2) {\small{$\dots$}};
\node[circle, fill=white,inner sep=1pt, minimum size=1pt] at (to655) {$\boldsymbol{\cdot}$};
\node[circle, fill=white,inner sep=1pt, minimum size=1pt] at (to656) {$\boldsymbol{\cdot}$};
\node[circle, fill=white,inner sep=1pt, minimum size=1pt] at (to6565) {\dots};
\node[circle, fill=white,inner sep=1pt, minimum size=1pt] at (to6566) {$\boldsymbol{\cdot}$};
\node[circle, fill=white,inner sep=1pt, minimum size=1pt] at (to656o5) {\color{blue}{$\bullet$}};
\node[circle, fill=white,inner sep=1pt, minimum size=1pt] at (to656o51) {$\boldsymbol{\cdot}$};
\node[circle, fill=white,inner sep=1pt, minimum size=1pt] at (to656o52) {$\boldsymbol{\cdot}$};
\node[circle, fill=white,inner sep=1pt, minimum size=1pt] at (to656o53) {\color{gray}$\bullet$};
\node[circle, fill=white,inner sep=1pt, minimum size=1pt] at (to656o532) {$\dots$};
\node[circle, fill=white,inner sep=1pt, minimum size=1pt] at (to656o533) {\color{gray}$\bullet$};
\node[circle, fill=white,inner sep=1pt, minimum size=1pt] at (to656o5331) {\dots};
\node[circle, fill=white,inner sep=1pt, minimum size=1pt] at (to656o5332) {\color{gray}$\bullet$};
\node[circle, fill=white,inner sep=1pt, minimum size=1pt] at (to656o53321) {$\dots$};
\node[circle, fill=white,inner sep=1pt, minimum size=1pt] at (to656o53322) {\color{blue}{$\bullet$}};

\node[circle, fill=white,inner sep=1pt, minimum size=1pt] at (to656o54) {\color{blue}{$\bullet$}};
\node[circle, fill=white,inner sep=1pt, minimum size=1pt] at (to656o541) {\color{blue}{$\bullet$}};
\node[circle, fill=white,inner sep=1pt, minimum size=1pt] at (to656o5411) {\color{blue}{$\bullet$}};
\node[circle, fill=white,inner sep=1pt, minimum size=1pt] at (to656o54112) {\color{gray}\dots};
\node[circle, fill=white,inner sep=1pt, minimum size=1pt] at (to656o5412) {{$\dots$}};
\node[circle, fill=white,inner sep=1pt, minimum size=1pt] at (to656o542) {\dots};
\node[circle, fill=white,inner sep=1pt, minimum size=1pt] at (to656o55) {$\boldsymbol{\cdot}$};
\node[circle, fill=white,inner sep=1pt, minimum size=1pt] at (to656o56) {$\boldsymbol{\cdot}$};





 \draw [decorate,color=black] (0.5,2.5)
   node[left] {\begin{Large}{$\Gamma_{\Phi}$}\end{Large}};

\end{tikzpicture}
}
\end{subfigure}
\caption{{\small{Identification of type $(\mathscr{V})$  at levels $4$ and $8,$ and of type 
 $(\mathscr{E})$ from  level $5$ when
 $\mathscr{\textcolor{black}{z^4}} \in I_1 \cap V_1$ }}}
\label{i1}
\end{figure}
\begin{figure}[ht!]
\begin{subfigure}[h]{0.65\linewidth}
\resizebox{1\textwidth}{!}{%
\begin{tikzpicture}[grow'=up,scale=1]
\tikzstyle{level 1}=[sibling distance=6.2in]
\tikzstyle{level 2}=[sibling distance=0.25in]
\tikzstyle{level 3}=[sibling distance=0.4in]
\tikzstyle{level 4}=[sibling distance=0.4in]
\tikzstyle{level 5}=[sibling distance=0.75in]
\tikzstyle{level 6}=[sibling distance=0.5in]
\tikzstyle{level 7}=[sibling distance=0.15in]
\tikzstyle{level 8}=[sibling distance=0.3in]



\node [label=$\tiny{0}$]
 {} coordinate (t9)
child{ coordinate (to6) edge from parent[color=black, thin]
child{ coordinate (to62) edge from parent[color=black, thin]}
child{ coordinate (to6o2) edge from parent[color=black, thin]}
child{ coordinate (to65) edge from parent[color=black, thin]
child{ coordinate (to65o2) edge from parent[color=black, thin]}
child{ coordinate (to655) edge from parent[color=black, thin]}
child{ coordinate (to656) edge from parent[color=black, thin]
child{ coordinate (to6565) edge from parent[color=black, thin]}
child{ coordinate (to6566) edge from parent[color=black, thin]}
child{ coordinate (to656o5) edge from parent[color=black, thin]
child{ coordinate (to656o5-) edge from parent[color=black, thin]}
child{ coordinate (to656o5--) edge from parent[color=black, thin]}
child{ coordinate (to656o5o6) edge from parent[color=black, thin]
}
child{ coordinate (to656o51) edge from parent[color=blue,ultra thick]
child{ coordinate (to656o51a) edge from parent[color=black, thin]}
child{ coordinate (to656o51b) edge from parent[color=black, thin]}
child{ coordinate (to656o51o2) edge from parent[color=blue,ultra thick]
child{ coordinate (to656o51o2a) edge from parent[color=black, thin]}
child{ coordinate (to656o51o2b) edge from parent[color=black, thin]}
child{ coordinate (to656o51o2c) edge from parent[color=black, thin]}
child{ coordinate (to656o51o2d) edge from parent[color=black, thin]}
child{ coordinate (to656o51o2e) edge from parent[color=black, thin]}
child{ coordinate (to656o51o2-) edge from parent[color=blue,ultra thick]
}}}
}}}}
child{ coordinate (t1) edge from parent[color=black, thin]
child{ coordinate (t1o2) edge from parent[color=black, thin]
child{ coordinate (t1o2o1) edge from parent[color=black, thin]
child{ coordinate (t1o2o12) edge from parent[color=black, thin]
child{ coordinate (t1o2o121) edge from parent[color=blue,ultra thick]
child{ coordinate (t1o2o12a) edge from parent[color=blue,ultra thick]
child{ coordinate (t1o2o12aa) edge from parent[color=blue,ultra thick]}
child{ coordinate (t1o2o12ab) edge from parent[color=black, thin]}
}
child{ coordinate (t1o2o12b) edge from parent[color=black, thin]}
child{ coordinate (t1o2o12c) edge from parent[color=black, thin]}
child{ coordinate (t1o2o121o2) edge from parent[color=black, thin]}
child{ coordinate (t1o2o121-) edge from parent[color=black, thin]}}
child{ coordinate (t1o2o122) edge from parent[color=black, thin]}
child{ coordinate (t1o2o12o1) edge from parent[color=black, thin]}}
child{ coordinate (t1o2o1o1) edge from parent[color=black, thin]}
child{ coordinate (t1o2o1o2) edge from parent[color=black, thin]}}
child{ coordinate (t1o2o2) edge from parent[color=black, thin]}
child{ coordinate (t1o25) edge from parent[color=black, thin]}}
child{ coordinate (t15) edge from parent[color=black, thin]}
child{ coordinate (t1o5) edge from parent[color=black, thin]}};

\node[circle, fill=white,inner sep=1pt, minimum size=1pt] at (t1) {\small{$1$}};
\node[circle, fill=white,inner sep=1pt, minimum size=1pt] at (t1o2) {\small{${1\overline{2}}$}};
\node[circle, fill=white,inner sep=1pt, minimum size=1pt] at (t1o2o1) {\small{${1\overline{21}}$}};
\node[circle, fill=white,inner sep=1pt, minimum size=1pt] at (t1o2o12) {\color{blue}{\color{gray}\small{$\boldsymbol{1\overline{21}2}$}}};
\node[circle, fill=white,inner sep=1pt, minimum size=1pt] at (t1o2o121) {\color{blue}{\small{$\boldsymbol{1\overline{21}21}$}}};
\node[circle, fill=white,inner sep=1pt, minimum size=1pt] at (t1o2o12a) {\color{blue}{\small{$\boldsymbol{1\overline{21}216}$}}};
\node[circle, fill=white,inner sep=1pt, minimum size=1pt] at (t1o2o12b) {{\small{$1\overline{21}21\overline{5}$}}};
\node[circle, fill=white,inner sep=1pt, minimum size=1pt] at (t1o2o12c) {{\small{$1\overline{21}21\overline{6}$}}};
\node[circle, fill=white,inner sep=1pt, minimum size=1pt] at (t1o2o121o2) {{\small{$1\overline{21}211$}}};
\node[circle, fill=white,inner sep=1pt, minimum size=1pt] at (t1o2o12aa) {\color{blue}{$\bullet$}};


\node[circle, fill=white,inner sep=1pt, minimum size=1pt] at (t1o2o121-) {{\small{$1\overline{21}212$}}};

\node[circle, fill=white,inner sep=1pt, minimum size=1pt] at (t1o2o122) {\small{${1\overline{21}22}$}};
\node[circle, fill=white,inner sep=1pt, minimum size=1pt] at (t1o2o12o1) {\small{${1\overline{21}2\overline{1}}$}};
\node[circle, fill=white,inner sep=1pt, minimum size=1pt] at (t1o2o1o1) {\small{${1\overline{211}}$}};
\node[circle, fill=white,inner sep=1pt, minimum size=1pt] at (t1o2o1o2) {\dots};

\node[circle, fill=white,inner sep=1pt, minimum size=1pt] at (t1o2o2) {\small{${1\overline{22}}$}};
\node[circle, fill=white,inner sep=1pt, minimum size=1pt] at (t1o25) {\dots};

\node[circle, fill=white,inner sep=1pt, minimum size=1pt] at (t15) {\small{${15}$}};
\node[circle, fill=white,inner sep=1pt, minimum size=1pt] at (t1o5) {\dots};

\node[circle, fill=white,inner sep=1pt, minimum size=1pt] at (to6) {\small{$\overline{6}$}};
\node[circle, fill=white,inner sep=1pt, minimum size=1pt] at (to62) {\dots};
\node[circle, fill=white,inner sep=1pt, minimum size=1pt] at (to6o2) {\small{$\overline{62}$}};
\node[circle, fill=white,inner sep=1pt, minimum size=1pt] at (to65) {\small{$\overline{6}5$}};
\node[circle, fill=white,inner sep=1pt, minimum size=1pt] at (to65o2) {\small{$\dots$}};

\node[circle, fill=white,inner sep=1pt, minimum size=1pt] at (to655) {\small{$\overline{6}55$}};
\node[circle, fill=white,inner sep=1pt, minimum size=1pt] at (to656) {\small{$\overline{6}56$}};
\node[circle, fill=white,inner sep=1pt, minimum size=1pt] at (to6565) {\small{$\overline{6}56\overline{56}$}};
\node[circle, fill=white,inner sep=1pt, minimum size=1pt] at (to6566) {\small{$\overline{6}566$}};
\node[circle, fill=white,inner sep=1pt, minimum size=1pt] at (to656o5) {\color{blue}{\small{$\boldsymbol{\overline{6}56\overline{5}}$}}};
\node[circle, fill=white,inner sep=1pt, minimum size=1pt] at (to656o5-) {\small{$\overline{6}56\overline{5}6$}};
\node[circle, fill=white,inner sep=1pt, minimum size=1pt] at (to656o5--) {\small{$\overline{6}56\overline{55}$}};


\node[circle, fill=white,inner sep=0pt, minimum size=0pt] at (to656o51o2-) {\color{blue}{$\bullet$}};

\node[circle, fill=white,inner sep=0pt, minimum size=0pt] at (to656o5o6) {\small{$\overline{6}56\overline{56}$}};

\node[circle, fill=white,inner sep=0pt, minimum size=0pt] at (to656o51) {\color{blue}{\small{$\boldsymbol{\overline{6}56\overline{5}1}$}}};
\node[circle, fill=white,inner sep=0pt, minimum size=0pt] at (to656o51a) {{\small{$\overline{6}56\overline{5}1\overline{2}$}}};
\node[circle, fill=white,inner sep=0pt, minimum size=0pt] at (to656o51b) {{\small{$\overline{6}56\overline{5}15$}}};
\node[circle, fill=white,inner sep=0pt, minimum size=0pt] at (to656o51o2) {{\small{${\overline{6}56\overline{5}16}$}}};

 \draw [decorate,color=black] (0.5,2.5)
   node[left] {\begin{Large}{$T_{\Phi}$}\end{Large}};

\end{tikzpicture}
}

\end{subfigure}
\begin{subfigure}[h]{0.65\linewidth}
\resizebox{0.55\textwidth}{!}{%
\begin{tikzpicture}[grow'=up,scale=1]
\tikzstyle{level 1}=[sibling distance=3.in]
\tikzstyle{level 2}=[sibling distance=0.5in]
\tikzstyle{level 3}=[sibling distance=0.5in]
\tikzstyle{level 4}=[sibling distance=0.5in]
\tikzstyle{level 5}=[sibling distance=0.75in]
\tikzstyle{level 6}=[sibling distance=0.2in]
\tikzstyle{level 7}=[sibling distance=0.2in]
\tikzstyle{level 8}=[sibling distance=0.35in]

\node [label=$\tiny{0}$]
 {} coordinate (t9)
child{ coordinate (to6) edge from parent[color=black, thin]
child{ coordinate (to62) edge from parent[color=black, thin]}
child{ coordinate (to6o2) edge from parent[color=black, thin]}
child{ coordinate (to65) edge from parent[color=black, thin]
child{ coordinate (to65o2) edge from parent[color=black, thin]}
child{ coordinate (to655) edge from parent[color=black, thin]}
child{ coordinate (to656) edge from parent[color=black, thin]
child{ coordinate (to6565) edge from parent[color=black, thin]}
child{ coordinate (to6566) edge from parent[color=black, thin]}
child{ coordinate (to656o5) edge from parent[color=black, thin]
child{ coordinate (to656o51) edge from parent[color=black, thin]}
child{ coordinate (to656o52) edge from parent[color=black, thin]}
child{ coordinate (to656o53) edge from parent[color=black, thin]
}
child{ coordinate (to656o54) edge from parent[color=blue,ultra thick]
child{ coordinate (to656o54A) edge from parent[color=black, thin]}
child{ coordinate (to656o54B) edge from parent[color=black, thin]}
child{ coordinate (to656o541) edge from parent[color=blue,ultra thick]
child{ coordinate (to656o5411m) edge from parent[color=black, thin]}
child{ coordinate (to656o5411n) edge from parent[color=black, thin]}
child{ coordinate (to656o5411o) edge from parent[color=black, thin]}
child{ coordinate (to656o5411p) edge from parent[color=black, thin]}
child{ coordinate (to656o5411q) edge from parent[color=black, thin]}
child{ coordinate (to656o5411) edge from parent[color=blue,ultra thick]
}
child{ coordinate (to656o5412) edge from parent[color=black, thin]
}}
child{ coordinate (to656o542) edge from parent[color=black, thin]}
child{ coordinate (to656o543) edge from parent[color=black, thin]}
child{ coordinate (to656o544) edge from parent[color=black, thin]}
child{ coordinate (to656o545) edge from parent[color=black, thin]}}
child{ coordinate (to656o55) edge from parent[color=black, thin]}
child{ coordinate (to656o56) edge from parent[color=black, thin]}
}}}}
child{ coordinate (t1) edge from parent[color=black, thin]
child{ coordinate (t1o2) edge from parent[color=black, thin]
child{ coordinate (t1o2o1) edge from parent[color=black, thin]
child{ coordinate (t1o2o12) edge from parent[color=black, thin]}
child{ coordinate (t1o2o121) edge from parent[color=black, thin]}
child{ coordinate (t1o2o121o2) edge from parent[color=black, thin]}}
child{ coordinate (t1o2o2) edge from parent[color=black, thin]}
child{ coordinate (t1o25) edge from parent[color=black, thin]}}
child{ coordinate (t15) edge from parent[color=black, thin]}
child{ coordinate (t1o5) edge from parent[color=black, thin]}};

\node[circle, fill=white,inner sep=1pt, minimum size=1pt] at (t1) {$\boldsymbol{\cdot}$};
\node[circle, fill=white,inner sep=1pt, minimum size=1pt] at (t1o2) {$\boldsymbol{\cdot}$};
\node[circle, fill=white,inner sep=1pt, minimum size=1pt] at (t1o2o1) {$\boldsymbol{\cdot}$};
\node[circle, fill=white,inner sep=1pt, minimum size=1pt] at (t1o2o12) {\color{blue}{$\bullet$}};
\node[circle, fill=white,inner sep=1pt, minimum size=1pt] at (t1o2o121) {{$\boldsymbol{\cdot}$}};
\node[circle, fill=white,inner sep=1pt, minimum size=1pt] at (t1o2o121o2) {\dots};

%
%

\node[circle, fill=white,inner sep=1pt, minimum size=1pt] at (t1o2o2) {$\boldsymbol{\cdot}$};
\node[circle, fill=white,inner sep=1pt, minimum size=1pt] at (t1o25) {\dots};

\node[circle, fill=white,inner sep=1pt, minimum size=1pt] at (t15) {$\boldsymbol{\cdot}$};
\node[circle, fill=white,inner sep=1pt, minimum size=1pt] at (t1o5) {\dots};

\node[circle, fill=white,inner sep=1pt, minimum size=1pt] at (to6) {$\boldsymbol{\cdot}$};
\node[circle, fill=white,inner sep=1pt, minimum size=1pt] at (to62) {\dots};
\node[circle, fill=white,inner sep=1pt, minimum size=1pt] at (to6o2) {$\boldsymbol{\cdot}$};
\node[circle, fill=white,inner sep=1pt, minimum size=1pt] at (to65) {$\boldsymbol{\cdot}$};
\node[circle, fill=white,inner sep=1pt, minimum size=1pt] at (to65o2) {\small{$\dots$}};
\node[circle, fill=white,inner sep=1pt, minimum size=1pt] at (to655) {$\boldsymbol{\cdot}$};
\node[circle, fill=white,inner sep=1pt, minimum size=1pt] at (to656) {$\boldsymbol{\cdot}$};
\node[circle, fill=white,inner sep=1pt, minimum size=1pt] at (to6565) {\dots};
\node[circle, fill=white,inner sep=1pt, minimum size=1pt] at (to6566) {$\boldsymbol{\cdot}$};
\node[circle, fill=white,inner sep=1pt, minimum size=1pt] at (to656o5) {\color{blue}{$\bullet$}};
\node[circle, fill=white,inner sep=1pt, minimum size=1pt] at (to656o51) {$\boldsymbol{\cdot}$};
\node[circle, fill=white,inner sep=1pt, minimum size=1pt] at (to656o52) {$\boldsymbol{\cdot}$};

\node[circle, fill=white,inner sep=1pt, minimum size=1pt] at (to656o54) {\color{blue}{$\bullet$}};
\node[circle, fill=white,inner sep=1pt, minimum size=1pt] at (to656o541) {\color{blue}{$\bullet$}};
\node[circle, fill=white,inner sep=1pt, minimum size=1pt] at (to656o5411) {\color{blue}{$\bullet$}};
\node[circle, fill=white,inner sep=1pt, minimum size=1pt] at (to656o55) {$\boldsymbol{\cdot}$};
\node[circle, fill=white,inner sep=1pt, minimum size=1pt] at (to656o56) {$\boldsymbol{\cdot}$};





 \draw [decorate,color=black] (0.5,2.5)
   node[left] {\begin{Large}{$\Gamma_{\Phi}$}\end{Large}};

\end{tikzpicture}
}

\end{subfigure}
\caption{{\small{Identification of type $(\mathscr{V})$  at level $4$, and type  $(\mathscr{E})$ from level $5$, 
when  $\mathscr{\textcolor{black}{z^4}} \in I_1 \setminus 
\{V_1\cup V_2\} $.}}}
\label{i2}
\end{figure}
 \begin{Lemma}\label{Phi-Phi'}
Let $\Phi : S^1 \rightarrow S^1$ satisfying the conditions $\rm(SE), (EC), (E\pm), (CS\textrm{-}\textcolor{black}{\lambda})$ then there exists 
$\Phi' : S^1 \rightarrow S^1$ with the same combinatorics as $\Phi$ so that the identifications of type (ii) in Lemma 
\ref{vertexInterval} are all 2 to 1 for $\Gamma_{\Phi'}$.
 \end{Lemma}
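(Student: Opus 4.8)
The plan is to exploit the freedom in the choice of $\Phi'$ inside the combinatorial class of $\Phi$. By (CS) we work with the piecewise affine model of constant slope $\lambda$; once $\zeta,\iota$ (hence $\gamma,\delta$ and the integers $k(j)$) and $\lambda$ are fixed, such a map depends only on the finitely many parameters recording the positions of the cutting points $\widetilde{z}_1,\dots,\widetilde{z}_{2N}$ and the offsets $\widetilde{\Phi}'_j(\widetilde{z}_j)$, subject to the open conditions (SE),(E+),(E-) and the equalities (EC). By Lemma \ref{condSatisfied} this parameter set is non-empty and, inside the family $\Phi_{P,\Theta}$ of \cite{AJLM2}, contains an open set of parameters on which (EC) holds structurally, so there is genuine room to move. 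The first thing to do is to read off from the proof of Lemma \ref{vertexInterval} exactly when a type-(ii) vertex has $n\ge 3$: this happens only through the ``stacking'' mechanism. A type-(ii-v) identification at level $k(j)$ attached to a cutting point $z_j$ is carried to deeper levels by pure type-(ii-e) foldings along the branch following the forward orbit of the coincidence point $Z_j:=(\widetilde{\Phi}')^{k(j)}(\widetilde{z}_j)$, and a third interval gets adjoined, at level $k(j)+m+k(\alpha)$, precisely when $(\widetilde{\Phi}')^{m}(Z_j)$ lands inside the critical neighborhood $V_\alpha$ of some cutting point $z_\alpha$ of Lemma \ref{affin-diffeo} --- which can no longer happen once the orbit of $Z_j$ reaches the cutting set, by case~1 in the proof of Lemma \ref{GraphWellDef}.

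So I would construct $\Phi'$ in two steps. \emph{Step 1: shrink the critical neighborhoods.} Since $V_\alpha$ is the $(\widetilde{\Phi}')^{-k(\alpha)+1}$-preimage of $J_{c_\alpha}\cup J_{d_\alpha}$, and the endpoints $Z_{c_\alpha},Z_{d_\alpha}$ of these intervals may be pushed towards $\widetilde{z}_{c_\alpha},\widetilde{z}_{\zeta(d_\alpha)}$ by moving the offsets, every $V_\alpha$ --- hence $\mathcal V:=\bigcup_\alpha V_\alpha$ --- can be made as small as we like, all the ruling conditions being preserved (they are open, and (EC) persists structurally inside the family of \cite{AJLM2}). \emph{Step 2: put the coincidence points in general position.} With $\mathcal V$ now concentrated near the cutting set, tune the remaining offsets so that for every $j$ the coincidence point $Z_j$ is itself a cutting point of $\Phi'$ (or, equivalently for the conclusion, the expanding fixed point of $\Phi'$ in the interval $\widetilde{I}_{a_j}$ containing it, which avoids the small set $\mathcal V$). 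In the first case no type-(ii-e) cascade is ever started; in the second, the constant orbit of $Z_j$ stays outside $\mathcal V$, so each cascade is a pure folding.

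With these choices no stacking can occur: every cutting point $z_j$ contributes a single type-(ii-v) identification merging the two adjacent level-$k(j)$ intervals that descend from $V_j^{c_j}$ and $V_j^{d_j}$, and nothing further is adjoined at deeper levels; moreover, since the neighborhoods $V_j$ are pairwise disjoint and the branches supporting the various cascades follow disjoint orbits, the identifications attached to distinct cutting points never combine. Hence all type-(ii) identifications of $\Gamma_{\Phi'}$ are $2$-to-$1$. The step requiring real work is Step 2 --- showing that the $2N$ conditions ``$Z_j$ lies on the cutting set (resp.\ on the prescribed fixed point)'' can be imposed \emph{simultaneously} while the equalities (EC) continue to hold. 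This rests on the flexibility of the family $\Phi_{P,\Theta}$ of \cite{AJLM2}: (EC) holds there on a full open set of parameters, and on that set the coincidence points $Z_1,\dots,Z_{2N}$ can be moved essentially independently, so each can be steered onto its target (equivalently, onto the forward-invariant Cantor set of points whose orbit avoids $\mathcal V$). Everything else is immediate from the construction of $\Gamma_{\Phi'}$ in \S\ref{DG}.
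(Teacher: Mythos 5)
Your overall idea is the same as the paper's: steer each coincidence point $Z_j=\widetilde{\Phi}^{k(j)}(\widetilde{z}_j)$ onto the expanding fixed point $p^{\alpha}$ of the interval $\widetilde{I}_{\alpha}$ containing it, so that the orbit of $Z_j$ stays outside $\bigcup_j V_j$ and the type (ii-e) cascades never meet a type (ii-v) identification. But the step you yourself flag as ``the step requiring real work'' --- imposing the $2N$ conditions simultaneously while the equalities (EC) continue to hold --- is exactly the content of the lemma, and your proposal does not close it. The appeal to the family $\Phi_{P,\Theta}$ of [AJLM2] is not available here: the lemma concerns an arbitrary $\Phi$ satisfying the ruling conditions, with no presentation attached (producing the group/presentation is the goal of the paper), so you cannot assume $\Phi$ sits in that parametrized family, and even inside it the claim that the $Z_j$ can be moved ``essentially independently'' while preserving the exact equalities (EC) is asserted, not proved --- (EC) is an equality constraint, not an open condition, which also undermines the claim in your Step 1 that the $V_{\alpha}$ can be shrunk at will with all ruling conditions preserved.

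The paper closes the gap by a concrete local surgery that requires no genericity and no tuning: it moves the \emph{cutting point}, not the offsets. Setting $p^{\alpha}_j:=\widetilde{\Phi}^{-k(j)}(p^{\alpha})\in V_j$, one declares $p^{\alpha}_j$ to be the new cutting point, keeps $\Phi'_j=\widetilde{\Phi}_j$ on $[p^{\alpha}_j,\widetilde{z}_{\zeta(j)})$ and defines $\Phi'_{\zeta^{-1}(j)}$ as the $\lambda$-affine extension of $\widetilde{\Phi}_{\zeta^{-1}(j)}$ up to $p^{\alpha}_j$, as in (\ref{affine-extension}). Because the new cutting point lies in $V_j$, Lemma \ref{diffeo-V} already supplies both (E$\pm$) (via the containments (a), i.e.\ (\ref{extenE+-})) and (EC) (via the equality (b)) for $\Phi'$ with the same integers $k(j)$, and (CS), $\zeta$, $\iota$ are unchanged; since the modifications are supported in the pairwise disjoint neighborhoods $V_j$, all cutting points are treated simultaneously with no interference. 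Note also that no shrinking of the $V_{\alpha}$ is needed: $p^{\alpha}$ lies in $I_{\alpha,\alpha}$, which is automatically disjoint from $V_{\alpha}\cup V_{\zeta(\alpha)}$, so the (constant) orbit of $Z'=p^{\alpha}$ avoids $\bigcup_j V_j$ and the identifications of type (ii) are all $2$ to $1$. Your proposal would be complete if you replaced the genericity argument of Step 2 by this explicit construction (or an equivalent one) and dropped Step 1.
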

 \begin{proof}
 The idea is to change the map $\Phi$ by changing the cutting points while preserving the combinatorics.
 We replace $\Phi$ by the affine map $\widetilde{\Phi}$ via the conjugacy of condition (CS), the combinatorics are evidently the same. Consider the neighborhood 
 $\widetilde{V}_j$ of Lemma \ref{affin-diffeo} and the $\lambda$-affine extension $\widetilde{\Phi}_j^{\tilde{V}}$
 of Lemma \ref{diffeo-V},
 from which we obtain:
 $\widetilde{\Phi}_j^{\tilde{V}} (\widetilde{V}_j) \subset \tilde{I}_{\delta(j)} \setminus \widetilde{V}_{\delta(j)}$. Conditions 
 (E$\pm$), together with the definition of $\widetilde{V}_j$ give:
 \begin{equation}\label{extenE+-}
   \widetilde{\Phi}^{m} ( \widetilde{\Phi}_j^{\tilde{V}} (\widetilde{V}_j) ) \textcolor{black}{\subset} \tilde{I}_{\delta^{m+1}(j)}  \textrm{ , }  
   \widetilde{\Phi}^{m} ( \widetilde{\Phi}_{\zeta^{-1}(j)}^{\tilde{V}} (\widetilde{V}_j) ) \textcolor{black}{\subset} 
   \tilde{I}_{\gamma^{m+1}(\zeta^{-1} (j))} , 
   \textrm{ } 0\leq m\leq k(j) - 2.
\end{equation}    
\noindent  Condition (EC) gives 
 $\textcolor{black}{\mathscr{z}^{k(j)}_j} = \widetilde{\Phi}^{k(j)} (\tilde{z}_j) \in \tilde{I}_{\alpha}$, for some 
 $\alpha \in \{ 1, \dots, 2N \}$. Consider the fixed point $p^{\alpha} \in \tilde{I}_{\alpha}$ of 
 $\widetilde{\Phi}_{\alpha}$, given by condition (SE) and (CS).
 The definition of the involution $\iota$ and condition (SE) implies that the fixed point $p^{\alpha}$ belongs to the subinterval $I_{\alpha, \alpha}$ which is disjoint from
 $ I_{\alpha, \delta(\alpha)} \cup I_{\alpha, \gamma(\alpha)}$. By definition of the intervals $\widetilde{V}_j$ in Lemma \ref{affin-diffeo}, we obtain: $p^{\alpha} \notin \widetilde{V}_{\alpha} \cup \widetilde{V}_{\zeta(\alpha) }$.
 
 If $\textcolor{black}{\mathscr{z}^{k(j)}_j} = p^{\alpha}$ then we do not change the cutting point $z_j$.
 
  If $\textcolor{black}{\mathscr{z}^{k(j)}_j} \neq p^{\alpha}$ then either 
 $ p^{\alpha} \in \widetilde{\Phi}^{k(j)}(V_j^+)$ for $V_j^+:= \widetilde{V}_j \cap \widetilde{I}_j$ or 
 $ p^{\alpha} \in \widetilde{\Phi}^{k(j)}(V_j^-)$ for $V_j^-:= \widetilde{V}_j \cap \widetilde{I}_{\zeta^{-1}(j)}$.
 Assume, for instance, that $ p^{\alpha} \in \widetilde{\Phi}^{k(j)} ( V_j^+ )$ and let 
 $ p^{\alpha}_j = \widetilde{\Phi}^{- k(j)} ( p^{\alpha}) \in V_j^+ \subset \tilde{I}_{j}$, the other case is symmetric.
 The goal is to transform the map $\widetilde{\Phi}$ to $\Phi'$ so that $ p^{\alpha}_j$ is the new cutting point.
 Since $ p^{\alpha}_j \in V_j^+ \subset \tilde{I}_{j}$ we define 
 $\Phi'_j = \widetilde{\Phi}_j$ \textcolor{black}{restricted to} the interval 
 $[ p^{\alpha}_j  ,  \tilde{z}_{\zeta(j)} )$.
 We define the map $\Phi'_{\zeta^{-1}(j)}$ by the $\lambda$-affine extension of 
 $ \widetilde{\Phi}_{\zeta^{-1}(j)}$, i.e.,  from the map $ \widetilde{\Phi}^{V}_{\zeta^{-1}(j)}$, as defined in (\ref{affine-extension}), restricted to the interval   $[\tilde{z}_{\zeta^{-1}(j)} , p^{\alpha}_j  )$ rather than
 $[ \tilde{z}_{\zeta^{-1}(j)} , \tilde{z}_{j} )$ \textcolor{black}{for the map $ \widetilde{\Phi}_{\zeta^{-1}(j)}$}.
 
 \noindent We apply the same construction for each cutting point.
 The permutations $\zeta$ and $\iota$ as well as all the $k(j)$ are the same for $\Phi'$ as for $\Phi$. 
 The properties (\ref{extenE+-}), coming from  Lemma \ref{diffeo-V}-$(a)$ and (E$\pm$) for $\Phi$, imply that the conditions 
 (E$\pm$) are satisfied by $\Phi'$. The condition (EC) is satisfied by $\Phi'$ from the equality
 $(b)$ in Lemma \ref{diffeo-V}. Condition (CS), with slope $\lambda$ is satisfied by $\Phi'$ by construction since $\Phi'$ is affine with the same slope.
 The two maps $\Phi$ and $\Phi'$ have thus the same combinatorics.\\
 The choice of the new cutting point $p^{\alpha}_j$ of $\Phi'$ implies that 
 $\mathscr{z}' =  \Phi'^{k(j)} (p^{\alpha}_j) = p^{\alpha}$ which is fixed by $\Phi'$ since 
 $\Phi' = \widetilde{\Phi}$ outside the set $\bigcup V_j$ and we observed that
 $p^{\alpha} \notin V_{\alpha} \cup V_{\zeta(\alpha) }$. 
 Therefore we obtain: $ \Phi'^{m} (\mathscr{z}')  = p^{\alpha} $ for all $m \geq 1$ and thus this orbit is always outside the set 
 $\bigcup V_j$.
 By the proof of Lemma \ref{vertexInterval}, the identifications of type  ({\it ii}-$\mathscr{E}$) occur at all levels after level $k(j)$ and do not interact with identifications of type  ({\it ii}-$\mathscr{V}$). Therefore the identifications of type $(ii)$ in Lemma 
\ref{vertexInterval} are all with two intervals, i.e., are 2 to 1.
   \end{proof} 
   
\begin{Rm}\label{Markov}
All the cutting points of the new map $\Phi'$ are pre-periodic and thus the map satisfies a Markov property. \textcolor{black}{This observation also implies that the maps $\Phi$ and $\Phi'$ are non-conjugate.}
\end{Rm} 
\noindent This remark is immediate from a dynamical system point of view, it has an important consequence for our class of maps:

\begin{Lemma}\label{algebraic} If $\Phi$ satisfies the conditions 
$\rm(SE), (E\pm), (EC) ,(CS\textrm{-}\lambda)$ for some $\lambda >1$
then $\lambda$ is an algebraic integer.
\end{Lemma}

\begin{proof}
By Lemma \ref{Phi-Phi'} the two maps $\Phi$ and $\Phi'$ have the same combinatorics, in particular they have the same slope $\lambda >1$. It is classical in one dimensional dynamics that a piecewise affine map $f$ with constant slope 
$\lambda >1$ has positive topological entropy $h(f) = log (\lambda) $. From Remark \ref{Markov}, the map $\Phi'$ satisfies a Markov property and thus it is also classical that its topological entropy is the logarithm of an algebraic integer, as the largest eigenvalue of an integer matrix (see for instance \cite{ALM} for the classical facts).
\end{proof}


 \begin{Lemma}\label{QuasiI}
The two graphs $\Gamma_{\Phi'}$ and $\Gamma^0_{\Phi'}$, endowed with the combinatorial metric (every edge has length one), are quasi-isometric.
 \end{Lemma}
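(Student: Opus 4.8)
The plan is to prove that the quotient map itself is a quasi-isometry. Write $q\colon V(\Gamma^0_{\Phi'})\to V(\Gamma_{\Phi'})$ for the vertex map induced by $\Gamma_{\Phi'}=T_{\Phi'}/\!\sim_{\Phi'}$, recalling that $V(\Gamma^0_{\Phi'})=V(T_{\Phi'})$ and that $\Gamma^0_{\Phi'}$ is obtained from $T_{\Phi'}$ by adding the sphere edges. I would first record the combinatorial structure of $q$ for the special map $\Phi'$ produced by Lemma \ref{Phi-Phi'}: every vertex of $\Gamma_{\Phi'}$ has either one $q$-preimage (type (i)) or exactly two (type (ii), all foldings being $2$-to-$1$ by that lemma), and by Definition \ref{graph}(b) for type (ii-v) and by the description of the type-(ii-e) foldings in the proof of Lemma \ref{vertexInterval}, when a vertex has two preimages those are cyclically adjacent intervals of one and the same level, hence joined by a sphere edge of $\Gamma^0_{\Phi'}$. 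Thus $q$ is surjective, and any two $q$-preimages of a vertex lie at $\Gamma^0_{\Phi'}$-distance at most $1$.

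The crucial estimate is that $q$ is coarsely Lipschitz with a constant governed by $K:=\max_{j}k(j)$. A tree edge of $\Gamma^0_{\Phi'}$ is sent by $q$ to an edge or a point, so it suffices to show each sphere edge is sent to a path of length $\le 2K$ in $\Gamma_{\Phi'}$. Let $v_I,v_{I'}$ be the endpoints of such an edge, so $I,I'$ are cyclically adjacent intervals of some level $k$ sharing one endpoint $w$, and let $s$ be the level at which the ancestors of $v_I$ and $v_{I'}$ in $T_{\Phi'}$ last coincide. If $k-s\le K$, then $v_I$ and $v_{I'}$ have a common ancestor at level $s$, so $d_{T_{\Phi'}}(v_I,v_{I'})=2(k-s)\le 2K$, and since the quotient map $T_{\Phi'}\to\Gamma_{\Phi'}$ does not increase distances we get $d_{\Gamma_{\Phi'}}(q v_I,q v_{I'})\le 2K$. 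If $k-s>K$, then $w$ is interior to the common level-$s$ ancestor interval and $\Phi'^{s}(w)$ is a cutting point $z_\beta$; because $k(\beta)\le K<k-s$, conditions (EC), (E$+$), (E$-$) at $z_\beta$ force the two branch-vertices at level $s+k(\beta)$ to be $\sim_{\Phi'}$-identified (type (ii-v)). Moreover, for $\Phi'$ the coincidence point $Z_\beta=\Phi'^{k(\beta)}(z_\beta)$ is a fixed point $p^{\alpha}$ of $\Phi'$, interior to $I_{\alpha}$ and distinct from every cutting point of $\Phi'$ (see the proof of Lemma \ref{Phi-Phi'}); hence $\Phi'^{m}(Z_\beta)=p^{\alpha}$ is never a cutting point, so by the argument in the proof of Lemma \ref{GraphWellDef} the induced type-(e) identifications propagate to all deeper levels. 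Consequently the two branches are folded together at every level $\ge s+k(\beta)$, and $v_I,v_{I'}$ — the $w$-adjacent descendants of these branches at level $k$ — satisfy $q v_I=q v_{I'}$. In all cases $d_{\Gamma_{\Phi'}}(q v_I,q v_{I'})\le 2K$, so $q$ is $2K$-Lipschitz.

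It then remains to bound $\Gamma^0_{\Phi'}$-distances above by $\Gamma_{\Phi'}$-distances, which I would do by a lift. Given vertices $\tilde a,\tilde b$ of $\Gamma^0_{\Phi'}$, pick a geodesic $q\tilde a=c_0,c_1,\dots,c_m=q\tilde b$ in $\Gamma_{\Phi'}$ and lift it in $T_{\Phi'}$ starting from $\tilde a$: at a step through a downward (resp. upward) edge $c_i\to c_{i+1}$ carrying a label $\Psi_{\alpha}$, move from the current preimage of $c_i$ to its child along the $\Psi_{\alpha}$-edge (resp. to its unique parent); its $q$-image is one of the at most two downward $\Psi_{\alpha}$-neighbours (resp. upward neighbours) of $c_i$, and if it is not $c_{i+1}$ one passes to the other candidate, which differs from the first by a single sphere edge — using the first paragraph together with the fact that the two $\Psi_{\alpha}$-children of two adjacent intervals are again adjacent intervals of the next level. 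Each of the $m$ steps is thus realized by at most two edges of $\Gamma^0_{\Phi'}$, so after $m$ steps one reaches a $q$-preimage of $q\tilde b$ within $\Gamma^0_{\Phi'}$-distance $2m$ of $\tilde a$, and one last sphere edge, if needed, reaches $\tilde b$; hence $d_{\Gamma^0_{\Phi'}}(\tilde a,\tilde b)\le 2\,d_{\Gamma_{\Phi'}}(q\tilde a,q\tilde b)+1$. Together with the $2K$-Lipschitz bound and surjectivity, this exhibits $q$ as a quasi-isometry.

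The main obstacle is the case $k-s>K$ of the key estimate: one must know that the identifications of the dynamical graph persist indefinitely below a type-(ii-v) vertex, which is exactly what may fail for an arbitrary $\Phi$ whose coincidence orbits are complicated, and is precisely why one first replaces $\Phi$ by $\Phi'$ via Lemma \ref{Phi-Phi'} (so that every coincidence point is a fixed point avoiding all cutting points, cf. Remark \ref{Markov}). A secondary, purely bookkeeping, point is tracking which of the (at most two) $q$-preimages of a vertex is being followed along the lift; this is resolved once and for all by the observation that those two preimages are always joined by a sphere edge of $\Gamma^0_{\Phi'}$.
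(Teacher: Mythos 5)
Your proposal is correct and follows essentially the same route as the paper: both show that the quotient vertex map ($q$, the paper's $\mathcal{V}$) is a quasi-isometry with constants controlled by $K_{\Phi'}=\max_j k(j)$, using the $2$-to-$1$ property from Lemma \ref{Phi-Phi'} and the fact that identified vertices correspond to adjacent intervals of the same level, hence to sphere-adjacent vertices of $\Gamma^0_{\Phi'}$. Where you differ is only in level of detail — your case split for a sphere edge (common ancestor within $K$ levels versus outright identification via the persistence of type-(e) foldings for $\Phi'$) and your explicit lift of $\Gamma_{\Phi'}$-geodesics for the reverse inequality elaborate the steps the paper states tersely (its ``path of length at most $k(j)$'' and its one-line lower bound), not a different method.
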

 \begin{proof}
 Let us denote  by $d_{\Gamma_{\Phi'}^0}$ and $d_{\Gamma_{\Phi'}}$ the combinatorial distances in 
 $\Gamma_{\Phi'}^0 $ and $\Gamma_{\Phi'}$.
 The two sets of vertices $V(\Gamma_{\Phi'}^0 )$ and $V(\Gamma_{\Phi'})$ are related by a map
   ${\cal V} : V(\Gamma_{\Phi'}^0 ) \rightarrow V(\Gamma_{\Phi'})$ which is induced by the relation 
   $\sim_{\Phi}$ of Definition \ref{graph} and is at most 2 to 1 by Lemma \ref{Phi-Phi'}.\\
   Each vertex $ v \in V(\Gamma_{\Phi'}^0 )\setminus \{v_0\}$ is associated with an interval 
   $ I_v := I_{j_1, \dots , j_{k}}$ and thus with a vertex of the tree $T_{\Phi'}$.\\
 Two vertices of $\Gamma_{\Phi'}^0 $ with the same $\cal{V}$-image correspond to adjacent intervals at the same level $k$, they are at distance one in 
 $\Gamma_{\Phi'}^0 $. 
 Two vertices connected by an edge on a sphere $S_p^0$ of radius $p$ centred at the base vertex $v_0$ in 
 $\Gamma_{\Phi'}^0 $ are mapped either to a single vertex in the sphere $S_p$ of radius $p$, centred at $v_0$ in $\Gamma_{\Phi'} $ or to two distinct vertices on the same sphere. These two vertices are connected in $\Gamma_{\Phi'} $ by a path of length at most $k(j)$, for some 
 $j \in \{1, \dots , 2N\}$, where $k(j)$ is the integer in the condition (EC). 
 We define $K_{\Phi'} := {\rm max } \{ k(j)| j= 1, \dots, 2N\}$ and we assert that:\\
    \centerline{ $ d_{\Gamma_{\Phi'}} ( {\cal V}(v^0_{\alpha}) , {\cal V}(v^0_{\beta} )  ) \leq K_{\Phi'}. 
  d_{\Gamma_{\Phi'}^0}  (v^0_{\alpha} , v^0_{\beta} ) + \textcolor{black}{1}$,}
   for any pair of vertices $ (v^0_{\alpha} , v^0_{\beta} )$ in 
   $V(\Gamma_{\Phi'}^0)\times V(\Gamma_{\Phi'}^0)$.\\ 
   Indeed a minimal length path between $ v^0_{\alpha}$ and  $v^0_{\beta} $ is a concatenation of some paths along the spheres centred at $v_0$ and some paths along rays starting at $v_0$. The length of the paths along the rays are preserved by the map 
  ${\cal V}$ and the length of the paths along the spheres are at most expanded by a factor bounded by $K_{\Phi'}$.
  On the other direction, the same observation and the 
  fact that ${\cal V}$ could identify at most two vertices implies:\\
  \centerline{ $ d_{\Gamma_{\Phi'}} ( {\cal V}(v^0_{\alpha}) , {\cal V}(v^0_{\beta} )  ) \geq  
  \frac{1}{K_{\Phi'}} d_{\Gamma_{\Phi'}^0}  (v^0_{\alpha} , v^0_{\beta} ) - \textcolor{black}{1}$.}   \end{proof}

  \begin{Cor}\label{Graph1Hyper}
 The dynamical graph $\Gamma_{\Phi}$, with the combinatorial distance, is hyperbolic with boundary homeomorphic to $S^1$. 
 \end{Cor}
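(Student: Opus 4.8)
The plan is to deduce the corollary directly from the results already established. The key point is that hyperbolicity and the homeomorphism type of the boundary are quasi-isometry invariants, so it suffices to transport what we know about $\Gamma^0_\Phi$ through a chain of quasi-isometries. First, by Lemma \ref{Graph0Hyper}, for any $\Phi$ satisfying (SE) the graph $\Gamma^0_\Phi$ is Gromov hyperbolic with $\partial \Gamma^0_\Phi \cong S^1$. Second, by Lemma \ref{Phi-Phi'}, we may pick a map $\Phi'$ with the same combinatorics as $\Phi$ and satisfying all the ruling conditions (SE), (EC), (E$\pm$), (CS), with the additional feature that all the type (ii) identifications in $\Gamma_{\Phi'}$ are $2$ to $1$. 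For this particular $\Phi'$, Lemma \ref{QuasiI} gives that $\Gamma_{\Phi'}$ and $\Gamma^0_{\Phi'}$ are quasi-isometric. Since $\Gamma^0_{\Phi'}$ is hyperbolic with boundary $S^1$, and hyperbolicity together with the boundary (as a topological space) is preserved under quasi-isometry, we conclude that $\Gamma_{\Phi'}$ is hyperbolic with boundary homeomorphic to $S^1$.

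It remains to pass from $\Phi'$ back to $\Phi$. This is exactly the content of Proposition \ref{GammaHomeo}: two maps with the same combinatorics (same $\zeta$, $\iota$, hence the same $\gamma$, $\delta$, $k(j)$) and the same ruling conditions with the same slope produce homeomorphic dynamical graphs. Thus $\Gamma_\Phi \cong \Gamma_{\Phi'}$ as (labeled) graphs, hence they are isometric for the combinatorial metric, hence $\Gamma_\Phi$ is also hyperbolic with boundary homeomorphic to $S^1$.

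One should check the mild technical point that the $\Phi'$ supplied by Lemma \ref{Phi-Phi'} can be taken with the same slope $\lambda$ as $\Phi$: this is immediate from the construction in that proof, where $\Phi'$ is obtained from the affine model $\widetilde\Phi$ by replacing certain branches by $\lambda$-affine extensions, so condition (CS) holds for $\Phi'$ with the very same $\lambda$. With that observed, Proposition \ref{GammaHomeo} applies verbatim. $\qed$

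The only genuine subtlety here — and the place one must be a little careful — is the invariance statement being invoked: for a Gromov hyperbolic geodesic space, a quasi-isometry induces a homeomorphism of Gromov boundaries, so from $\partial\Gamma^0_{\Phi'}\cong S^1$ and the quasi-isometry of Lemma \ref{QuasiI} we really do get $\partial\Gamma_{\Phi'}\cong S^1$ and not merely that it is hyperbolic. This is standard (see e.g. \cite{GdlH}), and the chain $\Gamma_\Phi \cong \Gamma_{\Phi'} \sim_{q.i.} \Gamma^0_{\Phi'}$ then closes the argument.
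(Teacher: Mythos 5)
Your proof is correct and follows essentially the same route as the paper: hyperbolicity and boundary of $\Gamma^0_{\Phi'}$ from Lemma \ref{Graph0Hyper}, the quasi-isometry of Lemma \ref{QuasiI} (via the modified map $\Phi'$ of Lemma \ref{Phi-Phi'}), quasi-isometry invariance of hyperbolicity and of the Gromov boundary, and finally Proposition \ref{GammaHomeo} to transfer the conclusion back to $\Gamma_{\Phi}$. The extra remarks (that $\Phi'$ keeps the same slope $\lambda$, and that the graph identification is an isometry for the combinatorial metric) are sensible explicit checks of steps the paper leaves implicit.
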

 \begin{proof}
 A metric space quasi-isometric to a Gromov hyperbolic space is Gromov hyperbolic with the same boundary (see for instance \cite{GdlH}). By Lemmas \ref{Graph0Hyper} and \ref{QuasiI} the graph 
 $\Gamma_{\Phi'}$ is hyperbolic with boundary $S^1$. By Proposition \ref{GammaHomeo}
 the same property is satisfied by $\Gamma_{\Phi}$.
\end{proof} 
   
   \section{An action of $G_{X_{\Phi}}$ on $\Gamma_{\Phi}$}
  
 The group $G_{X_{\Phi}}$ of Definition \ref{groupGPhi}, as a subgroup of $\textrm{Homeo}^+ (S^1)$, is the main object of study for the rest of the paper. From Theorem \ref{CP-relations}, some relations are satisfied among the generators: the cutting point relations.  We do not know at this point if this is the whole set of relations. 
 The classical method to study such groups is via a geometric action on a metric space.
  The graph $\Gamma_{\Phi}$ of the previous section has been defined for that purpose. It is a hyperbolic metric space that reflects the dynamics of the map $\Phi$ 
but an action of $G_{X_{\Phi}}$ on $\Gamma_{\Phi}$ has to be defined, via the data we have i.e., the dynamics of the map $\Phi$.
  
   Recall that a {\em geometric action} of a group on a metric space is a morphism, acting by isometries that is co-compact and properly discontinuous.\\
   By Lemma \ref{vertexInterval}, each vertex $v \in V( \Gamma_{\Phi} )$ is identified with an interval 
   $I_v \subset S^1$ and each $g \in G_{X_{\Phi}}$ is, in particular, a homeomorphism of $S^1$.
   We need to understand, for each $g \in G_{X_{\Phi}}$ how the interval $g(I_v)$ is related to some 
   $I_w$ for $w \in V( \Gamma_{\Phi} )$.\\
   An ideal situation would be that for ``all $v$ and all $g$ there is $w$ so that $g(I_v) = I_w$", we will see immediately that this does not happen for all vertices (see Lemma \ref{action1}).
    The idea for defining an action is to weaken this ideal situation and find an interval $I_w$ so that   
 $g(I_v)$ and $I_w$ are  ``close enough" i.e., admit a controlled error. In the previous section, we simplified the notations by not making distinctions between $\Phi$ and $\widetilde{\Phi}$ or 
 $\widetilde{I}_j$ and ${I}_j$. We keep this simplification when no confusion is possible and we simplify also the notation for the generators, we write $\varphi_j$ for  $\varphi_j^{\infty}$.
  
  \subsection{A preliminary step}
  \noindent   Let us describe how the generators 
  $\varphi_j \in X_{\Phi}$ of the group do act on the partition intervals $I_m$ for all $m \in \{1,\dots, 2N\}$ i.e., on the intervals associated to vertices of level 1 in $\Gamma_{\Phi}$.
  
  \begin{Lemma}\label{action1}
  If $\Phi$ is a piecewise homeomorphism of $S^1$ satisfying the conditions {\rm(SE), (E$\pm$), (EC)} and {\rm(CS)}, let $\varphi_j \in X_{\Phi}$ be a generator of the group  $G_{X_{\Phi}}$ given by 
  Theorem \ref{CP-relations}.
  If $I_m$ is a partition interval, for $m \in \{1, \dots , 2N\}$, then $\varphi_j (I_m)$ satisfies one of the following conditions:
  \vspace*{-5pt}
     \begin{enumerate}[noitemsep, leftmargin=20pt]
  \item[$(a)$] If $m = j$ then: $\varphi_j (I_j) \cap I_k \neq \emptyset$ for all $k\neq \iota(j)$.
  \item[$(b)$] If $m \notin \{j, \zeta^{\pm 1} (j) \}$ then: $\varphi_j (I_m) = I_{\iota(j) , m}$.
  \item[$(c)$] $\varphi_j (I_{\zeta (j)}) = I_{\iota(j) , \zeta (j)} \cup L_{\iota(j)}$ and 
   $ \varphi_j ( I_{\zeta^{-1} (j)} ) = I_{\iota(j) , \zeta^{-1} (j)} \cup R_{\iota(j)}$,
   where $L_{\iota(j)}$ and $R_{\iota(j)}$ are the intervals defined in 
  (\ref{complement}) such that:
  $L_{\iota(j)} \subsetneq I _{\gamma(j), \gamma^2 (j) , \dots, \gamma^{k(\zeta(j)) - 1} (j)} $ and 
   $R_{\iota(j)} \subsetneq I _{\delta(j), \delta^2 (j) , \dots, \delta^{k( j ) - 1} (j)}$, 
  for $k(\zeta (j) )$ and $k(j)$  the integers of condition {\rm(EC)}.
  \end{enumerate}
   \end{Lemma}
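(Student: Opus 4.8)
The plan is to read off the action of each $\varphi_j$ on the partition intervals directly from the three pieces out of which $\varphi_j$ was built. Working with $\widetilde\Phi$ as usual, recall from Lemma \ref{prop-gen} and (\ref{Diff-nu}) that $(\varphi_j)_{|I_j}=\Phi_j$, that $(\varphi_j)_{|\Phi_{\iota(j)}(I_{\iota(j)})}=(\Phi_{\iota(j)})^{-1}$ (because $\varphi_j^{-1}=\varphi_{\iota(j)}$ agrees with $\Phi_{\iota(j)}$ on $I_{\iota(j)}$), and that on the two complementary intervals $L_j,R_j$ of (\ref{complement}) one has $\varphi_j(L_j)=R_{\iota(j)}$ and $\varphi_j(R_j)=L_{\iota(j)}$. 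The proof is then a case analysis according to how $I_m$ sits among these four intervals, the bookkeeping being handled by the permutation identities $\gamma=\zeta^{-1}\iota$, $\delta=\zeta\iota$, $\gamma(\iota(j))=\zeta^{-1}(j)$, $\delta(\iota(j))=\zeta(j)$ of \S\ref{combinatorics}.

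Item (a) is immediate, since $\varphi_j(I_j)=\Phi_j(I_j)=\Phi(I_j)$, so the statement is exactly consequence (I) of (SE). For item (b), apply consequence (I) with $\iota(j)$ in place of $j$: as $\gamma(\iota(j))=\zeta^{-1}(j)$ and $\delta(\iota(j))=\zeta(j)$, one gets $I_m\subset\Phi(I_{\iota(j)})=\Phi_{\iota(j)}(I_{\iota(j)})$ for every $m\neq j,\zeta^{\pm1}(j)$. Hence such an $I_m$ lies entirely in the region where $\varphi_j=(\Phi_{\iota(j)})^{-1}$, so $\varphi_j(I_m)=(\Phi_{\iota(j)})^{-1}(I_m)=I_{\iota(j)}\cap\Phi^{-1}(I_m)=I_{\iota(j),m}$, which is one non‑degenerate interval since $\Phi^{-1}(I_m)$ meets $I_{\iota(j)}$ in a single component.

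For item (c) I would first pin down the relevant endpoints. Orientation preservation of each branch together with (SE) forces $\Phi_m(\partial^-I_m)\in I_{\delta(m)}$ and $\Phi_m(\partial^+I_m)\in I_{\gamma(m)}$, these being the two intervals adjacent to the excluded $I_{\iota(m)}$. Applying this to $m=j$ and to $m=\iota(j)$ (recalling $\delta(\iota(j))=\zeta(j)$, $\gamma(\iota(j))=\zeta^{-1}(j)$) shows that $R_j\subset I_{\zeta(j)}$ is the initial segment abutting $\partial^-I_{\zeta(j)}$, that $L_j\subset I_{\zeta^{-1}(j)}$ is the final segment abutting $\partial^+I_{\zeta^{-1}(j)}$, and that $\Phi_{\iota(j)}(I_{\iota(j)})$ meets $I_{\zeta^{\pm1}(j)}$ precisely in the complementary segments. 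Therefore $I_{\zeta(j)}=R_j\sqcup\bigl(I_{\zeta(j)}\cap\Phi_{\iota(j)}(I_{\iota(j)})\bigr)$; on the first piece $\varphi_j(R_j)=L_{\iota(j)}$, and on the second $\varphi_j=(\Phi_{\iota(j)})^{-1}$ gives $I_{\iota(j)}\cap\Phi^{-1}(I_{\zeta(j)})=I_{\iota(j),\zeta(j)}$, and since $\varphi_j$ is a homeomorphism the two images join along a point to form the interval $I_{\iota(j),\zeta(j)}\cup L_{\iota(j)}$. The interval $I_{\zeta^{-1}(j)}$ is handled identically with $L_j$, $R_{\iota(j)}$ replacing $R_j$, $L_{\iota(j)}$.

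Finally I would locate $L_{\iota(j)}$ and $R_{\iota(j)}$. Unwinding (\ref{complement}) with $\iota(j)$ for $j$ gives $L_{\iota(j)}=[\Phi_j(z_{\zeta(j)}),z_{\iota(j)}]\subset I_{\gamma(j)}$ and $R_{\iota(j)}=[z_{\delta(j)},\Phi_j(z_j)]\subset I_{\delta(j)}$. The endpoint $\Phi_j(z_{\zeta(j)})$ is the value of $\Phi$ on the negative side of the cutting point $z_{\zeta(j)}$, so condition (E-) at $z_{\zeta(j)}$ reads $\Phi^m(\Phi_j(z_{\zeta(j)}))\in I_{\gamma^{m+1}(j)}$ for $0\le m\le k(\zeta(j))-2$; propagating this along the orbit of $L_{\iota(j)}$ (each step is monotone and each $\Phi(I_{\gamma^m(j)})$ ends inside $I_{\gamma^{m+1}(j)}$) yields $\Phi^m(L_{\iota(j)})\subset I_{\gamma^{m+1}(j)}$, i.e. $L_{\iota(j)}\subset I_{\gamma(j),\gamma^2(j),\dots,\gamma^{k(\zeta(j))-1}(j)}$, the inclusion being strict because the orbit of $\Phi_j(z_{\zeta(j)})$ stays in the interiors of those intervals while the cylinder reaches their boundaries. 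The claim for $R_{\iota(j)}$ is the mirror argument, using (E+) at $z_j$ and the permutation $\delta$. The only genuine nuisance is the bookkeeping — which cutting point, which one‑sided branch, and whether $k(j)$ or $k(\zeta(j))$ governs a given orbit, all settled by Lemmas \ref{conj}, \ref{same-cycle}, \ref{adj}, \ref{gamma-delta} — together with checking that the two set‑decompositions used in (c) are exact, which is precisely where (E+) and (E-) are needed.
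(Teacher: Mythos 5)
Your proof is correct and follows essentially the same route as the paper's: (a) is (SE), (b) is consequence (I) of (SE) applied to $I_{\iota(j)}$, where $\varphi_j=(\widetilde{\Phi}_{\iota(j)})^{-1}$, and (c) combines the splitting of $I_{\zeta^{\pm1}(j)}$ into the piece lying in $\widetilde{\Phi}_{\iota(j)}(I_{\iota(j)})$ plus the variation interval, together with the conditions (E$\pm$) propagated along the one-sided orbits of $\Phi_j(z_{\zeta(j)})$ and $\Phi_j(z_j)$. The only cosmetic differences are that you decompose the domain instead of intersecting the image with $I_{\iota(j)}$ and $I_{\gamma(j)}$, and that your attribution of (E$-$) at $z_{\zeta(j)}$ for locating $L_{\iota(j)}$ is the literally correct reading of the conditions as stated in \S 2 (the paper's proof says ``(E+) at $z_{\zeta(j)}$'' while using exactly that formula).
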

  
\begin{proof}  The proof is a case by case study.
  
\noindent  $(a)$ This is simply condition (SE) on the map $\Phi$, since $\varphi_j (I_j) = \widetilde{\Phi}_{j}(I_j)$.
  
\noindent  $(b)$ By definition of the generators $\varphi_{ j}$ in 
Theorem \ref{CP-relations}, 
  they satisfy:\\  
  $(\varphi_{ j})_{|\widetilde{\Phi}_{\iota(j)} ( I_{\iota(j)} ) } = ( \widetilde{\Phi}_{\iota(j)} )^{-1}_{|\widetilde{\Phi}_{\iota(j)} ( I_{\iota(j)} ) }$. 
 Condition (SE) implies that:
    $ \widetilde{\Phi}_{\iota(j)}(I_{\iota(j)}) \cap I_m = I_m$, for all $m \neq j , \zeta^{\pm 1} (j)$.
   Therefore we obtain:
    $I_{\iota(j)} \cap \widetilde{\Phi}^{-1}_{|I_{\iota(j)}} (I_m) = 
   \widetilde{\Phi}^{-1}_{|I_{\iota (j) }} (I_m)$ which reads: $I_{\iota(j) , m } = \varphi_j (I_m)$ (see Figure \ref{fig:7 (a)}).
    
    \begin{figure}[htbp]
\centerline{\includegraphics[height=40mm]{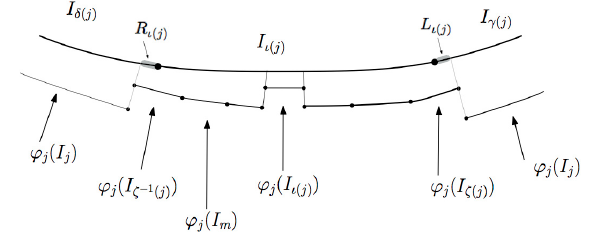} }
\caption{ Image of the intervals $I_m$ under $\varphi_j$}
\label{fig:7 (a)}
\end{figure}

\noindent $(c)$ The two situations are symmetric, we restrict to one of them, for instance to $\varphi_j (I_{\zeta (j) })$.
 By condition (SE), applied to $I_j$ and $I_{\iota (j) }$, we have:
 
\centerline{ (i) $\varphi_j (I_j) \cap I_{\gamma (j)} \subsetneq  I_{\gamma (j)}$, and 
 (ii) $\varphi_{\iota (j) }  (I_{\iota (j) })\cap I_{\zeta (j)} \subsetneq  I_{\zeta (j)}$.}

\noindent  By (i) and the continuity of $\varphi_j$ we have $L_{\iota (j)} = \varphi_j(I_{\zeta(j)})\cap I_{\gamma(j)} \neq \emptyset,$ and by (ii) we have
  $\varphi_j(I_{\zeta(j)}) \cap I_{\iota(j)}\neq \emptyset$.
  On the other hand,
  by
  definition of the generators $\varphi_j$ and Lemma \ref{prop-gen} we have:
  $\varphi_j(I_{\zeta(j)}) \cap I_{\iota(j)}= \widetilde{\Phi}^{-1}_{|I_{\iota (j) }}(I_{\zeta(j)})\cap I_{\iota(j)}=I_{\iota(j),\zeta(j)}.$\\
  Thus, we obtain $\varphi_j (I_{\zeta (j)}) = I_{\iota(j) , \zeta (j)} \cup L_{\iota(j)}$ (see Figure \ref{fig:7 (a)}).
  
  \noindent   To complete the proof we verify the properties of the interval $L_{\iota (j)  }$ 
   (resp. $R_{\iota (j)}$). With the notations of the cutting points, this interval is:\\
 \centerline{  $L_{\iota (j)  } = [  \varphi_j (z_{\zeta (j)} ) , z_{\iota (j) } ]$ (see Figure \ref{fig:7 (a)}).}
 By condition (E+) at $z_{\zeta (j) }$ we have:
 $\forall i,  0 \leq i \leq k ( \zeta ( j ) ) - 2: \tilde{\Phi}^i ( \tilde{\Phi}_j (z_{\zeta (j) } ) )\in I_{\gamma^{i+1} (j) }$.
 For $i = 0$: 
  $\tilde{\Phi}_j (z_{\zeta (j) }) = \varphi_j  (z_{\zeta (j) }) \in I_{\gamma (j) }$, and for 
  $i = 1$: $\tilde{\Phi} ( \tilde{\Phi}_j (z_{\zeta (j) }) ) \in I_{\gamma^2 (j)}$,
    this last condition means that the $\tilde{\Phi}$ image of the point 
  $\varphi_j (z_{\zeta (j) }) \in I_{\gamma (j) }$ belongs to the same partition interval as the 
  $\tilde{\Phi}$ image of the cutting point $z_{\iota (j)}$.
  Therefore the point $\varphi_j (z_{\zeta (j)} )$ belongs to the interior of the last sub-interval of level 2, with respect to the cyclic ordering of $S^1$, of the partition interval $I_{\gamma (j)}$ which is $I_{\gamma (j), \gamma^2 (j)}$.
  This implies that: $ L_{ \iota(j) } \subsetneq I_{\gamma (j), \gamma^2 (j)}$
   which is part of the statement. At this stage we only use the first iterate ($i =1$) in conditions (E+). 
   The proof of $(c)$ is completed by applying the same arguments for all iterates: 
   $ i \leq k ( \zeta (j) ) - 2$ in condition (E+), we obtain:\\
  \centerline{
  $L_{\iota(j)} \subsetneq I _{\gamma(j), \gamma^2 (j) , \dots, \gamma^{k(\zeta(j) ) - 1} (j)}.$}
  
  \vspace{3pt}
\noindent    This completes the proof of statement $(c)$ in this case.
The symmetric situation in case $(c)$, is obtained by replacing $\zeta$ by 
   $\zeta^{-1}$, $\gamma$ by $\delta$ and condition (E+) by (E-).
   \end{proof}
   
   \subsection{Additional properties of $\Gamma_{\Phi}$}
 From the proof of Lemma \ref{action1}, the intervals of level $m \leq k(j) -2$ in the tree $T_{\Phi}$ that are extreme in the interval $I_j$, i.e., that contain a cutting point, are of the form:
  \vspace{-5pt}
\begin{equation}\label{extremIj}
I_{j, \delta(j), \dots, \delta^m (j)}  \textrm{ and }  I_{j, \gamma(j), \dots, \gamma^m (j)} .
\end{equation}

 \vspace{-5pt}
\noindent  The intervals of type $(ii)$ and level $k(j)$ in the proof of Lemma \ref{vertexInterval} are thus of the form:
  \vspace{-5pt}
\begin{equation}\label{Ivtilde}
I_{\tilde{v}_j} := 
I_{j, \delta(j), \dots, \delta^{k(j) - 1 }(j)} \cup I_{\zeta^{-1}(j), \gamma(\zeta^{-1}(j)), \dots, \gamma^{k(j) - 1 } (\zeta^{-1}(j))}, 
\end{equation}

 \vspace{-3pt}
 
\noindent where the first interval is extreme of level $k(j)$ on the $(+)$ side of $\tilde{z}_j$ and the second is extreme on the $(-)$ side of the same cutting point.
 This interval is of type {\it(ii-$\mathscr{V}$)}, see also Remark \ref{V-E-ii}. It contains sub-intervals of level $k(j) + 1$ and possibly one with the cutting point  $z_{_j}$ in its interior, as in case 2) in the proof of Lemma \ref{vertexInterval}:\\
 \centerline{ $I_{\tilde{v}_j , \alpha} :=  I_{ \zeta^{- 1} (j), \gamma ( \zeta^{- 1} (j) ), \dots , 
\gamma^{k(j) -1} ( \zeta^{- 1} (j) ), \alpha}  \cup  I_{ j , \delta ( j ), \dots , 
\delta^{k(j) -1} ( j ), \alpha}$,}

 \vspace{3pt}
\noindent where $\alpha$ satisfies  (\ref{k(j)-image}), this interval is of type {\it(ii-$\mathscr{E}$)}.

\noindent More generally, from Definition \ref{graph} and  Remark \ref{V-E-ii}, an interval of type {\it(ii-$\mathscr{V}$)} is of the form:
\begin{equation}\label{id-ver}
 I_{\tilde{v}_{\ell , j}} := I_{\ell,\zeta^{-1}(j),\gamma(\zeta^{-1}(j)),\dots,\gamma^{k(j)-1}(\zeta^{-1}(j))} \cup 
  I_{\ell,j,\delta(j),\dots,\delta^{k(j)-1}(j)},
  \end{equation}
  
   \vspace{-3pt}
\noindent where $\ell$ is a finite sequence (possibly empty)  in $\{1,\dots,2N\}$.
The vertices, associated to these intervals by Lemma \ref{vertexInterval}, are denoted:  
$ \tilde{v}_j $, $\tilde{v}_{j , \alpha}$ and 
$ \tilde{v}_{\ell , j}$, respectively.\\
The next result induces an additional structure of the graph $ \Gamma_{\Phi}$ around each vertex.
  
 \begin{Prop}\label{cyclicorder}
If the map $\Phi$ satisfies the ruling conditions: {\rm(SE), (EC), (E$\pm$), (CS)} then the set of edges that are incident to a vertex $v \in  V( \Gamma_{\Phi})$ admits a natural cyclic ordering induced by the cyclic ordering of the partition intervals $I_j$ along $S^1$. In addition each vertex has valency $2N$.
\end{Prop}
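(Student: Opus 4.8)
The plan is to analyze, vertex-type by vertex-type, the local combinatorial picture of $\Gamma_\Phi$ around a vertex $v$, using the identifications of Definition \ref{graph} together with the expansivity consequence (\ref{k(j)-image}) of Lemma \ref{affin-diffeo}. By Lemma \ref{vertexInterval} every vertex $v\neq v_0$ corresponds to an interval $I_v\subset S^1$ which is either of type (i), $I_v=I_{j_1,\dots,j_k}$, or of type (ii), a finite union of pairwise adjacent intervals of the same level. The outgoing edges from $v$ (those going from level $k$ to level $k+1$) are, by construction, in bijection with the connected components of $I_v\cap\Phi^{-k}(I_m)$, $m\in\{1,\dots,2N\}$, that are not points; the incoming edge(s) go from level $k-1$ to $v$. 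The claim that each vertex has valency exactly $2N$ will be obtained by counting these in each case, and the cyclic order will be the one the sub-intervals of $I_v$ inherit from the cyclic order of $S^1$.

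\textbf{Step 1: the base vertex and level-1 vertices.} For $v_0$ there are $2N$ edges, one to each $v_j$, cyclically ordered by the cyclic order of the $I_j$'s along $S^1$ via the permutation $\zeta$. For a level-1 vertex $v_j$: one incoming edge from $v_0$, and outgoing edges to $v_{j,m}$ for each $m$ with $I_j\cap\Phi^{-1}(I_m)\neq\emptyset$ a non-degenerate interval; by condition (SE) this holds for all $m\neq\iota(j)$, i.e.\ $2N-1$ outgoing edges, giving total valency $2N$. The $2N-1$ sub-intervals $I_{j,m}$ together with the two cutting points $z_j$, $z_{\zeta(j)}$ bounding $I_j$ inherit a linear order from $I_j$; closing up around $S^1$ (the incoming edge corresponds to "the complement", i.e.\ to the position of $I_j$ itself) gives the cyclic order of the $2N$ incident edges.

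\textbf{Step 2: vertices of type (i) and type (ii-e), and type (ii-v).} For a type-(i) vertex $v=v_{j_1,\dots,j_k}$, exactly one incoming edge, and by (SE) applied to $I_{j_1,\dots,j_k}=I_{j_1,\dots,j_{k-1}}\cap\Phi^{-k+1}(I_{j_k})$ — which maps homeomorphically onto $I_{j_k}$ under $\Phi^{k-1}$ — the non-degenerate components of $I_v\cap\Phi^{-k}(I_m)$ are again in bijection with $\{m\neq\iota(j_k)\}$, so $2N-1$ outgoing edges, total $2N$. Type (ii-e) vertices have, by the proof of Lemma \ref{GraphWellDef} (case 2) and the folding description, a single incoming edge and behave combinatorially like type (i): again $2N-1$ outgoing edges. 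For type (ii-v) vertices $\tilde v=\tilde v_{\ell,j}$ as in (\ref{id-ver}), there are \emph{two} incoming edges (from the two pre-identification vertices), and by (\ref{k(j)-image}) in the proof of Lemma \ref{affin-diffeo} the image $\Phi^k(I_{\tilde v})$ meets $I_m$ for all $m\neq\overline{c_j},\overline{d_j}$, with $\overline{c_j},\overline{d_j}$ adjacent (Lemma \ref{adj}); hence the non-degenerate components of $I_{\tilde v}\cap\Phi^{-k}(I_m)$ number $2N-2$, giving total valency $2+(2N-2)=2N$. In every case the sub-intervals of $I_v$ realizing the outgoing edges, together with the one or two "slots" occupied by the incoming edge(s) (which sit at the boundary cutting points of $I_v$ in the type-(i)/(ii-e) case, and at the common cutting point of the two pieces in the type-(ii-v) case), are linearly ordered inside the circle and close up to a cyclic order of all $2N$ incident edges; this cyclic order is manifestly induced by the cyclic order of the partition intervals along $S^1$.

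\textbf{Main obstacle.} The routine counting in Steps 1--2 is straightforward; the delicate point is to check that the cyclic orders around pre-identification vertices are \emph{compatible} with the identification $\sim_\Phi$, so that the quotient genuinely carries a well-defined cyclic order at each type-(ii) vertex — in particular that the two incoming edges at a type-(ii-v) vertex occupy "adjacent" slots (precisely because $\overline{c_j}$ and $\overline{d_j}$ are adjacent, by Lemma \ref{adj}), and that the folding identifications of type (e) merge the two copies of an outgoing edge without disturbing the cyclic order of the remaining ones. I expect this compatibility — essentially a verification that the identifications of Definition \ref{graph} respect the planar/cyclic structure inherited from $S^1$ — to be the heart of the argument, and it follows by tracking, via conditions (E+), (E-), where the relevant sub-intervals (\ref{extremIj})--(\ref{Ivtilde}) sit inside $I_v$ and noting that $\Phi$, being a piecewise orientation-preserving homeomorphism, preserves all these cyclic orders along the orbits.
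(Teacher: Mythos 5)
Your proposal is correct and follows essentially the same route as the paper: a case analysis by vertex type, counting one incoming plus $2N-1$ outgoing edges for type (i)/(ii-e) vertices via (SE), and two incoming (adjacent by Lemma \ref{adj}) plus $2N-2$ outgoing edges for type (ii-v) vertices via (\ref{k(j)-image}), with the cyclic order at each vertex read off from the order of the sub-intervals along $S^1$ through the permutation $\zeta$. The only cosmetic difference is that the paper first invokes Proposition \ref{GammaHomeo} and Lemma \ref{Phi-Phi'} to assume all type (ii) identifications are 2 to 1, which disposes of the compatibility issue you flag as the ``main obstacle'' rather than tracking it directly via (E+), (E-) and orientation preservation.
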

\begin{proof}
 By definition of $\Gamma_{\Phi}$, the cyclic ordering of the intervals $I_j$ along $S^1$ defines a cyclic ordering of the vertices of level $1$ and thus a cyclic ordering on the edges incident at $v_0$.
 By Proposition \ref{GammaHomeo} the structure of $\Gamma_{\Phi}$ depends only on the combinatorics of the map $\Phi$. To simplify the arguments we assume that the identification of type $(ii)$ are all 2 to 1, as in Lemma \ref{Phi-Phi'}.
 
 If $v=v_{j_1,\dots,j_t} \in  V( \Gamma_{\Phi})$ is a vertex of type $(i)$ or {\it(ii-$\mathscr{E}$)} and level 
 $t \geq 1$: 
then it is connected to one vertex of level $t-1$, i.e., to $v=v_{j_1,\dots,j_{t -1}}$,
and to $2N-1$ vertices of level $t+1$, by condition (SE). 
At level $t+1$, these vertices $w_i$ are ordered by the ordering of the sub-intervals $I_{w_i}$ along the interval  $I_v$,  as sub-intervals of $S^1$. Recall that the ordering along $S^1$ is expressed by the permutation $\zeta$
(see $\S$ \ref{TheClassPhi}). By condition (SE) these vertices at level $t+1$ are:
$\quad   v_{j_1,\dots, j_t,{\zeta(\overline{j_t})}},\; v_{j_1,\dots, j_t,\zeta^2(\overline{j_t})},\;\dots \; , v_{j_1,\dots, j_t,\zeta^{2N-1}(\overline{j_t})} $.
 
\noindent   The edges arriving at these vertices, from $v$, are labelled respectively:\\
 \centerline{ $ \Psi_{\zeta(\overline{j_t})},\dots, \Psi_{\zeta^{2N-1}(\overline{j_t})}$.}
  The vertex at  level $t-1$ is $v_{j_1,\dots,j_{t-1}}$ and the reverse edge, i.e., from   $v$ to it,  is labelled 
  $ \Psi_{\overline{j_{t}}}$. Therefore, the vertices of type $(i)$ or {\it(ii-$\mathscr{E}$)} admit a cyclic ordering of the edges induced by the permutation $\zeta$.

If $v$ is a vertex of type {\it(ii-$\mathscr{V}$)}:
 then, there is $j\in\{1,\dots,2N\}$ and a finite sequence $\ell$ in 
$\{1,\dots,2N\}$ so that $v$ is identified with an interval $I_{\tilde{v}_{\ell , j}}$
as in (\ref{id-ver}).

\noindent From the equivalence relation $\sim_{\Phi}$, the vertex $v$ has 
  two incoming edges and they are adjacent by Lemma \ref{adj}. These two edges are labelled, reading from $v$, as:
  
  \centerline{ $\Psi_{\overline{\delta^{k(j)-1}(j)}} \textrm{  and  } \Psi_{\overline{\gamma^{k(j)-1}(\zeta^{-1}(j))}}$.}
  
\noindent And there are   
   $2N-2$ outgoing edges, ordered by the ordering along $S^1$. By condition (\ref{k(j)-image}) in the proof of Lemma \ref{affin-diffeo}, they are labelled as:
   
\centerline{ $\Psi_{\zeta(\overline{\gamma^{k(j)-1}(\zeta^{-1}(j))})}, \dots, \Psi_{\zeta^{2N-2}(\overline{\gamma^{k(j)-1}(\zeta^{-1}(j))})}$.}
\noindent In all cases, i.e., for the vertices of type $(i)$, {\it(ii-$\mathscr{E}$)} or {\it(ii-$\mathscr{V}$)}, $2N$ edges are incident at $v$ and they are cyclically ordered by the permutation $\zeta$ and thus by the ordering of the intervals along $S^1$. 
\end{proof}

\begin{Cor}\label{loops} To each pair of adjacent edges, for the natural cyclic ordering of Proposition \ref{cyclicorder}, at any vertex $v$, there is exactly one ``cutting point" relation of Theorem \ref{CP-relations} defined by this pair.
\end{Cor}
\begin{proof}
By the proof of Theorem \ref{CP-relations}, each ``cutting point" relation is associated to a cycle of the permutation $\delta$ or $\gamma$. From the proof of Lemma \ref{action1}, a cycle of the  permutation 
$\delta$ or $\gamma$ is also associated to the  orbit of a cutting point $z_j$ under $\Phi$ via conditions (E+), (E-).
In term of the edges in $\Gamma_{\Phi}$, the cycle defines the following loop, given by the sequence of labeled edges:
$\Psi_{\zeta^{-1}(j)}, \;\Psi_{\gamma(\zeta^{-1}(j))},   \dots,  \;\Psi_{\gamma^{k(j)-1}(\zeta^{-1}(j))},\; \Psi_{\overline{\delta^{k(j)-1}(j)}},\dots,\; \Psi_{\overline{\delta(j)}},\; \Psi_{\overline{j}},$\\
 for each $j=1,\dots,2N$, see Figure \ref{Cv0}. 
The two edges labeled $\Psi_{\zeta^{-1}(j)}$ and $\Psi_j$ are adjacent by definition of $\zeta$.
\noindent  Moreover, since the cycles of the permutations are disjoint, each pair of consecutive edges is associated to  exactly one ``cutting point" relation.
\end{proof} 
\begin{Rm}\label{compact} Any vertex $v \in V(\Gamma_{\Phi})$ is contained in a compact set  ${\cal{C}}_v$ defined by the  union of the loops associated to the  pairs of consecutive edges in Corollary \ref{loops} (see Figure \ref{Cv0} for 
${\cal{C}}_{v_0}$).
 The set ${\cal{C}}_{v_0}$ is based at $v_0$ and the vertices at local maximal distance from $v_0$ are vertices of type {\it(ii-$\mathscr{V}$)}, that we will call the extreme vertices of ${\cal{C}}_{v_0}$. The other vertices are of type $(i)$ according to Lemma \ref{vertexInterval}. Indeed, the extreme vertices above are the first one, starting from $v_0$, for which an identification of Definition \ref{graph} can occur.
\end{Rm}
\begin{figure}[ht!]
  \begin{center}
   \resizebox{0.4\textwidth}{!}{%
\begin{tikzpicture}[>=stealth,
  node at every point/.style={%
    decoration={%
      show path construction,
      lineto code={%
        \path [decoration={markings,
          mark=at position 0 with {\fill circle [radius=1pt];},
          mark=at position .5 with {\arrow{>};},
          mark=at position 1 with {\fill circle [radius=1pt];},
        }, decorate] (\tikzinputsegmentfirst) -- (\tikzinputsegmentlast);
      },
    },
    postaction=decorate
  }
  ]
   \draw [node at every point,thick,-](4,0)--(3,0.5)  ;
  \draw [node at every point,thick,-](4,0)--(4.2,-1.2)  ;
   \draw [node at every point,thick,-](4,0)--(3,-0.75)  ;
   \draw[dashed,gray,very thin,thick,-] (4.2,-1.2)--(5,-2)-- (5,-0.75)  ;
 \draw[dashed,gray,very thin,thick,-] (3,-0.75) -- (1.5,0.25)--(3,0.5);
 \draw [node at every point,thick,-](4,0)--(5,-0.75)  ;
 \draw[dashed,gray,very thin,thick,-] (5,-0.75) -- (6.75,0.2)--(5,0.5);
     \draw [decorate,color=black] (4.1,-0.65)
   node[left] {$\dots$};
      \draw [node at every point,thick,-](4,0)--(4,1.2) ;
   \draw [decorate,color=black] (4.15,-0.2)
   node[left] {\scalebox{0.45}{$v_0$}};
   \draw [node at every point,thick,-](4,0)--(5,0.5) ;
      \draw[dashed,gray,very thin,thick,-] (3,0.5) -- (3.25,2.2)--(4,1.2)--(4.8,2.2)-- (5,0.5);
       \draw [node at every point,thick,-](4,1.2)--(4.5,1.81) ;
         \draw [node at every point,thick,-](4,1.2)--(3.55,1.81) ;
    \draw [node at every point,thick,-](3,0.5)--(2.1,0.35)  ;
    \draw [node at every point,thick,-](3,0.5)--(3.1,1.25)  ;
  \draw [decorate,color=black] (3.5,1.75)
  node[right] {\scalebox{0.45}{$\Psi_{\delta(j)}$}};
   \draw [decorate,color=black] (3.58,0.65)
   node[right] {\scalebox{0.45}{$\Psi_j$}};
      \draw [decorate,color=black] (4.2,0.45)
    node[right] {\scalebox{0.45}{$\Psi_{\zeta(j)}$}};
      \draw [decorate,color=black] (2.8,0.15)
    node[right] {\scalebox{0.45}{$\Psi_{\zeta^{-1}(j)}$}};
     \draw [decorate,color=black] (3.15,1)
     node[left] {\scalebox{0.45}{{ $\Psi_{\gamma(\zeta^{-1}(j))}$}}};
   \end{tikzpicture}}
\caption{The compact set ${\cal{C}}_{v_0}$ in $\Gamma_{\Phi}$}
\label{Cv0}
\end{center}
\end{figure}
 \subsection{How the generators do act on the vertices of ${\cal{C}}_{v_0}$?}
    \noindent In this part we study the action of each generator $\varphi_j$ on the set of intervals corresponding to the vertices of the compact set ${\cal{C}}_{v_0}$. Lemma \ref{action1} is the first step and most of the arguments are exactly like in its proof. Observe that ${\cal{C}}_{v_0}$ is contained in the ball of $\Gamma_{\Phi} $: 
 $ \textrm{Ball}(v_0 , K_{\Phi})$ where the radius  $K_{\Phi} = \textrm{max} \{  k(j): j \in \{1, \dots, 2N\} \} $ was defined in the proof of Lemma \ref{QuasiI}.

\begin{Prop}\label{type-1}
With the above definitions and notations, the image under $\varphi_j$ of the intervals $I_v$ of type $(i)$ in Lemma \ref{vertexInterval}, associated to the vertices in $ {\cal{C}}_{v_0}$, are given by the following cases:
  \vspace*{-5pt}
     \begin{enumerate}[noitemsep, leftmargin=15pt]
\item If the cutting point $z_j $ is a  boundary point of  $I_v$ then,   
for $0 \leq m \leq k(j)-2$: 
\begin{enumerate}[noitemsep,leftmargin=7pt]
 \item if  $I_v = I_{j, \delta (j), \delta^2 (j) , \dots, \delta^m (j)}$  then
 $\varphi_j ( I_{v} ) \subset I_{ \delta(j), \dots , \delta^m (j)}$ 
 with 
$\varphi_j ( I_{v} ) \cap  I_{ \delta(j), \dots , \delta^m (j), \alpha}  \neq \emptyset$,
for all possible such $\alpha \in \{ 1, \dots, 2N \}$, i.e., all except one,
 \item if $I_v = I_{\zeta^{-1} (j), \dots, \gamma^m (\zeta^{-1} (j))}$ 
then $ \varphi_j ( I_{v} )= I_{\iota (j) , \zeta^{-1} ( j), \dots, 
\gamma^m (\zeta^{-1} ( j))} \cup  R_{\iota ( j)}$, where
 $R_{\iota ( j)} $ 
satisfies the properties $(c)$ in Lemma \ref{action1}.
\end{enumerate}
 \item  If $z_{\zeta (j)} $ is a  boundary point of  $I_v$ then,   
for $0 \leq m \leq k(\zeta(j))-2$: 
\begin{enumerate}[noitemsep,leftmargin=7pt]
 \item if  $I_v = I_{j, \gamma (j), \gamma^2 (j) , \dots, \gamma^m (j)}$  then
 $\varphi_j ( I_{v} ) \subset I_{ \gamma(j), \dots , \gamma^m (j)}$ 
 with 
$\varphi_j ( I_{v} ) \cap  I_{ \gamma(j), \dots , \gamma^m (j), \alpha}  \neq \emptyset$,
for all possible such $\alpha \in \{ 1, \dots, 2N \}$,  i.e., all except one,
 \item if $I_v = I_{\zeta (j), \delta(\zeta(j))\dots, \delta^m (\zeta(j))}$ 
then $ \varphi_j ( I_{v} )= I_{\iota (j) , \zeta ( j), \dots, 
\delta^m \zeta(j)} \cup  L_{\iota ( j)}$, where
 $L_{\iota ( j)} $ 
satisfies the properties $(c)$ in Lemma \ref{action1}.
\end{enumerate}
 \item  If $I_v$ is of type $(i)$ and does not contain $z_{j}$ or $z_{_{\zeta(j)}}$  as a boundary point then it 
 has the form: $I_v = I_{j_1, \dots, j_r}$ for $j_1 \neq j $ and
 $\varphi_j (I_v) = I_{\iota(j), j_1, \dots, j_r}$. 
 \end{enumerate}
 \end{Prop}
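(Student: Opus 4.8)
The plan is to run a case analysis strictly parallel to the proof of Lemma~\ref{action1} (which is the case $r=1$), the inputs being: the explicit shape of each generator given by Lemma~\ref{prop-gen} and Theorem~\ref{CP-relations} --- on $\widetilde{I}_j$ one has $\varphi_j=\widetilde{\Phi}_j$, on $\widetilde{\Phi}_{\iota(j)}(\widetilde{I}_{\iota(j)})$ one has $\varphi_j=(\widetilde{\Phi}_{\iota(j)})^{-1}$, and on the complementary arcs $L_j\subset\widetilde{I}_{\zeta^{-1}(j)}$, $R_j\subset\widetilde{I}_{\zeta(j)}$ of~(\ref{complement}) the map $\varphi_j$ is the connect-the-dot diffeomorphism sending $L_j$ onto $R_{\iota(j)}$ and $R_j$ onto $L_{\iota(j)}$ --- and the localisation of the four arcs $L_j,R_j,L_{\iota(j)},R_{\iota(j)}$ \emph{inside} the extreme nested intervals~(\ref{extremIj}) abutting the cutting points $z_j$, $z_{\zeta(j)}$, which is exactly what the closing lines of the proof of Lemma~\ref{action1}(c) extract from~(E+) and~(E-). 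Structurally, in cases 1a)/2a) the generator acts through the expanding forward branch $\widetilde{\Phi}_j$, so $\varphi_j(I_v)$ will meet all admissible partition intervals by~(SE) with the combinatorial exception pinned down by~(E+), resp.~(E-); in cases 1b)/2b)/3) it acts through the inverse branch $(\widetilde{\Phi}_{\iota(j)})^{-1}$, completed at worst by one overflow piece $R_{\iota(j)}$ or $L_{\iota(j)}$.

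I would first settle case 3). If $I_v=I_{j_1,\dots,j_r}$ with $j_1\neq j$ and $z_j,z_{\zeta(j)}\notin\partial I_v$, then $I_v$ avoids $\widetilde{I}_j\cup L_j\cup R_j$: it lies in $\widetilde{I}_{j_1}$ with $j_1\neq j$, while by Lemma~\ref{action1} the arc $L_j$ (resp.\ $R_j$) is confined to the extreme nested interval of $\widetilde{I}_{\zeta^{-1}(j)}$ at $z_j$ (resp.\ of $\widetilde{I}_{\zeta(j)}$ at $z_{\zeta(j)}$), so a nested interval abutting neither $z_j$ nor $z_{\zeta(j)}$ misses both. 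Hence $I_v\subset\widetilde{\Phi}_{\iota(j)}(\widetilde{I}_{\iota(j)})$, on which $\varphi_j=(\widetilde{\Phi}_{\iota(j)})^{-1}$, and by definition of the nested intervals $(\widetilde{\Phi}_{\iota(j)})^{-1}(I_{j_1,\dots,j_r})=I_{\iota(j),j_1,\dots,j_r}$.

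Cases 1) and 2) are interchanged by $\zeta\leftrightarrow\zeta^{-1}$, $\gamma\leftrightarrow\delta$, (E+)$\leftrightarrow$(E-), so I would write only case 1). In 1a), $I_v=I_{j,\delta(j),\dots,\delta^m(j)}\subset\widetilde{I}_j$ gives $\varphi_j(I_v)=\widetilde{\Phi}_j(I_v)=\widetilde{\Phi}_j(\widetilde{I}_j)\cap I_{\delta(j),\dots,\delta^m(j)}$; iterating~(E+) at $z_j$ (as in the discussion following~(E+)) identifies $I_{\delta(j),\dots,\delta^m(j)}$ as the extreme nested interval of $\widetilde{I}_{\delta(j)}$ at $z_{\delta(j)}$ and shows that $\widetilde{\Phi}_j(\widetilde{I}_j)$ falls short of it only by a proper sub-interval of its own extreme piece, whence $\varphi_j(I_v)\subset I_{\delta(j),\dots,\delta^m(j)}$ still meets every admissible $I_{\delta(j),\dots,\delta^m(j),\alpha}$, i.e.\ all $\alpha\neq\iota(\delta^m(j))$. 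In 1b), $I_v=I_{\zeta^{-1}(j),\dots,\gamma^m(\zeta^{-1}(j))}$ is the extreme nested interval of $\widetilde{I}_{\zeta^{-1}(j)}$ at $z_j$; the localisation of $L_j$ and $m\le k(j)-2$ give $L_j\subset I_v$ and $I_v\setminus L_j=I_v\cap\widetilde{\Phi}_{\iota(j)}(\widetilde{I}_{\iota(j)})$, so $\varphi_j(I_v)=(\widetilde{\Phi}_{\iota(j)})^{-1}(I_v\cap\widetilde{\Phi}_{\iota(j)}(\widetilde{I}_{\iota(j)}))\cup\varphi_j(L_j)=I_{\iota(j),\zeta^{-1}(j),\dots,\gamma^m(\zeta^{-1}(j))}\cup R_{\iota(j)}$, the two pieces meeting along the image of the common boundary point of $L_j$ and $\widetilde{\Phi}_{\iota(j)}(\widetilde{I}_{\iota(j)})$, with $R_{\iota(j)}$ enjoying the properties of Lemma~\ref{action1}(c).

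The main obstacle is the joint bookkeeping in 1b)/2b): one must certify at once that the overflow arc $L_j$ (resp.\ $R_j$) is \emph{entirely} swallowed by the extreme interval $I_v$ --- which is why the range $0\le m\le k(j)-2$ (resp.\ $\le k(\zeta(j))-2$) is indispensable, keeping us strictly before the coincidence of the two cutting-point orbits guaranteed by~(EC), hence before any identification of type~(ii) of Lemma~\ref{vertexInterval} can disturb the nesting --- and that the pieces $(\widetilde{\Phi}_{\iota(j)})^{-1}(I_v\cap\widetilde{\Phi}_{\iota(j)}(\widetilde{I}_{\iota(j)}))$ and $R_{\iota(j)}$ glue along a single point with no spurious overlap. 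Both points reduce to the fine localisation of $L_j$, $R_j$ inside the extreme nested intervals, obtained from~(E+) and~(E-) applied at $z_j$, $z_{\zeta(j)}$ and at the neighbouring cutting points $z_{\delta^i(j)}$, $z_{\gamma^i(\zeta^{-1}(j))}$, exactly as at the end of the proof of Lemma~\ref{action1}(c).
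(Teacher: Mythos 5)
Your proposal is correct and follows essentially the same route as the paper: a case-by-case reduction to the arguments of Lemma \ref{action1}, using (SE) together with the localisation of $L_j$, $R_j$ inside the extreme nested intervals coming from (E+), (E-), and treating case 2) by the symmetry $\zeta\leftrightarrow\zeta^{-1}$, $\gamma\leftrightarrow\delta$. You merely spell out in more detail (e.g.\ the identity $\widetilde{\Phi}_j(I_v)=\widetilde{\Phi}_j(\widetilde{I}_j)\cap I_{\delta(j),\dots,\delta^m(j)}$ and the inclusion $L_j\subset I_v$ for $m\le k(j)-2$) what the paper invokes by citing the proof of Lemma \ref{action1}(a), (b), (c).
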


\begin{proof}
Let $v\in V(\Gamma_{\Phi})\cap {\cal{C}}_{v_0}$.

\vspace{3pt}
\noindent 1) If the cutting point $z_j$ belongs to the boundary of $I_v$ and $v$ is a vertex of type $(i)$ according to Lemma \ref{vertexInterval} then it is given by (\ref{extremIj}), 
 the corresponding set of intervals are:
 
\noindent $(a)$
If $I_v = I_{j, \delta (j), \delta^2 (j) , \dots, \delta^m (j)}$.\\
For $m=1$, the \textcolor{black}{definition of $I_{j, \delta(j)}$ and the} argument in the proof of Lemma \ref{action1}-$(a)$ imply:
  $\varphi_j (I_{j, \delta(j)}) \subset I_{\delta (j)}$ and 
  $\varphi_j (I_{j, \delta(j)}) \cap  I_{\delta (j), \alpha} \neq \emptyset$ for all such possible 
  $\alpha$.\\
  The same argument applies for all $1 \leq m \leq k (j) - 2$ and we obtain the statement $1$-$(a)$ in this case.

 \noindent $(b)$ If $I_v= I_{\zeta^{-1} ( j), \gamma(\zeta^{-1} ( j)), \dots , \gamma^m(\zeta^{-1} ( j))}$,  the arguments in the proof of Lemma \ref{action1}-$(c)$ apply and we obtain, for all 
 $1 \leq m \leq k (j) - 2$: 

 $\varphi_j ( I_{\zeta^{-1} (j), \gamma (\zeta^{-1} (j)), \gamma^2 (\zeta^{-1} (j)) , \dots, 
  \gamma^m  ( \zeta^{-1} ( j ) )  }) =
   I_{\iota(j), \zeta^{-1} (j) , \gamma (\zeta^{-1} (j)) , \dots, \gamma^m (\zeta^{-1}(j) )} 
   \cup R_{\iota (j)}$,\\
 where  $ R_{\iota (j)} \subsetneq 
  I_{ \delta (j) ,  \delta^{2} ( j), \dots ,  \delta^{k(j) - 1} ( j )}$ by Lemma \ref{action1}-(c).
 
  \vspace{3pt} 
 \noindent 2) If $z_j$ is replaced by $z_{_{\zeta(j)}}$ then $\delta(j)$ is replaced by $\gamma(j)$,  
  the condition (E+) is replaced by (E-) and the same arguments as in the previous cases apply, by symmetry.

\vspace{3pt} 
 \noindent 3) If the interval $I_v$ of type $(i)$, level $r > 1$ does not contain the cutting points $z_{\zeta(j)}$ or $z_j$ then it has the form $I_{j_1, \dots, j_r}$ with $j_1 \neq j$ and if 
 $j_1 = \zeta^{\pm 1}( j )$ then 
 $j_2 \neq \gamma( j ) \textrm{ or } \delta(\zeta^{-1} ( j ) )$. In these cases, the arguments in Lemma \ref{action1}-$(b)$ apply and $\varphi_j  ( I_{j_1, \dots, j_r} ) = I_{\iota (j) , j_1, \dots, j_r} $. 
 \end{proof}

For the next result we consider the intervals of type $(ii)$ of  Lemma \ref{vertexInterval}. They are the ``extreme vertices" of ${\cal{C}}_{v_0}$ in Remark \ref{compact} (see Figure \ref{Cv0}).
  At level $k(j)$, around $z_j$, they are given by the intervals $I_{\tilde{v}_j}$ in (\ref{Ivtilde}).

\begin{Prop}\label{type-2}
With the above definitions and notations the image, under $\varphi_j$, of the intervals 
$I_{\tilde{v}_m}$ of type {\it(ii)} associated to the vertices in $ {\cal{C}}_{v_0}$ are given by the following cases:
\vspace*{-20pt}
     \begin{enumerate}[noitemsep, leftmargin=15pt]
\item[$1)$]  $ \varphi_j ( I_{\tilde{v}_{_j}} ) \subset I_{\delta (j), \delta^2 (j), \dots, \delta^{k(j) - 1} ( j )}$ 
(resp. $ \varphi_j(I_{\tilde{v}_{_{\zeta(j)}}} )\subset  I_{ \gamma (j) ,  \gamma^2 (j), \dots , \gamma ^{k(\zeta(j)) - 1} ( j )}$)
and it intersects all sub intervals of level $k(j)$ (resp. $k(\zeta(j))$), except one.

\item[$2)$]  $\varphi_j ( I_{\tilde{v}_{_l}} )
 = \textcolor{black}{ I_{\tilde{v}_{\iota(j)},l}} ,$ with the notation (\ref{id-ver}) for $l \notin \{j, \zeta(j)\}.$
\end{enumerate}
\end{Prop}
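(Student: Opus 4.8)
The plan is to mirror the proof of Proposition \ref{type-1}, using Lemma \ref{action1} and the conditions (E$\pm$) as the main tools, but now working at level $k(j)$ (resp. $k(\zeta(j))$) rather than at an intermediate level $m \leq k(j)-2$. Recall that $I_{\tilde v_j}$ is the union of the two extreme subintervals of level $k(j)$ lying on either side of the cutting point $z_j$, namely $I_{j,\delta(j),\dots,\delta^{k(j)-1}(j)}$ on the $(+)$ side and $I_{\zeta^{-1}(j),\gamma(\zeta^{-1}(j)),\dots,\gamma^{k(j)-1}(\zeta^{-1}(j))}$ on the $(-)$ side; this is exactly the neighbourhood $\widetilde V_j$ of Lemma \ref{affin-diffeo} (as observed in the proof of Lemma \ref{vertexInterval}).

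\medskip\noindent\textbf{Case 1.} First I would treat $I_{\tilde v_j}$. The generator $\varphi_j$ restricted to $\widetilde I_j$ is $\widetilde\Phi_j$, so it maps the $(+)$-side interval $I_{j,\delta(j),\dots,\delta^{k(j)-1}(j)}$ to $I_{\delta(j),\dots,\delta^{k(j)-1}(j)}$ by the same shift-of-indices argument as in Lemma \ref{action1}(b) and Proposition \ref{type-1}(1a). On the $(-)$ side, $\varphi_j$ restricted to $\widetilde I_{\zeta^{-1}(j)}$ is (an affine extension of) $\widetilde\Phi_{\zeta^{-1}(j)}$, and the image of the $(-)$-side interval is controlled exactly as in Lemma \ref{action1}(c): condition (E$-$) forces $\varphi_j(z_{\zeta^{-1}(j)})=\widetilde\Phi_{\zeta^{-1}(j)}(z_j)$ to land deep inside the interval $I_{\delta(j),\delta^2(j),\dots,\delta^{k(j)-1}(j)}$, the interval $R_{\iota(j)}$ being contained in it. Putting the two sides together, both pieces of $\varphi_j(I_{\tilde v_j})$ land inside $I_{\delta(j),\dots,\delta^{k(j)-1}(j)}$, which is the interval associated to the vertex $\widetilde\Phi^{k(j)}(\widetilde V_j)$ by Lemma \ref{affin-diffeo}. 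The ``intersects all sub-intervals of level $k(j)$ except one'' statement is then precisely condition (\ref{k(j)-image}): $\widetilde\Phi^{k(j)}(\widetilde V_j)\cap \widetilde I_k\neq\emptyset$ for all $k\neq\overline{c_j},\overline{d_j}$, and Lemma \ref{adj} identifies $\overline{c_j},\overline{d_j}$ as the two adjacent indices whose subintervals are the ones missed. The statement for $I_{\tilde v_{\zeta(j)}}$ is symmetric, replacing (E$+$) by (E$-$), $\delta$ by $\gamma$, and $z_j$ by $z_{\zeta(j)}$.

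\medskip\noindent\textbf{Case 2.} For $l\notin\{j,\zeta(j)\}$, the interval $I_{\tilde v_l}$ (more generally $I_{\tilde v_{\ell,l}}$ as in (\ref{id-ver})) is a union of two level-$k(l)$ intervals, each of which, together with all its subintervals, lies in $\widetilde\Phi_{\iota(j)}(\widetilde I_{\iota(j)})$ by (SE) — because neither of their leading indices is $j$ or $\zeta^{\pm1}(j)$ (since $l\neq j,\zeta(j)$ and adjacency via $\zeta$ places the relevant index away from the excluded ones). On that set $\varphi_j$ equals $(\widetilde\Phi_{\iota(j)})^{-1}$, so by exactly the computation in Lemma \ref{action1}(b) it prepends the symbol $\iota(j)$ to each address: $\varphi_j(I_{j_1,\dots,j_k})=I_{\iota(j),j_1,\dots,j_k}$. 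Since the relation $\sim_\Phi$ identifying the two halves of $I_{\tilde v_l}$ depends only on the combinatorics ($\gamma,\delta,k(l)$), which are untouched by prepending $\iota(j)$, the image is again an interval of type (ii) with address $(\iota(j),\tilde v_l)$, i.e. $\varphi_j(I_{\tilde v_l})=I_{\iota(j),\tilde v_l}$, as claimed.

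\medskip The main obstacle I anticipate is Case 2's bookkeeping: one has to check that for \emph{every} $l\notin\{j,\zeta(j)\}$ both constituent level-$k(l)$ intervals of $I_{\tilde v_l}$ actually sit inside $\widetilde\Phi_{\iota(j)}(\widetilde I_{\iota(j)})$ and not partly inside $L_{\iota(j)}\cup R_{\iota(j)}$ or in $\widetilde I_j$ — in other words, that no ``boundary effect'' from the non-affine parts of $\varphi_j$ intrudes. This is where one uses that $I_{\tilde v_l}$ is built from intervals whose leading index is controlled by $\zeta$-adjacency and that the only intervals meeting $L_{\iota(j)}$, $R_{\iota(j)}$ or $\widetilde I_j$ are the ones handled in Proposition \ref{type-1} and in Case 1 above; everything else maps cleanly by $(\widetilde\Phi_{\iota(j)})^{-1}$. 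The remaining content — the ``except one'' count — is a direct invocation of (\ref{k(j)-image}) and Lemma \ref{adj}, so no new difficulty arises there.
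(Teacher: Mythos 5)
Your overall outline (Case 1 via the conditions (E$\pm$)/(EC) and (\ref{k(j)-image}), Case 2 via the prepending argument of Lemma \ref{action1}(b)/Proposition \ref{type-1}(3)) is the same as the paper's, and your Case 2 is essentially the paper's argument. But your Case 1 contains a concrete error. On the $(-)$ side of $z_j$ the generator $\varphi_j$ is \emph{not} ``(an affine extension of) $\widetilde\Phi_{\zeta^{-1}(j)}$'': by (\ref{Diff-nu})[i] with $\nu=W^*$, on $W_j^-\subset\widetilde I_{\zeta^{-1}(j)}$ the map $\varphi_j$ is the $\lambda$-affine extension of $\widetilde\Phi_j$ continued across the cutting point. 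Likewise the equation ``$\varphi_j(z_{\zeta^{-1}(j)})=\widetilde\Phi_{\zeta^{-1}(j)}(z_j)$'' is false: $z_{\zeta^{-1}(j)}$ lies in $\widetilde\Phi_{\iota(j)}(I_{\iota(j)})$, so $\varphi_j$ sends it into $I_{\iota(j)}$, whereas $\widetilde\Phi_{\zeta^{-1}(j)}(z_j)\in I_{\gamma(\zeta^{-1}(j))}$ by (E$-$); these are unrelated points. What actually controls the left half of $I_{\tilde v_j}=\widetilde V_j$ is not Lemma \ref{action1}(c) but the fact that the whole of $\widetilde V_j$ lies in the affine part of $\varphi_j$ and that $\varphi_j(\widetilde V_j)\subset \widetilde I_{\delta(j)}\setminus\widetilde V_{\delta(j)}$ with forward $\widetilde\Phi$-iterates tracking the $\delta$-cycle, i.e. Lemma \ref{diffeo-V}(a) (or Proposition \ref{gen-ext}(a)) together with (\ref{compositionV}); this is exactly how the paper argues, treating $I_{\tilde v_j}$ as a single interval rather than splitting it and sending the left half through Lemma \ref{action1}(c).

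There is also a gap in your ``except one'' count. Condition (\ref{k(j)-image}) excludes the \emph{two} indices $\overline{c_j},\overline{d_j}$, and you describe both as ``the ones missed'', which contradicts the statement you are proving. The missing observation is that $\overline{d_j}=\iota(\delta^{k(j)-1}(j))$ cannot label a level-$k(j)$ subinterval of $I_{\delta(j),\dots,\delta^{k(j)-1}(j)}$ at all, by (SE), so only the subinterval with continuation $\overline{c_j}$ is genuinely missed. The paper avoids this counting altogether and is sharper: it identifies the unique missed subinterval as $I_{\tilde v_{\delta(j)}}\cap I_{\delta(j)}$, using the disjointness $\varphi_j(I_{\tilde v_j})\cap I_{\tilde v_{\delta(j)}}=\emptyset$ from Lemma \ref{diffeo-V}(a); this precise identification is what is used later (e.g. in Proposition \ref{WellDef-Cv0}). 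Finally, in Case 2 your parenthetical reason (``neither of their leading indices is $j$ or $\zeta^{\pm1}(j)$'') is not literally true when $l=\zeta^{-1}(j)$ or $l=\zeta^{2}(j)$; the correct point, which your last paragraph gestures at, is that in those cases the relevant half of $I_{\tilde v_l}$ sits at the end of $I_{\zeta^{-1}(j)}$ (resp. $I_{\zeta(j)}$) far from $z_j$ (resp. $z_{\zeta(j)}$), hence inside $\widetilde\Phi^{W^*}_{\iota(j)}(I^{W^*}_{\iota(j)})$ and away from the variation intervals of $\varphi_j$, so the prepending computation applies.
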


\begin{proof}
1)
 From the definition of the neighborhood $V_j$ in Lemma \ref{affin-diffeo}, we observe that the interval $I_{\tilde{v}_j}$ of (\ref{Ivtilde}) satisfies: $V_j =  I_{\tilde{v}_j}$.

 \noindent  The generators $\varphi_j$ given by Theorem \ref{CP-relations}, together with
 Lemma \ref{diffeo-V} gives:

 \centerline{ $\varphi_j ( I_{\tilde{v}_j} ) \subset I_{\delta(j)} \setminus 
I_{\tilde{v}_{\delta(j)}}$.}
\vspace*{3pt} 
  \noindent From the construction of the neighborhood $V_j $ in Lemma \ref{affin-diffeo} we obtain:\\
  $\Phi^m ( \varphi_j ( I_{\tilde{v}_j} ) ) \subset I_{{\delta}^{m+1} (j)}$ for all $m = 0, \dots, k(j) -2$, and thus :
  $\varphi_j ( I_{\tilde{v}_j} ) \subset I_{\delta (j), \delta^2 (j), \dots, \delta^{k(j) - 1} ( j )}$.
  
\noindent   For the next iterate of $\Phi$, the condition (\ref{k(j)-image}) implies:\\
    \centerline{ $\Phi^{k(j)-1} ( \varphi_j ( I_{\tilde{v}_j} ) ) \cap I_m \neq \emptyset$ for all 
    $m \neq \overline{\gamma^{k(j)-1}}(\zeta^{-1}(j)), \overline{\delta^{k(j)-1}}(j)$.}
     
\noindent We observe that 
 $I_{\tilde{v}_{\delta(j)}} \cap I_{\delta(j)}  $ is a subinterval of 
 $I_{\delta (j), \delta^2 (j), \dots, \delta^{k(j) } ( j )}$ of level $k(j)$ and by 
 Lemma \ref{diffeo-V}-(a)
  we have:
  $\varphi_j ( I_{\tilde{v}_j} ) \cap I_{\tilde{v}_{\delta(j)}} = \emptyset$.
 Therefore $\varphi_j ( I_{\tilde{v}_j} ) $ intersects all subintervals of level $k(j)$ of $I_{\delta (j), \delta^2 (j), \dots, \delta^{k(j) - 1} ( j )}$, except one, i.e.,
 $I_{\tilde{v}_{\delta(j)}} \cap I_{\delta(j)}  $.

\vspace{5pt} 
 If $I_{\tilde{v}_j}$ is replaced by $I_{\tilde{v}_{_{\zeta(j)}}}$ then the same arguments apply by replacing $\delta(j)$ with $\gamma(j)$.

\noindent 2) For $I_{\tilde{v}_l}=I_{\zeta^{- 1} (l), \gamma ( \zeta^{- 1} (l) ), \dots , \gamma^{k(l) -1} ( \zeta^{- 1} (l) )} \cup I_{l , \delta ( l ), \dots , \delta^{k(l) -1} ( l )},$ we have
\vspace{3pt}
\\
\centerline{$\varphi_j(I_{\zeta^{- 1} (l), \gamma ( \zeta^{- 1} (l) ), \dots , \gamma^{k(l) -1} ( \zeta^{- 1} (l) )})=I_{\iota(j),\zeta^{- 1} (l), \gamma ( \zeta^{- 1} (l) ), \dots , \gamma^{k(l) -1} ( \zeta^{- 1} (l) )} $}

\vspace{3pt}
\noindent and $\varphi_j( I_{l , \delta ( l ), \dots , \delta^{k(l) -1} ( l )})= I_{\iota(j),l , \delta ( l ), \dots , \delta^{k(l) -1} ( l )}$
because $I_{\tilde{v}_l} \subset \Phi_{\iota(j)}(I_{\iota(j)})$ and 
$l \notin \{j, \zeta(j)\}$, 

\vspace{3pt} \noindent this is the same argument as in case (3) of Proposition \ref{type-1}.
 Then:\\
$\varphi_j(I_{\tilde{v}_l})=I_{\iota(j),\zeta^{- 1} (l), \gamma ( \zeta^{- 1} (l) ), \dots , \gamma^{k(l) -1} ( \zeta^{- 1} (l) )} \cup I_{\iota(j),l , \delta ( l ), \dots , \delta^{k(l) -1} ( l )}
=\textcolor{black}{ I_{\tilde{v}_{\iota(j)},l}},$ with the notation  (\ref{id-ver}).
\end{proof}
  
   

\subsection{The action}  
We define here a map 
  ${\cal A}_g : \Gamma_{\Phi} \rightarrow \Gamma_{\Phi}$ for all $g \in G_{X_{\Phi}}$,
       Lemma \ref{action1}, the Propositions \ref{type-1} and \ref{type-2} are guide lines to this aim.  
  From Lemma \ref{vertexInterval}, each vertex $v\neq v_0$ of $\Gamma_{\Phi}$ is identified with an interval $I_v$ of $S^1$,
  and each $g \in G_{X_{\Phi}}$ maps $I_v$ to $g( I_v)$, another interval of $S^1$. 
  We have to understand how each interval $g( I_v)$ is related to some interval $I_w$, for a vertex $w$ of $\Gamma_{\Phi}$.  Lemma \ref{action1} implies, in particular, that we cannot expect: $``g( I_v) = I_w"$ for all intervals $I_v$. But it shows that if we allow a ``small" error then we can associate to $g( I_v)$ an interval $I_w$. This is one way to interpret Lemma \ref{action1},  its consequences in Proposition \ref{type-1}  and \ref{type-2} and the following definition. 
   
  \begin{definition}\label{action}
   Let $G_{X_{\Phi}}$ be the group of Definition \ref{groupGPhi},
and let $\Gamma_{\Phi}$ be the dynamical graph of Definition \ref{graph} with vertex set
   $V (\Gamma_{\Phi} )$. For each $v\in V (\Gamma_{\Phi} )$, let $I_v$ be the interval associated to $v$ by Lemma \ref{vertexInterval}.
For each generator $\varphi_j \in X_{\Phi}$, $ j=1,\dots,2N$, let
\centerline{${\cal A}_{\varphi_j} :V( \Gamma_{\Phi}) \rightarrow V(\Gamma_{\Phi})$}

\noindent be a map defined
   as follows:
   \vspace{-5pt}
   \begin{enumerate}[noitemsep, leftmargin=17pt]
\item
  If $ v \neq v_0 $ and  $ \varphi_j (I_v)$ intersects all partition intervals $I_k$ of level one except one, then:  
  ${\cal A}_{\varphi_j} (v) := v_0$
\item   If $  v\neq v_0 $ and there exists $ w \in V (\Gamma_{\Phi} ) $  such that 
  $\varphi_j (I_v) \subseteq I_w$ and $\varphi_j (I_v)$ intersects all subintervals
   $ I_{w'} \subset I_w$ of level one more than $w$, except possibly one, then: \\
   ${\cal A}_{\varphi_j} (v) := w$
\item 
\begin{enumerate}[noitemsep, leftmargin=15pt]
\item[$(i)$] If $  v\neq v_0 $ and there exists $ w \in V (\Gamma_{\Phi} ) $ a vertex of type $(i)$ or {\it(ii-$\mathscr{E}$)} in Lemma \ref{vertexInterval} such that 
$ I_w \subset \varphi_j  (I_v) $ and no other $I_{w'}$, for $w '$ of the same level as $w$ is contained in  $\varphi_j  (I_v)$ 
  then: ${\cal A}_{\varphi_j } (v) := w$
  \item[$(ii)$] If $  v\neq v_0 $ and there exists $ w \in V (\Gamma_{\Phi} ) $ a vertex of type {\it(ii-$\mathscr{V}$)} in Lemma \ref{vertexInterval} such
  that $ I_w \subset \varphi_j  (I_v) $ and 
  $\varphi_j ( I_v) $ does not contain $I_{w'}$ for $w '$ of level one less that $w$ 
  then:  ${\cal A}_{\varphi_j } (v) := w$
  \end{enumerate}
\item ${\cal A}_{\varphi_j} (v_0) := v_{\iota(j)}$
  \end{enumerate}
  If $g=\varphi_{n_{1}}\circ \dots \circ \varphi_{n_{k}}$ we define 
 $ {\cal A}_{_{g}}:={\cal A}_{\varphi_{n_1}}\circ \dots \circ {\cal A}_{\varphi_{n_k}}$.  
  \end{definition}
    
  The goal of this subsection is to show that the map $ {\cal A}_{_{g}}$ is well defined and can be extended to a map on the graph $\Gamma_{\Phi}$. We will have to check, in particular, that the map $ {\cal A}_{_{g}}$ does not depend on the expression, in the generators, of the element $g$. 
  
   The next subsection will be about proving that this map defines a geometric action. These are the main technical parts of the proof.
  
The definition of the map ${\cal A}_g$ is new and not standard. As a warm up, let us check it is well defined
   for each generator $\varphi_j$ on the vertices of level 1. 
 For this, we compute $\varphi_j( I_m)$ for $j \textrm{ and } m  \in \{1, \dots, 2N\} $ and Lemma \ref{action1} gives all the possibilities:
 
\noindent $\bullet$ If $m= j$ then, case $(a)$ in Lemma \ref{action1} and case $1$ of Definition \ref{action} gives:
  ${\cal A}_{\varphi_j} (v_j) = v_0.$\\
  \noindent $\bullet$ If $ m \neq j, \zeta^{\pm 1} (j)$ then, case $(b)$ of Lemma \ref{action1} and case $2$ of Definition \ref{action} gives:\\
 ${\cal A}_{\varphi_j} (v_m) = v_{\iota(j) , m}$.\\
 \noindent $\bullet$ If $ m =  \zeta^{\pm 1} (j)$ then, case $(c)$ of Lemma \ref{action1} and case $3$-$(i)$ of Definition \ref{action} gives:\\
 ${\cal A}_{\varphi_j} (v_{m} ) = v_{\iota(j) , m}$.
 
\noindent  With case $4$ in Definition \ref{action}, we obtain, for each generator $\varphi_j$, that  
 ${\cal A}_{\varphi_j}$ maps the ball of radius one centred at $v_0$ in $\Gamma_{\Phi}$, to the ball of radius one centred at $v_{\iota (j)}$.

    \begin{Prop}\label{WellDef-Cv0}
 The map $ {\cal A}_{\varphi_j}$ of Definition \ref{action} is well defined for all the vertices  in the compact set ${\cal{C}}_{v_0}$ of Remark \ref{compact}, for all $j \in \{ 1, \dots, 2N\}$. 
    \end{Prop}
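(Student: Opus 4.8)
The plan is to verify that Definition \ref{action} produces a well-defined vertex ${\cal A}_{\varphi_j}(v)$ for every $v \in V(\Gamma_\Phi)\cap {\cal C}_{v_0}$ and every generator $\varphi_j$. Since ${\cal C}_{v_0}$ is, by Remark \ref{compact}, the finite union of the $2N$ loops associated to the cutting point relations at $v_0$, its vertices are exactly: the base vertex $v_0$; the vertices $v_m$ of level $1$; the interior vertices of type (i) lying on the loops, which are of the form $I_{m,\delta(m),\dots,\delta^{r}(m)}$ or $I_{m,\gamma(m),\dots,\gamma^{r}(m)}$ with $r \le k(m)-2$; and the extreme vertices of type (ii-v), which are the $I_{\tilde v_m}$ of \eqref{Ivtilde}. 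So the proof is a finite case analysis: for each of these four families of vertices I would compute $\varphi_j(I_v)$ using the already-established Lemma \ref{action1}, Proposition \ref{type-1}, and Proposition \ref{type-2}, and then check that exactly one clause of Definition \ref{action} applies and that the vertex it names exists in $\Gamma_\Phi$.

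First I would dispose of $v_0$ (clause (4), ${\cal A}_{\varphi_j}(v_0)=v_{\iota(j)}$) and the level-$1$ vertices $v_m$, which is precisely the ``warm up'' computation already carried out in the text right before the Proposition: Lemma \ref{action1}(a) gives clause (1) when $m=j$, Lemma \ref{action1}(b) gives clause (2) when $m\ne j,\zeta^{\pm1}(j)$, and Lemma \ref{action1}(c) gives clause (3-i) when $m=\zeta^{\pm1}(j)$ — and in each instance the target interval ($S^1$ minus one partition interval, $I_{\iota(j),m}$, or $I_{\iota(j),m}$ respectively) is genuinely the interval attached to a vertex of $\Gamma_\Phi$. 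For the interior type-(i) vertices on the loops, I would invoke Proposition \ref{type-1}: if $z_j$ (or $z_{\zeta(j)}$) is a boundary point of $I_v$, cases (1a)/(2a) put us in clause (1) or (2) of Definition \ref{action} (the image sits inside $I_{\delta(j),\dots,\delta^m(j)}$, resp.\ $I_{\gamma(j),\dots}$, and meets all but one of its next-level subintervals), while cases (1b)/(2b) put us in clause (3-i) (the image is $I_{\iota(j),\dots}\cup R_{\iota(j)}$, resp.\ $\cup L_{\iota(j)}$, which by Lemma \ref{action1}(c) contains $I_{\iota(j),\dots}$ and, crucially, no $I_{w'}$ of strictly lower level — this is the controlled-error point); if neither cutting point is on $\partial I_v$, case (3) of Proposition \ref{type-1} gives $\varphi_j(I_v)=I_{\iota(j),j_1,\dots,j_r}$, i.e.\ clause (2) with $w$ the corresponding level-($r+1$) vertex. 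Finally, for the type-(ii-v) extreme vertices $I_{\tilde v_m}$, Proposition \ref{type-2} gives the two cases: case (1) ($m=j$ or $\zeta(j)$) lands in clause (2) of Definition \ref{action} since $\varphi_j(I_{\tilde v_j})\subset I_{\delta(j),\dots,\delta^{k(j)-1}(j)}$ meeting all but one of its level-$k(j)$ subintervals; case (2) ($m\notin\{j,\zeta(j)\}$) gives $\varphi_j(I_{\tilde v_m})=I_{\iota(j),\tilde v_m}$, an interval of type (ii-v) by \eqref{id-ver}, hence clause (3-ii).

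The one point requiring genuine care — and which I expect to be the main obstacle — is verifying that in each case the clauses of Definition \ref{action} are mutually exclusive on these vertices and that the chosen $w$ is \emph{unique}: e.g.\ in clause (3-i) one must confirm that $\varphi_j(I_v)$ does not strictly contain two distinct same-level intervals $I_w$, $I_{w'}$, and in clause (3-ii) that it does not contain the ``parent'' interval of the type-(ii-v) vertex. This is exactly where the sharp inclusions $L_{\iota(j)}\subsetneq I_{\gamma(j),\dots,\gamma^{k(\zeta(j))-1}(j)}$ and $R_{\iota(j)}\subsetneq I_{\delta(j),\dots,\delta^{k(j)-1}(j)}$ from Lemma \ref{action1}(c), together with condition \eqref{k(j)-image}, are needed: they say the ``error interval'' is strictly inside a single deep subinterval, so it cannot swallow a full vertex-interval of lower level. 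I would also note that every vertex of ${\cal C}_{v_0}$ lies in $\mathrm{Ball}(v_0,K_\Phi)$ with $K_\Phi=\max_j k(j)$, so all the levels appearing above are bounded and the case list is genuinely finite, completing the proof.
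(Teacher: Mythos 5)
Your overall architecture (finite case analysis over the vertices of ${\cal C}_{v_0}$, reducing everything to Lemma \ref{action1}, Proposition \ref{type-1} and Proposition \ref{type-2}, then matching each image against a clause of Definition \ref{action}) is the same as the paper's, and most of your case assignments agree with it. But there is a genuine gap at exactly the place the paper spends the most effort: the boundary case $j_1=\zeta^{\pm1}(j)$ with $n=k(j)-2$. You assign all of the cases (1b)/(2b) of Proposition \ref{type-1} uniformly to clause (3-i), with target the ``obvious'' type-(i) address $I_{\iota(j),\zeta^{-1}(j),\gamma(\zeta^{-1}(j)),\dots,\gamma^{n}(\zeta^{-1}(j))}$. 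When $n=k(j)-2$ that address has level $k(j)$ and is \emph{not} a type-(i) (or (ii-e)) vertex of $\Gamma_{\Phi}$: by (\ref{Ivtilde}) it is one half of the type-(ii-v) vertex $\tilde v_{\delta(j)}$, since $\zeta^{-1}(\delta(j))=\iota(j)$ and $\gamma^{m}(\iota(j))=\gamma^{m-1}(\zeta^{-1}(j))$. So clause (3-i), which explicitly requires $w$ of type (i) or (ii-e), cannot be invoked there, and the paper instead verifies clause (3-ii): it shows, using Lemma \ref{diffeo-V} ($\varphi_j(I_{\tilde v_j})\cap I_{\tilde v_{\delta(j)}}=\emptyset$), that the ``error'' interval $R_{\iota(j)}$ actually contains the whole other half $I_{\delta(j),\dots,\delta^{k(j)}(j)}$, so that $\varphi_j(I_v)\supset I_{\tilde v_{\delta(j)}}$ while containing no interval of level $k(j)-1$, and concludes ${\cal A}_{\varphi_j}(v)=\tilde v_{\delta(j)}$.

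This also undercuts your ``controlled-error'' uniqueness remark as stated: it is true that $R_{\iota(j)}\subsetneq I_{\delta(j),\dots,\delta^{k(j)-1}(j)}$ prevents the image from swallowing an interval of level $\le k(j)-1$, but in the case $n=k(j)-2$ the image \emph{does} swallow a full interval of level $k(j)$ (namely $I_{\delta(j),\dots,\delta^{k(j)}(j)}$), so $\varphi_j(I_v)$ contains two adjacent level-$k(j)$ intervals of the tree $T_{\Phi}$. Well-definedness is rescued only because these two are identified by $\sim_{\Phi}$ into the single type-(ii-v) vertex $\tilde v_{\delta(j)}$ — a fact you never use. Without isolating this subcase and supplying the Lemma \ref{diffeo-V} argument, the claim that ``exactly one clause applies and the chosen $w$ is unique'' is unjustified precisely where it is most delicate (and this identification is needed later, e.g.\ in the proof of Proposition \ref{LocIsom}, where the image of such a vertex must be of type (ii)). The remaining cases of your proposal (level $\le 1$, interior type-(i) vertices with $j_1=j$ or $j_1\neq j,\zeta^{\pm1}(j)$, interior case $n<k(j)-2$, and the type-(ii-v) extreme vertices) do match the paper's treatment, up to the harmless discrepancy that for $\tilde v_m$, $m\notin\{j,\zeta(j)\}$, you invoke clause (3-ii) where the paper invokes clause (2); both name the same vertex $\tilde v_{\iota(j),m}$.
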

     \begin{proof} We already checked that  $ {\cal A}_{\varphi_j}$  is well defined for the vertices of level 
  $\leq 1$.  Let us verify this property for all the vertices in ${\cal{C}}_{v_0}$.

\vspace{5pt}
\noindent 1)  If $v$ is a vertex of type $(i)$ in ${\cal{C}}_{v_0}$, the image of the corresponding interval by $\varphi_j$ is given by Proposition \ref{type-1}. For these cases  either 
$v = v_{j_1,\gamma(j_1), \dots, \gamma^n(j_1)}$ or  
  $v = v_{j_1,\delta(j_1), \dots, \delta^n(j_1)}$, for some $ j_1$ and  $n\leq  k(j_1)-2$.
 \vspace{-5pt}
   \begin{enumerate}[noitemsep, leftmargin=19pt] 
\item[(a)]  If $j_1\neq j, \zeta^{\pm 1} (j)$, Proposition \ref{type-1} case (3) gives:
$\varphi_j ( I_{j_1,j_2, \dots, j_n}) =  I_{ \iota(j), j_1,j_2, \dots, j_n}$
 and by Definition \ref{action} case $2$:\\
\centerline{ \textrm{either }${\cal{A}}_{\varphi_j}(v) = v_{\iota(j), j_1,\gamma(j_1), \dots, \gamma^n(j_1)} \textrm{ or } 
    {\cal{A}}_{\varphi_j}(v)=v_{\iota(j),j_1,\delta(j_1), \dots, \delta^n(j_1)}$.}
    \item[(b)] If $j_1 = j$, then Proposition \ref{type-1} case $2$-$(a)$ gives: $\varphi_j ( I_{j,\gamma(j), \dots, \gamma^n(j)} ) \subset  I_{ \gamma(j), \dots, \gamma^n(j)}$,
(resp.: $\varphi_j ( I_{j,\delta(j), \dots, \delta^n(j)} ) \subset  I_{ \delta(j), \dots, \delta^n(j)}$). In addition $\varphi_j ( I_{v})$ intersects all subintervals
of level $n+1$. By Definition \ref{action} case $2$ we obtain:\\
\centerline{\textrm{either } ${\cal{A}}_{\varphi_j}(v) = v_{\gamma(j), \dots, \gamma^n(j)} \textrm{ or } 
    {\cal{A}}_{\varphi_j}(v) = v_{\delta(j), \dots, \delta^n(j)}$.}
    \item[(c)]  If $j_1 = \zeta^{\pm 1} (j)$, for instance $j_1 = \zeta^{- 1} (j)$, Proposition \ref{type-1} case $1$-$(b)$ gives:\\ 
 \centerline{ $ \varphi_j ( I_{v} )= I_{\iota (j) , \zeta^{-1} ( j), \gamma (\zeta^{-1} ( j)), \dots, 
\gamma^n (\zeta^{-1} ( j))} \cup  R_{\iota ( j)}$, with 
$R_{\iota ( j)} \subset I_{ \delta(j), \dots, \delta^{k(j) -1}(j)}$,}
and there are two different situations: 
  \vspace{-5pt}
   \begin{enumerate}[noitemsep, leftmargin=8pt] 
 \item[(i)] If $n < k(j) - 2$, then 
 $I_{\iota (j) , \zeta^{-1} ( j), \gamma (\zeta^{-1} ( j)), \dots, 
\gamma^n (\zeta^{-1} ( j))}$ is an interval of type $(i)$ and level $n +2 \leq k(j) - 1$
and $R_{\iota ( j)}$ is contained in an interval of level $k(j) -1$.
 Definition \ref{action} case $3$-$(i)$ gives:\\
 \centerline{ ${\cal{A}}_{\varphi_j}(v) = v_{\iota(j), \zeta^{- 1} (j),\gamma(\zeta^{- 1} (j)), \dots, \gamma^n(\zeta^{- 1} (j)) } $.}
\item[(ii)] If $n = k(j) - 2$, then 
 $I_{\iota (j) , \zeta^{-1} ( j), \gamma (\zeta^{-1} ( j)), \dots, 
\gamma^{k(j) -2} (\zeta^{-1} ( j))}$ is an interval of level $ k(j) $
and $R_{\iota ( j)}$ is contained in an interval of level $k(j) -1$ and thus does not contain an interval of level $k(j) -1$.
Recall that the interval of type $(ii)$ containing the cutting point $z_{\delta(j)}$ is given by 
 (\ref{Ivtilde}): 
$ I_{\tilde{v}_{\delta(j)}} = I_{\iota (j) , \zeta^{-1} ( j), \gamma (\zeta^{-1} ( j)), \dots, 
\gamma^{k(j) - 2} (\zeta^{-1} ( j))} 
 \cup I_{ \delta ( j ), \dots , \delta^{k(j)} ( j ) }$.\\ 
 By Lemma \ref{diffeo-V} it satisfies:
 $\varphi_j ( I_{\tilde{v}_{j}} ) \cap I_{\tilde{v}_{\delta(j)}} = \emptyset $, which implies:\\
 $ R_{\iota ( j)} \cap I_{\tilde{v}_{\delta(j)}} = I_{ \delta ( j ), \dots , \delta^{k(j)} ( j ) }$,
  these equalities together give:
 
   \centerline{ $ \varphi_j ( I_{v} ) = I_{\tilde{v}_{\delta(j)}} \cup 
   [ R_{\iota ( j)} \setminus I_{ \delta ( j ), \dots , \delta^{k(j)} ( j ) }] $.} 
      
\noindent   Therefore $ \varphi_j ( I_{v} )$ contains the interval of type $(ii)$
$I_{\tilde{v}_{\delta(j)}}$
of level $k(j)$ and does not contain any interval of level $k(j) -1$. Thus, by Definition \ref{action} case $3$-$(ii)$ we obtain:\\
\centerline{ ${\cal{A}}_{\varphi_j}(v) = v_{\tilde{v}_{\delta(j)}} $.}
\end{enumerate}
\end{enumerate}
\noindent   2) If $v$ is a vertex of type $(ii)$, the image of the corresponding interval under $\varphi_j$ is given by Proposition \ref{type-2}, which gives:
 \vspace{-5pt}
   \begin{enumerate}[noitemsep, leftmargin=19pt] 
 \item[(a)] \textcolor{black}{if $v=\tilde{v}_{_j}$},  $ {\cal{A}}_{\varphi_j }( \tilde{v}_{_j})= v_{\delta (j), \delta^2 (j), \dots, \delta^{k(j) - 1} ( j )}$ and $ {\cal{A}}_{ \varphi_j}(  \tilde{v}_{_{\zeta(j)}})=  v_{ \gamma (j) ,  \gamma^2 (j), \dots , \gamma ^{k(\zeta(j)) - 1} ( j )}$,
 by Proposition \ref{type-2} case $1)$ and  Definition \ref{action} case $2$. 
\item[(b)]\textcolor{black}{ if $v=\tilde{v}_{_n}$} for 
$n \notin \{j, \zeta(j)\},$ $\textcolor{black}{{\cal{A}}_{\varphi_j }}( \tilde{v}_{_n})=\tilde{v}_{\iota(j),n}$,  
by Proposition \textcolor{black}{\ref{type-1}} case $3$ and Definition \ref{action} case $2$.
 \end{enumerate}
 This completes the case by case proof for all the vertices in ${\cal C}_{v_0}$.
     \end{proof} 
     
\begin{Rm}\label{others}
The vertices studied in Proposition \ref{WellDef-Cv0} are associated to intervals containing a cutting point, either in its boundary or in its interior. There are many other intervals, they are of the form $I_{j_1,j_2,\dots,j_r}$ where
       $j_2 \notin \{\gamma(j_1),\delta(j_1)\}$ or $j_2 \in \{\gamma(j_1),\delta(j_1)\}$  and\\ 
        $j_3 \notin \{\gamma^2(j_1),\delta^2(j_1)\}$ and so on. 
       Suppose that $v_{j_1,j_2,\dots,j_r}$ is a vertex associated to such an interval then:
\vspace{-7pt}
   \begin{enumerate}[noitemsep, leftmargin=17pt]            
\item[$(1)$] If $j_1=j$: then $  {\cal{A}}_{\varphi_j }(v_{j ,j_2,\dots,j_r})= v_{j_2, \dots , j_r},$  by the definition of $I_{j_1,j_2,\dots,j_r}$ and Definition \ref{action} case $2$.
\item[$(2)$] If $j_1\neq j$: then
        ${\cal{A}}_{\varphi_j }(v_{j_1,j_2,\dots,j_r}) = v_{\iota(j), j_1, j_2,\dots, j_r},$  by  the definition of $I_{j_1,j_2,\dots,j_r}$,
         and Definition \ref{action} case $(2)$.     
 \end{enumerate}
\end{Rm}
 The following result is a co-compactness property for the map $ {\cal A}$. 
     \begin{Prop}\label{PhiAction}
    For any vertex $v \in V (\Gamma_{\Phi} ) \setminus\{v_0\} $ of level $n$, there exists a group element 
    $g \in G_{X_{\Phi}}$ of length $l \leq n$ so that: $ {\cal A}_{g} (v) \in {\cal{C}}_{v_0} $.
   \end{Prop}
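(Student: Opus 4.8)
The plan is to argue by induction on the level $n$ of the vertex $v$, using the explicit description of how each generator $\varphi_j$ acts on intervals attached to vertices, as summarized in Lemma~\ref{action1}, Proposition~\ref{type-1}, Proposition~\ref{type-2} and Remark~\ref{others}. The base case is $n \leq K_\Phi = \max\{k(j)\}$: since the compact set ${\cal{C}}_{v_0}$ contains the ball of radius $K_\Phi$ about $v_0$ (as noted before Proposition~\ref{type-1}), any vertex of level $\leq K_\Phi$ already lies in ${\cal{C}}_{v_0}$, so one takes $g = \mathrm{id}$ (length $0 \leq n$). The inductive step is where the real work lies: given $v$ of level $n > K_\Phi$, I want to produce a single generator $\varphi_j$ with $\mathrm{level}({\cal A}_{\varphi_j}(v)) < n$, then apply the induction hypothesis to ${\cal A}_{\varphi_j}(v)$ and compose.

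The key observation driving the inductive step is that a vertex $v$ of level $n$ corresponds to an interval $I_v = I_{j_1,\dots,j_n}$ (or a union of two/three such, for type (ii)), and the crucial choice is to take $j := \iota(j_1) = \overline{j_1}$, i.e. the generator whose index is the involution-image of the first symbol of the address. Then Remark~\ref{others}(1) (and the analysis in Proposition~\ref{type-1} when a cutting point sits at the boundary) gives ${\cal A}_{\varphi_j}(v_{j_1,j_2,\dots,j_n}) = v_{j_2,\dots,j_n}$ in the generic situation, dropping the level by exactly one. The cases to be checked are: (i) $j_2 \notin \{\gamma(j_1),\delta(j_1)\}$, handled directly by Remark~\ref{others}(1); (ii) the address is of the ``extreme'' form $I_{j_1,\delta(j_1),\dots,\delta^m(j_1)}$ or $I_{j_1,\gamma(j_1),\dots,\gamma^m(j_1)}$ with $m \leq k(j_1)-2$, handled by Proposition~\ref{type-1} cases (1a)/(2a), where $\varphi_{\overline{j_1}}$ sends $I_v$ into $I_{\delta(j_1),\dots,\delta^m(j_1)}$ — an interval of level $m \leq k(j_1)-2 < K_\Phi < n$, so we land inside ${\cal{C}}_{v_0}$ in one step (length $1 \leq n$); (iii) the type (ii) vertices $\tilde v_{j_1}$, handled by Proposition~\ref{type-2} case (1), again landing in a level-$k(j_1)-1$ interval, hence in ${\cal{C}}_{v_0}$ in one step. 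In every branch, the length used is at most one more than the length needed for the image, and the image has strictly smaller level (or already lies in ${\cal{C}}_{v_0}$), so the total length is $\leq n$.

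The main obstacle I anticipate is bookkeeping at the vertices of type (ii-v), where $I_v$ is a union $I_{\ell,j,\delta(j),\dots,\delta^{k(j)-1}(j)} \cup I_{\ell,\zeta^{-1}(j),\dots,\gamma^{k(j)-1}(\zeta^{-1}(j))}$: here one must check that applying $\varphi_{\overline{\ell_1}}$ (peeling off the first symbol of the common prefix $\ell$) behaves coherently on both pieces and that the ${\cal A}$-image is again a genuine vertex of $\Gamma_\Phi$ of level $n-1$ — this uses Proposition~\ref{type-2} case (2) together with the fact (Proposition~\ref{cyclicorder}, Lemma~\ref{vertexInterval}) that the identification structure is determined purely by combinatorics and commutes with the ``prepend $\iota(j)$'' operation. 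A secondary point requiring care is ensuring the peeling is well-defined as a map on $\Gamma_\Phi$ and not merely on the tree $T_\Phi$, but this is exactly what Definition~\ref{action} and Proposition~\ref{WellDef-Cv0} were set up to guarantee; once $v \notin {\cal{C}}_{v_0}$ one is far enough from $v_0$ that the identifications near $v$ have already ``stabilized'' and peeling commutes with $\sim_\Phi$. Assembling these cases, the induction closes and the proposition follows; I would also record explicitly that the element $g$ produced is a reduced word of the stated length, which will be needed for the properly-discontinuous part of the geometric action in the next subsection.

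\begin{proof}[Proof sketch]
We induct on the level $n$ of $v$. If $n \leq K_\Phi := \max\{k(j) : 1 \leq j \leq 2N\}$, then $v$ lies in the ball of radius $K_\Phi$ about $v_0$, which is contained in ${\cal{C}}_{v_0}$ (as observed before Proposition~\ref{type-1}), so $g = \mathrm{id}$ works. Assume $n > K_\Phi$ and the claim for all smaller levels. Write $I_v = I_{j_1,\dots,j_n}$ (for $v$ of type (i) or (ii-e)), resp. $I_v$ the union described in \eqref{id-ver} (for $v$ of type (ii-v)), and set $j := \iota(j_1)$.

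If the address has the extreme form $I_{j_1,\delta(j_1),\dots,\delta^m(j_1)}$ or $I_{j_1,\gamma(j_1),\dots,\gamma^m(j_1)}$ with $0 \leq m \leq k(j_1)-2$, then by Proposition~\ref{type-1} cases (1a), (2a) we get ${\cal A}_{\varphi_j}(v) = v'$ with $I_{v'} \subset I_{\delta(j_1),\dots,\delta^m(j_1)}$ (resp. the $\gamma$-interval), a vertex of level $m+1 \leq k(j_1)-1 \leq K_\Phi$, hence ${\cal A}_{\varphi_j}(v) \in {\cal{C}}_{v_0}$ and $g = \varphi_j$ has length $1 \leq n$. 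The same conclusion holds for the type (ii) vertices $\tilde v_{j_1}$, $\tilde v_{\zeta(j_1)}$ by Proposition~\ref{type-2} case (1). Otherwise, by Remark~\ref{others}(1) (and Proposition~\ref{type-2} case (2) in the type (ii) case), ${\cal A}_{\varphi_j}(v)$ is the vertex obtained by deleting the first symbol $j_1$ from the address; this vertex $w$ has level $n - 1$. By the induction hypothesis there is $h \in G_{X_\Phi}$ of length $\leq n-1$ with ${\cal A}_h(w) \in {\cal{C}}_{v_0}$. Then $g := h \circ \varphi_j$ has length $\leq n$ and ${\cal A}_g(v) = {\cal A}_h({\cal A}_{\varphi_j}(v)) = {\cal A}_h(w) \in {\cal{C}}_{v_0}$, completing the induction.
\end{proof}
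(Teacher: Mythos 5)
Your overall strategy is the same as the paper's: peel off the leading symbol of the address one generator at a time, using Lemma \ref{action1}, Propositions \ref{type-1} and \ref{type-2} and Remark \ref{others} for the case analysis, and iterate until the image lies in ${\cal C}_{v_0}$, spending one generator per level so that the length is at most $n$. However, two concrete steps fail as written. First, the ``crucial choice'' $j:=\iota(j_1)$ is the wrong generator: by Lemma \ref{action1}(b) (equivalently Remark \ref{others}(2)), $\varphi_{\iota(j_1)}$ acts on intervals contained in $I_{j_1}$ as $(\widetilde{\Phi}_{j_1})^{-1}$, so ${\cal A}_{\varphi_{\iota(j_1)}}(v_{j_1,\dots,j_n})=v_{j_1,j_1,j_2,\dots,j_n}$ and the level goes \emph{up} by one, not down, so your induction never closes. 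The results you cite --- Remark \ref{others}(1), Proposition \ref{type-1} cases (1a), (2a), (3), Proposition \ref{type-2}(1) --- are all statements about $\varphi_{j}$ applied to an address whose first symbol is $j$; the peeling generator is therefore $\varphi_{j_1}$ (which restricts to $\widetilde{\Phi}_{j_1}$ on $I_{j_1}$), exactly as in the paper's proof.

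Second, your base case rests on the claim that ${\cal C}_{v_0}$ contains $\mathrm{Ball}(v_0,K_{\Phi})$; the observation made in the paper before Proposition \ref{type-1} is the opposite inclusion, ${\cal C}_{v_0}\subset \mathrm{Ball}(v_0,K_{\Phi})$, and your version is false: ${\cal C}_{v_0}$ is the union of the relation loops of Corollary \ref{loops}, so it only contains vertices whose addresses follow the $\delta$- or $\gamma$-cycles (such as $v_{j,\delta(j),\dots,\delta^{m}(j)}$), while a generic vertex $v_{j_1,j_2}$ with $j_2\notin\{\overline{j_1},\gamma(j_1),\delta(j_1)\}$ has level $2\leq K_{\Phi}$ but does not lie in ${\cal C}_{v_0}$, so $g=\mathrm{id}$ does not work there. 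The repair is simply to keep peeling with $\varphi_{j_1}$ until the image either lands on a loop or reaches level one (every level-one vertex lies on a loop, hence in ${\cal C}_{v_0}$), which is how the paper's iteration terminates and still yields length $\leq n$. With these two corrections your argument coincides with the paper's; the extra care you take at type (ii-v) vertices, checking that the two branches of the identified interval are peeled coherently, is legitimate and consistent with the paper's treatment.
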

   
\begin{proof}
Assume that $v$ is of type $(i)$ and let $I_v = I_{j_1, j_2, \dots , j_n}$. 
 If $I_v$ does not contain a cutting point $z_{j_1}$  or $z_{\zeta(j_1 )}$ on its boundary, then by Remark \ref{others} case $(1)$, we have:
$I_v \subset \textrm{int} (I_{j_1})$ and 
$\varphi_{j_1} (I_v) = I_{ j_2, \dots , j_n}$ is an interval of type $(i)$ and level $n-1$ and thus:
$ {\cal A}_{\varphi_{j_1}} (v) = v_{ j_2, \dots , j_n}$.

If $I_v = I_{j_1, j_2, \dots , j_n}$ is of type $(i)$ and contains $z_{j_1}$  or $z_{\zeta(j_1 )}$ on its boundary then $\varphi_{j_1} (I_v) \subset I_{ j_2, \dots , j_n}$ and intersects all subintervals of level $n$, as in Proposition \ref{type-1} case $1$-$(a)$ and thus: 
$ {\cal A}_{\varphi_{j_1}} (v) = v_{ j_2, \dots , j_n}$ is a vertex of level $n-1$.

If $I_v = I_{j_1, j_2, \dots , j_n}$ is of type $(ii)$ and does not contain $z_{j_1}$ or $z_{\zeta(j_1 )}$  then, as above we obtain: $ {\cal A}_{\varphi_{j_1}} (v) = v_{ j_2, \dots , j_n}$ is a vertex of level $n-1$.

If $I_v = I_{j_1, j_2, \dots , j_n}$ is of type $(ii)$ and contains $z_{j_1}$  or $z_{\zeta(j_1 )}$ on its
interior then, as in Proposition \ref{type-2} case (1),
$\varphi_{j_1} (I_v) \subset I_{ j_2, \dots , j_n}$ and intersects all subintervals of level $n$ maybe except one and thus $ {\cal A}_{\varphi_{j_1}} (v) = v_{ j_2, \dots , j_n}$ is a vertex of level $n-1$.

In all cases, there is a generator $\varphi_{j_1}$ so that 
$ {\cal A}_{\varphi_{j_1}} (v)$ is a vertex of level $n-1$. By iterating this argument, we obtain a finite sequence of generators: $\varphi_{j_1}, \varphi_{j_2}, \dots, \varphi_{j_m}$ with $m\leq n -1$ so that:
$ {\cal A}_{\varphi_{j_m} \circ \dots \circ \varphi_{j_1}} (v) \in {\cal{C}}_{v_0}$.
\end{proof}

 Let us extend the map ${\cal{A}}_{\varphi_{j}}$, defined on the vertices of $\Gamma_{\Phi}$, to a map on the graph.
We denote by \textcolor{black}{$(v,w)$} the edge connecting the  vertices $v$ and $w$ in $\Gamma_{\Phi}$. 

\begin{Prop}\label{LocIsom} The map ${\cal{A}}_{\varphi_{j}}$  is well defined on the vertex set 
$V (\Gamma_{\Phi} )$. It extends to a well defined map on the set of edges as:
${\cal{A}}_{\varphi_{j}} (v, w) := ( {\cal{A}}_{\varphi_{j}} (v) , {\cal{A}}_{\varphi_{j}} (w) )$
 and is a bijective isometry, for  $j =1,\dots, 2N$, for the combinatorial metric on 
 $\Gamma_{\Phi}$.
\end{Prop}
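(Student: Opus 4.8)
The plan is to prove Proposition \ref{LocIsom} in three stages: first that ${\cal A}_{\varphi_j}$ is well defined on all of $V(\Gamma_\Phi)$, then that it carries edges to edges (so the edge-formula makes sense), and finally that it is a bijective isometry, with the inverse being ${\cal A}_{\varphi_{\iota(j)}}$.

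\textbf{Well-definedness on vertices.} Proposition \ref{WellDef-Cv0} handles the vertices of ${\cal C}_{v_0}$, and Remark \ref{others} handles the remaining vertices whose associated interval meets a cutting point only mildly; what is still needed is that for \emph{every} vertex $v\neq v_0$ exactly one of the cases (1),(2),(3-i),(3-ii) of Definition \ref{action} applies and produces a unique $w$. The argument is a dichotomy on the interval $I_v$ given by Lemma \ref{vertexInterval}. If $\iota(j)$ does not appear as the second symbol forced by the position of $I_v$ relative to the cutting points $z_j,z_{\zeta(j)}$, then by the very definition of the generators (Lemma \ref{prop-gen}(1), carried through Theorem \ref{CP-relations}), $\varphi_j$ is just $(\widetilde\Phi_{\iota(j)})^{-1}$ on a neighbourhood of $I_v$, so $\varphi_j(I_v)=I_{\iota(j),\ell}$ if $I_v=I_\ell$, giving case (2) with a unique $w$; this is exactly Proposition \ref{type-1}(3), Proposition \ref{type-2}(2) and Remark \ref{others}(2). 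If $I_v$ abuts or contains one of $z_j,z_{\zeta(j)}$, Propositions \ref{type-1} and \ref{type-2} list all the possibilities: $\varphi_j(I_v)$ either covers all level-one subintervals of some $I_w$ except one (case (2), with $w$ forced by the $\delta$- or $\gamma$-cycle and the slope/expansion count), or covers a unique interval $I_w$ of a strictly smaller level plus an extra ``tail'' $R_{\iota(j)}$ or $L_{\iota(j)}$ which by Lemma \ref{action1}(c) is strictly contained in an interval of level $k-1$ — the two sub-cases (3-i) (the tail stays inside one level-$(k{-}1)$ interval, so $w$ is of type (i)/(ii-e)) and (3-ii) (the tail together with the already-covered interval makes up a type-(ii-v) interval of level $k$, as in case (c-ii) of Proposition \ref{WellDef-Cv0}). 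One checks these cases are mutually exclusive by comparing levels, so ${\cal A}_{\varphi_j}(v)$ is unambiguous. Note that ${\cal A}_{\varphi_j}$ as a map of vertices is not defined by a formula on $g$ — that point (independence of the word representing $g$) is deferred to the next subsection, so here I only need each \emph{single-generator} map to be well defined.

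\textbf{Edges and isometry.} For the edge statement, recall that $\Gamma_\Phi$ has edges of two kinds: tree edges (from level $m$ to level $m+1$, labelled by a symbol $\Psi_{j_{m+1}}$) and, after the identifications, the extra structure described in Proposition \ref{cyclicorder} giving each vertex valency $2N$ with a cyclic ordering of incident edges induced by $\zeta$. The cleanest route is: show ${\cal A}_{\varphi_j}$ sends the ball $B(v_0,1)$ isometrically onto $B(v_{\iota(j)},1)$ — already observed in the text right before Proposition \ref{WellDef-Cv0}, via Lemma \ref{action1} and Definition \ref{action}(4) — and then propagate this outward. Concretely, if $e=(v,w)$ is an edge with $w$ one level deeper than $v$ and label $\Psi_{j_{m+1}}$, then the interval $I_w$ is a subinterval of $I_v$ cut out by one more application of $\Phi$, and $\varphi_j$ being a homeomorphism of $S^1$ sends the nested pair $(I_w\subset I_v)$ to a nested pair of intervals; comparing with the defining cases of ${\cal A}_{\varphi_j}$ on $v$ and on $w$ shows $({\cal A}_{\varphi_j}(v),{\cal A}_{\varphi_j}(w))$ is again a pair of vertices whose associated intervals are nested with the image-interval one ``combinatorial step'' inside the other, i.e. they span an edge of $\Gamma_\Phi$ (here one uses that $\varphi_j$ respects the cyclic ordering of Proposition \ref{cyclicorder}, because $\varphi_j$ is orientation-preserving on $S^1$ and the cyclic order on edges is the $S^1$-order of the intervals). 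Bijectivity and the isometry property then follow at once: ${\cal A}_{\varphi_j}\circ{\cal A}_{\varphi_{\iota(j)}}={\cal A}_{\varphi_{\iota(j)}}\circ{\cal A}_{\varphi_j}=\mathrm{id}$ on vertices — because $\varphi_j^{-1}=\varphi_{\iota(j)}$ by Theorem \ref{CP-relations}/Lemma \ref{prop-gen}(3) and each of the four cases of Definition \ref{action} is visibly reversed by the corresponding case applied to $\varphi_{\iota(j)}$ (the $v_0\mapsto v_{\iota(j)}$, $v_{\iota(j)}\mapsto v_0$ pattern, together with the ``add/remove the symbol $\iota(j)$ at the front'' pattern in case (2), and the matching coverings in cases (3-i)/(3-ii)) — so ${\cal A}_{\varphi_j}$ is a bijection of $V(\Gamma_\Phi)$ with inverse ${\cal A}_{\varphi_{\iota(j)}}$; since it and its inverse both send edges to edges, it preserves the combinatorial distance, hence is an isometry of $\Gamma_\Phi$.

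\textbf{Main obstacle.} The delicate point is the edge-to-edge claim at vertices where $\varphi_j(I_v)$ is \emph{not} simply another level-$(m{+}1)$ interval but one of the ``exceptional'' images of cases (3-i)/(3-ii) — i.e. near the images of the cutting points $z_j,z_{\zeta(j)}$, where the tail $R_{\iota(j)}$ or $L_{\iota(j)}$ appears. There one must check that the \emph{pair} of vertices $({\cal A}_{\varphi_j}(v),{\cal A}_{\varphi_j}(w))$ for $e=(v,w)$ an edge incident to such an exceptional vertex lands on an actual edge, and that no two distinct edges get collapsed; this is where the identification relation $\sim_\Phi$ of Definition \ref{graph} and the precise inclusions $L_{\iota(j)}\subsetneq I_{\gamma(j),\dots,\gamma^{k(\zeta(j))-1}(j)}$, $R_{\iota(j)}\subsetneq I_{\delta(j),\dots,\delta^{k(j)-1}(j)}$ of Lemma \ref{action1}(c) do the work — the ``controlled error'' is absorbed exactly because the tail is one combinatorial level short of containing a vertex. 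I expect the bookkeeping there, organized case-by-case as in the proof of Proposition \ref{WellDef-Cv0} (with the running example $k(j)=3$), to be the bulk of the argument; everything else is formal once the single-generator map is pinned down.
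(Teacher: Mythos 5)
Your outline follows the paper's proof in all essentials: well-definedness on vertices comes from Proposition \ref{WellDef-Cv0} together with Remark \ref{others} and Proposition \ref{PhiAction}, and the edge statement is reduced to a case-by-case check of the edges inside the compact sets ${\cal C}_v$ (the paper does exactly this on ${\cal C}_{v_0}$, splitting into $n<k(j_1)-2$ and $n=k(j_1)-2$ and invoking the cases of Proposition \ref{WellDef-Cv0}, then noting the other ${\cal C}_v$ are identical), after which bijectivity and the isometry property follow. Two caveats on your shortcuts. First, your generic mechanism ``$I_w\subset I_v$ for an edge $(v,w)$, so $\varphi_j$ sends a nested pair to a nested pair'' is literally false precisely at the interesting edges: when $w$ is a vertex of type (ii-v), $I_w$ is the union of two level-$k$ intervals lying in two \emph{different} parent intervals, so $I_w\not\subset I_v$; these edges (the paper's case $n=k(j_1)-2$) can only be handled by the explicit case analysis you defer to, and your ``main obstacle'' paragraph locates the difficulty slightly wrongly (at the tails $R_{\iota(j)},L_{\iota(j)}$, i.e.\ at $j_1=\zeta^{\pm1}(j)$) rather than at all edges into identified vertices, which arise for every generator. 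Second, your bijectivity argument via ${\cal A}_{\varphi_{\iota(j)}}\circ{\cal A}_{\varphi_j}=\mathrm{id}$ is legitimate and arguably cleaner than the paper's terse ``no two edges are mapped to the same one'', but that identity is not ``visible'' from Definition \ref{action}: in the paper it is only established in Lemma \ref{GeometricAction}, and there by appealing to the same case-by-case study of Propositions \ref{WellDef-Cv0} and \ref{LocIsom}; so using it here does not save you any of the bookkeeping, it just reorganizes it. With those two points repaired (i.e.\ with the deferred case analysis actually carried out as in the paper), your argument is the paper's argument.
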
 
\begin{proof} 
\textcolor{black}{ By Remark \ref{others}, Propositions \ref{WellDef-Cv0} and \ref{PhiAction} each map ${\cal{A}}_{\varphi_{j}}$ is well defined on $V(\Gamma_{\Phi})$.}  
  It is enough to prove the result for the compact set ${\cal C}_{v_0}$.

\noindent  Let $(v,w)$ be an edge in ${\cal C}_{v_0}$, we can assume $v$ is 
  $v_{j_1,\gamma(j_1), \dots, \gamma^n(j_1)}$,\\
   (resp. $ v = v_{j_1,\delta(j_1), \dots, \delta^n(j_1)}$),
  and $w = v_{j_1,\gamma(j_1), \dots, \gamma^{n+1}(j_1)}$, (resp. $ w = v_{j_1,\delta(j_1), \dots, \delta^{n+1}(j_1)}$), for some $n\leq  k(j_1)-2$. \textcolor{black}{We compute the image of each vertex following the proof of  Proposition \ref{WellDef-Cv0}.}
  
\noindent 1) If $n < k(j_1)-2$ then the two vertices are of type $(i)$, this gives the following cases:
\vspace{-7pt}
   \begin{enumerate}[noitemsep, leftmargin=23pt]     
\item[(a)] If $j_1\neq j$, then by  case 1)-(a) and 1)-(c)-(i), the image
of each vertex gives that:\\
\centerline{ $({\cal{A}}_{\varphi_{j}}(v),{\cal{A}}_{\varphi_{j}}(w))$ is an edge in 
${\cal C}_{v_{\iota(j)}}$.}
\item[(b)] If $j_1= j$, then by  case 1)-(b), the image of each vertex 
gives that:\\  
\centerline{  $({\cal{A}}_{\varphi_{j}}(v),{\cal{A}}_{\varphi_{j}}(w))$ is an edge of 
${\cal C}_{v_{\iota(j)}} \cap {\cal C}_{v_0}.$}
\end{enumerate}
\noindent 2) If $n = k(j_1)-2$ then $v$ is of type $(i)$ and $w$ of type $(ii)$.
\vspace{-7pt}
   \begin{enumerate}[noitemsep, leftmargin=23pt]   
\item[(a)] If $j_1\notin \{\zeta^{\pm 1}(j),j\}$ then, by case  1)-(a) for $v$
  and case 2)-(b) for $w$ we obtain that:\\
\centerline{   $({\cal{A}}_{\varphi_{j}}(v),{\cal{A}}_{\varphi_{j}}(w))$ is an edge of 
${\cal C}_{v_{\iota(j)}}.$}

\item[(b)] If $j_1=j$, then by  case 1)-(b) for $v$ 
and case 2)-(a)
for $w$ we obtain that:\\
\centerline{ $({\cal{A}}_{\varphi_{j}}(v),{\cal{A}}_{\varphi_{j}}(w))$ is an edge of 
${\cal C}_{v_{\iota(j)}} \cap {\cal C}_{v_0}.$}

\item[(c)] If $j_1 = \zeta^{\pm 1}(j)$, then by  case 1)-(c)-(ii) the image of $v$ is of type $(ii)$
and by the case 2)-(a) the image of $w$ is of type $(i)$.
Hence, we obtain that:\\
 \centerline{   $({\cal{A}}_{\varphi_{j}}(v),{\cal{A}}_{\varphi_{j}}(w))$ is an edge of 
${\cal C}_{v_{\iota(j)}} \cap {\cal C}_{v_0}.$}
\end{enumerate}
\noindent For all the edges in ${\cal C}_{v_0}$, the map ${\cal{A}}_{\varphi_{j}}$ is well defined by
  $ {\cal{A}}_{\varphi_{j}}(v, w) :=({\cal{A}}_{\varphi_{j}}(v),{\cal{A}}_{\varphi_{j}}(w))$.

\noindent In particular no two edges are mapped to the same one. Therefore each 
 ${\cal{A}}_{\varphi_{j}}$ is a bijective isometry, when restricted to ${\cal C}_{v_0}$, for the combinatorial metric on $\Gamma_{\Phi}$.
 In addition, the map 
${\cal{A}}_{\varphi_{j}}$ increases or decreases by one the level of both vertices. 
The proof for the other compact sets  ${\cal C}_{v}$ is the same and thus the map is well defined on 
$\Gamma_{\Phi}$.
\end{proof}

\begin{Prop}\label{ActionOrdering} For every vertex $v$ of $\Gamma_{\Phi}$, 
 ${\cal{A}}_{\varphi_j}({\cal C}_{v}) = {\cal C}_{{\cal{A}}_{\varphi_j}(v)}$  
 and ${\cal A}_{\varphi_j}$ preserves the natural cyclic ordering of the edges given by Proposition \ref{cyclicorder}  around $v$. 
  \end{Prop}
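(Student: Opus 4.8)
## Proof Proposal for Proposition \ref{ActionOrdering}

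The plan is to prove the two assertions in sequence, reducing the general vertex $v$ to the case $v = v_0$ and then invoking the case-by-case analysis already established. First I would observe that by Proposition \ref{LocIsom} the map ${\cal A}_{\varphi_j}$ is a bijective isometry of $\Gamma_\Phi$ that shifts levels by exactly one, and by Proposition \ref{PhiAction} (co-compactness) every compact set ${\cal C}_v$ is the translate under some group element of a compact set centered near $v_0$. So the core of the argument is local: it suffices to verify both claims for $v = v_0$ and for the $2N$ generators, since a general ${\cal C}_v$ is carried to ${\cal C}_{v_0}$ (up to the action of an element of $G_{X_\Phi}$) by a composition of ${\cal A}_{\varphi_i}$'s, each of which is already known to be a graph isometry.

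For the equality ${\cal A}_{\varphi_j}({\cal C}_{v_0}) = {\cal C}_{{\cal A}_{\varphi_j}(v_0)} = {\cal C}_{v_{\iota(j)}}$, I would use the explicit description of ${\cal C}_{v_0}$ from Remark \ref{compact}: it is the union of the loops of Corollary \ref{loops}, one for each pair of adjacent edges at $v_0$, i.e.\ one for each cutting point / each cycle of $\delta$. The vertices of ${\cal C}_{v_0}$ were enumerated in Proposition \ref{WellDef-Cv0} (types (i), (ii-v), (ii-e) as in Lemma \ref{vertexInterval}), and their images under ${\cal A}_{\varphi_j}$ were computed there explicitly. Running through those cases, one checks that the image of each loop of ${\cal C}_{v_0}$ is precisely a loop of ${\cal C}_{v_{\iota(j)}}$ (the loops attached to $v_{\iota(j)}$ being the ${\cal A}_{\varphi_j}$-images of the loops attached to $v_0$, since ${\cal A}_{\varphi_j}(v_0) = v_{\iota(j)}$), and conversely every loop of ${\cal C}_{v_{\iota(j)}}$ arises this way because ${\cal A}_{\varphi_j}$ is a bijection (Proposition \ref{LocIsom}) and its inverse is ${\cal A}_{\varphi_{\iota(j)}}$ by Lemma \ref{prop-gen}(3) / the relation $(f_j)^{-1} = f_{\iota(j)}$. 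Thus the image of the union of loops is the union of loops, which is exactly the content of the claimed set equality.

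For the preservation of the cyclic ordering, I would invoke Proposition \ref{cyclicorder}: the cyclic order of edges at any vertex $w$ is the one induced by the cyclic order of the partition intervals $I_k$ along $S^1$, i.e.\ by the permutation $\zeta$. Since each generator $\varphi_j$ is an \emph{orientation-preserving} homeomorphism of $S^1$, it preserves the cyclic ordering of intervals along $S^1$; combined with the description in Lemma \ref{action1}, Proposition \ref{type-1} and Proposition \ref{type-2} of how $\varphi_j$ sends the sub-intervals $I_v$ to sub-intervals $I_w$, this forces ${\cal A}_{\varphi_j}$ to respect the cyclic arrangement of the out-going (and in-coming) edges at each vertex. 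Concretely, at $v_0$ the edges are ordered $v_1, v_2, \dots, v_{2N}$ by $\zeta$, and ${\cal A}_{\varphi_j}(v_m) = v_{\iota(j), m}$ (for $m \ne j, \zeta^{\pm1}(j)$) together with the boundary cases of Lemma \ref{action1}(c) shows that the images appear in the same cyclic order around $v_{\iota(j)}$ dictated by $\zeta$; the same check at the type-(ii-v) vertices uses condition (\ref{k(j)-image}) and Lemma \ref{adj}, which is exactly the combinatorial input encoding adjacency.

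The main obstacle I anticipate is the bookkeeping at the vertices where two identifications meet — the type-(ii-v) vertices that also carry a type-(ii-e) identification, as analyzed in Lemma \ref{vertexInterval} — since there the edge set has the irregular structure (two in-coming edges, $2N-2$ out-going edges) and one must check that the ``error term'' $R_{\iota(j)}$ or $L_{\iota(j)}$ of Lemma \ref{action1}(c), which straddles several level-$(k(j)-1)$ intervals, is absorbed correctly into the target loop without disturbing the cyclic order. This is precisely the situation handled in Proposition \ref{WellDef-Cv0} case (1-c-ii), so the resolution is to lean on that computation: the crucial point is that $\varphi_j(I_{\tilde v_j}) \cap I_{\tilde v_{\delta(j)}} = \emptyset$ (Lemma \ref{diffeo-V}(a)), which guarantees that the image of a loop lands on a genuine loop of ${\cal C}_{v_{\iota(j)}}$ and that consecutive edges stay consecutive. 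Everything else is a routine verification using that ${\cal A}_{\varphi_j}$ is already known to be a level-shifting graph isometry.
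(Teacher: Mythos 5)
Your treatment of the base case $v=v_0$ is essentially the paper's proof: the set equality ${\cal A}_{\varphi_j}({\cal C}_{v_0})={\cal C}_{v_{\iota(j)}}$ is read off from the case-by-case computations of Proposition \ref{WellDef-Cv0} and the proof of Proposition \ref{LocIsom} (loops associated to cutting point relations map to such loops), and the cyclic ordering is preserved because the images of consecutive edges $(v_0,v_k)$, $(v_0,v_{\zeta(k)})$ are again consecutive, the generators being orientation preserving. However, your reduction of the general vertex $v$ to $v_0$ is circular as written. To say that ``a general ${\cal C}_v$ is carried to ${\cal C}_{v_0}$ by a composition of ${\cal A}_{\varphi_i}$'s'' you need precisely the identity ${\cal A}_{\varphi_i}({\cal C}_w)={\cal C}_{{\cal A}_{\varphi_i}(w)}$ at $w=v$ and at the intermediate vertices along the way --- which is the statement you are proving, and an induction on the level does not repair this, since the problematic instance sits at the vertex $v$ itself, not at a lower level. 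Nor does Proposition \ref{LocIsom} alone bail you out: knowing that ${\cal A}_{\varphi_j}$ is a bijective isometry of the graph does not by itself imply that the union of the cutting point loops at $v$ is sent onto the union of the cutting point loops at ${\cal A}_{\varphi_j}(v)$, because the paper gives no intrinsic (purely metric) characterization of those loops; the identification of image loops is obtained in the paper only through the explicit interval computations. The paper's route for general $v$ is simply to redo that local case analysis at an arbitrary vertex (all vertices have the same local structure by Proposition \ref{cyclicorder}, and Remark \ref{others} supplies the images of the remaining intervals), and that is what your argument should do instead of invoking Proposition \ref{PhiAction}.

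A second, smaller point: you use that the inverse of ${\cal A}_{\varphi_j}$ is ${\cal A}_{\varphi_{\iota(j)}}$ to get surjectivity onto ${\cal C}_{v_{\iota(j)}}$. That the maps ${\cal A}$ respect the trivial relations $\varphi_j\circ\varphi_{\iota(j)}={\rm Id}$ is only verified later, in the proof of Lemma \ref{GeometricAction}, which itself relies on the present proposition; so either verify ${\cal A}_{\varphi_{\iota(j)}}\circ{\cal A}_{\varphi_j}={\rm Id}$ on ${\cal C}_{v_0}$ directly from Proposition \ref{WellDef-Cv0}, or argue surjectivity by counting: injectivity on edges (Proposition \ref{LocIsom}) plus the fact that both $v_0$ and $v_{\iota(j)}$ carry exactly $2N$ pairs of consecutive edges, hence the same finite collection of loops, already forces the image of ${\cal C}_{v_0}$ to be all of ${\cal C}_{v_{\iota(j)}}$.
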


\begin{proof} If $v = v_0$,
from the proof of Proposition \ref{LocIsom}: 
${\cal{A}}_{\varphi_{j}}({\cal C}_{v_0})={\cal C}_{v_{\iota(j)}}$ and, by Definition \ref{action} case {\it4}:
${\cal{A}}_{\varphi_{j}}(v_0) = v_{\iota(j)}$. For the other vertices the proof is the same.

  
 The cyclic ordering of Proposition \ref{cyclicorder} for the edges in 
$\Gamma_{\Phi}$ reflects the cyclic ordering of the intervals along the circle, it is given by the cyclic permutation $\zeta$. \\
Let us consider  $(v_0,v_k)$ and  $(v_0,v_{\zeta(k)})$  two consecutive edges around $v_0$. 
By Proposition \ref{LocIsom}, the image under ${\cal{A}}_{\varphi_{j}}$ depends on the value of $k$.\\
 If $k \neq j $ then 
 ${\cal{A}}_{\varphi_{j}}(v_0,v_k)=( {\cal{A}}_{\varphi_{j}}(v_0), {\cal{A}}_{\varphi_{j}}(v_k)) = (v_{\iota(j)},v_{\iota(j), k})$, and\\
${\cal{A}}_{\varphi_{j}}(v_0,v_{\zeta(k)}) = (v_{\iota(j)}, v_{{\iota(j),\zeta(k)}})$,
 these two edges are consecutive at the vertex $v_{\iota(j)}$.\\
  If $k = j $ then the image of the two edges are
 ${\cal{A}}_{\varphi_{j}}(v_0,v_j)=(v_{\iota (j)} , v_0 ) $ and\\
 ${\cal{A}}_{\varphi_{j}}(v_0,v_{\zeta(j)}) = (v_{\iota(j)}, v_{\iota(j),\zeta(j)})$,
 these two edges are consecutive around $v_{\iota(j)}$.\\
  Hence ${\cal{A}}_{\varphi_{j}}$ preserves the cyclic ordering of the edges around $v_0$. 

\noindent The proof for the other vertices is the same. 
 From  Proposition \ref{WellDef-Cv0}, Remark \ref{others}, Proposition \ref{PhiAction} and since each generator $\varphi_j$ is orientation preserving, the natural cyclic ordering at each vertex is preserved by the action. By composition, the same is true for each element in $G_{X_{\Phi}}$.
\end{proof}

\subsection{$G_{X_{\Phi}}$ is abstractly a surface group}

The length of an element $g \in G_{X_{\Phi}}$ is, as usual, the length of the shortest word expressing it in the generating set $X_{\Phi}$.

\begin{Prop}\label{groupEltDilatation}
Each element $g \in G_{X_{\Phi}}$ of length $n$  admits a non-trivial interval $J_g$ so that $g|_{J_g}$ is affine with slope $\lambda^n$.
 In addition, if $g$ has more than one expression of length $n$, then two expressions differ by some cutting point relations 
${\it(CPj)}$ of Theorem \ref{CP-relations}, for some $j \in \{1, \dots, 2N\}$.
\end{Prop}

\begin{proof} 
Let us consider the collection of integers given by (EC):
$ \{ k(j) : j \in \{ 1, \dots, 2N\} \}$, with
$K_0$ and $K_{\Phi}$ the minimal and maximal values of this set.

 We start the proof for the elements $g \in G_{X_{\Phi}}$ of length $n \leq K_{0}$,
 i.e., with an expression: 
$g=\varphi_{j_n}\circ \dots \circ \varphi_{j_1}$, satisfying, at least:
$\varphi_{j_{i+1}} \neq \varphi_{\overline{j_i}}$,  for $i=1, \dots, n-1$. 

\vspace{3pt}  
 (I) If $n=2 < K_0$: by condition (SE), the map $\Phi_{j_1}$ can be followed by  
any $\Phi_{k}$,  with $k \neq  \bar{j_1}$, for an iterate of length 2. This implies, from the definition of the generators in Definition \ref{groupGPhi}, that for each $j_2 \neq \bar{j_1}$, the element 
$g = \varphi_{j_2} \circ \varphi_{j_1}$ admits 
$J_g: = I_{j_1, j_2}$ as an interval where $g|_{ J_g}$ is affine with slope $\lambda^2$. 
This is the maximal possible slope for an element of length 2 in the group $G_{X_{\Phi}}$.
Since we are in a group, there cannot be more elements of length 2, starting with $\varphi_{j_1}$.

\vspace{5pt}  
(II) For $2<n < K_0$: we replace, in the above arguments, condition (SE) by the conditions (E-) and (E+) and we obtain that for all $n < K_0$ the element 
$g = \varphi_{j_n}\circ \dots \circ \varphi_{j_1}$ is of length $n$ with the only restriction that 
$\varphi_{j_{i+1}} \neq \varphi_{\overline{j_i}}$,  for all $i=1, \dots, n-1$.\\
On the graph $\Gamma_{\Phi}$, all the vertices $v$ in the interior of the ball $\textrm{ Ball} (v_0 , K_0) $ are of type $(i)$ and, on the corresponding interval $I_v = I_{j_1, \dots, j_n}$, the map 
 $g|_{I_{j_1, \dots, j_n}}$ is affine with slope $\lambda^n$, this is the maximal possible slope for an element of length $n$ and we choose $J_g := I_{j_1, \dots, j_n}$. 
 
 \vspace{5pt}  
(III) If $n = K_0$: let us consider an integer $j \in \{1, \dots, 2N \}$ so that $ k(j) = K_0$. 
 The element:
 $ g = \varphi_{\delta^{k(j)-1}(j) } \circ \dots\varphi_{\delta (j) } \circ \varphi_j$ has, at least, two expressions by Theorem \ref{CP-relations}.\\
  This element admits an interval $V_j$, given by Lemma \ref{affin-diffeo}, on which $g|_{V_j}$ is affine with slope $\lambda^n$. By definition of the generators in 
  Theorem \ref{CP-relations}, the interval $V_j$ might not be maximal with the property that the element is affine of slope $\lambda^n$.
  This interval is also denoted by $I_{{\tilde{v}}_j}$ in (\ref{Ivtilde}), and it is of type $(ii)$ by Lemma \ref{vertexInterval}. We choose in this case $J_g := I_{{\tilde{v}}_j}$.\\
By condition  (\ref{k(j)-image}) in the proof of Lemma \ref{affin-diffeo}, the two expressions of $g$ above can be followed by any $\varphi_{\alpha}$ for 
   $\alpha \notin \{\overline{\gamma^{k(j)-1}(\zeta^{-1}(j))}, \overline{\delta^{k(j)-1}(j)}\}$.\\
The two expressions of $g$, given by the cutting point relation ${\it(CPj)}$, have length $n$ and have $2N-2$ possible successors, i.e., elements of length $n+1$ with the same beginning, by condition (\ref{k(j)-image}). 
    The element $g$ cannot have more than two expressions, by a counting argument as in (I), this also proves that in cases (I) and (II) (when $n < K_0$) the expression is unique.
    The elements $g$ of length less than $ K_0$ are covered by one of the above cases (II) or (III). In all the cases the interval $J_g$ is chosen either as $I_{j_1,\dots, j_n}$, of type $(i)$ or 
   $I_{{\tilde{v}}_j}$,  of type $(ii)$. In addition, the element $g$ has either exactly one expression of length $n$ (case type $(i)$) or exactly two  (case type $(ii)$). 
  
   
   \vspace{5pt}  
 (IV) If $n>K_0$: For an element $ g = \varphi_{j_n} \circ \dots \circ \varphi_{j_1}$ of length $n > K_{0}$, the initial part of this expression of length $K_{0}$, i.e.,  
$g_1 = \varphi_{j_{K_{0}}} \circ \dots \circ \varphi_{j_1}$ is covered by the previous arguments. Thus there is an interval $J_{g_1}$ so that ${g_1}_{|J_{g_1}}$ is affine with slope 
 $\lambda ^{K_{0}}$ and two cases can occur: either $J_{g_1}$ is of type $(i)$ (resp. {\it(ii-$\mathscr{E}$))} or of type {\it(ii-$\mathscr{V}$)}.\\
 If $J_{g_1}$ is of type $(i)$ then, by the arguments in (II) above, $g_1$ has exactly $2N-1$ possible continuations of length $K_{0}+1$ and 
 $g_2 = \varphi_{j_{K_{0} +1}}\circ \varphi_{j_{K_{0}}} \circ \dots \circ \varphi_{j_1}$ is one of these continuations. The same argument applies to $g_2$ and we obtain an interval $J_{g_2} \subset J_{g_1} $.\\
 If $J_{g_1}$ is of type {\it(ii-$\mathscr{V}$)} then, by the argument in case (III), $g_1$ has exactly $2N-2$ possible continuations of length $K_{0}+1$ and 
 $g_2 = \varphi_{j_{K_{0} +1}}\circ \varphi_{j_{K_{0}}} \circ \dots \circ \varphi_{j_1}$ is one of these continuations. Again the same argument applies to $g_2$. In all these cases we obtain an interval $J_{g_2}$ so that 
 ${g_2}_{|_{J_{g_2}}}$ is affine with slope $\lambda ^{K_{0}+1}$. We complete this argument by induction.
 
 \vspace{5pt}
 At this point it remains to check the following:

      \noindent {\it Claim.}
 If two expressions of length $n$ define the same element in $G_{X_{\Phi}}$ then they differ by some cutting point relations \\
 {\it Proof of the claim:}    
     Notice that one property of the cutting point relations {\it(CPj)} has not been used yet:
     
 \noindent     - A cutting point relation, as a cyclic word in the generators, is such that for each consecutive letters 
     $\dots \varphi_j\circ \varphi_k \dots$, the indices $(j,\iota(k))$ 
      are adjacent, according to the permutation 
     $\zeta$, i.e., the intervals 
     $I_j$ and $I_{ \iota(k)}$ are adjacent along $S^1$.
     
\noindent     - In addition, all the adjacent pairs $(j,\iota(k))$ appear in the set of all cutting point relations.

\vspace{3pt}
    Suppose that  two expressions of length $n$: 
     $ A = \varphi_{j_n} \circ \dots \circ \varphi_{j_1}$ and $ B = \varphi_{k_n} \circ \dots \circ \varphi_{k_1}$ are equal in the group. If this equality represents a relation and is not a cutting point relation then, in the cyclic word $AB^{-1}$, some consecutive letters 
     $\varphi_a \circ \varphi_b$ are so that $(a, \iota(b))$ are not adjacent. Let us assume, for instance, 
     that the first indices $j_1$ and $k_1$ in $A$ and $B$ are not adjacent.
      This implies that the two intervals 
     $I_{j_1}$ and $I_{k_1}$ are not adjacent along $S^1$. Let $I_{j_1, \dots, j_n} \subset I_{j_1}$ (resp. 
     $I_{k_1, \dots, k_n} \subset I_{k_1}$) be the interval as above on which the element $A$ (resp. $B$) is affine of maximal slope $\lambda^n$. From the definition of the generators in Theorem \ref{CP-relations}, the maximal interval on which $A$ is affine of maximal slope is larger than $I_{j_1, \dots, j_n} \subset I_{j_1}$ but intersects, at most, a small subinterval of an adjacent interval $I_{\zeta^{\pm 1} (j_1)}$. Since $I_{j_1}$ and $I_{k_1}$ are not adjacent then the maximal intervals on which $A$ and $B$ are affine of maximal slope $\lambda^n$ are disjoint. Thus $A$ and $B$ cannot be equal, as homeomorphisms and thus in the group $G_{X_{\Phi}}$.

\textcolor{black}{     If $AB^{-1}$ is not a single relation then the equality is obtained as a concatenation of several relations. If all these relations are cutting point relations then we are done. If not then the above argument completes the proof. 
}
      \end{proof}

By combining the various results above we obtain:
\begin{Lemma}\label{GeometricAction} For all $g \in G_{X_{\Phi}}$, ${\cal A}_g: \Gamma_{\Phi} \rightarrow \Gamma_{\Phi}$ is a well defined morphism and
the map  ${\cal A}: G_{X_{\Phi}} \rightarrow {\rm Aut} (\Gamma_{\Phi}) $ defined by 
${\cal A}(g):={\cal A}_g$ is a 
geometric action of 
$G_{X_{\Phi}}$ on $\Gamma_{\Phi}$.
\end{Lemma}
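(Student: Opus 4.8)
The plan is to assemble the statement from the pieces already established, checking the three defining properties of a geometric action in turn: well-definedness as a morphism, action by isometries, cocompactness, and proper discontinuity.

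\textbf{Well-definedness as a morphism.} First I would verify that $\mathcal{A}_g$ does not depend on the chosen expression of $g$ in the generators $X_\Phi$. By Proposition \ref{LocIsom} each $\mathcal{A}_{\varphi_j}$ is a well-defined bijective isometry of $\Gamma_\Phi$, so the composition $\mathcal{A}_g = \mathcal{A}_{\varphi_{n_1}} \circ \dots \circ \mathcal{A}_{\varphi_{n_k}}$ is certainly a well-defined bijective isometry \emph{for a fixed word}. The point is independence of the word, i.e. that the cutting point relations (CP) of Theorem \ref{CP-relations} — which by Proposition \ref{groupEltDilatation} are, up to the obvious trivial cancellations $\varphi_{\iota(j)}\varphi_j = \mathrm{id}$, the only ways two shortest expressions of an element can differ — are respected by the maps $\mathcal{A}$. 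Concretely, one checks that for each $j$,
$\mathcal{A}_{\varphi_{\delta^{k(j)-1}(j)}} \circ \dots \circ \mathcal{A}_{\varphi_j}$ and $\mathcal{A}_{\varphi_{\gamma^{k(j)-1}(\zeta^{-1}(j))}} \circ \dots \circ \mathcal{A}_{\varphi_{\zeta^{-1}(j)}}$ agree on $V(\Gamma_\Phi)$. This is exactly mirrored by the loop description in Corollary \ref{loops}: the two sides of a (CP) relation trace the two arcs of one of the canonical loops $\mathcal{C}_v$, and Proposition \ref{ActionOrdering} shows each $\mathcal{A}_{\varphi_j}$ sends $\mathcal{C}_v$ to $\mathcal{C}_{\mathcal{A}_{\varphi_j}(v)}$ preserving the cyclic order of incident edges; so both sides of the relation carry $v_0$ around the same loop and land on the same vertex. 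Since $\varphi_{\iota(j)}\varphi_j=\mathrm{id}$ is trivially respected (Proposition \ref{LocIsom} gives $\mathcal{A}_{\varphi_{\iota(j)}} = (\mathcal{A}_{\varphi_j})^{-1}$ on each $\mathcal{C}_v$), this shows $\mathcal{A}_g$ depends only on $g$, hence $\mathcal{A}: G_{X_\Phi} \to \mathrm{Aut}(\Gamma_\Phi)$ is a homomorphism. Combinatorial metric preservation is immediate from Proposition \ref{LocIsom}.

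\textbf{Cocompactness and proper discontinuity.} For cocompactness, Proposition \ref{PhiAction} gives, for every vertex $v$ of level $n$, an element $g$ of length $\leq n$ with $\mathcal{A}_g(v) \in \mathcal{C}_{v_0}$; since $\mathcal{C}_{v_0}$ is a finite (compact) subgraph containing $v_0$, and every edge of $\Gamma_\Phi$ lies in some $\mathcal{C}_w$ with $w$ a vertex (Remark \ref{compact}), the $G_{X_\Phi}$-translates of $\mathcal{C}_{v_0}$ cover $\Gamma_\Phi$, giving a compact fundamental domain. For proper discontinuity it suffices to show vertex stabilizers are finite; I would argue that the stabilizer of $v_0$ is trivial. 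An element $g$ fixing $v_0$ must, by Proposition \ref{groupEltDilatation}, restrict to an affine map of slope $\lambda^{|g|}$ on a nondegenerate interval $J_g \subset S^1$; but as a homeomorphism of the circle fixing (the interval attached to) $v_0$ and permuting the level-$1$ partition intervals cyclically-order-preservingly, a nontrivial such $g$ would have to move points, and the expansion factor $\lambda^{|g|}>1$ on $J_g$ forces $|g|=0$, i.e. $g=\mathrm{id}$. (Alternatively one shows directly from Definition \ref{action} cases that $\mathcal{A}_g(v_0)=v_0$ with the cyclic-order-preservation of Proposition \ref{ActionOrdering} pins down $g$ on each generator's worth of data, forcing $g=\mathrm{id}$.) Finiteness of all vertex stabilizers then follows by conjugating with an element moving the vertex into $\mathcal{C}_{v_0}$.

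\textbf{Main obstacle.} The delicate step is the independence of $\mathcal{A}_g$ on the word representing $g$, i.e. compatibility with the (CP) relations. Everything else is bookkeeping once Propositions \ref{WellDef-Cv0}, \ref{PhiAction}, \ref{LocIsom}, \ref{ActionOrdering} are in hand, but the (CP)-compatibility requires knowing that \emph{all} coincidences among short words come from (CP) (this is the content of the second assertion of Proposition \ref{groupEltDilatation}) together with a careful check that the two sides of each (CP) relation induce the same vertex map — which is where the loop structure of Corollary \ref{loops} and the order-preservation of Proposition \ref{ActionOrdering} do the real work. I would also be careful that for long words the ``type (ii-v)'' branching (where an element has exactly two shortest expressions) is precisely tracked, so that the induction in Proposition \ref{groupEltDilatation} really does exhaust the sources of non-uniqueness.
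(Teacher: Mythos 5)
Your proposal follows essentially the same route as the paper's proof: you reduce well-definedness, via Proposition \ref{groupEltDilatation}, to checking the trivial relations and the cutting point relations, verify these with Propositions \ref{LocIsom} and \ref{ActionOrdering} together with the loops of Corollary \ref{loops}, and obtain cocompactness from Proposition \ref{PhiAction}, exactly as the paper does. The only divergence is proper discontinuity, where the paper directly bounds the word length of any element moving one compact set to meet another and concludes finiteness, while you reduce to finiteness of vertex stabilizers on the locally finite graph and argue triviality of the stabilizer of $v_0$ from the expansion in Proposition \ref{groupEltDilatation} — a legitimate variant (it is essentially the freeness argument the paper gives later in Theorem \ref{*}), though the phrase ``a nontrivial such $g$ would have to move points'' should be replaced by tracking $\mathcal{A}_g(v_0)$ through the cases of Definition \ref{action} to see that its level grows with the reduced length of $g$.
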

\begin{proof}
Each map ${\cal{A}}_{\varphi_{j}}$ is a bijective isometry on the compact sets ${\cal C}_v$ by 
Proposition \ref{LocIsom} and  ${\cal{A}}_{\varphi_j}({\cal C}_{v})={\cal C}_{{\cal{A}}_{\varphi_j}(v)}$ 
for any ${\cal C}_v$ by Proposition \ref{ActionOrdering}. Therefore any composition: ${\cal{A}}_{\varphi_{j_n}}\circ \dots \circ {\cal{A}}_{\varphi_{j_1}}$ is an isometry. By definition, 
${\cal{A}}_{\varphi_{j_n}}\circ \dots \circ {\cal{A}}_{\varphi_{j_1}}={\cal{A}}_{\varphi_{j_n}\circ \dots \circ \varphi_{j_1}}$. We have to check that this map does not depend on the expression of the group element $g=\varphi_{j_n}\circ \dots \circ \varphi_{j_1}$, i.e., the map is a well defined morphism.

By Proposition \ref{groupEltDilatation}, the set of relations in $G_{X_{\Phi}}$ for the generating set $X_{\Phi}$ are:
\vspace{-8pt}
   \begin{enumerate}[noitemsep, leftmargin=17pt]   
 \item[1)] The trivial relations:  $ \varphi_{j} \circ\varphi_{\iota(j)} = {\rm id}_{G_{X_{\Phi}}}$, or
\item[2)]  the cutting point relations ${\it(CPj)}$ of Theorem \ref{CP-relations}, for $j=1,\dots,2N$.
\end{enumerate}

 \vspace{-5pt}
\noindent We will show that the map ${\cal{A}}$ respects these relations and, by Proposition \ref{PhiAction}, it is sufficient to check it on the compact set ${\cal C}_{v_0}$.

\vspace{3pt}
\noindent 1) For the trivial relations:
 by Definition \ref{action} we have ${\cal{A}}_{\varphi_{j}}(v_0)=v_{\iota(j)}$ and ${\cal{A}}_{\varphi_{\iota(j)}}(v_{\iota(j)})=v_0.$ For the other vertices $v\neq v_0$ in ${\cal C}_{v_0}$ we have either $v=v_{j_1,\gamma(j_1), \dots, \gamma^n(j_1)}$ or  
  $v=v_{j_1,\delta(j_1), \dots, \delta^n(j_1)}$, for  $n\leq  k(j_1)-1$. The proof follows from the case by case study in the proofs of  Proposition \ref{WellDef-Cv0} and Proposition \ref{LocIsom}, we obtain:
 \textcolor{black}{${\cal{A}}_{\varphi_{\iota(j)}} \circ {\cal{A}}_{\varphi_{j}}$} 
is the identity on ${\cal C}_{v_0}$ and thus on $\Gamma_{\Phi}$.

\vspace{3pt}
\noindent  2) For the cutting point relations ${\it CPj}$:
They  are related to several properties of the map 
$\Phi$ and the space $\Gamma_{\Phi}$. Each ${\it CPj}$ is given by a cutting point of the map and to the equivalence relation of Definition \ref{graph} via the notion of vertices and intervals of type {\it(ii-$\mathscr{V}$)} according to Lemma \ref{vertexInterval}. 
The cutting point relations are also associated with the ``loops", based at any vertex $v$ by Corollary \ref{loops}. Recall that the compact sets ${\cal{C}}_v$ are defined in Remark \ref{compact} as the union of all the loops, based at $v$.
By Proposition \ref{ActionOrdering}, 
${\cal{A}}_{\varphi_j}({\cal C}_{v})={\cal C}_{{\cal{A}}_{\varphi_j}(v)}$  and
 ${\cal A}_{\varphi_j}$ is a bijective isometry by Proposition \ref{LocIsom}. This implies, in particular, that each loop, based at $v$ is mapped to a loop, based at ${\cal A}_{\varphi_j} (v)$, for all $j$ and all $v$. 
 Thus the map ${\cal A}$ respects all the cutting point relations.


 By the Propositions \ref{PhiAction} and \ref{ActionOrdering}, the map ${\cal{A}}$
  is co-compact and thus ${\cal{A}}$ is a well defined, co-compact isometric morphism.
 
 It remains to check that ${\cal{A}}$ is properly discontinuous. The graph $\Gamma_{\Phi}$ is locally compact so a compact set in $\Gamma_{\Phi}$ is contained in a ball of finite radius. If $C_1$ and $C_2$ are two compact sets in $\Gamma_{\Phi}$ we can assume that $C_1$ is contained in a ball of radius $R$ centred at $v_0$. By Proposition \ref{PhiAction} there are elements $g \in G_{X_{\Phi}}$ so that 
 ${\cal{A}}_{g} (C_2) \cap C_1 \neq \emptyset$. These elements have a length, with respect to the generating set $X_{\Phi}$. This length is bounded, by at most twice the distance in $\Gamma_{\Phi}$, between $C_1$ and $C_2$, as in the proof of Proposition \ref{PhiAction}.
 Thus the set $\{  g \in G_{X_{\Phi}}: {\cal{A}}_{g} (C_2) \cap C_1 \neq \emptyset \}$ is finite and the action is properly discontinuous.  Therefore the map ${\cal A}$ is a geometric action.
\end{proof}
As a consequence of the above properties we obtain the following result:
\begin{theo}\label{*}
Let $\Phi$ be a piecewise orientation preserving homeomorphism on the circle satisfying the conditions: {\rm(E$\pm$), (EC), (CS-$\lambda$)} for some $\lambda > 1$.\\
Let $G_{\Phi}:=G_{X_\Phi}$  be the sub-group of $\rm{Homeo}^+ (S^1)$ given in Definition \ref{groupGPhi} then:
\vspace{-9pt}
   \begin{enumerate}[noitemsep, leftmargin=22pt]   
\item[$(1)$] The group  $G_{\Phi}$ is discrete.
\item[$(2)$] The group  $G_{\Phi}$ is Gromov-hyperbolic with boundary $S^1$.
\item[$(3)$] $G_{\Phi}$ \textcolor{black}{ is conjugate in \textcolor{black}{${\rm Homeo} (S^1)$} to the restriction of a torsion free Fuchsian group action on $S^1$.}
\item[$(4)$] \textcolor{black}{ The number $\lambda$ is an algebraic integer.}
\end{enumerate}
\end{theo}
   \begin{proof}

(1) The group acts geometrically on a discrete metric space by Lemma \ref{GeometricAction} so it is a discrete group. Recall that the graph $\Gamma_{\Phi}$ and the action of Definition \ref{action} depends only on the map $\Phi$.

\noindent (2) By Lemma \ref{GeometricAction} and Corollary \ref{Graph1Hyper} the group acts geometrically on a Gromov hyperbolic space with boundary $S^1$. Therefore the group is Gromov hyperbolic with boundary $S^1$ by the Milnor-Swartz Lemma 
 (see for instance \S 3 in \cite{GdlH}).
 
\noindent (3) The group is a convergence group by a result of E. Freden \cite{F}. Therefore the conditions of \cite{G},  \cite{Tukia} and  \cite{CJ} are satisfied and the group $G_{\Phi}$ 
\textcolor{black}{ is conjugate in \textcolor{black}{$\textrm{Homeo} (S^1)$} to the restriction of a Fuchsian group action on $S^1$.}

In order to complete the proof of (3) it suffices to check:

\noindent {\it Claim.} The group $G_{\Phi}$ is torsion free.

\noindent {\it Proof of the Claim.}
We already observed that each $g \in G_{\Phi}$ has bounded expansion and contraction factors by Proposition \ref{groupEltDilatation}.
This implies, in particular, that 
 each element $g \in G_{\Phi}$ admits an interval $I_v$ on which $g$ is affine of slope $\lambda^n$,  where $n$ is the length of the element.
This property implies that $ g^m \neq id$ for all $g \in G_{\Phi}-\{id\}$ and all $m$.

 By \cite {Zi} a Fuchsian group that is torsion free is a surface group. So the group 
$G_{\Phi}$ is conjugate to the restriction of a Fuchsian surface group action on $S^1$.

\noindent \textcolor{black}{ (4) The number $\lambda$ is an algebraic integer by Lemma \ref{algebraic}.}
\end{proof}
Theorem \ref{*} admits several interesting consequences. 
 The following one is direct  and  surprising.

\begin{Cor}\label{growth}
Let $S$ be a compact, closed, orientable hyperbolic surface.
\textcolor{black}{There is a discrete faithfull representation $\rho : \pi_1 (S) \rightarrow {\rm Homeo}^+ (S^1)$, a metric 
$\mu$ on $S^1 = \partial( \pi_1 (S))$ and a set of generators $X$ of $\rho(\pi_1 (S))$ so that:}\\
$\bullet$ Each $g \in \rho(\pi_1 (S))$ is piecewise affine, with respect to $\mu$, with slopes in 
$\{\lambda^k , k \in \mathbb{Z}\}$ and $\lambda$ is an algebraic integer.\\
$\bullet$ If $g \in \rho(\pi_1 (S))$ has length $n$ with respect to $X$, it admits an interval $U_g \subset S^1$ so that $g_{| U_{g}}$ is affine of slope $\lambda^n$.
\end{Cor}

\textcolor{black}{ This result is direct from Lemma \ref{condSatisfied}, Theorem \ref{*}, Definition \ref{groupGPhi}, Proposition \ref{groupEltDilatation} and Lemma \ref{algebraic}. It is surprising at several levels. For instance, as a hyperbolic group, the Gromov boundary admits many classes of metrics. We obtain here a very particular and rigid metric that reflects the growth property of the group presentation
(see Corollary \ref{VolumeEntropy} in \S 7) via the factor $\lambda$. The presentation we obtain, with piecewise affine elements is also surprising. For instance the fact that the length of an element is directly computable.}

We will see other consequences in the Appendix in \S 7.

\section{Orbit equivalence}

In this section we complete the proof of the main theorem: the group and the map are orbit equivalent. Let us recall the definition of orbit equivalence, as given in \cite{BS}.
\begin{definition}
A map $\Phi : S^1 \rightarrow S^1$ and a group $G$ acting on $S^1$ are orbit equivalent if, except for a finite number of pairs of points $(x, y) \in S^1 \times S^1$:\\
$\exists g \in  G$ so that $y = g(x)$ if and only if 
$\exists (m, n) \in \mathbb{N}\times \mathbb{N}$ so that 
$ \Phi^n (x) = \Phi^m (y)$.
\end{definition}

The following result is the first statement of the main Theorem.
\begin{theo}\label{OE}
If $\Phi : S^1 \rightarrow S^1$ is an orientation preserving piecewise homeomorphism  satisfying the conditions {\rm (EC), (E$\pm $)} and  {\rm(CS-$\lambda$) for some $\lambda > 1$}, then the group $G_{X_{\Phi}}$ of Theorem \ref{groupGPhi} and the map
$\widetilde{\Phi}$, conjugated to $\Phi$ by {\rm(CS)}, are orbit equivalent.
\end{theo}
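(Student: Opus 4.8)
The plan is to follow the strategy of Bowen–Series \cite{BS}, adapted to our combinatorial setting. The statement to prove is that, up to finitely many exceptional pairs of points, two points of $S^1$ lie in the same $G_{X_\Phi}$-orbit if and only if they lie in the same grand orbit of $\widetilde\Phi$, i.e. $\Phi^n(x)=\Phi^m(y)$ for some $m,n\ge 0$. I would prove the two implications separately.

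\textbf{The easy direction: $G_{X_\Phi}$-orbit $\Rightarrow$ $\widetilde\Phi$-grand orbit.} It suffices to check this on the generators $\varphi_j\in X_\Phi$. By Lemma \ref{prop-gen}(1) (and its extension in the construction of Section 3), $\varphi_j$ coincides with $\widetilde\Phi_j$ on $\widetilde I_j$ and with $(\widetilde\Phi_{\iota(j)})^{-1}$ on $\widetilde\Phi_{\iota(j)}(\widetilde I_{\iota(j)})$; on the remaining two intervals $L_j,R_j$ it is the ``connect-the-dots'' part, which is only a homeomorphism and a priori has nothing to do with $\widetilde\Phi$. So for a point $x$ not in $L_j\cup R_j$ one immediately gets $\widetilde\Phi(x)=\widetilde\Phi(\varphi_j(x))$ or $\widetilde\Phi(\varphi_j(x))=\widetilde\Phi^{0}(x)$ after applying $\widetilde\Phi$ once more, hence $x$ and $\varphi_j(x)$ are in the same grand orbit. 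The points landing in the ``bad'' intervals $L_j\cup R_j$ under some element: one shows, using the cutting-point relations (CP) of Theorem \ref{CP-relations}, that a word whose partial products keep falling into the non-affine intervals can be rewritten; more precisely, the only way to stay off the $\widetilde\Phi$-matched parts indefinitely is to track the orbit of a cutting point, and by condition (EC) those two one-sided orbits merge after $k(j)$ steps, which is exactly relation (CP). So only finitely many orbits of cutting points are involved, giving finitely many exceptional pairs. This is the routine half.

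\textbf{The hard direction: $\widetilde\Phi$-grand orbit $\Rightarrow$ $G_{X_\Phi}$-orbit.} Here it is enough to show that for each $j$ and each $x$ in the interior of $\widetilde I_j$ (away from cutting points and their orbits), there is a group element $g\in G_{X_\Phi}$ with $g(x)=\widetilde\Phi(x)$; iterating then handles $\Phi^n(x)=\Phi^m(y)$. The candidate is $g=\varphi_{\iota(j)}^{-1}=\varphi_j$ itself on $\widetilde I_j$, since $\varphi_j|_{\widetilde I_j}=\widetilde\Phi_j$ by construction. The subtlety, exactly as in \cite{BS}, is the boundary/cutting-point behaviour: at $z_j$ the map $\widetilde\Phi$ has two one-sided values $\widetilde\Phi_{\zeta^{-1}(j)}(z_j)$ and $\widetilde\Phi_j(z_j)$, and we must check that the two corresponding group elements eventually produce the same point — this is precisely condition (EC), realized by the relation (CP). Thus the grand orbit of a cutting point is still a single $G_{X_\Phi}$-orbit once we pass $k(j)$ iterates, and again only finitely many orbits (the $2N$ cutting-point orbits, pre-periodic after Lemma \ref{Phi-Phi'}/Remark \ref{Markov}) can fail the exact equality; these are absorbed into the finite exceptional set.

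\textbf{Structuring the argument and the main obstacle.} Concretely I would: (i) fix notation, replacing $\Phi$ by $\widetilde\Phi$ and recalling that $\varphi_j|_{\widetilde I_j}=\widetilde\Phi_j$ and $\varphi_j|_{\widetilde\Phi_{\iota(j)}(\widetilde I_{\iota(j)})}=\widetilde\Phi_{\iota(j)}^{-1}$; (ii) prove the $\Rightarrow$ direction on generators as above, collecting the finite set $E_1$ of exceptional pairs coming from $L_j\cup R_j$ and cutting-point orbits; (iii) prove the $\Leftarrow$ direction by exhibiting, for a generic $x\in\widetilde I_j$, the element $\varphi_j$ with $\varphi_j(x)=\widetilde\Phi_j(x)=\widetilde\Phi(x)$, then composing; (iv) handle cutting points using (EC) and Theorem \ref{CP-relations} to show their two $\widetilde\Phi$-branches merge in $G_{X_\Phi}$, giving a finite set $E_2$; (v) set the exceptional set to $E_1\cup E_2$, which is finite because there are $2N$ cutting points with pre-periodic orbits. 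The main obstacle is step (iv) together with the ``bad interval'' analysis in step (ii): one must show that a point whose $G_{X_\Phi}$-orbit repeatedly meets the non-affine pieces $L_j,R_j$ — where $\varphi_j$ is not dictated by $\widetilde\Phi$ at all — is forced to be (a pre-image of) a cutting point, so that the mismatch is governed by finitely many orbits and by relation (CP); this is exactly where the conditions (E$\pm$) and (EC) are used in an essential way, mirroring the delicate part of \cite{BS}.
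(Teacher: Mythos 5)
Your split into two implications is the right frame, but you have mislabelled where the work lies, and the half you dismiss as ``routine'' is precisely the part your sketch does not actually prove. The direction you call hard (same $\widetilde{\Phi}$-grand orbit $\Rightarrow$ same $G_{X_\Phi}$-orbit) is immediate: since $\varphi_j|_{\widetilde I_j}=\widetilde\Phi_j$, any pair with $\widetilde\Phi^n(x)=\widetilde\Phi^m(y)$ satisfies $\varphi_{j_n}\circ\dots\circ\varphi_{j_1}(x)=\varphi_{l_m}\circ\dots\circ\varphi_{l_1}(y)$, and one composes with an inverse; no cutting-point analysis is needed there. The genuinely hard direction is $y=\varphi_j(x)$ with $x$ in the non-affine region $L_j\cup R_j$, and here your proposal has a concrete error: you claim that a point whose comparison keeps missing the $\widetilde\Phi$-matched pieces ``is forced to be (a pre-image of) a cutting point,'' and you build the finite exceptional set out of cutting-point orbits (invoking Lemma \ref{Phi-Phi'}/Remark \ref{Markov}, which concern the auxiliary map $\Phi'$, not $\Phi$). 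That identification is wrong. The bad region is a whole interval of points, almost none of which are preimages of cutting points; what the argument must show is that for every such $x$ except finitely many the mismatch resolves after finitely many steps, and that the points for which it never resolves are the \emph{neutral points} $N_j^{\pm}$ of the generators (the points of derivative one coming from the connect-the-dots construction), not cutting points. The finite exceptional set consists of the pairs involving these neutral points.

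What is missing is the actual mechanism. The cutting-point relation (CP) is not used to ``rewrite a word so that partial products avoid $L_j,R_j$''; it is applied to the point $x$ itself to produce the renormalization identity $\widetilde\Phi^{k(j)-1}(\varphi_j(x))=\varphi_{j_1}\big(\widetilde\Phi^{k(j)-1}(x)\big)$ with $j_1=\gamma^{k(j)-1}(\zeta^{-1}(j))$, so that the same alternative (affine piece versus variation interval) recurs at $x_1=\widetilde\Phi^{k(j)-1}(x)$ for the new generator $\varphi_{j_1}$. Two further ingredients are then indispensable and absent from your sketch: (i) if $x$ lies in one of the affine extension neighborhoods $W_j^{p,q}$ of the cutting point, the index-shift property behind Lemma \ref{equality-variation} shows the recursion terminates after finitely many steps (Proposition \ref{xinW}), after which Lemma \ref{bn0} puts $x$ and $y$ on the same $\widetilde\Phi$-orbit; and (ii) if the bad alternative persisted for all $n$, the chain rule applied to (CP) forces $\dd\varphi_j(x)=\dd\varphi_{j_1}(x_1)$, and the monotonicity of the derivative on the variation interval (Lemma \ref{derivative}) forces the derivative to be $1$, i.e.\ $x=N_j^-$. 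Without (i) and (ii) — or a substitute for them — your ``bad interval'' analysis does not close, and with the exceptional set placed on cutting-point orbits rather than neutral points the statement you would prove is not the one needed.
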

 \begin{proof} The proof uses the piecewise affine map 
 $\widetilde{\Phi}$ conjugate to $\Phi$ by condition  (CS-$\lambda$) via some $g \in \textrm{Homeo}^+(S^1)$. The orbit equivalence is preserved by conjugacy and the above statement is valid for the map $\Phi$ and the group obtained from \textcolor{black}{$G_{X_{\Phi}}$} by conjugacy via the \textcolor{black}{ element $g\in \textrm{Homeo}^+(S^1)$ given by condition (CS)}.\\
 One direction of the orbit equivalence is direct from the definition of the map and the group.\\
$\bullet$ If $ \widetilde{\Phi}^n (x) = \widetilde{\Phi}^m (y)$ then there are two sequences of integers 
 $ \{ j_1, \dots , j_n  \}$ and $\{  l_1, \dots, l_m \}$ such that:
$ \varphi_{j_n} \circ \dots \circ \varphi_{j_1} ( x) =  
 \varphi_{l_m} \circ \dots \circ \varphi_{l_1} ( y).$\\
  This implies that $ y = g(x)$ for 
  $g = (  \varphi_{l_m} \circ \dots \circ \varphi_{l_1}) ^{-1} \circ
   \varphi_{j_n} \circ \dots \circ \varphi_{j_1}  \in \textcolor{black}{G_{X_{\Phi}} }$.
   
\noindent $\bullet$   For the other direction we assume $y = h(x) $ and, since 
   $ X_{\Phi} = \{ \varphi_1, \dots , \varphi_{2N} \}$ is generating $G_{X_{\Phi}}$, it is sufficient to restrict to 
   $ h = \varphi_j \in X_{\Phi}$.

  Recall that each generator $\varphi_j \in X_{\Phi}$ of Definition \ref{LimitHomeo} and Definition
  \ref{groupGPhi} is piecewise affine with two special points, the breaking points $\{l_j^0, r_j^0 \}$, that are periodic under $\widetilde{\Phi} $.
    By construction, each interval of the partition satisfies:\\
\centerline{\textcolor{black}{  $ \widetilde{I}_j = [\widetilde{z}_j , \widetilde{z}_{\zeta (j)} ) \subset (  l_j^0, r_j^0 ) $
  with 
 $\varphi_{\iota (j) } ( l_{\iota (j)}^0  ) = r_j^0 \textrm{ and }  
\varphi_{\iota (j) } ( r_{\iota (j)}^0 ) = l_j^0$.}}

\noindent Let us assume \textcolor{black}{$x \notin \{  l_j^0, r_j^0 \} $  then}:

$ \varphi_j $ is either expanding or contracting at $x$, i.e., with slope $\lambda$ or $\lambda^{-1}$.\\
In the second case $x = \varphi_j ^{-1} (y) $ and 
   $ \varphi_j ^{-1}$ is expanding at $y$.
      By this symmetry, we assume that $ \varphi_j $ is expanding at $x$ and thus $ x \in ( l_j^0 , r_j^0) $.
   Two cases can arise:
   
   \centerline{$  \textrm{(a) }  x \in \widetilde{I}_j \hspace{0.5cm} \textrm{      or }   \hspace{0,5cm}  
   \textrm{ (b) }  x \in ( l_j^0 , r_j^0) \setminus \widetilde{I}_j.$}
   
\noindent  In case (a): $ \varphi_j (x) = \widetilde{\Phi} (x)$, thus $ y = \widetilde{\Phi} (x)$ and 
$ (x, y)$ are in the same $\widetilde{\Phi}$-orbit.
       
\noindent   In case (b): there is another symmetry:\\
\centerline{ $ x \in (l_j^0 , \widetilde{z}_j ) $ or $ x \in ( \widetilde{z}_{\zeta (j)} , r_j^0 ) $,
   we assume that $ x \in (l_j^0 , \widetilde{z}_j ) $.}
   
\noindent   By Corollary \ref{PeriodicPoints} and  Definition \ref{LimitHomeo}, the breaking point satisfies: 
$l_j^0 \in ( \varphi_{\iota (j) } ( \widetilde{z}_{\delta (j) } ) , \widetilde{z}_j ) =  L_{j}$.
 By Lemma \ref{action1}, condition $(c)$, we obtain :

   
   
\centerline{   $l_j^0 \in L_{j} \subset 
    I_{ \zeta^{-1} (j), \gamma (  \zeta^{-1} (j) ) , \dots , \gamma^{k(j) - 2} (  \zeta^{-1} (j) )} \textrm{ and, by symmetry, }   r_j^0 \in R_{j}.$}
   
\noindent  The definition of these intervals implies:\\ 
    \centerline{$ \forall u \in I_{ \zeta^{-1} (j), \gamma (  \zeta^{-1} (j) ) , \dots , 
    \gamma^{k(j) - 2} (  \zeta^{-1} (j))}:
     \widetilde{\Phi}^i (u) \in \widetilde{I}_{\gamma^{i} (  \zeta^{-1} (j) )}, \forall i \in
      \{ 0, \dots, k(j) - 2 \},$}
    and thus condition (b) implies:
    
\centerline{   $\widetilde{\Phi}^i (x) \in \widetilde{I}_{\gamma^{i} (  \zeta^{-1} (j) )}, \forall i \in \{ 0, \dots, k(j) - 2 \}. $}  

\noindent     With the same argument we obtain: \\
     $y = \varphi_j (x) \in ( r_{\iota (j)}^0 , \varphi_{j} (\widetilde{z}_j) ) \subset R_{\iota (j)}$
     with $ R_{\iota (j)} \subset I_{\delta (j), \dots , \delta^{k(j) - 1}(j) }$, and thus:\\     
   $ \widetilde{\Phi}^i ( y ) \in \widetilde{I}_{\delta^{i+1} (  j )}, \forall i \in \{ 0, \dots, k(j) - 2 \}.$
    Hence, the $\widetilde{\Phi}$ orbits of $x$ and $y$ satisfy:
     \vspace{-5pt}
    \begin{equation}\label{(6)}
    \begin{array}{l}
 \widetilde{\Phi}^{k(j) -1} (y) = \varphi_{ \delta^{k(j) -1} (j)} \circ \dots \circ 
    \varphi_{ \delta ( j) } (y) \textrm{, and}\\
\widetilde{\Phi}^{k(j) -1} (x) = \varphi_{ \gamma^{k(j) - 2} (  \zeta^{-1} (j) ) } \circ  \dots \circ \varphi_{ \gamma (  \zeta^{-1} (j) ) } \circ \varphi_{ \zeta^{-1} (j)} (x) .
\end{array}
\end{equation}

 \vspace{-6pt}
\noindent    Recall that each cutting point $z_j$ defines the following  relation ${\it(CPj)}$ in the group $G_{X_{\Phi}}$:
    
 \centerline{ $ \varphi_{ \delta^{k(j) -1} (j)} \circ  \dots \circ \varphi_{ \delta ( j) }\circ \varphi_{ j} =
     \varphi_{ \gamma^{k(j) - 1} (  \zeta^{-1} (j) ) } \circ  \dots\circ \varphi_{ \gamma (  \zeta^{-1} (j) ) } \circ \varphi_{ \zeta^{-1} (j)}$ .}
     
 \noindent    If the relation ${\it(CPj)}$ is applied to the point $x$ we obtain:
 \vspace{-7pt}
     \begin{equation}\label{(9)}
  \widetilde{\Phi}^{k(j) - 1} (\varphi_j(x)) = \varphi_{ \gamma^{k(j) - 1} (  \zeta^{-1} (j) ) } [\widetilde{\Phi}^{k(j) - 1} (x) ].
  \end{equation}
  
   \vspace{-5pt}
     \noindent     Indeed, by replacing $y = \varphi_j (x)$ in the left hand side of the relation we obtain the first  equality in (\ref{(6)}) which is the left hand side of (\ref{(9)}). The right hand side of (\ref{(9)}) is obtained by replacing, in the right hand side of the  relation, the second equality in  (\ref{(6)}).
        Let us denote:\\
   \centerline{ $j_1 :=  \gamma^{k(j) - 1} (  \zeta^{-1} (j) )  \in \{ 1, \dots , 2N \}$, \; $x_1:=\widetilde{\Phi}^{k(j) - 1} (x)$ and 
   $\;y_1:=\varphi_{j_1}(x_1)$.}
   Observe that the index $j_1 = \alpha (j)$, as defined in Lemma \ref{alpha-beta}.\\

   \noindent The equality (\ref{(9)}) implies that an alternative, similar to (a) or (b) above,  applies again, more precisely:
  
  \vspace{-7pt}  
\centerline{   $ (a_1) \quad   x_1 \in \widetilde{I}_{ j_1} \hspace{0.5cm}
   \textrm{ or } \hspace{0.5cm} (b_1) \quad  x_1 \notin \widetilde{I}_{ j_1}.$}
 
 \vspace{5pt}
 \noindent   In case $(a_1):$ the equality (\ref{(9)}) gives:

 \vspace{5pt} 
\centerline{ $\widetilde{\Phi}^{k(j) - 1} (y) =  \widetilde{\Phi} [ \widetilde{\Phi}^{k(j) - 1} (x) ] =  \widetilde{\Phi}^{k(j) } (x) ,$}
 \noindent and the orbit equivalence is proved in this case.
 
  \vspace{3pt}
 \noindent In case $(b_1):$  We obtain that $x_1 \in (l _{j_1}^0, \widetilde{z}_{ j_1} )$, 
 \textcolor{black}{if $ x \in (l_j^0 , \widetilde{z}_j ) $.\\
   Indeed, since $x_1=\widetilde{\Phi}^{k(j) - 1} (x)$
 then $x_1 \in \widetilde{\Phi}^{k(j) - 1} (l_j^0 , \widetilde{z}_j )$. In addition
 $\widetilde{\Phi}^{k(j) - 1} (\widetilde{z}_j) \in \widetilde{I}_{ j_1}$ by condition (E-) and 
 $\widetilde{\Phi}^{k(j) - 1} (l _{j}^0) = l _{j_1}^0$ by Corollary \ref{PeriodicPoints}. Thus if $x_1 \notin \widetilde{I}_{ j_1}$ then $x_1 \in (l _{j_1}^0, \widetilde{z}_{ j_1} )$.}
 
 

\noindent This  alternative $(b_1)$ is thus exactly the same at the point $x_1$ than (b) was at the point $x$. This implies, in particular that: $ x_1 \in \widetilde{I}_{\zeta^{-1} (j_1)}$ and, more precisely:\\
 \centerline{ $ x_1 \in I_{\zeta^{-1} (j_1), \gamma (\zeta^{-1} (j_1) ), \dots ,  
 \gamma^{k(j_1) - 2} (\zeta^{-1} (j_1) )} \textrm{ and } 
   y_1 = \varphi_{j_1} (x_1) \in 
 I_{\delta(j_1), \dots,  \delta^{k(j_1) -1}(j_1) } ,$}
 
\noindent by the same arguments as for the points $x$ and $y$.\\
 Therefore we obtain:\\ 
 $\bullet$  a sequence of integers: $\{ j=j_0, j_1, \dots, j_n , \dots \}$ where each 
 $ j_m \in \{ 1, \dots , 2N \} $,\\
 $\bullet$ a sequence of points: $ x_n := \widetilde{\Phi}^{k(j_{n - 1}) - 1} (x_{n-1} ) $ and 
 $y_n: = \varphi_{j_n} (x_n )$, with the following alternative: 
 
  \vspace{-5pt}
  \centerline{ $(a_n) \quad  x_n \in \widetilde{I}_{j_n }  \hspace{0.5cm} \textrm{ or }  \hspace{0.5cm} (b_n) \quad   x_n \notin \widetilde{I}_{j_n }$ \textcolor{black}{and thus $x_n \in (l _{j_n}^0, \widetilde{z}_{ j_n} )$. }}
  
 \begin{Lemma}\label{expandingBn0} 
 With the above notations, if $ x \in (l_j^0 , \widetilde{z}_j ) $ and $y = \varphi_j (x)$, then there   \textcolor{black}{exist integers 
 $n_0 \geq 1$
and} $K(n_0) \geq k(j) -1$ so that $x_{n_0} \in \widetilde{I}_{j_{n_0} }$ and 
  $ \widetilde{\Phi}^{K(n_0)} (y ) = \widetilde{\Phi}^{K(n_0) +1} (x )$.\\
\textcolor{black}{ If $ x \in ( \widetilde{z}_{\zeta (j)} , r_j^0 )$  and $y = \varphi_j (x)$, then there exist integers $n'_0 \geq 1$ and $K(n'_0) \geq k(j) -1$ so that $x_{n'_0} \in \widetilde{I}_{j_{n'_0} }$ and 
  $ \widetilde{\Phi}^{K(n'_0)} (y ) = \widetilde{\Phi}^{K(n'_0) +1} (x )$.}
    \end{Lemma}

  \begin{proof}
  Recall that $S^1$ has a well defined metric $|\quad|$ for which $\widetilde{\Phi}$ is affine.\\
   Since 
  $x \in (l_j^0, \widetilde{z}_j )$, we define $\rho_{x,j}:= | (l_j^0, x ) | > 0$. By Corollary \ref{PeriodicPoints}, the breaking point $l_j^0$ is an expanding periodic point of period $K(r_{\alpha} (j)) \geq k(j) -1$.\\
  - If the alternative $(a_1)$ is satisfied then $n_0 = 1$ and $K(n_0) = k(j) -1$, i.e.,\\
  $ \widetilde{\Phi}^{k(j) -1} (y ) = \widetilde{\Phi}^{k(j)} (x )$ and the points $x$, $y$ are in the same 
  $ \widetilde{\Phi}$-orbit.\\
  - If $(b_1)$ is safisfied then $x_1 = \widetilde{\Phi}^{k(j)-1} (x ) \in (l_{j_1}^0, \widetilde{z}_{j_1} )$.
  The breaking point $l_{j_1}^0$ belongs to the same $\widetilde{\Phi}$-orbit as $l_{j}^0$ and we obtain:
  $\rho_{x_1,j_1} = | (l_{j_1}^0, x_1 ) | = \lambda^{k(j) -1} \cdot \rho_{x,j}$, since $\widetilde{\Phi}$ is affine.
  
  Let $M = {\textrm{max}}_{i = 1, \dots, 2N} | (l_i^0, \widetilde{z}_i ) |$, since the periodic point  $l_{j_1}^0$ is expanding and $x_n = \widetilde{\Phi}^{k(j_{n-1}) -1} (x_{n-1} )$ then there is $n_1 \geq 1$ so that:
$\rho_{x_{n_1},j_{n_1}} = \lambda^{K(n_1)} . \rho_{x,j} > M$, where
$K(n) = \sum_{i=0}^{n} ( k(j_i) -1 )$. 
Therefore there is $n_0 \leq n_1$  so that $x_{n_0} \in \widetilde{I}_{j_{n_0} }$. This is the first statement of the Lemma and thus the alternative $(a_{n_0})$ is satisfied which implies:\\
 $ \widetilde{\Phi}^{K(n_0)} (y ) = \widetilde{\Phi}^{K(n_0) +1} (x )$, thus $x$ and $y$ belong to the same
 $\widetilde{\Phi}$-orbit. The second case is obtained by symmetry.
  \end{proof}

 \noindent  This completes the proof of Theorem \ref{OE} and of the main Theorem.
\end{proof}


\section{Appendix}
In this Appendix we give a direct proof of:
\begin{THM}
The group $G_{\Phi}$ of Definition \ref{groupGPhi} \textcolor{black}{ is conjugate in $\textcolor{black}{{\rm Homeo} (S^1)}$ to the restriction of a torsion free Fuchsian group action on $S^1$. It is abstractly a surface group.}
\end{THM}
\noindent This result has been obtained in Theorem \ref{*} of section 5, by using the very strong geometrisation theorem of Tukia \cite{Tukia}, Gabai \cite{G} and Casson-Jungreis \cite{CJ}. 
The proofs of this geometrisation theorem, in one way or another, rely on extending the group action on the circle to an action on a disc.
 Our approach is not an exception to this general strategy.  One way to interpret this stategy is to prove that the group is abstractly a \textcolor{black}
 {Fuchsian} group.
  In our case we already have an important ingredient: a geometric action given by Definition \ref{action} on the hyperbolic metric graph $\Gamma_{\Phi}$ of Definition \ref{graph}.
This graph $\Gamma_{\Phi}$ satisfies particular properties, for instance it admits a cyclic ordering at each vertex that is preserved by the action by Propositions \ref{cyclicorder} and \ref{ActionOrdering}.

We need to prove that $\Gamma_{\Phi}$ can be embedded in a plane and the action can be extended to a planar action.

We define a 2-complex $\Gamma_{\Phi}^{(2)}$, in analogy with the Cayley 2-complex:

\noindent $\bullet$ For each closed path in $\Gamma_{\Phi}$, associated to a cutting point relation 
 ${\it(CPj)}$ by Corollary \ref{loops} (see Figure \ref{Cv0}), we define a two disc $\Delta_{z_j}$ whose boundary is a polygon with $2 \cdot k(j)$ sides, where $k(j)$ is given by condition (EC) at $z_j$.

\noindent $\bullet$  We glue ``isometrically" a disc $\Delta_{z_j}$ along a closed path in $\Gamma_{\Phi}$, as above, associated to ${\it(CPj)}$. Isometrically means that each side of $\Delta_{z_j}$ has length one and is glued along the corresponding edge in 
$\Gamma_{\Phi}$, also of length one.
\noindent We denote $\Gamma_{\Phi}^{(2)}$ the 2-complex obtained by gluing all possible such discs. The graph $\Gamma_{\Phi}$ is naturally the 1-skeleton of $\Gamma_{\Phi}^{(2)}$.

\begin{Lemma}\label{Plane}

 The 2-complex $\Gamma_{\Phi}^{(2)}$ is homeomorphic to $\mathbb{R}^{2}$.\\
 The action ${\cal {A}}_{g}$, $ g\in G_{\Phi}$ extends to a free, co-compact, properly discontinuous action $\widetilde{\cal {A}}_{g}$ of $G_{\Phi}$ on $\Gamma_{\Phi}^{(2)}$.

\end{Lemma}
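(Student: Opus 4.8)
The plan is to establish the two assertions of the Lemma in sequence: first that $\Gamma_\Phi^{(2)}$ is a simply connected surface (hence homeomorphic to $\mathbb R^2$), and then that the group action extends. For the topological statement I would argue locally first. Around each vertex $v$, Proposition \ref{cyclicorder} gives a natural cyclic ordering of the $2N$ incident edges, and by Corollary \ref{loops} each consecutive pair of edges in this ordering lies on exactly one cutting-point loop, hence in the boundary of exactly one glued disc $\Delta_{z_j}$. Consequently the link of $v$ in $\Gamma_\Phi^{(2)}$ is a single cycle alternating between edges and disc-corners, so a neighbourhood of $v$ is an open $2$-disc. At an interior point of an edge $e$, $e$ bounds exactly two discs (the loop on each of its two sides in the cyclic ordering at either endpoint — here one uses that the two loops through $e$ on a given side agree, which follows from the disjointness of the cycles of $\delta$ in Corollary \ref{loops}), so a neighbourhood is again a $2$-disc; interior points of the $\Delta_{z_j}$ are trivially fine. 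Thus $\Gamma_\Phi^{(2)}$ is a surface without boundary. It is connected since $\Gamma_\Phi$ is.

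Next I would show $\Gamma_\Phi^{(2)}$ is simply connected. The $1$-skeleton $\Gamma_\Phi$ deformation-retracts, after removing the sphere-edges, onto the tree $T_\Phi$ modulo the folding identifications of Definition \ref{graph}; every closed loop in $\Gamma_\Phi$ is homotopic, rel basepoint, to a concatenation of the cutting-point loops of Corollary \ref{loops}, because those loops generate $\pi_1(\Gamma_\Phi)$ — this is exactly the content of the construction of $\Gamma_\Phi$ as $T_\Phi/\!\sim_\Phi$, where the only non-tree cycles created by the identifications are (compositions of) the relation loops $CP_{z_j}$. Since each such loop bounds a disc $\Delta_{z_j}$ in $\Gamma_\Phi^{(2)}$ by construction, every loop in $\Gamma_\Phi^{(2)}$ is nullhomotopic, so $\pi_1(\Gamma_\Phi^{(2)}) = 1$. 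A simply connected surface without boundary that is not compact (it is not compact because $\Gamma_\Phi$ is an infinite locally finite graph, e.g. the level of a vertex is unbounded) is homeomorphic to $\mathbb R^2$ by the classification of surfaces; compactness is ruled out since $G_\Phi$ is infinite and acts co-compactly.

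For the action, I would extend ${\cal A}_g$ from $\Gamma_\Phi$ to $\Gamma_\Phi^{(2)}$ cell by cell. By Proposition \ref{ActionOrdering}, for every generator $\varphi_j$ the map ${\cal A}_{\varphi_j}$ sends ${\cal C}_v$ isometrically to ${\cal C}_{{\cal A}_{\varphi_j}(v)}$ and preserves the cyclic ordering of edges; in particular it sends each cutting-point loop based at $v$ bijectively onto a cutting-point loop based at ${\cal A}_{\varphi_j}(v)$ — indeed it sends the relation $CP_{z_i}$ to a conjugate relation by the remark in the proof of Theorem \ref{CP-relations}, and loops associated to conjugate relations of the same length $k(i)$ correspond. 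One then declares $\widetilde{\cal A}_{\varphi_j}$ to map the disc $\Delta$ bounded by such a loop isometrically (respecting the polygon structure, which is well-defined since $k(i)$ depends only on combinatorics) onto the disc bounded by the image loop. This is consistent on overlaps (shared edges and vertices) because ${\cal A}_{\varphi_j}$ was already defined and bijective there, and it is compatible with the group relations because ${\cal A}$ is already a morphism $G_\Phi \to \mathrm{Aut}(\Gamma_\Phi)$ by Lemma \ref{GeometricAction}; hence $g \mapsto \widetilde{\cal A}_g$ is a well-defined homomorphism into the homeomorphism group of $\Gamma_\Phi^{(2)}$ acting by isometries for the induced length metric. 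Co-compactness and proper discontinuity are inherited from the corresponding properties of the action on $\Gamma_\Phi$ (Lemma \ref{GeometricAction}), since $\Gamma_\Phi^{(2)}$ is a bounded thickening of $\Gamma_\Phi$ (every disc has diameter at most $K_\Phi$). Freeness follows from the freeness of the action on vertices established in the proof of Theorem \ref{*}: an element fixing a $2$-cell would fix its boundary vertices, forcing $g=\mathrm{id}$.

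The main obstacle I anticipate is the verification that $\Gamma_\Phi^{(2)}$ is genuinely a manifold at the vertices and edges that arise from the type-(ii-v) identifications — the ``folded'' vertices of Lemma \ref{vertexInterval} which have two incoming edges and valency $2N$ but an identification structure that makes the incidence of relation-discs less transparent than at a generic vertex. One must check carefully that, even there, the $2N$ incident edges together with the $2N$ corners of relation-discs assemble into exactly one cyclic link, with no disc appearing twice and no edge left with the wrong number of adjacent discs; this is where the precise bookkeeping of which cutting-point relation is attached to which pair of consecutive edges (Corollary \ref{loops}, using disjointness of the cycles of $\delta$) does the real work, and where using the normalized map $\Phi'$ of Lemma \ref{Phi-Phi'} — for which all type-(ii) identifications are $2$-to-$1$ — simplifies the combinatorics before invoking Proposition \ref{GammaHomeo} to transfer the conclusion back to $\Gamma_\Phi$.
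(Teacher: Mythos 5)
Your proposal follows essentially the same route as the paper's appendix proof: local $2$-disc neighbourhoods at each vertex obtained from the cyclic ordering of Proposition \ref{cyclicorder} together with the one-disc-per-consecutive-edge-pair correspondence of Corollary \ref{loops}, simple connectivity because every loop in $\Gamma_{\Phi}$ is, up to conjugation, a product of cutting-point loops each of which bounds a glued disc, and extension of the action disc-by-disc using Propositions \ref{LocIsom} and \ref{ActionOrdering}, with co-compactness, proper discontinuity and freeness inherited as in Lemma \ref{GeometricAction}. The only step to tighten is your freeness argument: an element stabilizing a $2$-cell need not fix its boundary vertices (it could rotate the polygon), but then a suitable power would fix a vertex, and freeness on vertices combined with the torsion-freeness established in the proof of Theorem \ref{*} excludes this --- a refinement the paper itself leaves implicit.
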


\begin{proof} By the Propositions \ref{LocIsom} and \ref{ActionOrdering}, the action  ${\cal {A}}_{g}$ maps the link at a vertex 
$v \in V (\Gamma_{\Phi})$ to the link at $ w = {\cal {A}}_{g} (v) $  and 
this action preserves the cyclic ordering of Proposition \ref{cyclicorder}.  This implies, in particular, that adjacent edges at $v$ are mapped to adjacent edges at $w$.
Recall that adjacent edges define a relation ${\it(CPj)}$ by Corollary \ref{loops} for some 
$j \in \{ 1,\dots, 2N\}$. Therefore a closed path 
$\Pi^0$, based at $v$ in 
$ \Gamma_{\Phi}$ associated to a relation ${\it(CPj)}$ is mapped to a closed path 
$\widetilde{\Pi^0} $, based at $w$, associated to ${\it(CPj)}$. We extend the action 
${\cal {A}}_{g}$ on $ \Gamma_{\Phi}$ to an action $\widetilde{\cal {A}}_{g}$ on 
$\Gamma_{\Phi}^{(2)}$ by declaring that if 
${\cal {A}}_{g} (\Pi^0) = \widetilde{\Pi^0} $ then the disc $\Delta_{z_j}$ based at $v$ is mapped by $\widetilde{\cal {A}}_{g}$ to the disc $\widetilde{\Delta_{z_j}} $ based at $w$.

The set of 2-cells $\Delta_{z_j}$ for all $j \in \{1, \dots, 2N \}$, glued along each pair of adjacent edges at $v$ in $\Gamma_{\Phi}^{(2)}$, defines a neighborhood of $v$ in $\Gamma_{\Phi}^{(2)}$. This neighborhood is a 2-disc. Indeed, by the natural cyclic ordering of the edges at $v$, exactly two 2-cells are glued along an edge. Observe that the boundary of this neighborhood is a subset of the graph $\Gamma_{\Phi}$ which is precisely the boundary of the compact set
${\cal{C}}_v$ of Remark \ref{compact}.
This 2-disc is embedded in $\mathbb{R}^{2}$ and this property is true for each vertex. Thus 
$\Gamma_{\Phi}^{(2)}$ is homeomorphic to $\mathbb{R}^{2}$, since each point has a neighborhood homeomorphic to a 2-disc and 
$\Gamma_{\Phi}^{(2)}$ is contractible since any basic loop in $\Gamma_{\Phi}$ bounds a unique disc in 
$\Gamma_{\Phi}^{(2)}$.\\
The extended action $\widetilde{\cal {A}}_{g}$ defined above is co-compact, free, and properly discontinuous, exactly as the action ${\cal {A}}_{g}$ is on  $\Gamma_{\Phi}$.
\end{proof}
\noindent {\em Proof of the  Theorem}. The quotient of $\Gamma_{\Phi}^{(2)}$ by the action 
$\widetilde{\cal {A}}_{g}$ is a compact surface since $\Gamma_{\Phi}^{(2)}$ is homeomorphic to 
$\mathbb{R}^{2}$ and the action is co-compact and free. \textcolor{black}{The group $G_{X_{\Phi}}$ is abstractly a surface group and is thus conjugate in \textcolor{black}{$\textrm{Homeo} (S^1)$} to a torsion free Fuchsian group action on $S^1$.}
$\square$\\

As another consequence we obtain:
\begin{Cor}\label{Pgeom}
The group $G_{X_{\Phi}}$ of Definition \ref{groupGPhi} admits a presentation $P$ where the generating set is $X_{\Phi}$ and the set of relations are the cutting point relations ${\it(CPj)}$. Then the Cayley 2-complex of this presentation is homeomorphic to 
$\mathbb{R}^2$.
\end{Cor}

\begin{proof}
The group is generated by $X_{\Phi}$ by Definition \ref{groupGPhi} and, from Proposition \ref{groupEltDilatation}, the set of relations is the set ${\it(CPj)}$. From the proof of Lemma \ref{Plane}, the 2-complex 
$\Gamma_{\Phi}^{(2)}$ is identified with the Cayley 2-complex of the presentation $P$. Indeed, each vertex in 
$\Gamma_{\Phi}$ is associated to a group element, each edge is associated to a generator and each 2-disc 
in $\Gamma_{\Phi}^{(2)}$ is bounded by a loop associated to a relation.
\end{proof}

A presentation of a surface group $G = \pi_1 (S)$ is called {\em geometric} in \cite{Los} (see also  \cite{AJLM2}) if the Cayley 2-complex is planar. From the main Theorem  \ref{*} and Corollary \ref{Pgeom}, this is the case for the group $G_{\Phi}$ and the presentation $P$. 
 Recall that the volume entropy  $h_{vol} (G,P)$ of a group $G$ with presentation $P$ is the logarithm of the exponential growth rate of the number of elements of length $n$ with respect to the presentation $P$ (see \cite{GH}).
 As another consequence of this work we obtain:

\begin{Cor}\label{VolumeEntropy}
If $\Phi : S^1 \rightarrow S^1$ is an orientation preserving piecewise homeomorphism satisfying {\rm(EC), (E$\pm$)} and  {\rm(CS-$\lambda$}), for some $\lambda > 1$, then $\lambda$ is an algebraic integer and 
$log (\lambda ) = h_{vol} (G_{\Phi},P)$, for the group $G_{\Phi}$ of Definition \ref{groupGPhi}, with the presentation $P$ given by  Corollary
\ref{Pgeom}. The quantity $log (\lambda )$ is also the topological entropy of the map $\Phi$.
\end{Cor}

This result is an immediate consequence of Theorem \ref{*} together with the main result in \cite{AJLM2} since the presentation $P$ is geometric by Corollary \ref{Pgeom}. The fact that $\lambda > 1$ is an algebraic integer has already been obtained directly before, in Lemma \ref{algebraic}, in this Lemma it was noticed that 
$log (\lambda )$ is the topological entropy of the map $\Phi$.\\

The two Corollaries \ref{VolumeEntropy} and \ref{growth} together give the second Theorem of the introduction.


J\'er\^ome Los \\
I2M (Institut  Math\'ematique de Marseille),  Aix-Marseille Universit\'e,\\
CNRS UMR 7373,
 3 Place Victor Hugo, 13003 Marseille, France\\
{\rm jerome.los@univ-amu.fr}\\


Natalia A. Viana Bedoya\\
Departamento de Matem\'atica, 
Universidade Federal  de S\~ao Carlos\\
Rod. Washington Luis, Km. 235. C.P 676 - 13565-905 S\~ao Carlos, SP - Brasil\\
{\rm nvbedoya@ufscar.br}
 \end{document}